\renewcommand\section{\@startsection {section}{1}{\z@}%
             {-3.5ex \@plus -1ex \@minus -.2ex}%
             {2.3ex \@plus.2ex}%
             {\normalfont\large\bfseries}}
\renewcommand\subsection{\@startsection{subsection}{2}{\z@}%
             {-3.25ex\@plus -1ex \@minus -.2ex}%
             {1.5ex \@plus .2ex}%
             {\normalfont\normalsize\bfseries}}
\renewcommand\subsubsection{\@startsection{subsubsection}{3}{\z@}%
             {-3.25ex\@plus -1ex \@minus -.2ex}%
             {1.5ex \@plus .2ex}%
             {\normalfont\normalsize\bfseries}}
\newcommand{\OP}[1]{\operatorname{#1}}
\newcommand{\BF}[2]{\langle #1,#2 \rangle} 
\newcommand{\LF}[1]{\langle #1 \rangle}
\newcommand{\MC}[1]{\mathcal{#1}}
\newcommand{\MF}[1]{\mathfrak{#1}}
\newcommand{\Z}{\mathbb{Z}}
\newcommand{\Q}{\mathbb{Q}}
\newcommand{\R}{\mathbb{R}}
\newcommand{\C}{\mathbb{C}}
\renewcommand{\P}{\mathbb{P}}
\def\rank{\operatorname{rank}}
\newcommand{\disc}{\operatorname{disc}}
\newcommand{\ord}{\operatorname{ord}}
\newcommand{\sign}{\operatorname{sign}}
\newcommand{\CONG}{{\displaystyle\mathop{\rightarrow}^\sim}}
\newcommand{\Gsymp}{\mathfrak{G}_{K3}^\text{\rm symp}}
\newcommand{\clos}{\text{\rm clos}}
\title{Finite Symplectic Actions on the $K3$ Lattice}
\author{Kenji Hashimoto}
\subjclass[2010]{Primary 14J28; Secondary 11H56}
\thanks{This work was partially supported by %
 Grant-in-Aid for JSPS Fellows No.\ 20-56181.}
\address{Korea Institute for Advanced Study, %
 207-43 Cheongnyangni-2dong, Dongdaemun-gu,
 Seoul, 130-722, Korea}
\email{hashimoto@kias.re.kr}
\begin{document}
\theoremstyle{plain}
\newtheorem{thm}{Theorem}[section]
\newtheorem*{thmn}{Theorem}
\newtheorem{thmdash}[thm]{Theorem$'$}
\newtheorem*{mainthm}{Main Theorem}
\newtheorem{lem}[thm]{Lemma}
\newtheorem{cor}[thm]{Corollary}
\newtheorem{prop}[thm]{Proposition}
\newtheorem{clm}[thm]{Claim}
\newtheorem{ex}[thm]{Example}
\newtheorem{nota}[thm]{Notation}
\theoremstyle{definition}
\newtheorem{defn}[thm]{Definition}
\newtheorem{rem}[thm]{Remark}

\begin{abstract}
In this paper,
 we study finite symplectic actions on $K3$ surfaces $X$,
 i.e.\ actions of finite groups $G$ on $X$
 which act on $H^{2,0}(X)$ trivially.
We show that
 the action on the $K3$ lattice $H^2(X,\Z)$
 induced by a symplectic action of $G$ on $X$
 depends only on $G$ up to isomorphism,
 except for five groups.
\end{abstract}
\maketitle
\setcounter{tocdepth}{1}
\setcounter{section}{-1}
\section{Introduction} \label{sect:intro}

A compact complex surface $X$ is called a $K3$ surface
 if it is simply connected and has a nowhere vanishing
 holomorphic $2$-form $\omega_X$.
For properties on $K3$ surfaces, see \cite{bhpv}.
An automorphism $g$ of $X$ is said to be symplectic
 if $g^* \omega_X=\omega_X$.
Nikulin \cite{nikulin76,nikulin79fin} studied
 symplectic actions of finite groups
 on $K3$ surfaces.
In particular, he showed the following result:

\begin{thm}[\cite{nikulin79fin}]
\label{thm:nikulinab}
There exist exactly $14$ finite abelian groups $G$
 ($G=C_2,C_3,\ldots$)
 which act on $K3$ surfaces
 faithfully and symplectically.
Moreover, for each $G$,
 the action of $G$ on the $K3$ lattice
 induced by a faithful and symplectic action of $G$
 on a $K3$ surface
 is unique up to isomorphism.
\end{thm}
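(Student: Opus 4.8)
The plan is to translate the statement into lattice theory and resolve it with Nikulin's discriminant‑form machinery together with the Lefschetz fixed‑point formulas. Write $L=H^2(X,\Z)\cong U^{\oplus 3}\oplus E_8(-1)^{\oplus 2}$ for the $K3$ lattice. Given a faithful symplectic action of $G$ on $X$, let $L^G$ be the invariant sublattice and $S_G:=(L^G)^{\perp}_L$ the coinvariant lattice, so $G$ acts on $S_G$ with $(S_G)^G=0$. Since $\omega_X$ and $\bar\omega_X$ span a $G$‑invariant positive‑definite real $2$‑plane and, after averaging a Kähler class, there is a $G$‑invariant ample class, $L^G$ has signature $(3,\,19-\rank S_G)$ and $S_G$ is negative definite of rank $\le 19$ (and $G$ acts trivially on the transcendental lattice $T_X\subseteq L^G$, since $T_X\otimes\C$ is an irreducible Hodge structure fixing $\omega_X$). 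The $G$‑action on $L$ is then reconstructed from three pieces of data: (i) the isometry class of $S_G$ as a $\Z[G]$‑lattice; (ii) the isometry class of $L^G$, on which $G$ acts trivially; and (iii) the gluing, i.e.\ the primitive embedding $S_G\hookrightarrow L$, equivalently an anti‑isometry $q_{S_G}\cong -q_{L^G}$ of discriminant forms. So it suffices to show that each of (i)--(iii) depends on $G$ alone.

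\textbf{Step 1 (the groups and existence).} The first step is to apply the topological and the holomorphic Lefschetz fixed‑point formulas to each $g\in G$. A nontrivial finite‑order symplectic automorphism has only finitely many fixed points, and near each one its differential is conjugate to $\mathrm{diag}(\zeta_n^{a},\zeta_n^{-a})$ with $n=\ord(g)$; the holomorphic Lefschetz number $\sum_i(-1)^i\mathrm{tr}\bigl(g^*\mid H^i(X,\mathcal O_X)\bigr)$ equals $2$, while the topological Lefschetz number equals $2+\mathrm{tr}(g^*\mid L)$. Solving the resulting relations in the fixed‑point data forces $\ord(g)\le 8$ and, for prime order $p$, $p\in\{2,3,5,7\}$, and moreover determines $\#X^g$ and $\mathrm{tr}(g^*\mid L)$, hence $\rank S_{\langle g\rangle}$, in terms of $\ord(g)$. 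Feeding this back, using faithfulness and that $S_G=\sum_{g\in G}S_{\langle g\rangle}$ lies in a negative‑definite space of rank $\le 19$, bounds $|G|$ and, after a short enumeration, singles out exactly the $14$ abelian possibilities. Existence for each is obtained by exhibiting a concrete action---on a suitable Kummer surface, on a quotient construction, or on an explicit family---and appealing to the Torelli theorem and the surjectivity of the period map.

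\textbf{Step 2 (rigidity of $S_G$).} Next one shows that $S_G$, with its $\Z[G]$‑module structure, is independent of $X$ and of the chosen action. The mechanism is that $S_G$ is governed by the rational double points of the quotient $X/G$, whose types and numbers are pinned down by the fixed‑point data of Step~1: on the minimal resolution $Y\to X/G$ the exceptional configuration is determined, and tracing it back to $X$ identifies $S_G$ up to isometry, equivariantly for the $G$‑action; for small $G$ one recovers familiar lattices, e.g.\ $S_G\cong E_8(-2)$ for $G=C_2$. One must also check that the inclusion $G\hookrightarrow O(S_G)$ (a finite group, $S_G$ being definite) is determined up to conjugacy, not merely the underlying lattice; this too is forced by the rigidity of the fixed‑point configuration.

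\textbf{Step 3 (uniqueness of the embedding).} Finally, with $S_G$ and $q_{S_G}$ known, one invokes Nikulin's existence--uniqueness theorem for primitive embeddings of an even lattice into an even unimodular lattice: as $L$ is indefinite with $3>0$ positive squares, a primitive embedding $S_G\hookrightarrow L$ exists, the complement $L^G$ (of signature $(3,19-\rank S_G)$ and discriminant form $-q_{S_G}$) is thereby determined, and the embedding is unique up to $O(L)$ provided Nikulin's numerical conditions hold, notably $\rank S_G+\ell(A_{S_G})\le\rank L-2=20$ with $\ell$ the minimal number of generators of the discriminant group. This is verified for the $14$ lattices $S_G$, and for the finitely many borderline cases where it just fails one argues directly with the discriminant form and the image of $O(q_{S_G})$ in $O(A_{S_G})$. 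Since $G$ acts trivially on $L^G=S_G^{\perp}$ by construction, any $G$‑equivariant isometry between two copies of $S_G$ that extends to $O(L)$ yields a $G$‑equivariant isometry of $L$; combined with Steps~1 and~2 this yields the asserted uniqueness. I expect the substance of the argument to lie in Step~2 and in these borderline cases: one must show the coinvariant lattice is genuinely rigid as a $\Z[G]$‑lattice---not merely its genus or its rational type---and push embedding‑uniqueness through by hand for the largest groups, where the generic criterion is not quite enough. It is exactly the failure of such statements for certain non‑abelian $G$ that will produce the five exceptions in the Main Theorem.
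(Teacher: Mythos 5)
This theorem is quoted from Nikulin without proof in the paper, so the only available comparison is with the machinery the paper builds for the Main Theorem, which subsumes the uniqueness assertion here. Your overall decomposition into (i) the $G$-lattice $S_G$, (ii) the invariant lattice $L^G$, and (iii) the gluing datum is exactly the paper's decomposition, and your Steps 1 and 3 are sound: the Lefschetz computation giving $\ord(g)\le 8$ and $\OP{Tr}(g^*\mid L)=\chi(g)-2$, hence the list of $14$ abelian groups, is the standard input (cf.\ Proposition \ref{prop:trace}), and the uniqueness of $L^G$ and of the gluing via Nikulin's criteria (Theorems \ref{thm:nikulinuniq} and \ref{thm:nikulinsurj}) is what Sections \ref{sect:invariantuniq}--\ref{sect:uniqueness} carry out. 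In fact all $14$ abelian groups satisfy $c(G)\le 18$, so $L^G$ is indefinite of rank $\ge 4$ and the generic criteria apply; the rank-$3$ definite borderline you flag never occurs in the abelian case (it is responsible for the exceptions $\MF{S}_5,L_2(7),\MF{A}_6$ in the non-abelian setting).

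The genuine gap is Step 2. You assert that the $A$-$D$-$E$ configuration of the singularities of $X/G$ ``pins down'' $S_G$ up to $G$-equivariant isometry, but this is precisely the implication that does not come for free: the exceptional configuration lives in $H^2(Y,\Z)$ for the resolution $Y\to X/G$, and recovering from it the isometry class of $S_G\subset H^2(X,\Z)$ --- let alone the conjugacy class of the inclusion $G\hookrightarrow\OP{O}(S_G)$ --- is the hard part. The present paper is explicitly motivated by the observation that Xiao's uniqueness of the singularity configuration (Theorem \ref{thm:xiao}) does not by itself yield uniqueness of the lattice action; that is why Sections \ref{sect:embedding}--\ref{sect:coinvariantuniq} instead classify the primitive embeddings of $S_G$ into Niemeier lattices (Lemma \ref{lem:kondoembed}), reduce the $G$-action to a subgroup of $\OP{O}(N,\Delta^+)$ (e.g.\ $M_{24}$ for $N$ of type $A_1^{\oplus 24}$), and enumerate the possibilities, arriving at Theorem \ref{thm:coinvariantuniq}. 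Nikulin's own treatment likewise rests on classifying the ``Leech-type'' lattices $S_G$ by embedding them into even unimodular lattices (see Remark \ref{rem:history}), together with explicit discriminant-form computations (which, as the paper notes, contained errors later corrected by Garbagnati--Sarti). As written, your Step 2 replaces this with an unargued appeal to quotient-singularity data, so the central rigidity claim --- that $(G,S_G)$ is unique as a $\Z[G]$-lattice --- is not established.
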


In this paper, we prove that
 the above uniqueness
 holds for any finite groups
 except for five groups
 (see Theorem \ref{thm:main}).
We use the same notations for groups as in \cite{xiao96}
 (see Table \ref{subsect:qands}).

\begin{mainthm}
Let $G$ be a finite group
 such that
 $G\neq Q_8,T_{24},\MF{S}_5,L_2(7),\MF{A}_6$.
Then the action of $G$ on the $K3$ lattice
 induced by a faithful and symplectic action of $G$
 on a $K3$ surface
 is unique up to isomorphism.
More precisely,
 if $G_i\cong G$ acts on a $K3$ surface $X_i$
 faithfully and symplectically
 ($i=1,2$), then
 there exists an isomorphism
 $\alpha:H^2(X_1,\Z)\rightarrow H^2(X_2,\Z)$
 preserving the intersection forms such that
 $\alpha\circ G_1 \circ\alpha^{-1}=G_2$
 in $\OP{GL}(H^2(X_2,\Z))$.
\end{mainthm}

As a corollary, 
 we have the following
 by a similar argument in \cite{nikulin79fin}
 (see \cite{whitcher09} for a detailed argument).

\begin{cor} \label{cor:deform}
Let $G$ be a finite group
 which does not belong to the exceptional cases
 listed above.
If $G$ acts on $K3$ surfaces $X_i$
 faithfylly and symplectically
 for $i=1,2$,
 then there exists a connected family $\MC{X}$ of
 $K3$ surfaces with an action of $G$
 which satisfies the following conditions:
\begin{enumerate}
\renewcommand{\labelenumi}{(\arabic{enumi})}
\item
$X_1,X_2$ are fibers of $\MC{X}$;
\item
the restriction of the action of $G$ on $\MC{X}$
 to the fiber $X_i$ coincides with the given one
 ($i=1,2$);
\item
the action of $G$ on each fiber of $\MC{X}$ is symplectic.
\end{enumerate}
\end{cor}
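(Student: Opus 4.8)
The plan is to run the Torelli-theoretic deformation argument of Nikulin, using the Main Theorem to guarantee that the lattice-theoretic data attached to $X_1$ and $X_2$ coincide. First I fix the lattice model. By Nikulin's results the coinvariant lattice $S:=(H^2(X,\Z)^{G})^{\perp}$ of any faithful symplectic action of $G$ on a $K3$ surface $X$ is negative definite, and by the Main Theorem the $\OP{O}(H^2(X,\Z))$-conjugacy class of $\{g^*:g\in G\}$ does not depend on $X$. Fix a copy $\Lambda$ of the $K3$ lattice together with a faithful $\rho\colon G\to\OP{O}(\Lambda)$ in this class, put $T:=\Lambda^{\rho(G)}$, so that $\Lambda_G:=T^{\perp}\cong S$ is negative definite while $T$ has signature $(3,19-\rank S)$ --- it has three positive squares because, geometrically, $T\otimes\R$ contains $\operatorname{Re}\omega_X$, $\operatorname{Im}\omega_X$, and a $G$-invariant Kähler class obtained by averaging. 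Call a marking $\phi\colon H^2(X,\Z)\xrightarrow{\sim}\Lambda$ \emph{$\rho$-compatible} if it intertwines the $G$-action on $H^2(X,\Z)$ with $\rho$ (after fixing an abstract identification of $G$ with the acting group). Such markings exist by the Main Theorem, and for any of them $g^*\omega_X=\omega_X$ forces $\omega_X\in T\otimes\C$, so the period lies in
\[
\Omega_T := \{[\omega]\in\P(T\otimes\C) : \langle\omega,\omega\rangle=0,\ \langle\omega,\bar\omega\rangle>0\}.
\]

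I would then record two facts. (i) Because $T$ has \emph{three} positive squares, $\Omega_T$ is connected --- the space of oriented positive-definite $2$-planes in $\R^{3,m}$ is connected, unlike in signature $(2,m)$; this is precisely why symplectic actions are more rigid than polarizations. (ii) By the surjectivity of the $K3$ period map and the strong Torelli theorem, every $[\omega]\in\Omega_T$ is realised by a $K3$ surface $X_\omega$ with a $\rho$-compatible marking; taking as Kähler class the $\rho(G)$-average of an ample class --- which lies in the $\rho(G)$-fixed part of the positive cone and in the interior of a Weyl chamber --- makes every $\rho(g)$ preserve the Kähler cone, hence be induced by an automorphism $g_{X_\omega}$; since $\rho(g)$ fixes $\C\omega$ this automorphism is symplectic, and since $\rho$ is faithful and an automorphism of a $K3$ surface is determined by its action on $H^2$, the $g_{X_\omega}$ define a faithful symplectic $G$-action inducing $\rho$. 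Furthermore the equivariant Kuranishi family of such a $G$-$K3$ is universal ($H^0(T_X)=H^2(T_X)=0$ for $K3$ surfaces, so deformations are unobstructed and the Kuranishi family is universal, the same being true equivariantly) and its period map is a local isomorphism onto a neighborhood of $[\omega]$ in $\Omega_T$, since $H^1(T_X)^{G}\cong(T\otimes\C)\cap H^{1,1}$ has dimension $\rank T-2=\dim\Omega_T$ and local Torelli is injective. This is the construction of \cite{nikulin79fin}; full details are in \cite{whitcher09}.

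To finish, apply the Main Theorem to get $\rho$-compatible markings $\phi_i\colon H^2(X_i,\Z)\xrightarrow{\sim}\Lambda$ and hence periods $[\omega_i]\in\Omega_T$. Using connectedness, choose a path $\gamma\subset\Omega_T$ from $[\omega_1]$ to $[\omega_2]$ meeting the locally finite union of $(-2)$-walls of $\Lambda$ at most at its two endpoints. Cover $\gamma$ by finitely many open subsets of $\Omega_T$, each the image of the period map of the equivariant Kuranishi family of a suitable $G$-$K3$ fiber, taking the families over the end pieces to be the Kuranishi families of $X_1$ and $X_2$; glue these families along overlaps by universality. The result is a family $\pi\colon\MC{X}\to\gamma$ of $K3$ surfaces with a fiberwise $G$-action whose fibers over the endpoints are $X_1$ and $X_2$ with their given actions, so (1) and (2) hold. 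Finally (3) holds because the fiberwise period stays in $\Omega_T$ --- equivalently, on an equivariant deformation of a symplectic $G$-$K3$ the representation of $G$ on the line of holomorphic $2$-forms remains trivial, the multipliers being roots of unity depending continuously on the fiber and equal to $1$ on $X_i$.

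The main obstacle is the passage from lattice isometries to holomorphic automorphisms, and its propagation in the family: one must make a $\rho(G)$-equivariant choice of Kähler class avoiding every $(-2)$-wall so that the strong Torelli theorem applies fiberwise, and then check that the resulting automorphisms vary holomorphically, which follows from the uniqueness part of Torelli applied to the marked family (the Torelli automorphism attached to a continuously varying marked period must coincide with the fiberwise automorphism). The bookkeeping at the endpoints, where a period can sit on a $(-2)$-wall and the marked moduli space is non-separated, is the genuinely delicate point; the rest is the standard deformation argument of \cite{nikulin79fin}, carried out in detail in \cite{whitcher09}.
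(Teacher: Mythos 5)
Your proposal is correct and follows exactly the route the paper intends: the paper offers no proof of this corollary beyond invoking ``a similar argument in \cite{nikulin79fin}'' (detailed in \cite{whitcher09}), and what you have written is a faithful reconstruction of that Torelli-theoretic deformation argument --- Main Theorem to fix a common lattice model $\rho\colon G\to\OP{O}(\Lambda)$, connectedness of the period domain for the signature-$(3,\ast)$ invariant lattice, realization of each period by a symplectic $G$-action via an averaged K\"ahler class and the strong Torelli theorem, and gluing of equivariant Kuranishi families along a path. The points you flag as delicate (equivariant choice of K\"ahler class off the $(-2)$-walls, behaviour at endpoints lying on walls) are precisely the ones handled in \cite{whitcher09}, so no gap remains.
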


If two $K3$ surfaces $X_1$ and $X_2$ 
 with actions of $G$ satisfy
 the conclusions of Corollary \ref{cor:deform},
 $X_1$ and $X_2$ are said to be
 $G$-deformable.

We recall known results on finite symplectic actions
 on $K3$ surfaces.
After a result of Nikulin \cite{nikulin79fin},
 Mukai \cite{mukai88} classified
 finite groups
 which act on $K3$ surfaces faithfully and symplectically
 by listing the eleven maximal groups
 (see Theorem \ref{thm:mukai}).
Xiao \cite{xiao96} gave another proof of Mukai's result
 by studying the singularities of
 the quotient $G\backslash X$
 for a $K3$ surface $X$ with a symplectic action
 of a finite group $G$.
Moreover, he showed the following:

\begin{thm}[\cite{xiao96}] \label{thm:xiao}
Let $G$ be a finite group.
Suppose that $G\neq Q_8,T_{24}$.
Then,
 for any $K3$ surface $X$
 with a faithful and symplectic action of $G$,
 the quotient $G\backslash X$ has
 the same $A$-$D$-$E$-configuration of the singularities.
\end{thm}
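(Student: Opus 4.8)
The plan is to reduce Theorem \ref{thm:xiao} to a purely group-theoretic question and then to a finite calculation: the $A$-$D$-$E$-configuration of $X/G$ is governed by the stabilizers of the action, and the sizes of the relevant fixed-point sets are controlled by Nikulin's work.

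\textbf{Local structure.} Following Nikulin \cite{nikulin79fin}, a symplectic automorphism $g\neq 1$ of finite order $m$ on $X$ has only isolated fixed points: averaging local coordinates over $\langle g\rangle$ linearizes the action near a fixed point $p$, and invariance of the holomorphic $2$-form forces this linear action into $SL(T_pX)\cong SL(2,\C)$, so $1$ is not an eigenvalue and $\langle g\rangle$ acts as $\operatorname{diag}(\zeta_m^a,\zeta_m^{-a})$ with $\gcd(a,m)=1$. Hence, for any $p$ with non-trivial stabilizer $G_p$, averaging embeds $G_p$ into $SL(2,\C)$ as a finite subgroup acting freely on $\C^2\setminus\{0\}$ --- a binary polyhedral group, i.e.\ $C_n$, $Q_{4n}$, $T_{24}$, $O_{48}$ or $I_{120}$ --- and the image of $p$ in $X/G$ is the rational double point of type $A_{n-1}$, $D_{n+2}$, $E_6$, $E_7$, $E_8$ accordingly. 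Since this type is an invariant of the abstract subgroup $G_p\leq G$, the configuration of $X/G$ is completely recorded by the integers $N([H])$, the number of $G$-orbits of points whose stabilizer lies in a given conjugacy class $[H]$ of binary polyhedral subgroups of $G$. It therefore suffices to show that each $N([H])$ depends only on the isomorphism class of $G$.

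\textbf{Counting.} By Nikulin's fixed-point formula, $|X^{g'}|$ depends only on $\ord(g')$; denote it $f(m)$, so $f(2)=8$, $f(3)=6$, $f(4)=f(5)=4$, $f(6)=f(8)=2$, $f(7)=3$, and $f(m)>0$ in all admissible cases. Fixing $g\neq 1$ of order $m$ and counting the pairs $\{(g',p):g'\text{ is $G$-conjugate to }g,\ g'p=p\}$ first by $g'$ and then by the $G$-orbit of $p$, one obtains
\begin{equation}
\sum_{[H]}\frac{c(g,H)}{|H|}\,N([H])\;=\;\frac{f(m)}{|C_G(g)|},
\label{eq:xiaocount}
\end{equation}
where $c(g,H)$ is the number of elements of $H$ that are $G$-conjugate to $g$ and $C_G(g)$ is the centralizer; every quantity in \eqref{eq:xiaocount} is intrinsic to $G$. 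Letting $g$ run over the conjugacy classes of non-trivial elements yields a linear system in the $N([H])$. Writing $r_H$ for the number of exceptional curves over the rational double point attached to $H$ (equivalently, the rank of its root lattice), one may also adjoin
\begin{equation}
\sum_{[H]} r_H\,N([H])\;=\;24-\frac1{|G|}\Bigl(24+\sum_{g\neq 1}f(\ord g)\Bigr),
\label{eq:xiaoeuler}
\end{equation}
which follows from $\chi(X/G)=|G|^{-1}\sum_g\chi(X^g)$, the fact that $f(m)>0$ makes $X/G$ simply connected so that its minimal resolution is again a $K3$ surface of Euler number $24$, and the fact that resolving a rational double point raises $\chi$ by $r_H$. (In practice \eqref{eq:xiaoeuler} often follows from \eqref{eq:xiaocount}, but it is a useful check.) The claim thus reduces to: the system \eqref{eq:xiaocount}, subject to $N([H])\in\Z_{\geq 0}$, has a unique solution.

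\textbf{The finite check, and the obstacle.} By Mukai's classification \cite{mukai88}, every finite group acting faithfully and symplectically on a $K3$ surface is a subgroup of one of eleven explicit groups, so only finitely many $G$ --- each with finitely many conjugacy classes of subgroups --- need be treated, and for each the system above is solved by direct computation. I expect this to be the only substantive step, and the phenomenon it must isolate is the following: when $G$ contains several pairwise non-conjugate but \emph{isomorphic} binary polyhedral subgroups, the coefficients $c(g,H)/|H|$ in \eqref{eq:xiaocount} need not distinguish them, so a free parameter can survive. One must verify that this degeneracy occurs only for $G=Q_8$ --- where the three cyclic subgroups of order $4$ conspire to leave $N([Q_8])\in\{2,4\}$ both admissible, giving the two configurations $2D_4+3A_3$ and $4D_4+A_1$ --- and for $G=T_{24}$, and for no other group on Mukai's list. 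Granting that (finite, but lengthy) verification, the reductions above give the uniqueness of the $A$-$D$-$E$-configuration whenever $G\neq Q_8,T_{24}$.
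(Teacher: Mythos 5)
The paper does not prove this statement: Theorem \ref{thm:xiao} is quoted verbatim from Xiao \cite{xiao96}, so there is no in-paper argument to compare you against; the relevant benchmark is Xiao's own proof. Your framework is sound and lies in the same circle of ideas he uses. The local analysis (linearization at a fixed point, invariance of $\omega_X$ forcing $G_p\subset SL(T_pX)\cong SL(2,\C)$, hence $G_p$ a binary polyhedral group and the image point a rational double point of the matching type), Nikulin's fixed-point counts $f(m)$, and both of your counting identities are correct: the first follows from the double count you describe, and the second reduces to $\sum_{[H]} r_H N([H])=c(G)=\rank\Lambda_G$, consistent with Proposition \ref{prop:trace}. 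Xiao works instead with essentially the single relation $\sum_i\bigl(1-1/|H_i|\bigr)+\sum_i r_{H_i}=24-24/|G|$ together with the bound $\sum_i r_{H_i}\le 19$ and an induction over subgroups; your per-conjugacy-class linear system is a finer bookkeeping of the same topological data.

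The gap is that the deferred ``finite, but lengthy verification'' is the entire content of the theorem, and your diagnosis of where it could fail is too narrow. Your system is underdetermined as a linear system whenever $G$ has more conjugacy classes of admissible stabilizers than of nontrivial elements, which happens well beyond $Q_8$ and $T_{24}$: in $C_2\times Q_8$ (where every subgroup is normal) there are thirteen classes of binary polyhedral subgroups --- three $C_2$, six $C_4$, four $Q_8$ --- against only nine classes of nontrivial elements, and one must use nonnegativity to force $N([H])=1$ for all four $Q_8$'s and land on $2A_1+4D_4$. So for each of the $77$ remaining groups you must actually show that the set of nonnegative integer solutions is a single point; and if for some group it were not (two admissible solutions of which only one is geometrically realized), the method would be inconclusive and genuinely geometric input, as in \cite{xiao96}, would be required. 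A further small point: uniqueness of the configuration only requires uniqueness of the sums $\sum_{H\cong\Gamma}N([H])$ over each binary polyhedral isomorphism type $\Gamma$, not of the individual $N([H])$, so the check you need is slightly weaker than the one you pose --- but either way, until the computation is carried out the proposal is a correct reduction of the theorem to an asserted but unestablished finite verification, not a proof.
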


Considering his result, one may expect
 that the uniqueness as in
 Theorem \ref{thm:nikulinab} holds for
 most of non-abelian finite groups as well.
This paper is motivated by this expectation.
We follow Kond\=o's approach \cite{kondo98}
 with which he gave another proof of Mukai's result
 (see Remark \ref{rem:history} for Nikulin's contribution
 to this another proof).
He embeds the coinvariant lattice
 $H^2(X,\Z)_G=(H^2(X,\Z)^G)^\bot$
 into a Niemeier lattice $N$,
 and describes a symplectic action as
 an action on $N$.
Here a Niemeier lattice is a negative definite
 even unimodular lattice of rank $24$
 which is not isomorphic to the Leech lattice.
By looking this action more carefully, we prove
 Main Theorem.
For some finite groups, the uniqueness of
 their (symplectic) actions on $K3$ surfaces were studied
 by several authers
 \cite{kondo99,oguisozhang02,keumoguisozhang05,%
 oguiso05,zhang06,whitcher09}.
In the case where $G$ is abelian,
 Garbagnati and Sarti
 \cite{garbagnatisarti07,garbagnatisarti09},
 and Garbagnati \cite{garbagnati10}
 computed the structure of
 $H^2(X,\Z)^G$ and $H^2(X,\Z)_G$,
 and corrected errors of computations of discriminant groups
 of $H^2(X,\Z)_G$ in \cite{nikulin79fin}.
We use computer algebra systems
 GAP \cite{GAP} and Maxima \cite{maxima}
 for the computations of permutation groups and lattices.

The paper proceeds as follows.
In Section \ref{sect:lattice},
 we recall basic facts on lattices,
 which are used through the paper.
We recall results on finite symplectic actions
 on $K3$ surfaces in Section \ref{sect:symplectic}.
Using these results,
 we take a lattice theoretic approach
 to study finite symplectic actions on $K3$ surfaces.
We introduce the notion of ``finite symplectic actions
 on the $K3$ lattice $\Lambda$,''
 taking account of Nikuin's characterization of
 symplectic actions on $K3$ surfaces
 (see Definition \ref{defn:sympgrp} and
 Proposition \ref{prop:nikulinsymp}).
The set of finite symplectic actions
 $G\subset\OP{O}(\Lambda)$ on $\Lambda$
 is denoted by $\MC{L}$.
For $G\in\MC{L}$,
 there exists a $K3$ surface $X$ with
 a symplectic action of $G$ such that we have
 a $G$-equivalent isomorphism $\Lambda\cong H^2(X,\Z)$.
Section \ref{sect:embedding}
 is the key of the paper.
By Kond\=o's lemma (see Lemma \ref{lem:kondoembed}),
 the coinvariant lattice $\Lambda_G$ for $G\in\MC{L}$
 is embedded
 into a Niemeier lattice $N$ primitively.
Since the action of $G$ on $\Lambda_G$ is extended to
 that on $N$ such that $N_G=\Lambda_G$,
 we can study $G$ as
 an automorphism group of $N$.
Applying the classification of Niemeier lattices,
 we classify the primitive embeddings of
 $\Lambda_G$ into Niemeier lattices.
To prove Main Theorem,
 we first prove
 the uniqueness of $\Lambda_G$ and $\Lambda^G$.
In Sections \ref{sect:coinvariantuniq}
 and \ref{sect:invariantuniq},
 we show the uniqueness of $\Lambda_G$ and $\Lambda^G$
 respectively,
 by using the result in Section \ref{sect:embedding}.
Next, we show the uniqueness of the glueing data of
 $\Lambda^G$ and $\Lambda_G$ to $\Lambda$.
In Sections \ref{sect:coinvariantsurj}
 and \ref{sect:invariantsurj},
 we show that either
 $\overline{\OP{O}(\Lambda_G)}=\OP{O}(q(\Lambda_G))$ or
 $\overline{\OP{O}(\Lambda^G)}=\OP{O}(q(\Lambda^G))$
 holds for any $G\in\MC{L}$.
This implies the uniqueness of the glueing data.
Finally, in Section \ref{sect:uniqueness},
 we prove Main Theorem by using the results
 in the previous sections.
Some applications of Main Theorem are given
 in Section \ref{sect:applications}.
We give the list of Niemeier lattices and
 the results of the computations
 in Section \ref{sect:tables}.

\subsubsection*{Acknowledgement}
The author would like to express his thanks to
 the former supervisor Professor Tomohide Terasoma
 for useful comments on a manuscript.
He also wishes to thank
 A. Garbagnati, S. Kond\=o, S. Mukai, V. Nikulin
 and U. Whitcher
 for comments and encouragements.
He would like to thank G. H\"ohn
 for kindly pointing out some errors
 in tables in Section \ref{sect:tables}.
Finally, he deeply thanks the referees for the helpful comments.

\section{Basic facts on lattices}
\label{sect:lattice}

\subsection{Definitions}

A lattice $L=(L,\BF{~}{~})$ is
 a free $\Z$-module $L$ of finite rank
 equipped with an integral symmetric bilinear form
 $\BF{~}{~}$.
We identify a lattice $L$ with its Gramian matrix
 $(\BF{v_i}{v_j})$ under an integral basis $(v_i)$ of $L$.
The discriminant $\disc(L)$ of $L$ is defined as
 the determinant of the Gramian matrix of $L$.
If $\disc(L)\neq 0$ (resp.\ $=\pm 1$),
 a lattice $L$ is said to be
 non-degenerate (resp.\ unimodular).
Let $t_{(+)}$ (resp.\ $t_{(-)}$) be the number of
 positive (resp.\ negative) eigenvalues of the Gramian
 matrix of $L$.
We call $(t_{(+)},t_{(-)})$ the signature of $L$
 and write
\begin{equation}
 \sign L=(t_{(+)},t_{(-)}).
\end{equation}
If $\BF{v}{v}\equiv 0 \bmod 2$ for all $v\in L$,
 a lattice $L$ is said to be even.
We denote by $L(\lambda)$ the $\Z$-module $L$ equipped with
 $\lambda$ times the bilinear form $\BF{~}{~}$,
 i.e.\ $(L,\lambda\BF{~}{~})$.
A sublattice $K$ of $L$ is said to be primitive
 if $L/K$ is torsion-free.
An automorphism of $L$ is defined
 as a $\Z$-automorphism of $L$
 preserving the bilinear form $\BF{~}{~}$.
We denote by $\OP{O}(L)$ the group of automorphisms of $L$.
For a subset $S\subset L$,
 we define a subgroup $\OP{O}(L,S)$ of $\OP{O}(L)$ by
\begin{equation} \label{osubgroup}
 \OP{O}(L,S)=\{ g\in\OP{O}(L) \bigm|
 g\cdot S=S \}.
\end{equation}

\begin{defn} \label{defn:latticewithaction}
A lattice $L$ with an action of $G$ is called a $G$-lattice
 if $G$ is a subgroup of $\OP{O}(L)$
 and is denoted by $(G,L)$.
For a $G$-lattice $(G,L)$,
 we define the invariant lattice $L^G$
 and the coinvariant lattice $L_G$ by
\begin{equation}
L^G=\{v\in L\bigm|g\cdot v=v ~ (\forall g\in G) \},\quad
L_G=(L^G)^\bot_L.
\end{equation}
A $G$-lattice $(G,L)$ and a $G'$-lattice $(G',L')$
 are said to be isomorphic
 if there exists an isomorphism
 $\alpha:L\rightarrow L'$ such that
\begin{equation}
 \alpha\circ G \circ \alpha^{-1}=G'.
\end{equation}
\end{defn}

We recall some basic properties on
 discriminant forms of lattices
for the reader's convenience.
See \cite{nikulin79int} for details.
Let $L$ be a non-degenerate even lattice.
The discriminant group $A(L)$ is a finite abelian group
 defined by
\begin{equation} \label{def_dual}
 A(L)=L^\vee/L,\quad
 L^\vee=\{ v\in L\otimes\Q \bigm|
 \BF{v}{L}\subset\Z \}.
\end{equation}
Here we extend the bilinear form $\BF{~}{~}$ on $L$
 to that on $L\otimes\Q$ linearly.
We have
\begin{equation}
 \left| A(L) \right|=\left| \disc(L) \right|.
\end{equation}
The discriminant form $q(L)$ of $L$
 is defined by
\begin{equation}
 q(L):A(L)\rightarrow\Q/2\Z;~
 x \bmod L \mapsto \BF{x}{x} \bmod 2\Z.
\end{equation}
We simply write $q(L)$ instead of $(A(L),q(L))$.
For a prime number $p$,
 let $A(L)_p$ and $q(L)_p$ denote
 the $p$-components of $A(L)$ and $q(L)$, respectively.
We have
\begin{equation}
 A(L)=\bigoplus_{p} A(L)_p, \quad
 q(L)=\bigoplus_{p} q(L)_p.
\end{equation}
We can consider $q(L)_p$ as
 the discriminant form of $L\otimes\Z_p$.
(The discriminant group and form
 for a non-degenerate even lattice over $\Z_p$
 are similarly defined.
Note that any lattice over $\Z_p$
 is even if $p\neq 2$.)
An automorphism of $q(L)$ is defined as
 an automorphism of a finite abelian group $A(L)$
 preserving $q(L)$.
We denote the group of automorphisms of $q(L)$
 by $\OP{O}(q(L))$.
An automorphism $\varphi\in\OP{O}(L)$
 induces an automorphism
 $\overline{\varphi}\in\OP{O}(q(L))$.
This correspondence gives the natural homomorphism
\begin{equation}
 \OP{O}(L)\rightarrow\OP{O}(q(L)).
\end{equation}
We define
\begin{equation}
 \OP{O}_0(L)=
 \OP{Ker}\bigl(\OP{O}(L)\rightarrow\OP{O}(q(L))\bigr)
\end{equation}
 and
\begin{equation} \label{image_in_oq}
 \overline{\OP{O}(L)}=
 \OP{Im}\bigl(\OP{O}(L)\rightarrow\OP{O}(q(L))\bigr).
\end{equation}

\subsection{Facts}

We use the following facts.
For details, see \cite{nikulin79int}.

\begin{lem}[\cite{nikulin79int}] \label{lem:unimodembed}
Let $L_1,L_2$ be non-degenerate even lattices.
We define
\begin{equation}
 \OP{Isom}(q(L_1),-q(L_2))=
 \{ \gamma:q(L_1)\CONG q(L_2) \}.
\end{equation}
If $\gamma\in\OP{Isom}(q(L_1),-q(L_2))$,
 the lattice $\Gamma_\gamma$ defined by
\begin{equation}
\Gamma_\gamma=\{
 x\oplus y \in L_1^\vee \oplus L_2^\vee \bigm|
 \gamma(x\bmod L_1)=y\bmod L_2
\}
\end{equation}
is an even unimodular lattice which contains
 $L_1$ and $L_2$ primitively.
This correspondence gives a one-to-one correspondence
 between $\OP{Isom}(q(L_1),-q(L_2))$ and
 the set of even unimodular lattices
 $\Gamma\subset L_1^\vee\oplus L_2^\vee$
 which contain $L_1$ and $L_2$ primitively.
Moreover, let $\gamma'\in\OP{Isom}(q(L_1),-q(L_2))$ and
 $\varphi_i\in\OP{O}(L_i)$.
Then,
 $\varphi_1\oplus\varphi_2
 \in\OP{O}(\OP{L_1}\oplus\OP{L_2})$
 is extended to an isomorphism
 $\Gamma_\gamma\rightarrow\Gamma_{\gamma'}$
 if and only if
 $\gamma'\circ\overline{\varphi}_1\circ\gamma^{-1}
 =\overline{\varphi}_2$
 in $\OP{O}(q(L_2))$.
\end{lem}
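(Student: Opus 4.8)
The plan is to verify directly that $\Gamma_\gamma$ lies in the asserted set of even unimodular overlattices, then to recover $\gamma$ from an arbitrary such overlattice by looking at the finite subgroup $\Gamma/(L_1\oplus L_2)\subseteq A(L_1)\oplus A(L_2)$, and finally to read off the compatibility condition for $\varphi_1\oplus\varphi_2$ at the level of these subgroups.

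First I would note that $L_1\oplus L_2\subseteq\Gamma_\gamma\subseteq L_1^\vee\oplus L_2^\vee=(L_1\oplus L_2)^\vee$, so $\Gamma_\gamma$ is a full-rank overlattice of $L_1\oplus L_2$ carrying the restriction of the $\Q$-bilinear form, and $\Gamma_\gamma/(L_1\oplus L_2)$ is exactly the graph of $\gamma$, hence a group of order $|A(L_1)|$. Writing $b(L_i)$ for the $\Q/\Z$-valued bilinear form attached to $q(L_i)$, the defining property of an element $\gamma\in\OP{Isom}(q(L_1),-q(L_2))$ is $q(L_2)(\gamma\bar x)=-q(L_1)(\bar x)$ in $\Q/2\Z$, and this forces $b(L_2)(\gamma\bar x,\gamma\bar x')=-b(L_1)(\bar x,\bar x')$ in $\Q/\Z$. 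Hence for $x\oplus y,\ x'\oplus y'\in\Gamma_\gamma$ the number $\BF{x}{x'}+\BF{y}{y'}$ reduces modulo $\Z$ to $b(L_1)(\bar x,\bar x')+b(L_2)(\gamma\bar x,\gamma\bar x')=0$, so $\Gamma_\gamma$ is an integral lattice, and the same computation with the quadratic form gives $\BF{x\oplus y}{x\oplus y}\equiv q(L_1)(\bar x)+q(L_2)(\gamma\bar x)\equiv 0\bmod 2\Z$, so $\Gamma_\gamma$ is even. Unimodularity follows from the index count $|\disc\Gamma_\gamma|=|\disc(L_1)\,\disc(L_2)|/[\Gamma_\gamma:L_1\oplus L_2]^2=|A(L_1)|\,|A(L_2)|/|A(L_1)|^2=1$, using $|A(L_1)|=|A(L_2)|$. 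Finally $L_1$ is primitive in $\Gamma_\gamma$: if a nonzero integer multiple of $x\oplus y\in\Gamma_\gamma$ lies in $L_1$, then $y=0$, so $\gamma(\bar x)=\bar y=0$, whence $\bar x=0$ by injectivity of $\gamma$, i.e.\ $x\in L_1$; and symmetrically for $L_2$.

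For the bijectivity I would take an arbitrary even unimodular $\Gamma$ with $L_1\oplus L_2\subseteq\Gamma\subseteq L_1^\vee\oplus L_2^\vee$ in which $L_1$ and $L_2$ are primitive, set $H=\Gamma/(L_1\oplus L_2)\subseteq A(L_1)\oplus A(L_2)$, and consider the projections $p_i\colon H\to A(L_i)$. Primitivity of $L_2$ gives $\Gamma\cap(0\oplus L_2^\vee)=0\oplus L_2$ (both are contained in $L_2\otimes\Q$ and $\Gamma\cap(L_2\otimes\Q)=L_2$), which is precisely $\OP{Ker}(p_1)=0$; symmetrically $\OP{Ker}(p_2)=0$. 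The discriminant identity $|A(L_1)|\,|A(L_2)|=[\Gamma:L_1\oplus L_2]^2=|H|^2$ together with $|H|\le|A(L_i)|$ for both $i$ then forces $|H|=|A(L_1)|=|A(L_2)|$ and both $p_i$ bijective, so $\gamma:=p_2\circ p_1^{-1}$ is a group isomorphism with graph $H$, giving $\Gamma=\Gamma_\gamma$; evenness of $\Gamma$ yields $q(L_2)(\gamma\bar x)=-q(L_1)(\bar x)$ exactly as above, so $\gamma\in\OP{Isom}(q(L_1),-q(L_2))$, and $\gamma$ is unique since it is recovered as the graph $\Gamma/(L_1\oplus L_2)$. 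I expect this to be the only delicate step, the two points needing care being the passage from the primitivity hypotheses to $\OP{Ker}(p_i)=0$ and the order estimate pinning $|H|$ down to $|A(L_i)|$; everything before and after is a direct unwinding of the definitions and of the sign convention built into $\OP{Isom}(q(L_1),-q(L_2))$.

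For the last assertion I would extend $\varphi_1\oplus\varphi_2$ $\Q$-linearly to an isometry of $L_1^\vee\oplus L_2^\vee$, which acts on $A(L_1)\oplus A(L_2)$ as $\overline{\varphi}_1\oplus\overline{\varphi}_2$. Since it already preserves the form, it restricts to an isomorphism $\Gamma_\gamma\to\Gamma_{\gamma'}$ if and only if it maps $\Gamma_\gamma$ onto $\Gamma_{\gamma'}$, equivalently if and only if it carries the graph of $\gamma$ onto the graph of $\gamma'$; evaluating on a point $(\bar x,\gamma\bar x)$ shows this holds for all $\bar x$ exactly when $\overline{\varphi}_2\circ\gamma=\gamma'\circ\overline{\varphi}_1$, i.e.\ $\gamma'\circ\overline{\varphi}_1\circ\gamma^{-1}=\overline{\varphi}_2$ in $\OP{O}(q(L_2))$.
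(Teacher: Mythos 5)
The paper gives no proof of this lemma---it is quoted directly from Nikulin's \emph{Integral symmetric bilinear forms}---so there is nothing internal to compare against; your argument is the standard one (overlattices of $L_1\oplus L_2$ correspond to isotropic subgroups of $q(L_1)\oplus q(L_2)$, and the primitivity and unimodularity conditions force that subgroup to be the graph of an anti-isometry), and it is correct. All the delicate points are handled properly: the polarization step passing from $q(L_2)\circ\gamma=-q(L_1)$ to the $\Q/\Z$-valued bilinear forms, the identification of $\OP{Ker}(p_i)=0$ with primitivity, the order count $|H|^2=|A(L_1)||A(L_2)|$ pinning down $|H|$, and the observation that any extension of $\varphi_1\oplus\varphi_2$ is forced to be the $\Q$-linear one, so the criterion reduces to carrying the graph of $\gamma$ onto that of $\gamma'$.
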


\begin{lem} \label{lem:unimodaction}
Let $\Gamma$ be a non-degenerate even lattice
 and $L$ a non-degenerate primitive sublattice
 of $\Gamma$.
\begin{enumerate}
\renewcommand{\labelenumi}{(\arabic{enumi})}
\item
If $g\in\OP{O}_0(L)$, the action of $g$ on $L$
 is extended to that on $\Gamma$
 whose restriction to $(L)^\bot_\Gamma$ is trivial.
\item
Suppose that $\Gamma$ is unimodular.
If $G$ is a subgroup of $\OP{O}(\Gamma,L)$ and
 the action of $G$ on $(L)^\bot_\Gamma$ is trivial,
 then the induced action of $G$ on $A(L)$
 is trivial.
\item
Suppose that $\Gamma$ is unimodular.
If a group $G$ acts on $\Gamma$ and $\Gamma_G$ is
 non-degenerate,
 then the induced action of $G$ on $A(\Gamma_G)$
 is trivial.
\end{enumerate}
\end{lem}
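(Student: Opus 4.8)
The three statements are all consequences of the interplay between an embedding $L\hookrightarrow\Gamma$, its orthogonal complement $L^\bot_\Gamma$, and the gluing data between $q(L)$ and $q(L^\bot_\Gamma)$ recorded in Lemma \ref{lem:unimodembed}. For part (1), given $g\in\OP{O}_0(L)$, I would define the extension $\tilde g$ on $\Gamma$ by letting it act as $g$ on $L$ and as the identity on $L^\bot_\Gamma$; this is a well-defined automorphism of $L\oplus L^\bot_\Gamma$, and by Lemma \ref{lem:unimodembed} (applied with $\gamma=\gamma'$ the gluing isomorphism for the overlattice $\Gamma$) it extends to an automorphism of $\Gamma$ precisely because $\gamma\circ\overline g\circ\gamma^{-1}=\overline{\OP{id}}$, which holds since $\overline g$ is trivial on $q(L)$ by hypothesis. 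One subtlety is that Lemma \ref{lem:unimodembed} is phrased for unimodular overlattices; for a general non-degenerate even $\Gamma$ one instead works in the finite-index extension $\Gamma\supset L\oplus L^\bot_\Gamma$ and checks directly that $\tilde g$ preserves $\Gamma$ using that $\tilde g$ acts trivially on $A(L)\oplus A(L^\bot_\Gamma)\supset \Gamma/(L\oplus L^\bot_\Gamma)$.

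For part (2), assume $\Gamma$ is unimodular, so by Lemma \ref{lem:unimodembed} the gluing isomorphism $\gamma:q(L)\CONG -q(L^\bot_\Gamma)$ identifies $A(L)$ with $A(L^\bot_\Gamma)$. If $G\subset\OP{O}(\Gamma,L)$ acts trivially on $L^\bot_\Gamma$, then $G$ acts trivially on $A(L^\bot_\Gamma)$, hence via $\gamma$ it acts trivially on $A(L)$. To make this precise I would note that for $g\in G$ the pair $(g|_L, g|_{L^\bot_\Gamma})=(g|_L,\OP{id})$ extends to $g\in\OP{O}(\Gamma)$, so the ``if'' direction of the last assertion of Lemma \ref{lem:unimodembed} forces $\gamma\circ\overline{g|_L}\circ\gamma^{-1}=\overline{\OP{id}}$ in $\OP{O}(q(L^\bot_\Gamma))$, i.e.\ $\overline{g|_L}=\OP{id}$ on $q(L)$, which a fortiori gives triviality of the $G$-action on the group $A(L)$.

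Part (3) is the special case of part (2) obtained by taking $L=\Gamma_G$. Here one only needs that $\Gamma_G$ is non-degenerate (so that $A(\Gamma_G)$ is defined and $\Gamma_G$ is primitive in $\Gamma$, being a coinvariant lattice), that $G\subset\OP{O}(\Gamma,\Gamma_G)$ since $G$ visibly preserves $\Gamma_G$, and that $(\Gamma_G)^\bot_\Gamma=\Gamma^G$ carries the trivial $G$-action by definition of the invariant lattice. Applying part (2) then yields that $G$ acts trivially on $A(\Gamma_G)$. The only step requiring a small argument is the primitivity of $\Gamma_G$ in $\Gamma$: this follows because $\Gamma_G=(\Gamma^G)^\bot_\Gamma$ and orthogonal complements of sublattices in a non-degenerate lattice are always primitive. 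I expect part (1), specifically getting the extension across a possibly non-unimodular $\Gamma$ directly rather than quoting Lemma \ref{lem:unimodembed}, to be the only place where any care is needed; parts (2) and (3) are then essentially bookkeeping on top of the stated gluing lemma.
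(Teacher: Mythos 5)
Your proof is correct. The paper itself gives no argument for this lemma (it is listed among the ``Facts'' deferred to Nikulin's paper on integral bilinear forms), and your route --- working inside $L\oplus L^\bot_\Gamma\subset\Gamma\subset L^\vee\oplus (L^\bot_\Gamma)^\vee$ for part (1), invoking the equivariance clause of Lemma \ref{lem:unimodembed} for part (2), and specializing to $L=\Gamma_G$ with $(\Gamma_G)^\bot_\Gamma=\Gamma^G$ for part (3) --- is exactly the standard derivation the paper implicitly relies on.
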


A lattice over $\Z_p$ is defined as
 a free $\Z_p$-module of finite rank
 with a $\Z_p$-valued symmetric bilinear form $\BF{~}{~}$.
First we consider the case $p\neq 2$.
In this case, any lattice can be diagonalized
 over $\Z_p$.

\begin{prop}[cf. \cite{cassels78,nikulin79int,SP}]
\label{prop:localodd}
Let $p$ be an odd prime and
 $\varepsilon_p\in\Z_p^\times$ a non-square
 $p$-adic unit.
If $L^{(p)}$ is a non-degenerate lattice over $\Z_p$,
\begin{equation}
 L^{(p)}\cong \bigoplus_{k\geq 0} \left(
 \LF{p^k}^{\oplus n_k}\oplus
 \LF{\varepsilon_p p^k}^{\oplus m_k}
 \right),
\end{equation}
 where $n_k\geq 0$ and $m_k\in\{0,1\}$
 are uniquely determined.
Hence
\begin{equation} \label{discformdecomp}
 q(L^{(p)})\cong \bigoplus_{k\geq 1} \left(
 q^{(p)}_+(p^k)^{\oplus n_k}
 \oplus q^{(p)}_-(p^k)^{\oplus m_k}
 \right),
\end{equation}
 where
\begin{align}
 q^{(p)}_+(p^k)&=\LF{1/p^k}\text{ on }
 \Z/p^k\Z, \\
 q^{(p)}_-(p^k)&=\LF{\varepsilon_p/p^k}\text{ on }
 \Z/p^k\Z.
\end{align}
In (\ref{discformdecomp}), the $n_k$ and $m_k$ are
 also uniquely determined.
\end{prop}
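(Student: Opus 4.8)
The plan is to peel off the two assertions of the Proposition --- existence of the normal form and uniqueness of the multiplicities --- and to reduce everything to the classical structure theory of lattices over $\Z_p$ for odd $p$, for which one may ultimately just cite \cite{cassels78,nikulin79int,SP}; here is the sketch I would give.

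\textbf{Existence.} Since $p$ is odd, $\tfrac12\in\Z_p$, so the bilinear form is recovered from $v\mapsto\BF{v}{v}$ by polarization; in particular non-degeneracy forces $\BF{v}{v}\ne 0$ for some $v\in L^{(p)}$. Choose such a $v$ with $\ord_p\BF{v}{v}$ minimal --- it is automatically primitive --- and write $\BF{v}{v}=p^k u$ with $u\in\Z_p^\times$. For every $w\in L^{(p)}$ one has $\ord_p\BF{v}{w}\ge k$: otherwise $\BF{v+w}{v+w}=\BF{v}{v}+2\BF{v}{w}+\BF{w}{w}$ would have valuation $<k$ because $p\ne 2$, contradicting minimality. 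Hence $w-\BF{v}{w}\BF{v}{v}^{-1}v\in L^{(p)}$ and is orthogonal to $v$, so $L^{(p)}=\LF{p^k u}\oplus(v^\bot_{L^{(p)}})$, and an induction on $\rank L^{(p)}$ diagonalizes $L^{(p)}$. As $\Z_p^\times/(\Z_p^\times)^2=\{1,\varepsilon_p\}$ for odd $p$, rescaling the generator of each rank-one summand by a suitable unit puts $L^{(p)}$ in the shape $\bigoplus_{k\ge 0}(\LF{p^k}^{\oplus n_k}\oplus\LF{\varepsilon_p p^k}^{\oplus m_k'})$ for some $m_k'\ge 0$. Finally $\LF{\varepsilon_p p^k}^{\oplus 2}\cong\LF{p^k}^{\oplus 2}$: rescaling by $p^{-k}$ reduces this to $\LF{\varepsilon_p}^{\oplus 2}\cong\LF 1^{\oplus 2}$ over $\Z_p$, which holds because a unimodular binary $\Z_p$-lattice ($p$ odd) represents every $p$-adic unit --- by Hensel applied to its reduction mod $p$ --- and is therefore determined up to isomorphism by $\disc\bmod(\Z_p^\times)^2$, and $\varepsilon_p^2\equiv 1$. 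Iterating this replacement lowers each $m_k'$ until $m_k\in\{0,1\}$.

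\textbf{Uniqueness.} It suffices to show that the Jordan splitting $L^{(p)}=\bigoplus_k p^k M_k$, with each $M_k$ unimodular over $\Z_p$, is unique up to isomorphism of the $M_k$: then $\rank M_k=n_k+m_k$ and $\disc(M_k)\equiv\varepsilon_p^{m_k}\bmod(\Z_p^\times)^2$ are invariants, and since $0\le m_k\le 1$ the pair $(n_k,m_k)$ is determined. For the canonicity of the splitting I would use the scale filtration of $L^{(p)}$: the finite module $(L^{(p)})^\vee/L^{(p)}\cong\bigoplus_k(\Z/p^k\Z)^{\rank M_k}$ already fixes every $\rank M_k$ as the number of its cyclic summands of order $p^k$, while for odd $p$ consecutive scales do not interact, so each homogeneous block $p^k M_k$ is recovered as a canonical submodule by the standard argument in \cite{cassels78,SP}. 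The decomposition (\ref{discformdecomp}) of $q(L^{(p)})$ then follows by applying $q(-)$ summand by summand, using $q(\LF{p^k})=q^{(p)}_+(p^k)$ and $q(\LF{\varepsilon_p p^k})=q^{(p)}_-(p^k)$ on $\Z/p^k\Z$ together with $q(\LF{\pm\,\text{unit}})=0$ --- the latter being why the sum in (\ref{discformdecomp}) starts at $k\ge 1$. For the last uniqueness claim one repeats the argument intrinsically on the finite quadratic form: $A(L^{(p)})$ fixes $n_k+m_k$, and a determinant-type invariant of the $p^k$-homogeneous layer $q^{(p)}_+(p^k)^{\oplus n_k}\oplus q^{(p)}_-(p^k)^{\oplus m_k}$ --- e.g.\ its discriminant in $(\Z/p\Z)^\times$ modulo squares, which equals $\varepsilon_p^{m_k}$ --- separates $m_k=0$ from $m_k=1$.

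\textbf{Main obstacle.} The only genuinely nontrivial ingredient is the canonicity of the Jordan (scale) splitting and of its constituents $M_k$; for odd $p$ this is classical and technically easy, since invertibility of the prime $2$ decouples the scale-filtration layers, so in the write-up I would simply cite \cite{cassels78,nikulin79int,SP} and present the filtration remark above as motivation rather than as a full proof.
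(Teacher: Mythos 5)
The paper gives no proof of this proposition---it is recalled as a classical fact with the ``cf.'' references to Cassels, Nikulin, and Conway--Sloane---and your sketch is a correct rendition of exactly the standard argument found in those sources: polarization-based diagonalization using that $2\in\Z_p^\times$, normalization via $\Z_p^\times/(\Z_p^\times)^2=\{1,\varepsilon_p\}$ together with $\LF{\varepsilon_p}^{\oplus 2}\cong\LF{1}^{\oplus 2}$, and uniqueness via the discriminant group plus the canonicity of the odd-$p$ Jordan splitting. The one step you defer to the references (canonicity of the Jordan constituents, equivalently that the total discriminant alone does not separate the layers) is indeed the only non-formal ingredient, and deferring it is consistent with the paper's own treatment.
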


Let $L$ be a non-degenerate lattice.
We can determine $q(L)_p$ as follows.
Let $\Z_{(p)}$ be a localization of $\Z$
 by the prime ideal $(p)$,
 which is considered as a subring of $\Z_p$.
Then $L$ can be diagonalized over $\Z_{(p)}$.
Then we can write
\begin{equation} \label{localdecomp}
 L\cong\bigoplus_{k\geq 0} L^{(p)}_k(p^k)
\end{equation}
 over $\Z_{(p)}$,
 where $L^{(p)}_k$ are lattices over $\Z_{(p)}$
 such that $L^{(p)}_k=0$
 or $\disc(L^{(p)}_k)\in
 \Z_{(p)}^\times/(\Z_{(p)}^\times)^2$.
(The discriminant of a lattice over a ring $R$
 is defined modulo $(R^\times)^2$.)
The $n_k$ and $m_k$ for $L\otimes\Z_p$
 in the above proposition
 are determined by
\begin{equation}
 (n_k,m_k)=
\begin{cases}
 (0,0) & \text{if } L^{(p)}_k=0, \\
 (\rank L^{(p)}_k,0) & \text{if }
 \disc(L^{(p)}_k) \in
 (\Z_p^{\times})^2/(\Z_{(p)}^\times)^2, \\
 (\rank L^{(p)}_k-1,1) & \text{otherwise.}
\end{cases}
\end{equation}

Next we consider the more complicated case $p=2$.

\begin{prop}[cf. \cite{cassels78,nikulin79int,SP}]
\label{prop:localeven}
Let $L^{(2)}$ be a non-degenerate lattice over $\Z_2$.
Then $L^{(2)}$ can be written as
 an orthogonal sum of
 the following lattices:
\begin{equation} \label{2adicbasis}
 \LF{\varepsilon 2^k},
 \begin{pmatrix} 0 & 2^k \\ 2^k & 0 \end{pmatrix},
 \begin{pmatrix} 2^{k+1} & 2^k \\ 2^k & 2^{k+1}
 \end{pmatrix},
\end{equation}
 where $k\geq 0$ and
 $\varepsilon\in\{1,3,5,7\}$.
Hence, if $L^{(2)}$ is even,
 $q(L^{(2)})$ can be written as an orthogonal
 sum of the following:
\begin{align}
 q^{(2)}_\varepsilon(2^k)&=\LF{\varepsilon/2^k}
 \text{ on } \Z/2^k\Z, \label{q2} \\
 u^{(2)}(2^k)&=
 \begin{pmatrix} 0 & 1/2^k \\ 1/2^k & 0
 \end{pmatrix} \text{ on }
 (\Z/2^k\Z)^{\oplus 2}, \label{u2} \\
 v^{(2)}(2^k)&=
 \begin{pmatrix} 1/2^{k-1} & 1/2^k \\ 1/2^k & 1/2^{k-1}
 \end{pmatrix} \text{ on } (\Z/2^k\Z)^{\oplus 2}.
 \label{v2}
\end{align}
\end{prop}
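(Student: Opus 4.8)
This is a classical result (see the references cited in the statement); I would reproduce the standard $2$-adic Jordan-splitting argument, deriving the discriminant-form statement at the end from the lattice decomposition. The main assertion to prove is the orthogonal decomposition of $L^{(2)}$ into the lattices in (\ref{2adicbasis}), and I would argue by induction on $\rank L^{(2)}$. Let $(2^\ell)$ be the scale ideal of $L^{(2)}$, i.e.\ the ideal of $\Z_2$ generated by all $\BF{v}{w}$ with $v,w\in L^{(2)}$; since $2\BF{v}{w}=\BF{v+w}{v+w}-\BF{v}{v}-\BF{w}{w}$, the norm ideal generated by the $\BF{v}{v}$ is either $(2^\ell)$ or $(2^{\ell+1})$. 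In the first case I pick $v$ with $\BF{v}{v}$ of valuation $\ell$ and set $N=\Z_2 v$; in the second all self-products lie in $(2^{\ell+1})$ but some $\BF{v}{w}$ has valuation $\ell$, and I set $N=\Z_2 v+\Z_2 w$. In either case $N$ is $2^\ell$-modular, meaning $2^{-\ell}\BF{~}{~}$ is a unimodular $\Z_2$-form on $N$. The crucial point is that such an $N$ is an orthogonal direct summand: for any $m\in L^{(2)}$ the functional $n\mapsto 2^{-\ell}\BF{m}{n}$ is $\Z_2$-valued on $N$ (since $\BF{m}{N}\subset(2^\ell)$), hence by unimodularity of $2^{-\ell}N$ it equals $n\mapsto 2^{-\ell}\BF{n'}{n}$ for a unique $n'\in N$, so $m-n'\in N^\bot$ and $L^{(2)}=N\perp N^\bot$. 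Inducting on rank reduces everything to decomposing a single $2^\ell$-modular lattice $2^\ell M$ with $M$ unimodular over $\Z_2$, so it suffices to decompose $M$.

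To decompose a unimodular $\Z_2$-lattice $M$: if $\BF{v}{v}\in\Z_2^\times$ for some $v$, then $\Z_2 v\cong\LF{\varepsilon}$ is $2^0$-modular, hence a direct summand by the splitting step with $\ell=0$, and I induct on the complement. Otherwise $M$ is even; since $M$ is unimodular there are $v,w$ with $\BF{v}{w}\in\Z_2^\times$, so $P=\Z_2 v+\Z_2 w$ has Gram matrix $(\begin{smallmatrix}2a&b\\b&2c\end{smallmatrix})$ with $b\in\Z_2^\times$, hence $P$ is even unimodular binary, hence a direct summand, and $P^\bot\subset M$ is again even unimodular, so I induct on $P^\bot$. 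It then remains to classify even unimodular binary $\Z_2$-lattices: such a lattice has $\disc\equiv-1\bmod 4$, hence $\disc\equiv 3$ or $7\bmod 8$, and completing the square over $\Z_2$ (legitimate by Hensel's lemma, since the discriminants occurring in the completion are units) shows there are exactly two up to isometry, $U=(\begin{smallmatrix}0&1\\1&0\end{smallmatrix})$ with $\disc\equiv 7$ and $V=(\begin{smallmatrix}2&1\\1&2\end{smallmatrix})$ with $\disc\equiv 3$. Thus $M$ is an orthogonal sum of copies of $\LF{\varepsilon}$, $U$ and $V$, and rescaling each rank-one summand by a unit square lets us take $\varepsilon\in\{1,3,5,7\}$.

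Scaling each Jordan block back by its $2^\ell$ turns $\LF{\varepsilon}$, $U$, $V$ into the three lattices in (\ref{2adicbasis}), which proves the first assertion. For the discriminant form, recall $q$ is additive over orthogonal sums and vanishes on unimodular lattices; a direct computation gives $q(\LF{\varepsilon 2^k})\cong q^{(2)}_\varepsilon(2^k)$, $q\bigl((\begin{smallmatrix}0&2^k\\2^k&0\end{smallmatrix})\bigr)\cong u^{(2)}(2^k)$ and $q\bigl((\begin{smallmatrix}2^{k+1}&2^k\\2^k&2^{k+1}\end{smallmatrix})\bigr)\cong v^{(2)}(2^k)$, where for $k=0$ all three discriminant forms are trivial. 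Hence, when $L^{(2)}$ is even, $q(L^{(2)})$ is the orthogonal sum of forms listed in (\ref{q2})--(\ref{v2}).

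The technical heart, and the place where the case $p=2$ genuinely differs from Proposition \ref{prop:localodd}, is the classification of even unimodular binary forms over $\Z_2$: one cannot diagonalize, $\Z_2^\times$ has four square classes rather than two, and one must keep track of residues modulo $8$ and apply Hensel's lemma carefully while completing the square. The orthogonal-splitting lemma in the first step is conceptually transparent but is where the remaining $2$-adic subtlety is packaged, being precisely the statement that a modular sublattice of matching scale is always an orthogonal summand.
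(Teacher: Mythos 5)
The paper does not actually prove this proposition --- it is stated with a ``cf.''\ citation to Cassels, Nikulin and Conway--Sloane and no proof is given --- so there is no in-paper argument to compare against; your proof is the standard Jordan-splitting argument from exactly those sources (split off a $2^\ell$-modular sublattice of rank $1$ or $2$ matching the scale, show it is an orthogonal summand via the unimodularity of the rescaled form, induct on the rank, and classify the unimodular blocks), and it is correct. The only loosely worded step is the classification of even unimodular binary $\Z_2$-lattices: one does not literally ``complete the square'' on an even form over $\Z_2$; rather one shows via Hensel that the form is isotropic, hence isometric to $U$, exactly when $-\disc$ is a square (i.e.\ $\disc\equiv 7 \bmod 8$), and is isometric to $V$ otherwise --- but the assertions you make there (exactly two classes, distinguished by the discriminant modulo $8$) are correct, and the remainder of the argument, including the computation of the discriminant forms of the three building blocks, is sound.
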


In the case $p=2$, the uniqueness as in
 Proposition \ref{prop:localodd} does not hold.
Although there is a complete system of invariants of
 a non-degenerate lattice over $\Z_2$
 (see \cite{SP}),
 we only recall the unimodular case.

\begin{prop}[cf.\ \cite{SP}]
\label{prop:2adicunimodular}
For a non-degenerate lattice $L^{(2)}$ over $\Z_2$
 with $\disc(L^{(2)})\in\Z_2^\times$,
 a quadruple $(r,d,t,e)$ defined as follows
 is a complete system of invariants
 of $L^{(2)}$.
If
\begin{equation} \label{2adicdecomp}
 L^{(2)}\cong
 \bigoplus_i \LF{\varepsilon_i} \oplus
 \begin{pmatrix} 0 & 1 \\ 1 & 0
 \end{pmatrix}^{\oplus n} \oplus
 \begin{pmatrix} 2 & 1 \\ 1 & 2
 \end{pmatrix}^{\oplus m},
\end{equation}
 the invariants $r,d,t,e$ are defined by
\begin{align}
 r&=\rank L^{(2)}, \\
 d&=\begin{cases}
 +1 & \text{if }
 \disc(L^{(2)}) \in \pm(\Z_2^\times)^2/(\Z_2^\times)^2, \\
 -1 & \text{otherwise,}
 \end{cases} \\
 t&=\sum_i \varepsilon_i \bmod 8\Z_2 \in \Z_2/8\Z_2, \\
 e&=\begin{cases}
 \text{\rm I} & \text{if $L^{(2)}$ is odd,} \\
 \text{\rm II} & \text{otherwise}.
 \end{cases}
\end{align}
\end{prop}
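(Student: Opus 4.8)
The plan is to deduce this from the classical classification of unimodular quadratic forms over $\Z_2$ (see \cite{cassels78,SP}); here I indicate how the argument runs.

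Two things must be shown: that $(r,d,t,e)$ is an isometry invariant of $L^{(2)}$, and that it is complete. Invariance is clear for the rank $r$, for $d$ (which only records whether $\disc(L^{(2)})$ lies in $\pm(\Z_2^\times)^2$), and for the parity $e$ (being even or odd is intrinsic and preserved by isometry). The delicate invariant is the oddity $t$, defined through the decomposition (\ref{2adicdecomp}); so I would first show such a decomposition exists and then that $\sum_i\varepsilon_i\bmod 8$ does not depend on it. Existence follows from Proposition \ref{prop:localeven}: the hypothesis $\disc(L^{(2)})\in\Z_2^\times$ forces every scaling exponent to be $0$, and the only even unimodular rank-$2$ blocks over $\Z_2$ are $\begin{pmatrix}0&1\\1&0\end{pmatrix}$ and $\begin{pmatrix}2&1\\1&2\end{pmatrix}$, so one lands in a decomposition of the form (\ref{2adicdecomp}). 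Independence of $\sum_i\varepsilon_i\bmod 8$ I would reduce to a short list of isometry relations between unimodular forms of rank $\le 2$ over $\Z_2$, such as
\[
 \LF{1}\oplus\LF{1}\cong\LF{5}\oplus\LF{5},\qquad
 \LF{1}\oplus\LF{3}\cong\LF{5}\oplus\LF{7},\qquad
 \LF{1}\oplus\LF{7}\cong\LF{3}\oplus\LF{5},
\]
each verified by writing down an explicit pair of primitive, mutually orthogonal vectors of the prescribed norms spanning the lattice; any two decompositions of a fixed $L^{(2)}$ are linked by a finite chain of such local moves, none of which changes $\sum_i\varepsilon_i\bmod 8$.

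For completeness I would use the same relations, together with the absorption identities
\[
 \LF{\varepsilon}\oplus\begin{pmatrix}0&1\\1&0\end{pmatrix}
 \cong\LF{\varepsilon}\oplus\LF{\varepsilon}\oplus\LF{-\varepsilon},\qquad
 \LF{\varepsilon}\oplus\begin{pmatrix}2&1\\1&2\end{pmatrix}
 \cong\LF{\varepsilon}\oplus\LF{\varepsilon}\oplus\LF{3\varepsilon},
\]
to reduce an arbitrary $L^{(2)}$ to a normal form. If $L^{(2)}$ is even ($e=\mathrm{II}$) there are no diagonal blocks, and $L^{(2)}$ is either $\begin{pmatrix}0&1\\1&0\end{pmatrix}^{\oplus r/2}$ or $\begin{pmatrix}0&1\\1&0\end{pmatrix}^{\oplus (r/2-1)}\oplus\begin{pmatrix}2&1\\1&2\end{pmatrix}$, the two being distinguished precisely by $d$ (while $t=0$ vacuously, and $r$ is necessarily even). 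If $L^{(2)}$ is odd ($e=\mathrm{I}$), the absorption identities make it a pure diagonal lattice $\bigoplus_{i=1}^{r}\LF{\varepsilon_i}$, and the rank-$2$ relations then bring it into one of a short explicit list of diagonal normal forms, within which $(r,d,t)$ determines the form uniquely. In either case two unimodular $\Z_2$-lattices with equal $(r,d,t,e)$ are isometric.

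The step I expect to be the main obstacle is the one flagged after Proposition \ref{prop:localodd}: at $p=2$ there is no diagonalization up to reordering, so one must check by hand that the finite list of rank-$\le 2$ relations really generates all isometries between unimodular $\Z_2$-forms — equivalently, that the oddity is well defined. The subtlety is that such an isometry need not respect any obvious orthogonal splitting; for example $\LF{1}\oplus\LF{1}\oplus\LF{-1}\cong\LF{1}\oplus\begin{pmatrix}0&1\\1&0\end{pmatrix}$ is realized only by splitting off a cleverly chosen norm-$1$ vector (such as $e_1+e_2+e_3$), not the evident one. Once that bookkeeping is in place the proposition follows; alternatively one simply cites \cite{SP}, where exactly this quadruple is shown to be a complete system of invariants.
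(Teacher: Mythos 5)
The paper offers no proof of this proposition at all: it is stated with the citation ``cf.\ \cite{SP}'' and the only supporting material is the single worked example $\LF{1}^{\oplus 3}\cong\left(\begin{smallmatrix}2&1\\1&2\end{smallmatrix}\right)\oplus\LF{3}$ that follows it. So your closing option of ``simply cite \cite{SP}'' coincides with what the paper actually does, and your identification of where the real difficulty lies (well-definedness of the oddity $t$, i.e.\ that the rank-$\le 2$ relations generate all isometries) is accurate.

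Your attempted self-contained argument, however, contains a genuine error and two gaps. The error: the second absorption identity $\LF{\varepsilon}\oplus\left(\begin{smallmatrix}2&1\\1&2\end{smallmatrix}\right)\cong\LF{\varepsilon}\oplus\LF{\varepsilon}\oplus\LF{3\varepsilon}$ is false. Its right-hand side has $\sum\varepsilon_i=5\varepsilon\equiv\varepsilon+4\pmod 8$, whereas the left-hand side has oddity $\varepsilon$ (take $\varepsilon=3$: your identity gives $\LF{3}\oplus V\cong\LF{3,3,1}$ with $t=7$, while the paper's displayed example gives $\LF{3}\oplus V\cong\LF{1,1,1}$ with $t=3$; one checks directly, e.g.\ via Hasse symbols, that $\LF{1,1,1}\not\cong\LF{1,3,3}$ over $\Z_2$). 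If your identity held, $t$ would be well defined only mod $4$ and the proposition would fail; the correct identity is $\LF{\varepsilon}\oplus\left(\begin{smallmatrix}2&1\\1&2\end{smallmatrix}\right)\cong\LF{3\varepsilon}^{\oplus 3}$. The gaps: (i) your list of rank-$2$ relations omits $\LF{3}^{\oplus 2}\cong\LF{7}^{\oplus 2}$ (both have $d=+1$, $t=6$, and this is not a consequence of your three relations within rank $2$), and you also need $\left(\begin{smallmatrix}2&1\\1&2\end{smallmatrix}\right)^{\oplus 2}\cong\left(\begin{smallmatrix}0&1\\1&0\end{smallmatrix}\right)^{\oplus 2}$ to reach your asserted even normal forms; (ii) the assertion that any two decompositions of a fixed $L^{(2)}$ are linked by a chain of these local moves is precisely the content of the theorem and is left unproved --- you flag it as ``the main obstacle'' but do not overcome it. The standard ways to close it are Witt cancellation for unimodular $\Z_2$-lattices together with induction on the rank, or an a priori, decomposition-free definition of the oddity (e.g.\ via Gauss sums / Milgram's formula), after which the reduction to normal form becomes routine.
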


For example, we can directly check that
\begin{equation}
 \LF{1}^{\oplus 3} \cong
 \begin{pmatrix} 2 & 1 \\ 1 & 2 \end{pmatrix}
 \oplus \LF{3}
\end{equation}
 over $\Z_2$.
We actually have
 $(r,d,t,e)=(3,+1,\overline{3},\text{I})$
 for both lattices.
Using Proposition \ref{prop:2adicunimodular},
 we can determine $q(L)_2$
 for a non-degenerate even lattice $L$
 similarly to the case $p\neq 2$.
We can find an orthogonal decomposition
\begin{equation} \label{localdecompeven}
 L\cong\bigoplus_{k\geq 0} L^{(2)}_k(2^k)
\end{equation}
 over $\Z_2$,
 where $L^{(2)}_k$ is of the form (\ref{2adicdecomp}).
Then we can write $q(L)_2$
 as the corresponding orthogonal sum of
 (\ref{q2})--(\ref{v2}).
For relations between (\ref{q2})--(\ref{v2}),
 see \cite{nikulin79int}.

For a finite abelian group $A$,
 let $l(A)$ denote the minimum number
 of generators of $A$.
Let $L$ be a non-degenerate even lattice.
Since $\rank L^\vee=\rank L$ (see (\ref{def_dual})),
 we have
\begin{equation} \label{ineq_discgrp}
 l(A(L)) \leq \rank L.
\end{equation}
The follwoing theorem is a reformulation of
 Eichler's result in a view-point of discriminant forms.

\begin{thm}[\cite{nikulin79int}] \label{thm:nikulinuniq}
Let $L,L'$ be
 indefinite (non-degenerate) even lattices of rank $\geq 3$.
Suppose that the following conditins are satisfied:
\begin{enumerate}
\renewcommand{\labelenumi}{(\arabic{enumi})}
\item \label{thm:nikulinuniq:odd}
For each $p\neq 2$,
 either $\rank L\geq l(A(L)_p) +2$, or
 $n_k+m_k\geq 2$ for some $k$
 in the orthogonal decomposition
 (\ref{discformdecomp}), i.e.,
\begin{equation}
 q(L)_p\cong q_p \oplus q^{(p)}_{\pm}(p^k)\oplus
 q^{(p)}_{\pm}(p^k)
\end{equation}
 for some $q_p$ and $k>0$.
\item \label{thm:nikulinuniq:even}
Either $\rank L\geq l(A(L)_p) +2$, or
\begin{equation}
 q(L)_2\cong q_2 \oplus q'_2
\end{equation}
 for some $q_2$ and $q'_2$, where $q'_2$ is
 one of the following:
\begin{align}
 & u^{(2)}(2^k), ~ k>0, \\
 & v^{(2)}(2^k), ~ k>0, \\
 & q^{(2)}_{\varepsilon_1}(2^k)\oplus
   q^{(2)}_{\varepsilon_2}(2^k)\oplus
   q^{(2)}_{\varepsilon_3}(2^{k'}), ~
   \varepsilon_i\in\Z_2^\times,k,k'>0,|k-k'|\leq 1.
\end{align}
\item
 $\sign L=\sign L'$ and $q(L)\cong q(L')$.
\end{enumerate}
Then $L$ is isomorphic to $L'$.
\end{thm}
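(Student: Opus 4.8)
The plan is to reduce the statement to the classical theory of genera of integral quadratic forms together with Eichler's theorem on strong approximation for spinor genera: hypothesis~(3) puts $L$ and $L'$ in a common genus, and hypotheses~(1) and~(2) are exactly what is needed to show that genus consists of a single isometry class. For the first point I would deduce from~(3) that $L\otimes\mathbb{R}\cong L'\otimes\mathbb{R}$ and $L\otimes\mathbb{Z}_p\cong L'\otimes\mathbb{Z}_p$ for every prime $p$; the real isomorphism is just $\sign L=\sign L'$. For an odd prime $p$, Proposition~\ref{prop:localodd} shows that $L\otimes\mathbb{Z}_p$ is determined by its rank and by $q(L)_p$, so $q(L)\cong q(L')$ together with the equality of ranks (forced by~(3)) gives the claim. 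The case $p=2$ is the delicate one: by the complete system of $2$-adic invariants (Proposition~\ref{prop:2adicunimodular} and its extension in~\cite{SP}), $L\otimes\mathbb{Z}_2$ is \emph{not} determined by $\rank L$ and $q(L)_2$ alone --- there remains an oddity-type invariant --- but Milgram's formula expresses $\sign L\bmod 8$ as a sum of the local oddities, so once $\sign L=\sign L'$ and $q(L)\cong q(L')$ this last invariant is forced to agree, giving $L\otimes\mathbb{Z}_2\cong L'\otimes\mathbb{Z}_2$. Hence $L$ and $L'$ lie in the same genus.

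Next, since $L$ is indefinite of rank $\geq 3$, the spin group of $L\otimes\mathbb{Q}$ has strong approximation, so Eichler's theorem applies: two lattices in the same proper spinor genus are isometric. It therefore suffices to show that the genus of $L$ contains a single proper spinor genus. The number of proper spinor genera in this genus equals the index
\begin{equation}
 \Bigl[\,\mathbb{J}_{\mathbb{Q}}\;:\;\mathbb{Q}^{\times}\cdot\theta\bigl(\OP{O}^{+}(L\otimes\mathbb{R})\bigr)\cdot{\textstyle\prod_{p}}\theta\bigl(\OP{O}^{+}(L\otimes\mathbb{Z}_p)\bigr)\,\Bigr],
\end{equation}
where $\theta$ is the spinor norm and $\mathbb{J}_{\mathbb{Q}}$ the idele group of $\mathbb{Q}$. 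Since $L\otimes\mathbb{R}$ is indefinite, $\theta(\OP{O}^{+}(L\otimes\mathbb{R}))=\mathbb{R}^{\times}$, so this index is $1$ as soon as $\theta(\OP{O}^{+}(L\otimes\mathbb{Z}_p))\supseteq\mathbb{Z}_p^{\times}$ for every prime $p$.

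It remains to establish the last inclusion, and this is where hypotheses~(1) and~(2) are used, via the computations of spinor norms of $p$-adic orthogonal groups due to Kneser and O'Meara. Write the Jordan splitting $L\otimes\mathbb{Z}_p\cong\bigoplus_k L_k(p^k)$ with each $L_k$ unimodular over $\mathbb{Z}_p$; then $\rank L-l(A(L)_p)=\rank L_0$. For $p\neq 2$, the first alternative in~(1) says $L_0$ has rank $\geq 2$ and the second says some $L_k$ with $k>0$ has rank $\geq 2$; in either case $L\otimes\mathbb{Z}_p$ has a modular orthogonal summand of rank $\geq 2$, which represents $p$-adic units of both square classes, and reflections in its vectors (together with products of two of them) realize every element of $\mathbb{Z}_p^{\times}/(\mathbb{Q}_p^{\times})^2$ as a spinor norm. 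For $p=2$ one argues likewise: (2) produces a $2$-adic summand whose discriminant form is $u^{(2)}(2^k)$ or $v^{(2)}(2^k)$ with $k>0$, or a triple $q^{(2)}_{\varepsilon_1}(2^k)\oplus q^{(2)}_{\varepsilon_2}(2^k)\oplus q^{(2)}_{\varepsilon_3}(2^{k'})$ as listed, and for each of the corresponding $2$-adic lattices the spinor norm group of $\OP{O}^{+}$ is known to be all of $\mathbb{Z}_2^{\times}(\mathbb{Q}_2^{\times})^2$. Combining the three steps, $L$ is isometric to $L'$.

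The routine parts are the odd-$p$ and real cases in the first step and the odd-$p$ half of the last step. The main obstacles are: (i) the $p=2$ analysis in the first step, where one must check exactly how fixing the signature eliminates the residual $2$-adic ambiguity; and above all (ii) the $p=2$ spinor-norm computations, which are considerably heavier than their odd-$p$ analogues --- the real content of the theorem is the verification that the three explicit alternatives in~(2) are precisely the local conditions forcing $\theta(\OP{O}^{+}(L\otimes\mathbb{Z}_2))$ to contain the $2$-adic units.
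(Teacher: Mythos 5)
The paper does not prove this theorem; it is quoted from Nikulin \cite{nikulin79int}, who indeed obtains it exactly as you describe --- conditions (3) place $L$ and $L'$ in one genus (signature plus discriminant form determine the genus of an even lattice, the $2$-adic oddity being pinned down by Milgram's formula), Eichler's strong approximation reduces the problem to showing the genus has one spinor genus, and conditions (1) and (2) are the local spinor-norm conditions guaranteeing $\theta(\OP{O}^{+}(L\otimes\Z_p))\supseteq\Z_p^{\times}$ for all $p$. Your outline is correct and is essentially the same argument as the cited source.
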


We use the following facts
 in Section \ref{sect:invariantsurj}.

\begin{thm}[\cite{nikulin79int}] \label{thm:nikulinsurj}
Let $L$ be an indefinite even lattice of rank $\geq 3$.
If the following conditins are satisfied,
 $\overline{\OP{O}(L)}=\OP{O}(q(L))$.
\begin{enumerate}
\renewcommand{\labelenumi}{(\arabic{enumi})}
\item \label{thm:nikulinsurj:1}
For each $p\neq 2$,
 $\rank L\geq l(A(L)_p) +2$.
\item \label{thm:nikulinsurj:2}
Either $\rank L\geq l(A(L)_p) +2$, or
\begin{equation}
 q(L)_2\cong
 q_2\oplus u^{(2)}(2) \text{~~or~~}
 q_2\oplus v^{(2)}(2)
\end{equation}
 for some $q_2$.
\end{enumerate}
\end{thm}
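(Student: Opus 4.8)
The goal is to show that every $\varphi\in\OP{O}(q(L))$ lies in the image of $\OP{O}(L)\to\OP{O}(q(L))$. I would argue locally and then patch globally. Using the orthogonal decomposition $q(L)=\bigoplus_p q(L)_p$, it suffices to produce, for each prime $p$, an element $\psi_p\in\OP{O}(L\otimes\Z_p)$ whose image in $\OP{O}(q(L)_p)$ is the $p$-component $\varphi_p$ of $\varphi$, and then to find a single $\psi\in\OP{O}(L)$ which agrees with each $\psi_p$ modulo $\OP{O}_0(L\otimes\Z_p)$; such a $\psi$ then automatically satisfies $\overline{\psi}=\varphi$, since $q(L)_p=0$ for almost all $p$ and $\psi_p$ may be taken trivial there. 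One might hope to bypass the patching by the gluing construction of Lemma \ref{lem:unimodembed} applied to $L$ and $M=L(-1)\oplus U$, together with Theorem \ref{thm:nikulinuniq}; but that route only yields $\overline{\OP{O}(L)}\cdot\overline{\OP{O}(M)}=\OP{O}(q(L))$, which is strictly weaker than what we want, so a direct argument seems unavoidable.

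For the local step I would use Propositions \ref{prop:localodd}, \ref{prop:localeven} and \ref{prop:2adicunimodular}. Condition (\ref{thm:nikulinsurj:1}) says that for $p\neq2$ the number $\rank L-l(A(L)_p)=\rank L^{(p)}_0$ of basis vectors coming from the $\Z_{(p)}$-unimodular summand in the decomposition (\ref{localdecomp}) is at least $2$; hence $L\otimes\Z_p$ has a $\Z_p$-unimodular orthogonal summand of rank $\ge2$, and every automorphism of $q(L)_p$ is then realized by an isometry of $L\otimes\Z_p$. Concretely one splits $\OP{O}(q(L)_p)$ according to its action on the homogeneous blocks $q^{(p)}_{\pm}(p^k)$ and lifts each block by Hensel's lemma, the two spare dimensions of the unimodular part furnishing the Witt-type cancellation needed to correct determinants. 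At $p=2$, the first alternative of condition (\ref{thm:nikulinsurj:2}) gives the same conclusion through the analogous, more delicate book-keeping with the invariants $(r,d,t,e)$ of Proposition \ref{prop:2adicunimodular}; in the second alternative, the orthogonal summand $u^{(2)}(2)$ or $v^{(2)}(2)$ of $q(L)_2$ corresponds to a rank-$2$ summand of $L\otimes\Z_2$ of scale $2$ with that discriminant form, and it provides exactly the flexibility that the possibly missing rank-$2$ unimodular part would have supplied. This yields the required $\psi_p$ for every $p$.

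For the global step I would invoke strong approximation, valid precisely because $L$ is indefinite (so $\operatorname{Spin}(L\otimes\R)$ is non-compact) and of rank $\ge3$ --- the same Eichler-type input that underlies Theorem \ref{thm:nikulinuniq}. The family $(\psi_p)_p$ defines an element of the finite-adelic orthogonal group, and one wants to move it into $\OP{O}(L)\cdot\prod_p\OP{O}(L\otimes\Z_p)$; the sole obstruction is a spinor-norm (idele-class) obstruction, which one kills by first multiplying each $\psi_p$ by a suitable element of $\OP{O}_0(L\otimes\Z_p)$. This last adjustment is possible because conditions (\ref{thm:nikulinsurj:1})--(\ref{thm:nikulinsurj:2}) force $\OP{O}_0(L\otimes\Z_p)$ to realize every $p$-adic spinor norm. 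The resulting $\psi\in\OP{O}(L)$ then satisfies $\overline{\psi}=\varphi$. I expect the two genuinely hard points to be the local analysis at $p=2$ --- tracking the invariants of Proposition \ref{prop:2adicunimodular} and verifying that the $u^{(2)}(2)$/$v^{(2)}(2)$ alternative truly suffices --- and the step showing the spinor-norm obstruction vanishes, which is where both hypotheses on $L$ are essentially used.
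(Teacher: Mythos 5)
The paper offers no proof of this statement: it is quoted verbatim from \cite{nikulin79int} (Nikulin's Theorem 1.14.2), so there is nothing internal to compare your argument against. That said, your strategy is the standard one, and it is exactly the machinery the paper itself sets up in Section \ref{sect:invariantsurj} for its own surjectivity arguments: the unnumbered Proposition there with the commutative diagram (\ref{snake}) is precisely your ``global step'' made rigorous --- strong approximation gives surjectivity of $\alpha$ on the kernel $\OP{O}'(L)$ of $\det\times\theta$, and the remaining obstruction lives in $\prod_{p\mid d} f_p(\OP{O}(L_p))/f_p(\OP{O}_0(L_p))$. Your observation that the gluing route via Lemma \ref{lem:unimodembed} only controls a double coset and is therefore insufficient is also correct.

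Two points deserve correction. First, the hypotheses play no role in your local step: $\overline{\OP{O}(L_p)}=\OP{O}(q(L)_p)$ holds for \emph{every} non-degenerate even lattice over $\Z_p$ (this is Theorem \ref{thm:localsurj}), so the conditions (\ref{thm:nikulinsurj:1})--(\ref{thm:nikulinsurj:2}) are not what makes the local lifting work. Second, and more substantively, the entire content of the theorem is concentrated in the one sentence you assert without proof, namely that the hypotheses force the quotients $f_p(\OP{O}(L_p))/f_p(\OP{O}_0(L_p))$ to be trivial. What must be shown is the inclusion $f_p(\OP{O}_0(L_p))\supseteq f_p(\OP{O}(L_p))$, not that $\OP{O}_0(L_p)$ realizes ``every $p$-adic spinor norm'' in $\{\pm1\}\times\Q_p^\times/(\Q_p^\times)^2$; the latter is generally false (compare the cases $n=46,61$ in Section \ref{sect:invariantsurj}, where $f_2(\OP{O}(L_2))=J_2$ is a proper subgroup) and is not needed. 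Proving that inclusion requires analyzing the spinor norms of the reflections generating $\OP{O}(L_p)$ summand by summand in the decompositions of Propositions \ref{prop:localodd} and \ref{prop:localeven}, and checking that a rank-$2$ unimodular summand (or a $u^{(2)}(2)$ or $v^{(2)}(2)$ block at $p=2$, as in Lemma \ref{lem:ozero}) supplies elements of $\OP{O}_0(L_p)$ with the same images under $f_p$. Your sketch names the right ingredients but stops exactly where the work begins; as an outline of Nikulin's proof it is faithful, as a proof it is incomplete at this step.
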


\begin{rem}
The conditions of Theorem \ref{thm:nikulinsurj}
 are stronger than those of Theorem \ref{thm:nikulinuniq}.
\end{rem}

\begin{thm}[\cite{nikulin79int}] \label{thm:localsurj}
If $L^{(p)}$ is a non-degenerate even lattice
 over $\Z_p$, we have
 $\overline{\OP{O}(L^{(p)})}=\OP{O}(q(L^{(p)}))$.
\end{thm}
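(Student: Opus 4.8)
The plan is to reduce the claim to checking it on a set of generators of $\OP{O}(q(L^{(p)}))$ and to lift each generator to an automorphism of $L^{(p)}$. By Proposition \ref{prop:localodd} (if $p\neq 2$) or Propositions \ref{prop:localeven} and \ref{prop:2adicunimodular} (if $p=2$), I would first write $L^{(p)}\cong\bigoplus_{k\geq 0}M_k(p^k)$ with each $M_k$ a unimodular lattice over $\Z_p$, so that $q(L^{(p)})\cong\bigoplus_{k\geq 1}q_k$, where $q_k:=q(M_k(p^k))$ is a non-degenerate form on $A_k:=p^{-k}M_k/M_k\cong M_k/p^kM_k$ (for unimodular $M_k$ one has $M_k(p^k)^\vee=p^{-k}M_k$ and $\OP{O}(M_k(p^k))=\OP{O}(M_k)$). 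The summand $M_0$ is unimodular, contributes nothing to $q(L^{(p)})$, and acts trivially on the discriminant form, so it may be ignored.

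The first step is a description of $\OP{O}(q(L^{(p)}))$ by generators. Writing $A=A(L^{(p)})$, every $\varphi\in\OP{O}(q(L^{(p)}))$ preserves the characteristic subgroups $p^jA$ and $\{x\in A\bigm|p^jx=0\}$; passing to the associated graded one sees that $\varphi$ induces on each $A_k$ (modulo the lower layers) an element of $\OP{O}(q_k)$, and that $\varphi$ is determined by these graded parts together with ``mixing'' homomorphisms $A_{k'}\rightarrow A_k$ for $k<k'$. Hence $\OP{O}(q(L^{(p)}))$ is generated by (i) the automorphisms supported on a single $q_k$, i.e.\ the image of $\OP{O}(q_k)$ acting by the identity on the remaining summands, and (ii) for $k<k'$, the unipotent automorphisms $x_k\mapsto x_k$, $x_{k'}\mapsto x_{k'}+\psi(x_{k'})$ with $\psi\in\OP{Hom}(A_{k'},A_k)$ compatible with $q$. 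For $p=2$ this description is more delicate, since the building blocks $q^{(2)}_\varepsilon(2^k),u^{(2)}(2^k),v^{(2)}(2^k)$ are subject to the relations recalled after Proposition \ref{prop:localeven}, so the layers $q_k$ and the admissible mixing maps are only well defined up to these relations.

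Next I would lift the two kinds of generators. For type (i): since $\OP{O}(M_k(p^k))=\OP{O}(M_k)$ and $\OP{O}(q_k)$ is identified with the group of automorphisms of $M_k/p^kM_k$ preserving the reduction of $\langle~,~\rangle_{M_k}$ (modulo $p^k$ if $p$ is odd, modulo $2^{k+1}$ together with the quadratic refinement if $p=2$), the needed lifting is the surjectivity of $\OP{O}(M_k)\rightarrow\OP{O}(q_k)$, which follows from a Hensel-type lifting argument for automorphisms of the unimodular lattice $M_k$ over $\Z_p$ (at $p=2$ one must use the quadratic refinement, not merely the bilinear form). For type (ii): for two orthogonal summands $M_k(p^k)$ and $M_{k'}(p^{k'})$ of $L^{(p)}$ with $k<k'$, I would exhibit explicit transvection-type automorphisms of $M_k(p^k)\oplus M_{k'}(p^{k'})$ fixing $M_k(p^k)$ pointwise and sending $f\in M_{k'}(p^{k'})$ to $f$ plus a suitable $p$-power multiple of an element of $M_k(p^k)$; since $k<k'$ the cross terms carry enough powers of $p$ that such a map is integral, and a direct computation shows it induces the prescribed mixing on $q(L^{(p)})$. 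Since these generators generate $\OP{O}(q(L^{(p)}))$, this yields $\overline{\OP{O}(L^{(p)})}=\OP{O}(q(L^{(p)}))$.

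The main obstacle is the prime $p=2$: the generator description of $\OP{O}(q(L^{(p)}))$ is complicated by the non-uniqueness of the $2$-adic decomposition, and the lifting of the type (i) generators must be done with the quadratic refinement in mind, since an automorphism of $q_k$ a priori preserves only $\langle~,~\rangle_{M_k}\bmod 2^{k+1}$; the explicit mixing maps likewise have to be checked with the correct powers of $2$. As an alternative one could realize $q(L^{(p)})$ as $q(L)$ for a global indefinite even lattice $L$ of large rank and invoke Theorem \ref{thm:nikulinsurj} for $L$, deducing surjectivity of $\OP{O}(L\otimes\Z_p)\rightarrow\OP{O}(q(L)_p)$; this route, however, still requires the cancellation statement that $\overline{\OP{O}(M\oplus U)}=\OP{O}(q(M))$ implies $\overline{\OP{O}(M)}=\OP{O}(q(M))$ for $U$ unimodular over $\Z_p$, which is a local computation of the same nature.
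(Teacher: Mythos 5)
The paper offers no proof of this statement: it is quoted directly from Nikulin \cite{nikulin79int} and then used as a black box (notably in the strong-approximation argument of Section \ref{sect:invariantsurj}), so there is no argument in the paper to measure yours against. Judged on its own terms, your strategy --- Jordan-split $L^{(p)}$ into unimodular blocks $M_k(p^k)$, describe $\OP{O}(q(L^{(p)}))$ by block-diagonal and unipotent ``mixing'' generators, and lift each generator --- is the natural one. But as written, the two load-bearing steps are described rather than carried out, and they are exactly where the content of the theorem lies. First, the generation claim for $\OP{O}(q(L^{(p)}))$: at $p=2$ the Jordan splitting, hence the layers $q_k$ and the associated-graded description of an automorphism, are not canonical (this is the non-uniqueness recorded after Proposition \ref{prop:localeven}), so the dichotomy ``block-diagonal versus mixing'' is not even well defined until you fix a splitting and prove that every automorphism of the abstract form can be factored accordingly; that is a genuine piece of work, not bookkeeping. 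Second, the lifting of the block generators at $p=2$: for an even $L^{(2)}$ the constituents $M_k$ with $k\geq 1$ are typically \emph{odd} unimodular (e.g.\ $\LF{2^k\varepsilon}=\LF{\varepsilon}(2^k)$), so the asserted ``Hensel-type lifting'' does not apply off the shelf; one must identify exactly which reductions $\OP{O}(q_k)$ preserves ($\BF{~}{~}$ modulo $2^k$ together with the quadratic refinement modulo $2^{k+1}$) and verify surjectivity onto that group, which for the small blocks $q^{(2)}_\varepsilon$, $u^{(2)}$, $v^{(2)}$ comes down to explicit case analysis. You flag both points as ``the main obstacle'' without resolving them, so the proposal is a plan rather than a proof.

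A further caution about your fallback: realizing $q(L^{(p)})$ as $q(L)_p$ for a global indefinite $L$ and invoking Theorem \ref{thm:nikulinsurj} is circular in this paper's logical structure, since Theorem \ref{thm:nikulinsurj} and the strong-approximation proposition of Section \ref{sect:invariantsurj} are themselves deduced from Theorem \ref{thm:localsurj}. If you want a self-contained argument, you must fight the local battle directly (or follow Nikulin's original treatment in \cite{nikulin79int}).
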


\section{Finite symplectic actions on the $K3$ lattice $\Lambda$}
\label{sect:symplectic}

A compact complex surface $X$ is called a $K3$ surface
 if it is simply connected and has a nowhere vanishing
 holomorphic $2$-form $\omega_X$.

\begin{defn}
For a $K3$ surface $X$,
 an automorphism $g$ of $X$ is said to be symplectic
 if $g^*\omega_X=\omega_X$.
\end{defn}

We study faithful and symplectic actions of
 finite groups on $K3$ surfaces.

\begin{nota} \label{nota:grp}
We use a Fraktur letter (e.g.\ $\MF{G}$) for
 an abstract group and use a roman letter (e.g.\ $G$)
 for a group with an action on an object
 (a lattice, a finite set,\ldots).
The abstract group structure of $G$ is denoted by $[G]$.
\end{nota}

\begin{defn}
We denote by $\MF{G}_{K3}^{\text{symp}}$
 the set of finite abstract groups
 $\MF{G}\neq 1$
 which can be realized as
 faithful and symplectic actions of groups
 on $K3$ surfaces.
\end{defn}

Mukai determined $\Gsymp$
 by listing the eleven maximal
 groups in $\Gsymp$.

\begin{thm}[\cite{mukai88}] \label{thm:mukai}
A finite abstract group $\MF{G}\neq 1$ is
 an element in $\Gsymp$
 if and only if $\MF{G}$ is
 a subgroup of the following eleven groups:
$$
 T_{48}, N_{72}, M_9, \MF{S}_5, L_2(7),
 H_{192}, T_{192}, \MF{A}_{4,4}, \MF{A}_6, F_{384},
 M_{20}.
$$
\end{thm}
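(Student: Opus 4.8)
The plan is to follow Kond\=o's argument \cite{kondo98}: realize a symplectic $G$-action on a $K3$ surface as an action on a Niemeier lattice and read off the constraint on $[G]$ from the known automorphism groups of Niemeier lattices. (Mukai's original proof \cite{mukai88}, via a trace computation and a counting argument inside the Mathieu group, is an alternative.) So let $X$ be a $K3$ surface with a faithful symplectic action of a finite group $G$, fix a marking $\Lambda\cong H^2(X,\Z)$, and form $\Lambda_G,\Lambda^G$ as in Definition \ref{defn:latticewithaction}. Since $g^{*}\omega_X=\omega_X$ for all $g\in G$, the span of $H^{2,0}(X)$, $H^{0,2}(X)$ and a K\"ahler class lies in $\Lambda^G\otimes\R$, so $\Lambda^G$ has signature $(3,r)$ with $r\ge 0$; hence $\rank\Lambda^G\ge 3$, the lattice $\Lambda_G=(\Lambda^G)^{\bot}$ is negative definite and even with $\rank\Lambda_G=22-\rank\Lambda^G\le 19$, and $(\Lambda_G\otimes\Q)^G=0$. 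By Nikulin's analysis of symplectic automorphisms of finite order \cite{nikulin79fin} (cf.\ Proposition \ref{prop:nikulinsymp}), every nontrivial $g\in G$ has $\ord(g)\le 8$ and a nonempty fixed locus consisting of finitely many points whose number depends only on $\ord(g)$, so by the Lefschetz fixed point formula $\OP{tr}(g^{*}|\Lambda)$ also depends only on $\ord(g)$. Finally, since $\Lambda$ is unimodular and $G$ fixes $\Lambda^G$ pointwise, Lemma \ref{lem:unimodaction}(3) gives $G\subseteq\OP{O}_0(\Lambda_G)$.

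Next I would pass to a Niemeier lattice. By Kond\=o's lemma (Lemma \ref{lem:kondoembed}) --- which rests on Nikulin's embedding criterion, the point being that $24-\rank\Lambda_G\ge 5$ leaves enough room --- the lattice $\Lambda_G$ admits a primitive embedding into one of the $23$ Niemeier lattices $N$ such that the $G$-action extends to $N$ with $N_G=\Lambda_G$. Then $N^G=(\Lambda_G)^{\bot}_N$ is fixed pointwise by $G$ and has rank $24-\rank\Lambda_G\ge 5$. Thus $G$ may be viewed as a finite group of isometries of a root Niemeier lattice $N$ with an invariant sublattice of rank at least $5$.

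It remains to extract the group-theoretic constraint. For such an $N$, with root sublattice $R$ of rank $24$, one has $\OP{O}(N)=W(N)\rtimes\bigl(\OP{O}(N)/W(N)\bigr)$, where $W(N)$ is the Weyl group generated by the reflections in the roots of $N$ and $\OP{O}(N)/W(N)$ is an explicitly known finite group built from the symmetries of the glue code of $R$. Kond\=o shows that a $G$ arising as above injects, via the induced homomorphism, onto a subgroup of the Mathieu group $M_{24}$ in such a way that the associated permutation action of $G$ on $24$ letters has at least $5$ orbits --- the ``$5$'' being $\rank N^G=\rank\Lambda^G+2=\dim_{\Q}H^{*}(X,\Q)^G$. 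A finite-group computation (done with GAP \cite{GAP}, say) identifies the maximal subgroups of $M_{24}$ having at least $5$ orbits on $24$ letters as exactly the eleven listed groups; every subgroup of a realized group is realized by restricting the action to the same $X$. Conversely, each of the eleven maximal groups acts on a suitable root Niemeier lattice with an invariant sublattice of rank $\ge 5$, and the corresponding coinvariant lattice carries a symplectic action of $G$ on $\Lambda$, which is realized on an actual $K3$ surface via Proposition \ref{prop:nikulinsymp} (alternatively, one exhibits the explicit models of \cite{mukai88}). This gives both implications.

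The main obstacle is the middle of the last paragraph: passing rigorously from ``$G$ acts on one of the $23$ root Niemeier lattices with an invariant sublattice of rank $\ge 5$'' to ``$[G]$ embeds in $M_{24}$ with at least $5$ orbits'' requires a case-by-case analysis of $\OP{O}(N)/W(N)$ for all $23$ lattices, together with the verification that the symplectic constraints (nontrivial invariant lattice, prescribed traces) really do cut these automorphism groups down to this uniform description, followed by the classification of the subgroups of $M_{24}$ with enough orbits. By contrast, the reductions of the first two paragraphs are routine, and the realization (converse) direction is straightforward once explicit models --- or the lattice-theoretic construction together with surjectivity of the period map --- are in hand.
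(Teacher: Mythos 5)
The paper itself offers no proof of Theorem \ref{thm:mukai}: it is quoted from \cite{mukai88} and taken as input (the resulting list of the $79$ groups in $\Gsymp$ is what drives the GAP enumeration in Section \ref{sect:embedding}), so there is no in-paper argument to compare against. Your sketch follows Kond\=o's route \cite{kondo98}, which is the strategy underlying the paper's own machinery, and your first two paragraphs are correct and consistent with Propositions \ref{prop:nikulinsymp} and \ref{prop:trace} and Lemma \ref{lem:kondoembed}: signature $(3,*)$ of $\Lambda^G$, negative definiteness and $\rank\Lambda_G\le 19$, $G\subset\OP{O}_0(\Lambda_G)$, the primitive embedding into a Niemeier lattice $N$ with the action extended so that $N_G=\Lambda_G$ and $\rank N^G\ge 5$, and the identification of $\rank N^G$ with the number of $G$-orbits on the $24$ simple roots.

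Two things in your final reduction are off. First, the correct reformulation is ``$[G]$ embeds into $M_{23}$ (a point stabilizer in $M_{24}$) with at least five orbits on the $24$ letters,'' not merely ``into $M_{24}$ with at least five orbits.'' The missing ingredient is condition (\ref{defn:mcn:invariant}) of Definition \ref{defn:mcn}: Kond\=o's embedding forces $N^G$ to contain a root (this is what rules out the Leech lattice), and for $N$ of type $A_1^{\oplus 24}$ a $G$-invariant root is exactly a $G$-fixed vertex, i.e.\ $G$ lies in $M_{23}$. Without the fixed-point condition the class of groups being classified changes, and the maximality statement for the eleven listed groups is no longer the right assertion to verify. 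Second --- and you flag this yourself --- the passage from ``$G$ acts on one of the $23$ Niemeier lattices with $\rank N^G\ge 5$ and a root in $N^G$'' to the $M_{23}$ picture is the actual content of the proof: the groups $\OP{O}(N_i,\Delta_i^+)$ tabulated in Subsection \ref{subsect:niemeier} are not given as subgroups of $M_{24}$ for $i\ne 23$, so the admissible subgroups must be analyzed lattice by lattice and matched against subgroups of the eleven target groups. Likewise, the converse direction requires checking conditions (\ref{defn:mcn:coinvariant}) and (\ref{defn:mcn:embedding}) of Definition \ref{defn:mcn} (no roots in $N_G$, and a primitive embedding $N_G\hookrightarrow\Lambda$), not just the rank count, before Proposition \ref{prop:nikulinsymp} can be invoked. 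As written, then, this is a correct plan with the decisive classification step left open, plus the $M_{24}$/$M_{23}$ imprecision that must be repaired for the plan to close.
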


There are exactly $79$ groups in $\Gsymp$.
See Table \ref{subsect:qands} for all elements in $\Gsymp$.
We use Xiao's notation \cite{xiao96}.

For a $K3$ surface $X$,
 the second integral cohomology group $H^2(X,\Z)$
 with its intersection form is isomorphic to
 the $K3$ lattice $\Lambda$ defined by
\begin{equation}
 \Lambda=
 \begin{pmatrix}0&1\\1&0\end{pmatrix}
 ^{\oplus 3}
 \oplus E_8(-1)^{\oplus 2},
\end{equation}
 which is the unique even unimodular lattice
 of signature $(3,19)$
 up to isomorphism
 (see Theorem \ref{thm:nikulinuniq}).
Here $E_8$ is the root lattice of
 type $E_8$.
The N\'eron--Severi group $\OP{NS}(X)$ of $X$
 is considered as a sublattice of $H^2(X,\Z)$.
If a group $G$ acts on $X$, the action of $G$
 induces a left action on $H^2(X,\Z)$ by
\begin{equation}
g \cdot v=(g^{-1})^*v,\quad
g\in G,v\in H^2(X,\Z).
\end{equation}
Note that if the action of $G$ is faithful,
 so is the induced action of $G$ on $H^2(X,\Z)$
 by the grobal Torelli theorem (see \cite{bhpv}).
Hence, if we take an isomorphism
 $\alpha:H^2(X,\Z)\rightarrow\Lambda$,
 the action of $G$ on $X$ induces a subgroup
 $\alpha\circ G \circ\alpha^{-1}
 \subset\OP{O}(\Lambda)$,
 which is isomorphic to $G$ as an abstract group.

We define the notion of ``finite symplectic actions
 on the $K3$ lattice.''

\begin{defn} \label{defn:sympgrp}
A finite subgroup $G\neq 1$ of
 $\OP{O}(\Lambda)$ is called
 a finite symplectic action
 on the $K3$ lattice $\Lambda$, if
 the following conditions are satisfied:
\begin{enumerate}
\renewcommand{\labelenumi}{(\arabic{enumi})}
\item
$\Lambda_{G}$ is negative definite;
\item
$\BF{v}{v}\neq -2$ for all $v\in \Lambda_{G}$.
\end{enumerate}
We denote the set of finite symplectic actions
 on the $K3$ lattices $\Lambda$ by $\MC{L}$.
Note that the finiteness of $G$ follows from
 the condition (1).
\end{defn}

Definition \ref{defn:sympgrp} is justified due to
 the following:

\begin{prop}[\cite{nikulin79fin}] \label{prop:nikulinsymp}
If a finite group $G$ acts
 on a $K3$ surface $X$ faithfully and symplectically,
 then $H^2(X,\Z)_G \subset \OP{NS}(X)$ and
 the induced subgroup of $\OP{O}(\Lambda)$
 is an element in $\MC{L}$.
Conversely, any element in $\MC{L}$
 is induced by a symplectic action
 of a finite group
 on a $K3$ surface.
\end{prop}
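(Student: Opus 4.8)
The plan is to prove the two implications separately: the forward one by Hodge theory on $X$, the converse by the surjectivity of the period map together with the (strong) Torelli theorem (see \cite{bhpv}). For the forward implication, suppose $G$ acts faithfully and symplectically on $X$ and fix an isometry $H^2(X,\Z)\cong\Lambda$, so that $G$ becomes a subgroup of $\OP{O}(\Lambda)$, faithful by the global Torelli theorem. First I would observe that $g^*\omega_X=\omega_X$ forces $G$ to act trivially on $H^{2,0}(X)=\C\omega_X$, hence on $H^{0,2}(X)$, while respecting the Hodge decomposition; consequently $\OP{NS}(X)=H^{1,1}(X)\cap H^2(X,\Z)$ and its orthogonal complement $T(X)$ are $G$-stable. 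Next I would show $G$ acts trivially on $T(X)$: the transcendental Hodge structure $T(X)\otimes\Q$ has no nonzero proper sub-Hodge structure (for such a $W$, either $\omega_X\in W\otimes\C$, whence $W^\bot\otimes\C\subset H^{1,1}(X)$ forces $W^\bot=0$, or $\omega_X\notin W\otimes\C$, whence $W\otimes\C\subset H^{1,1}(X)$ forces $W=0$), so $(T(X)\otimes\Q)^G$ — a sub-Hodge structure whose complexification contains $\omega_X$ — must be all of $T(X)\otimes\Q$. Hence $T(X)\subset H^2(X,\Z)^G$, and taking orthogonal complements, $H^2(X,\Z)_G\subset T(X)^\bot=\OP{NS}(X)$.

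To obtain the two conditions of Definition \ref{defn:sympgrp}, I would average a Kähler class over $G$ to get a $G$-invariant Kähler class $\kappa$, with $\kappa^2>0$ and $\BF{\kappa}{\omega_X}=0$; together with the positive-definite real plane underlying $H^{2,0}(X)\oplus H^{0,2}(X)$, this exhibits a positive-definite $3$-dimensional subspace inside $H^2(X,\R)^G=H^2(X,\Z)^G\otimes\R$. Since $\sign H^2(X,\Z)=(3,19)$ and $H^2(X,\Z)^G$ is non-degenerate, the complementary lattice $H^2(X,\Z)_G$ is negative definite, which is condition (1). For condition (2), if some $v\in H^2(X,\Z)_G$ had $\BF{v}{v}=-2$, then $v\in\OP{NS}(X)$ and Riemann--Roch would make $v$ or $-v$ effective, say $v$; being a nonzero effective class, it satisfies $\BF{v}{\kappa}>0$, contradicting $v\bot H^2(X,\R)^G\ni\kappa$. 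Thus the induced subgroup of $\OP{O}(\Lambda)$ lies in $\MC{L}$.

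For the converse, given $G\in\MC{L}$, the invariant lattice $\Lambda^G=(\Lambda_G)^\bot$ has signature $(3,19-\rank\Lambda_G)$ and $G$ fixes it pointwise. I would choose $\omega\in\Lambda^G\otimes\C$ with $\BF{\omega}{\omega}=0$, $\BF{\omega}{\bar\omega}>0$ and $\omega^\bot\cap\Lambda^G=0$ (a generic choice, available since $\Lambda^G$ has at least two positive eigenvalues), then use the surjectivity of the period map to get a $K3$ surface $X$ and an isometry $\phi\colon\Lambda\rightarrow H^2(X,\Z)$ with $\phi_\C(\C\omega)=H^{2,0}(X)$; put $G'=\phi G\phi^{-1}$. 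Every $g'\in G'$ fixes $\phi_\C(\omega)$, hence preserves $H^{2,0}(X)$ and the whole Hodge decomposition, i.e.\ is a Hodge isometry; and by genericity $\OP{NS}(X)=\phi(\Lambda_G)$ contains no $(-2)$-vector, so the Kähler cone of $X$ equals one component $\MC{C}^+$ of the positive cone of $H^{1,1}(X)_\R$. Since $g'$ fixes $\phi(\Lambda^G)\otimes\R$ pointwise, it fixes some $\kappa_0$ in that subspace with $\kappa_0^2>0$, and after a sign change $\kappa_0\in\MC{C}^+$ is a Kähler class; so the strong Torelli theorem realizes each $g'$ as a unique automorphism of $X$, yielding a faithful action of $G'\cong G$ on $X$. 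This action is symplectic because $g'$ fixes $\phi_\C(\omega)$, hence fixes $\omega_X$, and $\phi$ is the required $G$-equivariant isometry $\Lambda\cong H^2(X,\Z)$. (If one prefers $X$ projective, take $\omega$ generic instead in a signature-$(2,19-\rank\Lambda_G)$ sublattice of $\Lambda^G$.)

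The routine part is the forward direction, which is a sequence of standard Hodge-theoretic observations. The hard part will be the converse, where one must verify that each transported isometry $g'$ preserves the Kähler cone before invoking the strong Torelli theorem; the point that makes this go through is that $G$ acts trivially on the invariant lattice $\Lambda^G$, so every $g'$ fixes an entire positive ray in $H^{1,1}(X)_\R$, and the hypothesis that $\Lambda_G$ contains no $(-2)$-vector is exactly what turns such a ray into a Kähler class.
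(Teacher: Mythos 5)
The paper itself offers no proof of this proposition: it is imported as a black box from Nikulin \cite{nikulin79fin}, so there is no internal argument to measure yours against. Your reconstruction is correct and follows the standard (essentially Nikulin's) route in both directions: Hodge theory plus the averaged K\"ahler class for the forward implication, and surjectivity of the period map plus the strong Torelli theorem for the converse, with the absence of $(-2)$-vectors in $\Lambda_G$ used exactly where it must be --- to identify the K\"ahler cone with a full component of the positive cone, so that the positive invariant class fixed by each $g'$ is automatically K\"ahler and Torelli applies. Two small remarks. First, the detour through irreducibility of $T(X)\otimes\Q$ is more than you need for the inclusion $H^2(X,\Z)_G\subset\OP{NS}(X)$: since $G$ is finite, $H^2(X,\Z)^G\otimes\C=H^2(X,\C)^G\ni\omega_X$, so $H^2(X,\Z)_G=(H^2(X,\Z)^G)^\bot$ is orthogonal to $\omega_X$, and the inclusion is immediate from Lefschetz $(1,1)$. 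Second, your irreducibility argument tacitly uses that $\OP{NS}(X)$ (equivalently $T(X)$) is non-degenerate when it concludes $W=0$ from $W\subset\OP{NS}(X)\otimes\Q\cap T(X)\otimes\Q$; this holds because such an $X$ is projective, but that is itself a small theorem, whereas the one-line argument above sidesteps the issue entirely. Neither point is a gap of substance; the proof stands.
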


A $K3$ surface which admits a symplectic action
 of a finite group is characterized
 by coinvariant lattices $\Lambda_G$ of $G\in\MC{L}$.

\begin{prop}[\cite{nikulin79fin}] \label{prop:nikulinchara}
Let $\MF{G}\in\Gsymp$.
A $K3$ surface $X$ admits a symplectic action of $\MF{G}$
 if and only if
 there exists a primitive embedding
 $\Lambda_G \hookrightarrow \OP{NS}(X)$
 for some $G\in \MC{L}$ such that $[G]=\MF{G}$.
\end{prop}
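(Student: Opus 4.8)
The plan is to prove both directions using the characterization of $\MC{L}$ from Definition \ref{defn:sympgrp} and Proposition \ref{prop:nikulinsymp}, together with the lattice-theoretic results collected in Section \ref{sect:lattice}. First I would fix $\MF{G}\in\Gsymp$ and recall from Proposition \ref{prop:nikulinsymp} (applied to any $K3$ surface realizing a faithful symplectic action of $\MF{G}$, which exists by definition of $\Gsymp$) that there is at least one $G\in\MC{L}$ with $[G]=\MF{G}$; this $G$ and its coinvariant lattice $\Lambda_G$ will be the one I use throughout. Note $\Lambda_G$ is negative definite and contains no vectors of square $-2$, by the two conditions defining $\MC{L}$.

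For the forward direction, suppose $X$ admits a faithful symplectic action of $\MF{G}$. By Proposition \ref{prop:nikulinsymp}, the induced subgroup $G'\subset\OP{O}(H^2(X,\Z))\cong\OP{O}(\Lambda)$ lies in $\MC{L}$, we have $H^2(X,\Z)_{G'}\subset\OP{NS}(X)$, and $[G']=\MF{G}$. The only gap is that Main Theorem (the uniqueness statement) has not yet been established at this point in the paper, so I cannot simply identify $\Lambda_{G'}$ with $\Lambda_G$ for the particular $G$ chosen above. However, for the statement of this proposition it suffices to produce \emph{some} $G\in\MC{L}$ with $[G]=\MF{G}$ and a primitive embedding $\Lambda_G\hookrightarrow\OP{NS}(X)$: take $G=G'$ and the embedding $\Lambda_{G'}=H^2(X,\Z)_{G'}\hookrightarrow\OP{NS}(X)$, which is primitive since the coinvariant lattice is by definition the orthogonal complement of $H^2(X,\Z)^{G'}$ in the unimodular lattice $H^2(X,\Z)$, and the orthogonal complement of a primitive sublattice is primitive. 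This settles the ``only if'' direction. (I should phrase the statement so that the $G$ in it is existentially quantified, which the LaTeX already does.)

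For the converse, suppose $G\in\MC{L}$ with $[G]=\MF{G}$ and a primitive embedding $j\colon\Lambda_G\hookrightarrow\OP{NS}(X)$ are given. By Proposition \ref{prop:nikulinsymp} there is a $K3$ surface $Y$ with a faithful symplectic action of (an abstract copy of) $\MF{G}$ inducing exactly the given $G\in\MC{L}$; so $H^2(Y,\Z)_G\cong\Lambda_G$ as $G$-lattices, and in particular $H^2(Y,\Z)^G$ is a lattice containing $\OP{NS}(X)\ominus j(\Lambda_G)$-type data only abstractly. To transport the action from $Y$ to $X$ I would use the surjectivity of the period map for $K3$ surfaces together with Lemma \ref{lem:unimodaction}: the primitive embedding $j$, combined with Lemma \ref{lem:unimodembed} applied to the splitting $\Lambda\supset \Lambda_G\oplus(\Lambda_G)^\bot$, lets me extend the $G$-action on $\Lambda_G$ to a $G$-action on all of $H^2(X,\Z)$ that is trivial on the orthogonal complement of $\Lambda_G$ (here one uses that $G$ acts trivially on $A(\Lambda_G)$ by Lemma \ref{lem:unimodaction}(2)–(3), so the action extends by the identity on the complement and is compatible with the glueing). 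One then checks that this isometry group preserves the period (since it fixes $\omega_X$, which lies in $(\Lambda_G\otimes\C)^\bot$ as $\Lambda_G$ is algebraic) and preserves the Kähler cone (using condition (2) of Definition \ref{defn:sympgrp}: no $-2$-vectors in $\Lambda_G$ means no obstruction from $(-2)$-curves to be dealt with), so by the global Torelli theorem and its equivariant refinement it is induced by an automorphism group of $X$; that group is symplectic because it fixes $\omega_X$, and faithful because the action on cohomology is. This is the direction Nikulin proves in \cite{nikulin79fin}, so I would cite that argument for the details.

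The main obstacle is the converse: verifying that the lattice isometry group one builds actually preserves the Kähler cone and hence lifts to genuine automorphisms. This is exactly where condition (2) of Definition \ref{defn:sympgrp} (absence of $(-2)$-vectors in $\Lambda_G$) is essential—without it the extended isometries might not preserve any chamber of the positive cone cut out by the $(-2)$-walls, and one could only conclude the existence of an automorphism group up to composition with a reflection group. I expect the cleanest route is to invoke Nikulin's equivariant Torelli-type argument from \cite{nikulin79fin} verbatim rather than reprove it, and to keep the lattice-theoretic bookkeeping (extension of the action across the glueing via Lemmas \ref{lem:unimodembed} and \ref{lem:unimodaction}) as the only thing spelled out in detail.
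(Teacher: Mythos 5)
The paper gives no proof of this proposition: it is quoted directly from Nikulin \cite{nikulin79fin}, so there is no in-paper argument to compare yours against. Your sketch is nonetheless a faithful reconstruction of the standard argument: the forward direction is complete and correct (taking $G=G'$ induced by the given action, with primitivity of $H^2(X,\Z)_{G'}$ in $\OP{NS}(X)$ inherited from primitivity in $H^2(X,\Z)$), and the converse correctly isolates the essential ingredients --- triviality of $G$ on $A(\Lambda_G)$ via Lemma \ref{lem:unimodaction}(3), extension of the action across the primitive embedding by Lemma \ref{lem:unimodaction}(1) acting trivially on the orthogonal complement (hence fixing $\omega_X$), and the global Torelli theorem. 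The one point you leave to the citation, that a $G$-averaged K\"ahler class can be arranged to avoid all $(-2)$-walls using the absence of $(-2)$-vectors in $\Lambda_G$, is indeed the crux of Nikulin's argument and is reasonably deferred to \cite{nikulin79fin}; the auxiliary surface $Y$ and the appeal to Lemma \ref{lem:unimodembed} in your converse are superfluous but harmless.
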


Now we consider extensions of symplectic actions.

\begin{prop} \label{prop:extensionofaction}
Suppose that a finite group $G$ acts
 on a $K3$ surface $X$ faithfully and symplectically.
Then the action of $G$ on $X$ is
 extended to a faithful and symplectic action of
 $G':=\OP{O}_0(H^2(X,\Z)_G)$.
\end{prop}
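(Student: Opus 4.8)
The plan is to first build $G'=\OP{O}_0(H^2(X,\Z)_G)$ as a group of lattice isometries of $\Lambda:=H^2(X,\Z)$ extending the given action of $G$, and then to realize it by automorphisms of $X$ via the Torelli theorem. Write $S=\Lambda_G$ and $T=\Lambda^G=S^\bot$; by Proposition \ref{prop:nikulinsymp}, $S$ is negative definite, has no $(-2)$-vectors, and $S\subset\OP{NS}(X)$, so in particular $\OP{O}(S)$, hence $G'=\OP{O}_0(S)$, is finite. Since $\Lambda$ is unimodular and $S$ is a non-degenerate primitive sublattice, Lemma \ref{lem:unimodaction}(1) extends each $g\in\OP{O}_0(S)$ to an isometry of $\Lambda$ acting trivially on $T$; this extension is unique because an isometry of $\Lambda$ is determined by its restrictions to $S$ and to $S^\bot=T$, which together span a finite-index subgroup of $\Lambda$. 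Thus we obtain an injective homomorphism $\OP{O}_0(S)\hookrightarrow\OP{O}(\Lambda)$ whose image $G'$ fixes $T$ pointwise. Conversely, the original $G$-action on $\Lambda$ restricts on $S=\Lambda_G$ to a subgroup of $\OP{O}_0(S)$ (Lemma \ref{lem:unimodaction}(3)) and is trivial on $T$ by definition of $\Lambda^G$; by the same uniqueness it agrees with the extension of that restriction, so $G\subset G'$ inside $\OP{O}(\Lambda)$ and $G'\cong\OP{O}_0(H^2(X,\Z)_G)$.

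Next I check that $G'$ acts by Hodge isometries. Since $\omega_X$ is orthogonal to $\OP{NS}(X)\supset S$, we have $\omega_X\in(S^\bot)\otimes\C=T\otimes\C$; as $G'$ fixes $T$ pointwise, every $g\in G'$ fixes $\omega_X$, hence preserves the Hodge structure of $\Lambda$ (and in particular preserves $\OP{NS}(X)=\omega_X^\bot\cap\Lambda$).

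Now I invoke the geometry. Averaging an ample class $a_0$ over $G$ gives a $G$-invariant ample class $a=\tfrac1{|G|}\sum_{h\in G}h\cdot a_0$, using that the ample cone is open and convex and each $h\cdot a_0$ is ample. Being $G$-fixed, $a$ lies in $\Lambda^G=T$, hence is fixed by all of $G'$. Fix $g\in G'$. Then $g$ (equivalently $g^{-1}$) is a Hodge isometry of $\Lambda$ fixing the ample class $a$; since $a$ lies in the interior of the Kähler cone and the image of the Kähler cone under a Hodge isometry fixing a point of it is again a chamber (for the Weyl group action on the positive cone) meeting the Kähler cone, $g^{-1}$ must map the Kähler cone onto itself. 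By the strong global Torelli theorem \cite{bhpv} there is a unique automorphism $\tilde g$ of $X$ with $\tilde g^{*}=g^{-1}$, i.e.\ inducing $g$ under the convention $g\cdot v=(\tilde g^{-1})^{*}v$. Uniqueness in Torelli forces $g\mapsto\tilde g$ to be a homomorphism $G'\to\OP{Aut}(X)$; it is injective because $\tilde g$ induces $g$ on $\Lambda$ and $G'\subset\OP{O}(\Lambda)$; it is symplectic because each $g$ fixes $\omega_X$; and it restricts to the given $G$-action because, for $g\in G\subset G'$, the isometry $g\in\OP{O}(\Lambda)$ is precisely the one induced by the original automorphism of $X$, which therefore coincides with $\tilde g$ by uniqueness.

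The only non-formal ingredient is the appeal to the \emph{strong} form of the Torelli theorem (a Hodge isometry carrying the Kähler cone to itself is induced by an automorphism). The rest is bookkeeping around the finite-index inclusion $S\oplus S^\bot\subset\Lambda$, the crucial simplification being that a $G$-invariant ample class automatically lies in $\Lambda^G$ and is thus $G'$-invariant, so there is no need to correct $g$ by a Weyl-group element in order to land in the Kähler cone.
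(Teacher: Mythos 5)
Your proof follows the same route as the paper's: extend $G'=\OP{O}_0(H^2(X,\Z)_G)$ to an action on $H^2(X,\Z)$ that is trivial on the invariant part via Lemma \ref{lem:unimodaction}(1), observe that $G'$ then fixes $\omega_X$ and a $G$-invariant positive class, and apply the global Torelli theorem. The extra bookkeeping you supply (uniqueness of the extension on the finite-index sublattice $S\oplus S^\bot$, the inclusion $G\subset G'$ via Lemma \ref{lem:unimodaction}(3), the chamber argument showing that a Hodge isometry fixing a K\"ahler class preserves the K\"ahler cone) is correct and simply makes explicit what the paper leaves implicit.

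The one step that does not go through as written is the averaging of an \emph{ample} class. The $K3$ surfaces of this paper are compact complex surfaces, not assumed projective, and the hypothesis of the proposition does not force projectivity: for a symplectic involution, for instance, the coinvariant lattice is $E_8(-2)$, and a very general period orthogonal to it yields $\OP{NS}(X)\cong E_8(-2)$, which is negative definite, so $X$ is not projective even though it carries the involution. For such $X$ there is no ample class $a_0$ to average. The repair is exactly what the paper does: average a K\"ahler class $\kappa$ instead, using that the K\"ahler cone is open and convex, so that $\frac{1}{|G|}\sum_{h\in G}h\cdot\kappa$ is again K\"ahler and lies in $H^2(X,\R)^G=H^2(X,\R)^{G'}$; the remainder of your Torelli argument then applies verbatim with this class in place of $a$.
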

\begin{proof}
[Proof (cf.\ \cite{nikulin79fin}).]
By Lemma \ref{lem:unimodaction}(1), the action of $G$ on
 $H^2(X,\Z)$ is extended to that of $G'$ such that
\begin{equation} \label{ggdash}
 H^2(X,\Z)^G=H^2(X,\Z)^{G'}.
\end{equation}
By the definition of a symplectic action,
 we have $\omega_X\in H^2(X,\C)^G$.
Since $G$ is a finite group, there exists
 a $G$-invariant K\"ahler $(1,1)$-form
 $\kappa\in H^2(X,\R)^G$.
By (\ref{ggdash}), the action of $G'$ also fixes
 $\omega_X$ and $\kappa$.
By the grobal Torelli theorem for $K3$ surfaces,
 the action of $G'$ on $H^2(X,\Z)$ is induced by
 that on $X$.
Since the action of $G'$ fixes $\omega_X$,
 the action of $G'$ on $X$ is symplectic.
\end{proof}

\begin{defn} \label{defn:closg}
For $G\in\MC{L}$, we define $\OP{Clos}(G)$ by
\begin{equation}
 \OP{Clos}(G)=\OP{O}_0(\Lambda_G).
\end{equation}
\end{defn}

By Lemma \ref{lem:unimodaction}(1),
 the action of $G$ on $\Lambda$ is extended to
 that of $\OP{Clos}(G)$ such that
 $\Lambda_G=\Lambda_{\OP{Clos}(G)}$,
 and $\OP{Clos}(G)$ is considered
 as an element in $\MC{L}$
 (see Definition \ref{defn:sympgrp}).
We define the subset $\MC{L}_\clos$ of $\MC{L}$ by
\begin{equation}
 \MC{L}_\clos=\{ G\in\MC{L} \bigm| \OP{Clos}(G)=G \}.
\end{equation}
By the following proposition,
 $\rank \Lambda_G$ depends only on
 the structure of $G$ as an abstract group.

\begin{prop}[\cite{nikulin79fin,mukai88}]
\label{prop:trace}
Let $g$ be an element in $O(\Lambda)$
 such that the group $\langle g \rangle$ generated by $g$
 is an element in $\MC{L}$.
Then $\ord(g)\leq 8$ and $\OP{Tr}(g;\Lambda)=\chi(g)-2$,
 where
\begin{equation}
 \chi(g)=24,8,6,4,4,2,3,2
 \text{~~if~~} \ord(g)=1,2,3,4,5,6,7,8.
\end{equation}
Hence, for $G\in\MC{L}$,
\begin{equation}
 \rank \Lambda_G=c(G):=24-
 \frac{1}{|G|} \sum_{g\in G} \chi(g).
\end{equation}
In particular, $c(G)=c(\OP{Clos}(G))$.
\end{prop}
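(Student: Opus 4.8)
The plan is to realise $g$ geometrically and apply the two Lefschetz fixed-point formulas. Since $\LF{g}\in\MC{L}$, Proposition~\ref{prop:nikulinsymp} provides a $K3$ surface $X$ with a symplectic automorphism $\phi$ of order $n:=\ord(g)$ and a $\LF{g}$-equivariant isometry $H^2(X,\Z)\cong\Lambda$. As $\phi$ preserves the integral intersection form, the characteristic polynomial of $\phi^*$ has integer (hence real) coefficients, so $\OP{Tr}(\phi^*;H^2)=\OP{Tr}((\phi^*)^{-1};H^2)$ and therefore $\OP{Tr}(g;\Lambda)=\OP{Tr}(\phi^*;H^2(X,\Z))$. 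A $K3$ surface has $H^1=H^3=0$, and $\phi^*$ acts as the identity on $H^0$ and $H^4$, so the topological Lefschetz formula gives
\begin{equation*}
 \#\OP{Fix}(\phi)=2+\OP{Tr}(\phi^*;H^2(X,\Z)).
\end{equation*}
Thus it remains to show $n\le8$ and to count $\OP{Fix}(\phi)$ (the case $n=1$ being trivial, with $\OP{Fix}(\phi)=X$ of Euler number $24$).

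Assume $\phi\neq\mathrm{id}$. Linearising the $\LF{\phi}$-action at a fixed point $p$ and using $\phi^*\omega_X=\omega_X$ gives $d\phi_p\in\OP{SL}(T_pX)$; if $1$ were an eigenvalue then $d\phi_p=\mathrm{id}$, so $\phi$ would fix a neighbourhood of $p$, hence all of $X$---impossible. The same reasoning shows $d\phi_p$ has order exactly $n$. Hence $\OP{Fix}(\phi)$ is finite and each $p\in\OP{Fix}(\phi)$ has a type $j\in(\Z/n)^\times/\{\pm1\}$ with $d\phi_p$ conjugate to $\OP{diag}(\zeta_n^{\,j},\zeta_n^{-j})$; write $a_j$ for the number of such $p$. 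Applying the holomorphic Lefschetz (Atiyah--Bott) formula to $\phi^k$ for each $k$ coprime to $n$ (so $\LF{\phi^k}=\LF{\phi}$, $\OP{Fix}(\phi^k)=\OP{Fix}(\phi)$, and a type-$j$ point becomes type-$kj$ for $\phi^k$), and noting that a symplectic $\phi$ acts trivially on $H^0(\MC{O}_X)$ and, by Serre duality, on $H^2(\MC{O}_X)$ while $H^1(\MC{O}_X)=0$, so the holomorphic Lefschetz number equals $2$, one obtains
\begin{equation*}
 \sum_j\frac{a_j}{c_{kj}}=2,\qquad c_l:=|1-\zeta_n^{\,l}|^2=2-2\cos(2\pi l/n).
\end{equation*}
Averaging over $k\in(\Z/n)^\times$, using that $k\mapsto kj$ permutes $(\Z/n)^\times$, together with the identity $\sum_{l=1}^{m-1}(2-2\cos(2\pi l/m))^{-1}=(m^2-1)/12$ and M\"obius inversion, collapses this to
\begin{equation*}
 \#\OP{Fix}(\phi)=\sum_j a_j=\frac{24\,\varphi(n)}{J_2(n)}=\frac{24}{\psi(n)},
\end{equation*}
with $J_2$ the Jordan totient and $\psi$ the Dedekind $\psi$-function. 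In particular $\psi(n)\mid24$, so $n\le24$; and for $n\ge9$ one has $c_1<\tfrac12$, so the $k=1$ equation forces $a_j=0$ whenever $c_j<\tfrac12$, after which $\sum_j a_j/c_j<2$ for every remaining candidate vector $(a_j)$ with $\sum_j a_j=24/\psi(n)\le2$---a contradiction. Hence $n\le8$, and plugging $n=1,\dots,8$ into the formula gives $\#\OP{Fix}(\phi)=24,8,6,4,4,2,3,2=\chi(g)$, i.e.\ $\OP{Tr}(g;\Lambda)=\chi(g)-2$.

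For the remaining assertions: the operator $\tfrac1{|G|}\sum_{g\in G}g$ on $\Lambda\otimes\Q$ is the projector onto $(\Lambda\otimes\Q)^G$, so its trace $\tfrac1{|G|}\sum_{g\in G}\OP{Tr}(g;\Lambda)$ equals $\rank\Lambda^G$; hence $\rank\Lambda^G=\tfrac1{|G|}\sum_{g\in G}\chi(g)-2$ and
\begin{equation*}
 \rank\Lambda_G=\rank\Lambda-\rank\Lambda^G=24-\frac1{|G|}\sum_{g\in G}\chi(g)=c(G).
\end{equation*}
Finally, $\OP{Clos}(G)=\OP{O}_0(\Lambda_G)$ acts on $\Lambda$ with $\Lambda_{\OP{Clos}(G)}=\Lambda_G$ by Lemma~\ref{lem:unimodaction}(1), so $c(\OP{Clos}(G))=\rank\Lambda_{\OP{Clos}(G)}=\rank\Lambda_G=c(G)$.

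The part requiring real care is the passage from the holomorphic Lefschetz relations to the bound $n\le8$: once the averaged identity $\#\OP{Fix}(\phi)=24/\psi(n)$ is in hand, one must still exclude the finitely many residual $n$ (namely $9,11,12,14,15,16,23$), which is where the actual case analysis lives---either by the elementary estimate sketched above, or, more structurally, by showing that the group matrix $\bigl(c_{kj}^{-1}\bigr)$ of $(\Z/n)^\times/\{\pm1\}$ is invertible (nonvanishing of the character sums of $l\mapsto c_l^{-1}$), which would force $(a_j)$ to be the constant $48/J_2(n)\notin\Z$. A more geometric route---resolving $X/\LF{\phi}$ to another $K3$ surface and comparing Euler numbers---gives the prime-order counts cleanly but needs careful bookkeeping of the quotient singularities produced by the intermediate powers $\phi^k$ in the composite-order cases.
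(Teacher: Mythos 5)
The paper does not prove this proposition at all: it is quoted from \cite{nikulin79fin,mukai88}, and the only "proof" in the source is the citation. Your argument is a correct reconstruction of Nikulin's original one --- topological Lefschetz to convert $\OP{Tr}(g;\Lambda)$ into a fixed-point count, holomorphic Lefschetz applied to all powers $\phi^k$ with $\gcd(k,n)=1$ to pin that count down to $24/\psi(n)$, and a finite check to kill the residual orders with $\psi(n)\mid 24$ --- so it supplies exactly what the paper outsources. Two small points deserve tightening. First, "the characteristic polynomial has integer coefficients" does not by itself give $\OP{Tr}(\phi^*)=\OP{Tr}((\phi^*)^{-1})$; the clean justification is either that $\phi^*$ preserves the nondegenerate form (so $(\phi^*)^{-1}$ is conjugate to the transpose of $\phi^*$) or that $\phi^*$ has finite order, so its eigenvalues are roots of unity closed under $\zeta\mapsto\bar\zeta=\zeta^{-1}$. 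Second, the elimination of $n\in\{9,11,12,14,15,16,23\}$ is asserted rather than carried out; it does go through (after discarding the types $j$ with $|1-\zeta_n^j|^2<\tfrac12$, the at most two remaining fixed points contribute strictly less than $2$ to the $k=1$ Lefschetz sum in each of the seven cases), but as written it is a deferred finite verification, and you correctly flag it as the only place where real case analysis lives. The deduction of $\rank\Lambda_G=c(G)$ via the averaging projector and of $c(G)=c(\OP{Clos}(G))$ via $\Lambda_{\OP{Clos}(G)}=\Lambda_G$ is exactly what the paper intends.
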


\section{Embeddings of $\Lambda_G$ into Niemeier lattices}
\label{sect:embedding}

In this paper, a Niemeier lattice is a negative definite
 even unimodular lattice of rank $24$
 which is not isomorphic to the negative Leech lattice.
Here the negative Leech lattice is
 the unique negative definite even unimodular lattice of rank $24$
 which has no vector $v$ such that $\BF{v}{v}=-2$
 (cf.\ \cite{SP}).
In this section,
We study primitive embeddings
 of $\Lambda_G$ into Niemeier lattices.

\begin{defn} \label{defn:mcn}
Let $\MC{N}$ denote the set of isomorphism classes of
 $G$-lattices $(G,N)$
 which satisfy the following conditions:
\begin{enumerate}
\renewcommand{\labelenumi}{(\arabic{enumi})}
\item \label{defn:mcn:niemeier}
$G\neq 1$ and $N$ is a Niemeier lattice;
\item \label{defn:mcn:invariant}
there exists a vector $v\in N^{G}$ such that
 $\BF{v}{v}=-2$;
\item \label{defn:mcn:coinvariant}
there exists no vector $v\in N_G$ such that
 $\BF{v}{v}=-2$;
\item \label{defn:mcn:embedding}
there exists a primitive embedding
 $N_G\hookrightarrow\Lambda$.
\end{enumerate}
\end{defn}

\begin{lem}[\cite{kondo98}] \label{lem:kondoembed}
For any $G\in\MC{L}$,
 $(G,\Lambda_G)\cong(G',N_{G'})$
 for some $(G',N)\in\MC{N}$.
%
Conversely, if $(G',N)\in\MC{N}$, then there exists
 an element $G\in\MC{L}$ such that
 $(G,\Lambda_G)\cong(G',N_{G'})$.
\end{lem}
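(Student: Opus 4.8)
The plan is to establish the two directions of the correspondence separately, using Kond\=o's embedding technique and the classification of Niemeier lattices together with the lattice-theoretic facts recalled in Section \ref{sect:lattice}.

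For the first direction, start with $G\in\MC{L}$. By definition $\Lambda_G$ is negative definite and has no vector of square $-2$, and $\Lambda^G=(\Lambda_G)^\bot_\Lambda$ is an even lattice of signature $(3,19-c(G))$ with $c(G)=\rank\Lambda_G\leq 19$ by Proposition \ref{prop:trace}. The key numerical input is that $\rank\Lambda_G\leq 19$, so $\rank\Lambda_G + \OP{rank}(\text{a suitable complement}) = 24$ is achievable: one wants to glue $\Lambda_G$ to a negative definite even lattice $K$ of rank $24-c(G)$ so that the result is a Niemeier lattice $N$. Concretely, by Lemma \ref{lem:unimodembed} (Nikulin's existence/uniqueness of unimodular overlattices via isomorphisms of discriminant forms), it suffices to find a negative definite even lattice $K$ with $q(K)\cong -q(\Lambda_G)$ of rank $24-c(G)$; such a $K$ exists because $q(\Lambda^G)\cong -q(\Lambda_G)$ and one can replace the indefinite $\Lambda^G$ by a definite lattice with the same discriminant form using Nikulin's existence theorem for even lattices with prescribed signature and discriminant form (the rank and signature bounds $\rank K = 24-c(G) \geq l(A(\Lambda_G))$ hold since $\rank\Lambda_G \geq l(A(\Lambda_G))$ and $\rank\Lambda^G \geq l(A(\Lambda^G)) = l(A(\Lambda_G))$). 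The resulting $N$ is negative definite even unimodular of rank $24$, and $\Lambda_G\hookrightarrow N$ is primitive; the $G$-action on $\Lambda_G$ extends to $N$ acting trivially on $K$ by Lemma \ref{lem:unimodaction}(1) applied with $G\subset\OP{O}_0(\Lambda_G)$ — here one uses that $G$ acts trivially on $A(\Lambda_G)$, which follows from Lemma \ref{lem:unimodaction}(3) since $\Lambda$ is unimodular and $\Lambda_G$ is non-degenerate. Then $N_G=\Lambda_G$, so conditions (\ref{defn:mcn:coinvariant}) and (\ref{defn:mcn:embedding}) of Definition \ref{defn:mcn} hold; condition (\ref{defn:mcn:invariant}), the existence of a $(-2)$-vector in $N^G=K$, must be arranged by choosing $K$ to contain roots, which is possible because a negative definite even lattice of rank $\geq 1$ with the right discriminant form can always be taken to have a $(-2)$-vector (e.g.\ by adding an $A_1$ summand and adjusting) provided $\rank K\geq 1$, i.e.\ $c(G)\leq 23$, which holds. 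Finally one checks $N$ is not the Leech lattice precisely because it has a $(-2)$-vector. Setting $G'$ to be the image of $G$ in $\OP{O}(N)$ gives $(G',N)\in\MC{N}$ with $(G,\Lambda_G)\cong(G',N_{G'})$.

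For the converse, given $(G',N)\in\MC{N}$, condition (\ref{defn:mcn:embedding}) supplies a primitive embedding $N_{G'}\hookrightarrow\Lambda$. One must check $N_{G'}$ with its $G'$-action lies in $\MC{L}$: it is negative definite (being a sublattice of the negative definite $N$) and has no $(-2)$-vector by (\ref{defn:mcn:coinvariant}). So it remains to produce $G\subset\OP{O}(\Lambda)$ with $\Lambda_G = $ (image of $N_{G'}$) and $[G]=[G']$. Here the argument is: $N_{G'}^\bot$ in $N$ contains $N^{G'}$, and since $N$ is unimodular, $q(N_{G'})\cong -q((N_{G'})^\bot_N)$; one extends the embedding $N_{G'}\hookrightarrow\Lambda$ to the whole $G'$-action on $\Lambda$ by first extending $G'$ on $N_{G'}$ trivially on the orthogonal complement inside a unimodular lattice, and checking the signature/discriminant-form matching so that $(N_{G'})^\bot_\Lambda$ exists as an even lattice of signature $(3,19-c)$ with the correct discriminant form — again by Lemma \ref{lem:unimodembed} and Nikulin's existence theorem. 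Then $G:=$ image of $G'$ acting on $\Lambda$ satisfies $\Lambda_G=N_{G'}$ (since $G$ acts trivially on the complement), so $G\in\MC{L}$ and $(G,\Lambda_G)\cong(G',N_{G'})$.

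The main obstacle I expect is the bookkeeping around discriminant forms and existence of the auxiliary definite lattices: one needs to verify in each direction that Nikulin's existence criteria (the inequalities relating rank, length of the discriminant group, and the $2$-adic and $p$-adic conditions) are met, and that the extension of the group action to the unimodular overlattice is well-defined — this is exactly where Lemma \ref{lem:unimodaction}(1)--(3) is doing the real work, ensuring $G$ acts trivially on $A(\Lambda_G)$ so that the identity on the complement glues to a genuine automorphism. A secondary subtlety is ensuring the Niemeier lattice produced in the first direction is genuinely \emph{not} the Leech lattice, but this is immediate from condition (\ref{defn:mcn:invariant})/(\ref{defn:mcn:coinvariant}): the $(-2)$-vector in $N^G$ certifies it. (This is Lemma \ref{lem:kondoembed}, essentially due to Kond\=o \cite{kondo98}; the details of the discriminant-form matching follow the template of \cite{nikulin79int}.)
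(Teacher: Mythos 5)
The paper offers no proof of this lemma; it is quoted from \cite{kondo98} (cf.\ Remark \ref{rem:history}), so your proposal has to be measured against Kond\=o's argument. Your converse direction is correct and is exactly the standard one: condition (\ref{defn:mcn:embedding}) of Definition \ref{defn:mcn} hands you the primitive embedding $N_{G'}\hookrightarrow\Lambda$, Lemma \ref{lem:unimodaction}(3) puts $G'|_{N_{G'}}$ inside $\OP{O}_0(N_{G'})$, and Lemma \ref{lem:unimodaction}(1) extends the action to $\Lambda$ trivially on the orthogonal complement; one should only add the (easy) observation that $(N_{G'})^{G'}=0$, so that the coinvariant lattice of the extended action is exactly the image of $N_{G'}$ and not a proper sublattice, and that the action on $N_{G'}$ is faithful.

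The forward direction, however, has a genuine gap at the one point where the argument is delicate: guaranteeing a $(-2)$-vector in $N^{G}$, which is also what rules out the Leech lattice. You assert that the complement $K$ of rank $24-c(G)$ with $q(K)\cong -q(\Lambda_G)$ ``can always be taken to have a $(-2)$-vector, e.g.\ by adding an $A_1$ summand and adjusting''; this is not a proof, and the naive version of it fails: writing $K\cong\LF{-2}\oplus K'$ forces $q(K)$ to contain $\LF{-1/2}$ as an orthogonal summand, which the prescribed form $-q(\Lambda_G)$ need not admit, and whether an arbitrary genus of definite even lattices contains a representative with a root is a nontrivial question that your argument does not address. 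Kond\=o's device sidesteps this entirely: apply Nikulin's primitive-embedding criterion (\cite{nikulin79int}) not to $\Lambda_G$ but to $\Lambda_G\oplus\LF{-2}$. The required inequality
\begin{equation*}
 \rank\bigl(\Lambda_G\oplus\LF{-2}\bigr)
 +l\bigl(A(\Lambda_G\oplus\LF{-2})\bigr)
 \le \bigl(c(G)+1\bigr)+\bigl(22-c(G)\bigr)+1=24
\end{equation*}
holds, using $l(A(\Lambda_G))=l(A(\Lambda^G))\le\rank\Lambda^G=22-c(G)$, so $\Lambda_G\oplus\LF{-2}$ embeds primitively into a negative definite even unimodular lattice $N$ of rank $24$. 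The generator of the $\LF{-2}$-summand is a root lying in $(\Lambda_G)^\bot_N$, hence in $N^{G}$ once the $G$-action is extended trivially on that complement (your appeal to Lemma \ref{lem:unimodaction}(1) and (3) for this extension is correct), and its presence simultaneously certifies that $N$ is not the Leech lattice. With this substitution the rest of your forward direction, including the signature-mod-$8$ and discriminant-form bookkeeping, goes through as written.
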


\begin{rem}
In the above lemma, we write $(G,\Lambda_G)$
 instead of $(G|_{\Lambda_G},\Lambda_G)$
 (cf.\ Definition \ref{defn:latticewithaction}).
We use the same notation in what follows.
\end{rem}

\begin{rem} \label{rem:history}
Lemma \ref{lem:kondoembed} is a direct consequence of
 Nikulin's work \cite{nikulin79fin,nikulin79int}.
Moreover, Nikulin pointed out that
 lattices such as $\Lambda_G$
 (``Leech-type'' lattices)
 can be classified
 by embedding them into even unimodular lattices
 (the latter part of Section 1.14 of \cite{nikulin79fin}).
\end{rem}

By Lemma \ref{lem:kondoembed},
 the study of $(G,\Lambda_G)$ for $G\in\MC{L}$
 is reduced to that of $\MC{N}$.
In the following subsections, we present
 how to make a complete list of $\MC{N}$.
Some consequences from the list are given in
 Subsection \ref{subsect:conseq}.

\subsection{Some facts on Niemeier lattices}
\label{subsect:factniemeier}

The following theorem is standard.

\begin{thm}[cf.\ \cite{SP}]
There exist exactly 23 isomprphism classes
 of Niemeier lattices.
The isomorphism class of a Niemeier lattice $N$
 is determined by the root sublattice of $N$,
 whose type is given in Table \ref{subsect:niemeier}.
Here the root sublattice of $N$ is
 the sublattice generated by vectors $v\in N$
 such that $\BF{v}{v}=-2$.
\end{thm}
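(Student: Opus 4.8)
The plan is to follow the classical proof of Niemeier's classification, in the streamlined form due to Venkov (see \cite{SP}). Pass to the positive definite form $L=N(-1)$; by definition a Niemeier lattice is not the negative Leech lattice, so $L$ contains a vector of norm $2$. Write $N_{2m}$ for the number of $v\in L$ with $\BF{v}{v}=2m$, so $N_2>0$. The theta series $\theta_L=\sum_{v\in L}e^{\pi i\tau\BF{v}{v}}$ lies in the $2$-dimensional space $M_{12}(\OP{SL}_2(\Z))$ spanned by $E_4^3$ and $\Delta$; matching the low-order Fourier coefficients gives $\theta_L=E_4^3+(N_2-720)\Delta$, and in particular $N_4=196560-24N_2$, so $\theta_L$ is determined by $N_2$ alone.

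The key input is Venkov's averaging identity. For a harmonic polynomial $P$ of degree $2$ on $L\otimes\R$, the weighted theta series $\sum_{v\in L}P(v)e^{\pi i\tau\BF{v}{v}}$ is a cusp form of weight $14$, and $\dim S_{14}(\OP{SL}_2(\Z))=0$, so it vanishes identically; comparison of the coefficient of $q$ then gives $\sum_{\BF{v}{v}=2}P(v)=0$. Taking $P(v)=\BF{v}{x}^2-\tfrac{1}{24}\BF{x}{x}\BF{v}{v}$, which is harmonic, yields
\begin{equation}
 \sum_{\BF{v}{v}=2}\BF{v}{x}^2=\frac{N_2}{12}\BF{x}{x}
 \qquad(x\in L\otimes\R).
\end{equation}
Since $N_2>0$, no nonzero $x$ is orthogonal to every root, so the root system $R$ of $L$ (the set of its norm-$2$ vectors, which form a simply-laced root system) has rank $24$; and taking $x$ in the span of an irreducible component $R_i$ and using the standard identity $\sum_{v\in R_i}\BF{v}{x}^2=2h_i\BF{x}{x}$, where $h_i$ is the Coxeter number of $R_i$, forces all the $h_i$ to equal a common value $h=N_2/24$. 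Thus $R$ is an orthogonal sum of ADE root systems of total rank $24$ all having the same Coxeter number. A short combinatorial enumeration, using $h(A_n)=n+1$, $h(D_n)=2n-2$, $h(E_6)=12$, $h(E_7)=18$, $h(E_8)=30$, produces exactly the $23$ root systems listed in Table \ref{subsect:niemeier}.

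It remains to realize each of these as the root sublattice of a Niemeier lattice, uniquely up to isomorphism. Even unimodular overlattices of the root lattice $Q(R)$ correspond to maximal isotropic (``Lagrangian'') subgroups $C$ of the discriminant form $q(Q(R))$, and such an overlattice has root sublattice exactly $Q(R)$ precisely when no nonzero element of $C$ lifts to a vector of norm $2$ --- i.e.\ when $C$ is a glue code of minimum norm $\geq 4$. For existence I would exhibit such a $C$ for each of the $23$ root systems --- for $A_1^{24}$ and $A_2^{12}$ these are the binary and ternary Golay codes, the others being treated explicitly --- or, equivalently, confirm via the Minkowski--Siegel mass formula that no admissible root system is unrealized. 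For uniqueness I would invoke Lemma \ref{lem:unimodembed} (with one orthogonal summand trivial): two Lagrangians give isomorphic lattices exactly when they lie in one orbit of $\overline{\OP{O}(Q(R))}$, the image in $\OP{O}(q(Q(R)))$ of $\OP{O}(Q(R))$, which coincides with the diagram automorphism group of $R$ because the Weyl group $W(R)$ acts trivially on $A(Q(R))$; so uniqueness reduces to the transitivity of that finite group on the admissible Lagrangians, checked case by case. Putting this together, every Niemeier lattice has one of the $23$ admissible root systems and is determined up to isomorphism by it, and all $23$ occur. The main obstacle is this last case-by-case analysis of the glue codes, which is the technical core of Niemeier's theorem; for it I would follow the detailed treatment in \cite{SP}.
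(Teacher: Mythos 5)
The paper gives no proof of this theorem at all --- it is stated as standard and referred to \cite{SP} --- and your outline is a correct rendition of the classical Venkov--Niemeier argument found there: the computations ($\theta_L=E_4^3+(N_2-720)\Delta$, the harmonicity of $P$, the identity $\sum_{\BF{v}{v}=2}\BF{v}{x}^2=\tfrac{N_2}{12}\BF{x}{x}$, and the resulting constraint $h_i=N_2/24$) all check out. The case-by-case enumeration and glue-code analysis that you defer to \cite{SP} is precisely the technical core the paper also takes on faith, so your proposal matches the paper's (implicit) approach.
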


Let $N$ be a Niemeier lattice.
A vector $d\in N$ is called a root
 if $\BF{d}{d}=-2$.
Let $\Delta$ denote the set of roots of $N$.
A Weyl chamber $\MC{C}$ is a connected component of
$N\otimes\R - \cup_{d\in\Delta} d^\bot$.
The set of positive roots $\Delta^+$
 corresponding to $\MC{C}$ is defined by
\begin{equation}
 \Delta^+
 =\{ d\in\Delta \bigm| \BF{d}{\MC{C}}\subset\R_{>0} \}.
\end{equation}
We have $\Delta=\Delta^+\sqcup-\Delta^+$.
The set of simple roots $R(N,\Delta^+)$ corresponding to
 $\Delta^+$ is the set of positive roots
 $d\in\Delta^+$ such that there exists no decomposition
 $d=d_1+d_2$ with $d_i\in\Delta^+$.
It is known that $R(N,\Delta^+)$ becomes a Dynkin diagram
 of rank 24.
The automorphism group
 of the Dynkin diagram $R(N,\Delta^+)$
 is denoted by $\OP{Aut}(R(N,\Delta^+))$.
Let $W(N)$ denote the subgroup of $\OP{O}(N)$
 generated by
 reflections of $d\in\Delta$.
The action of $W(N)$ on the set of Weyl chambers
 is free and transitive.
The group $\OP{O}(N,\Delta^+)$
 (see (\ref{osubgroup}))
 is considered as
 a subgroup of $\OP{Aut}(R(N,\Delta^+))$.
We have $\OP{O}(N)=W\rtimes\OP{O}(N,\Delta^+)$.

\subsection{Method for making the list of $\MC{N}$}
\label{subsect:method}

We use the above result to construct a complete list
 of $\MC{N}$.
For the proof of the following lemma,
 see \cite{kondo98}.

\begin{lem}[\cite{kondo98}] \label{lem:invariantroots}
Let $N$ be a Niemeier lattice and
 $G$ a subgroup of $\OP{O}(N)$.
Then the condition (\ref{defn:mcn:coinvariant}) in
 Definition \ref{defn:mcn} is satisfied if and only if
 there exists a $G$-invariant set of positive roots.
\end{lem}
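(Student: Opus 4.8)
The statement is Lemma \ref{lem:invariantroots}: for a Niemeier lattice $N$ and $G\subset\OP{O}(N)$, the condition "$N_G$ has no root" holds iff there is a $G$-invariant set of positive roots. I would prove both directions by exploiting the semidirect product decomposition $\OP{O}(N)=W(N)\rtimes\OP{O}(N,\Delta^+)$ and the fact that $W(N)$ acts simply transitively on the set of Weyl chambers. The key translation is that choosing a set of positive roots is the same as choosing a Weyl chamber, and a set of positive roots $\Delta^+$ is $G$-invariant precisely when $G$ stabilizes the corresponding chamber $\MC{C}$.

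\textbf{The easy direction.} Suppose there exists a $G$-invariant set of positive roots $\Delta^+$; equivalently $G$ fixes a Weyl chamber $\MC{C}$. Then each barycentric interior point, or better, any point $v_0$ in the interior of $\MC{C}$ has a $G$-average $v:=\frac{1}{|G|}\sum_{g\in G}g\cdot v_0$ which lies in $\MC{C}$ (the chamber is convex and $G$-stable) and is $G$-invariant, so $v\in N^G\otimes\R$. Since $v$ lies in the open chamber, $\BF{v}{d}\neq 0$ for every root $d$. But $N_G=(N^G)^\bot$, so every root in $N_G$ would be orthogonal to $v$ — contradiction unless $N_G$ contains no root. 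Hence condition \eqref{defn:mcn:coinvariant} holds.

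\textbf{The converse (the main obstacle).} Suppose $N_G$ contains no root. I want to produce a $G$-invariant $\Delta^+$. The roots of $N_G$ are exactly the roots $d\in\Delta$ with $d\in N_G$, i.e.\ $d\perp N^G$. So "$N_G$ has no root" says every root $d$ satisfies $\BF{d}{v}\neq 0$ for some $v\in N^G$; equivalently, no root lies in $(N^G)^\bot$, i.e.\ the subspace $N^G\otimes\R$ is not contained in any root hyperplane $d^\bot$. Pick a generic point $w$ in the interior of the (nonempty, since $N$ is negative definite and $N^G\otimes\R$ meets no $d^\bot$ in codimension one inside itself) — more precisely choose $w\in N^G\otimes\R$ with $\BF{w}{d}\neq 0$ for all $d\in\Delta$; such $w$ exists because the union of the hyperplanes $d^\bot\cap(N^G\otimes\R)$ is a finite union of proper subspaces of $N^G\otimes\R$, and properness is exactly the no-root-in-$N_G$ hypothesis. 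Then $w$ lies in a unique Weyl chamber $\MC{C}$, and since $w$ is $G$-fixed and $G$ permutes the chambers, $G\cdot\MC{C}=\MC{C}$. The corresponding $\Delta^+=\{d\in\Delta\mid \BF{d}{\MC{C}}\subset\R_{>0}\}$ is then $G$-invariant. The delicate point to get right is the genericity/non-degeneracy argument: one must verify that each $d^\bot\cap(N^G\otimes\R)$ is a \emph{proper} subspace of $N^G\otimes\R$, which is precisely the contrapositive of "$d\in N_G$", and that finitely many proper subspaces cannot cover $N^G\otimes\R$ over $\R$. I would also remark that this argument uses $\OP{O}(N)=W(N)\rtimes\OP{O}(N,\Delta^+)$ only implicitly, through the free transitive action of $W(N)$ on chambers, so that $\operatorname{Stab}_{\OP{O}(N)}(\MC{C})$ maps isomorphically to $\OP{O}(N,\Delta^+)\subset\OP{Aut}(R(N,\Delta^+))$; this is the form in which the lemma is actually used to enumerate $\MC{N}$.
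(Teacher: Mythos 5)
Your proof is correct and complete: the averaging of an interior point of a $G$-stable chamber handles one direction, and for the converse the key observation that ``no root in $N_G$'' says exactly that each $d^\bot\cap(N^G\otimes\R)$ is a proper subspace, so a finite union of them cannot exhaust $N^G\otimes\R$, produces the required $G$-fixed regular vector and hence a $G$-stable chamber. The paper itself does not prove this lemma but defers to \cite{kondo98}; your argument is precisely the standard one used there, so there is nothing to flag.
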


Let $N_1.\cdots,N_{23}$ be all Niemeier lattices and
 $\Delta_i^+$ a set of positive roots of $N_i$.
Let $G\subset\OP{O}(N_i)$ be a subgroup satisfying
 the condition (\ref{defn:mcn:coinvariant}) in
 Definition \ref{defn:mcn}.
By the above lemma,
 we may assume that $G$ preserves $\Delta_i^+$
 by replacing $G$ by $\gamma G \gamma^{-1}$
 for some $\gamma\in W(N_i)$ if necessary.
Hence we may only consider subgroups of
 $\OP{O}(N_i,\Delta_i^+)$.
Using GAP \cite{GAP},
 we can make a complete list of subgroups
 $G_{i1},\cdots,G_{i j_i}$
 of $\OP{O}(N_i,\Delta^+_i)$
 such that $[G_{ij}]\in\Gsymp$
 up to conjugacy\footnote{Note that conjugacy in
 $\OP{O}(N_i,\Delta^+_i)$
 is equivalent to conjugacy in $\OP{O}(N_i)$, which is
 a property of semi-direct product groups.}.
Since $\OP{O}(N_i,\Delta^+_i)$ is realized as a subgroup of
 $\OP{Aut}(R(N_i,\Delta^+_i))$, so is $G_{ij}$.
To decide whether $(G_{ij},N_i)\in\MC{N}$ or not,
 we should check conditions
 (\ref{defn:mcn:invariant})--(\ref{defn:mcn:embedding})
 in Definition \ref{defn:mcn} for $(G_{ij},N_i)$.

The condition (\ref{defn:mcn:invariant})
 can be checked directly.
For example, if $N_i$ is of type $A_1^{\oplus 24}$,
 the condition (\ref{defn:mcn:invariant})
 is equivalent to
 the existence of a $G_{ij}$-fixed element in $R(N_i,\Delta^+_i)$.
By Lemma \ref{lem:invariantroots},
 the condition (\ref{defn:mcn:coinvariant})
 is already satisfied.

To confirm the condition (\ref{defn:mcn:embedding}),
 it is sufficient to show that
 there exists an even lattice $L$ such that
\begin{equation} \label{cond_L}
 \sign L=(3,19-c(G_{ij})),~
 q(L)\cong -q(N_{G_{ij}})
\end{equation}
 by Lemma \ref{lem:unimodembed} and
 Proposition \ref{prop:trace}.
We can compute the Gramian matrix of
 $N^{G_{ij}}$
 by using the orbit decomposition of $R(N_i,\Delta^+_i)$
 which is obtained from the list of $(G_{ij},N_i)$.
From the Gramian matrix of $N^{G_{ij}}$,
 we can determine
 $A(N^{G_{ij}})$ and $q(N^{G_{ij}})$
 (cf.\ Section \ref{sect:lattice}).
Since $q(N_{G_{ij}})\cong -q(N^{G_{ij}})$
 by Lemma \ref{lem:unimodembed}, we obtain the list of
 $q(N_{G_{ij}})$.
From the list, we have the following:

\begin{lem}
For $(G_{ij},N_i)$ satisfying
 the condition (\ref{defn:mcn:invariant})
 in Definition \ref{defn:mcn},
 the condition (\ref{defn:mcn:embedding}) is equivalent to
 the inequality
\begin{equation} \label{ineqgen}
 l(A(N^{G_{ij}}))\leq 22-c(G_{ij})
 =\rank N^{G_{ij}}-2.
\end{equation}
Here $l(A)$ denotes the minimum number
 of generators of a finite abelian group $A$.
\end{lem}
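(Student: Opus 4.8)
Write $G=G_{ij}$, $N=N_i$ and $c=c(G)$. The plan is to recast condition~(\ref{defn:mcn:embedding}) as the existence of a single even lattice and then to settle that existence by Nikulin's criterion together with the explicit list of the discriminant forms $q(N^{G})$ already at hand. Since $N$ is unimodular and $N_{G}=(N^{G})^{\bot}_{N}$ with both summands primitive, Lemma~\ref{lem:unimodembed} gives $q(N_{G})\cong -q(N^{G})$; in particular $A(N_{G})\cong A(N^{G})$. By Proposition~\ref{prop:trace}, $\rank N_{G}=c$, so $\rank N^{G}=24-c$. Condition~(\ref{defn:mcn:embedding}) holds if and only if there is an even lattice $L$ satisfying~(\ref{cond_L}), i.e.\ $\sign L=(3,19-c)$ and $q(L)\cong -q(N_{G})\cong q(N^{G})$: the ``if'' direction is the one recalled before the statement, and the ``only if'' direction follows from Lemma~\ref{lem:unimodembed} by taking $L=(N_{G})^{\bot}_{\Lambda}$. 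Any such $L$ has $\rank L=22-c=\rank N^{G}-2$ and $A(L)\cong A(N^{G})$.

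The implication ``(\ref{defn:mcn:embedding})$\Rightarrow$(\ref{ineqgen})'' is then immediate and uses nothing special about $G$: given $L$ as above, the general bound~(\ref{ineq_discgrp}) applied to $L$ yields $l(A(N^{G}))=l(A(L))\le\rank L=\rank N^{G}-2$, which is~(\ref{ineqgen}).

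For the converse I would assume~(\ref{ineqgen}) and construct $L$ via Nikulin's existence theorem for even lattices~\cite{nikulin79int}. The required signature congruence is automatic: any two even lattices with isomorphic discriminant forms have the same value of $t_{(+)}-t_{(-)}$ modulo $8$, and the two candidate values here are $3-(19-c)=c-16$ for $L$ and $0-(24-c)=c-24$ for $N^{G}$, which differ by exactly $8$. When $19-c\ge 1$, the lattice $L$ sought is indefinite of rank $22-c\ge 3$, and Nikulin's theorem produces it as soon as, besides the signature congruence, one has $\rank L\ge l\bigl(A(N^{G})_{p}\bigr)$ for every prime $p$ together with the accompanying $p$-adic side conditions; the rank bound is exactly~(\ref{ineqgen}), and the side conditions are trivially met except (essentially) when~(\ref{ineqgen}) is an equality. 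With $L$ in hand, Lemma~\ref{lem:unimodembed} glues $L$ and $N_{G}$ into an even unimodular lattice of signature $(3,19)$, which is $\Lambda$ by Theorem~\ref{thm:nikulinuniq}, and the resulting inclusion $N_{G}\hookrightarrow\Lambda$ is the primitive embedding demanded by~(\ref{defn:mcn:embedding}).

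The remaining work does not follow from a uniform argument and is where I expect the real (though routine) effort to lie: the boundary cases must be checked against the tabulated forms. First, when~(\ref{ineqgen}) holds with equality one verifies directly from the computed shape of $q(N^{G})$ that the $2$-adic and odd-$p$ side conditions of the existence theorem hold, i.e.\ that $q(N^{G})$ avoids the short list of obstructed local shapes. Second, the few groups with $c=19$ force $L$ to be positive definite of rank $3$, outside the scope of the indefinite existence theorem; there $\rank N^{G}=5$, condition~(\ref{ineqgen}) reads $l(A(N^{G}))\le 3$, and one simply exhibits a suitable positive definite even rank-$3$ lattice. (One also records that $c(G)\le 19$ throughout, so the signature $(3,19-c)$ is always legitimate and no pair $(G_{ij},N_i)$ is excluded for signature reasons.)
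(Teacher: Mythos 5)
Your proof is correct, and the ``only if'' half (existence of the embedding forces the inequality) is word-for-word the paper's argument: $l(A(N^{G}))=l(A(N_{G}))=l(A(L))\le\rank L=22-c$ via Lemma \ref{lem:unimodembed} and (\ref{ineq_discgrp}). Where you genuinely diverge is the ``if'' half. The paper does not argue at all there: it simply exhibits, case by case, an even lattice $L$ with $\sign L=(3,19-c)$ and $q(L)\cong -q(N_{G})$, and these explicit Gramian matrices are exactly the entries of Table \ref{subsect:invariant}; this is not laziness but a design choice, since those lattices are reused as $\Lambda^{G}$ in Proposition \ref{prop:uniqueinvariant} and throughout Sections \ref{sect:invariantuniq}--\ref{sect:invariantsurj}. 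You instead invoke Nikulin's existence criterion for even lattices with prescribed signature and discriminant form, correctly observing that the mod-$8$ signature condition is automatic (the two candidate values $c-16$ and $c-24$ differ by $8$) and that (\ref{ineqgen}) is precisely the rank condition $l(A_q)\le t_{(+)}+t_{(-)}$ of that criterion, so that explicit verification is needed only at the boundary (equality in (\ref{ineqgen})) and in the definite rank-$3$ cases with $c=19$. This buys a conceptual explanation of \emph{why} (\ref{ineqgen}) is the right threshold, at the price of invoking a theorem the paper never states and of still having to consult the computed forms $q(N^{G})$ for the residual cases. One small imprecision: Nikulin's existence theorem (Theorem 1.10.1 of \cite{nikulin79int}) does not require indefiniteness --- only the uniqueness and surjectivity statements do --- so the $c=19$ cases are not actually outside its scope; your fallback of exhibiting a positive definite even rank-$3$ lattice there is nonetheless valid and is in effect what Table \ref{subsect:invariant} records.
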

\begin{proof}
For each case satisfying
 the inequality (\ref{ineqgen}),
 we can find a lattice $L$ satisfying (\ref{cond_L}).
See Tables \ref{subsect:qands} and \ref{subsect:invariant}
 for $q(N_{G_{ij}})$ and $L$ in each case respectively.
Conversely, the existence of $L$ implies that
\begin{equation}
 l(A(N^{G_{ij}}))
 = l(A(N_{G_{ij}}))
 = l(A(L))
 \leq \rank L
 = 22-c(G_{ij})
\end{equation}
 by Lemma \ref{lem:unimodembed} and (\ref{ineq_discgrp}).
\end{proof}

By the above argument,
 the set which consists of $(G_{ij},N_i)$
 satisfying the condition (\ref{defn:mcn:invariant}) and
 the inequality (\ref{ineqgen})
 becomes a complete list of $\MC{N}$.

\subsection{Example} \label{subsect:example}

We consider the case of
 the cyclic group $C_8$ of order $8$ as an example.
We make the list of $(G,N)\in\MC{N}$ with $[G]=C_8$.
Since $c(C_8)=18$, we have $\rank N_G=18$ and
 $\rank N^G=6$.
Using GAP \cite{GAP}, we can make a complete list of subgroups
 $G\subset \OP{O}(N,\Delta^+)$
 such that $[G]=C_8$ up to conjugacy
 for each Niemeier lattice $N$.
The result is as follows.
\begin{equation*}
\begin{array}{c|cccccc}
\text{case} & \text{(I)} & \text{(II)} & \text{(III)}
 & \text{(IV)} & \text{(V)} & \text{(VI)} \\
\hline
\text{root type of } N
 & E_6^{\oplus 4} & A_5^{\oplus 4}\oplus D_4
 & A_3^{\oplus 8} & A_2^{\oplus 12} & A_2^{\oplus 12}
 & A_1^{\oplus 24} \\
\hline
\begin{matrix} \text{number of stable} \\
               \text{components of $R(N,\Delta^+)$}
\end{matrix}
 & 0 & 1 & 0
 & 2 & 0 & 2 \\
\hline
(G,N)\in \MC{N}\text{?}
 & \text{no} & \text{yes} & \text{no}
 & \text{yes} & \text{no} & \text{yes}
\end{array}
\end{equation*}
If the condition (\ref{defn:mcn:invariant})
 in Definition \ref{defn:mcn}
 holds, then at least one component
 of the Dynkin diagram $R(N,\Delta^+)$
 is stable under the action of $G$.
In the case (I),
 the action of $G$
 as a permutation group of the components $E_6$
 of $R(N,\Delta^+)$ is transitive.
Therefore, we have $(G,N)\not\in\MC{N}$
 in the case (I).
Similarly, we have $(G,N)\not\in\MC{N}$
 in the cases (III) and (V).
In fact, we have $(G,N)\in\MC{N}$
 in the cases (II), (IV) and (VI),
 as we will see below.
Let $g$ be a generator of $G$.

The case (II).
There exists a numbering of
 $R(N,\Delta^+)=\{v_1,\ldots,v_{24}\}$
 as in Figure 1 such that
\begin{equation}
g\cdot v_i=
v_{\sigma(i)},
\end{equation}
 where
\begin{equation}
 \sigma=(1,6,11,16,5,10,15,20)(2,7,12,17,4,9,14,19)
 (3,8,13,18)(23,24).
\end{equation}
Hence $N^G\otimes\Q$ is generated by
\begin{equation}
\begin{array}{c}
 \displaystyle
 w_1=\sum_{i=0}^3 (v_{1+5i}+v_{5+5i}),~
 w_2=\sum_{i=0}^3 (v_{2+5i}+v_{4+5i}), \\
 \displaystyle
 w_3=\sum_{i=0}^3 v_{3+5i},~
 w_4=v_{21},~
 w_5=v_{22},~
 w_6=v_{23}+v_{24}
\end{array}
\end{equation}
over $\Q$.
From the explicit description of
 $G\subset\OP{O}(N,\Delta^+)$,
 we find that $N^G$ is generated by
 the above vectors and
 $(w_1+w_3)/2$
 over $\Z$.
Therefore,
\begin{equation} \label{basisII}
 w_1,w_2,(w_1+w_3)/2,w_4,w_5,w_6
\end{equation}
 form a basis of $N^G$ over $\Z$.
The Gramian matrix of $N^G$
 under the basis (\ref{basisII}) is
\begin{equation} \label{gram1}
\begin{pmatrix}
-16 &   8 &  0 & 0 & 0 & 0 \\
  8 & -16 &  8 & 0 & 0 & 0 \\
  0 &   8 & -8 & 0 & 0 & 0 \\
0 & 0 & 0 & -2 &  1 & 0 \\
0 & 0 & 0 &  1 & -2 & 2 \\
0 & 0 & 0 &  0 &  2 & -4
\end{pmatrix}.
\end{equation}
We can determine $A(N^G)$ and $q(N^G)$ from (\ref{gram1})
 (cf.\ Section \ref{sect:lattice}):
\begin{align}
 A(N^G)&\cong \Z/2\Z \oplus \Z/4\Z \oplus
 (\Z/8\Z)^{\oplus 2}, \\
 q(N^G)&\cong \LF{1/2} \oplus \LF{1/4}
 \oplus \begin{pmatrix}0&1/8\\1/8&0\end{pmatrix}.
\end{align}
Since $q(N_G)\cong -q(N^G)$
 by Lemma \ref{lem:unimodembed}, we have
\begin{equation} \label{discformII}
 q(N_G)\cong \LF{-1/2} \oplus \LF{-1/4}
 \oplus \begin{pmatrix}0&1/8\\1/8&0\end{pmatrix}.
\end{equation}

\begin{figure}[bthp] \label{fig:4a5d4}
\setlength\unitlength{0.8truecm}
\begin{center}
\begin{picture}( 14 , 6 )(0,0) 
\put( 1 , 4.83 ){$ 1 $} 
\put( 1 , 3.83 ){$ 2 $} 
\put( 1 , 2.83 ){$ 3 $} 
\put( 1 , 1.83 ){$ 4 $} 
\put( 1 , 0.83 ){$ 5 $} 
\put( 3 , 4.83 ){$ 6 $} 
\put( 3 , 3.83 ){$ 7 $} 
\put( 3 , 2.83 ){$ 8 $} 
\put( 3 , 1.83 ){$ 9 $} 
\put( 3 , 0.83 ){$ 10 $} 
\put( 5 , 4.83 ){$ 11 $} 
\put( 5 , 3.83 ){$ 12 $} 
\put( 5 , 2.83 ){$ 13 $} 
\put( 5 , 1.83 ){$ 14 $} 
\put( 5 , 0.83 ){$ 15 $} 
\put( 7 , 4.83 ){$ 16 $} 
\put( 7 , 3.83 ){$ 17 $} 
\put( 7 , 2.83 ){$ 18 $} 
\put( 7 , 1.83 ){$ 19 $} 
\put( 7 , 0.83 ){$ 20 $} 
\put( 2 , 5 ){\circle{ 0.3 }} 
\put( 2 , 4 ){\circle{ 0.3 }} 
\put( 2 , 3 ){\circle{ 0.3 }} 
\put( 2 , 2 ){\circle{ 0.3 }} 
\put( 2 , 1 ){\circle{ 0.3 }} 
\put( 4 , 5 ){\circle{ 0.3 }} 
\put( 4 , 4 ){\circle{ 0.3 }} 
\put( 4 , 3 ){\circle{ 0.3 }} 
\put( 4 , 2 ){\circle{ 0.3 }} 
\put( 4 , 1 ){\circle{ 0.3 }} 
\put( 6 , 5 ){\circle{ 0.3 }} 
\put( 6 , 4 ){\circle{ 0.3 }} 
\put( 6 , 3 ){\circle{ 0.3 }} 
\put( 6 , 2 ){\circle{ 0.3 }} 
\put( 6 , 1 ){\circle{ 0.3 }} 
\put( 8 , 5 ){\circle{ 0.3 }} 
\put( 8 , 4 ){\circle{ 0.3 }} 
\put( 8 , 3 ){\circle{ 0.3 }} 
\put( 8 , 2 ){\circle{ 0.3 }} 
\put( 8 , 1 ){\circle{ 0.3 }} 
\put( 2 , 4.84 ){\line(0,-1){ 0.69 }} 
\put( 2 , 3.84 ){\line(0,-1){ 0.69 }} 
\put( 2 , 2.84 ){\line(0,-1){ 0.69 }} 
\put( 2 , 1.84 ){\line(0,-1){ 0.69 }} 
\put( 4 , 4.84 ){\line(0,-1){ 0.69 }} 
\put( 4 , 3.84 ){\line(0,-1){ 0.69 }} 
\put( 4 , 2.84 ){\line(0,-1){ 0.69 }} 
\put( 4 , 1.84 ){\line(0,-1){ 0.69 }} 
\put( 6 , 4.84 ){\line(0,-1){ 0.69 }} 
\put( 6 , 3.84 ){\line(0,-1){ 0.69 }} 
\put( 6 , 2.84 ){\line(0,-1){ 0.69 }} 
\put( 6 , 1.84 ){\line(0,-1){ 0.69 }} 
\put( 8 , 4.84 ){\line(0,-1){ 0.69 }} 
\put( 8 , 3.84 ){\line(0,-1){ 0.69 }} 
\put( 8 , 2.84 ){\line(0,-1){ 0.69 }} 
\put( 8 , 1.84 ){\line(0,-1){ 0.69 }} 

\put(10,3){\circle{0.3}}
\put(9.75,3.55){$21$}
\put(11.5,3){\circle{0.3}}
\put(11.25,3.55){$22$}
\put(12.5,4){\circle{0.3}}
\put(12.9,3.83){$23$}
\put(12.5,2){\circle{0.3}}
\put(12.9,1.83){$24$}
\put(10.15,3){\line(1,0){1.2}}
\put(11.61,3.11){\line(1,1){0.77}}
\put(11.62,2.90){\line(1,-1){0.77}}
\end{picture} 
\end{center}
\caption{$A_5^{\oplus 4}\oplus D_4$}
\end{figure}
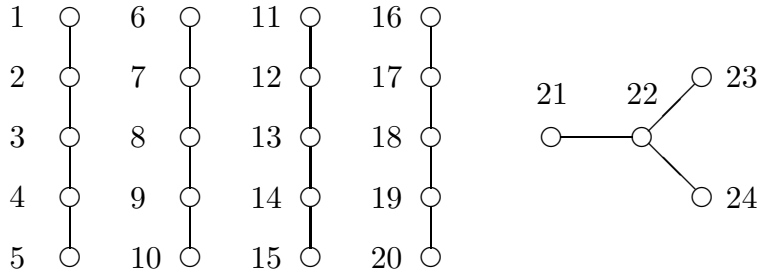

The case (IV).
Similarly, there exists a numbering of
 $R(N,\Delta^+)$
 as in Figure 2 such that
$
g\cdot v_i=
v_{\sigma(i)}
$,
 where
\begin{equation}
 \sigma=(3,4)(5,7,6,8)(9,11,13,15,17,19,21,23)
                     (10,12,14,16,18,20,22,24).
\end{equation}
Moreover, $N^G\otimes\Q$ is generated by
\begin{equation}
\begin{array}{c}
 \displaystyle
 w_1=v_1,~
 w_2=v_2,~
 w_3=v_3+v_4,~
 w_4=\sum_{i=5}^8 v_{i}, \\
 \displaystyle
 w_5=\sum_{i=0}^7 v_{9+2i},~
 w_6=\sum_{i=0}^7 v_{10+2i}
\end{array}
\end{equation}
over $\Q$, and $N^G$ is generated by
\begin{equation} \label{basisIV}
 w_1,w_2,w_3,w_4,w_5,
 \frac{1}{3} ( w_1-w_2+w_5-w_6 )
\end{equation}
 over $\Z$.
The Gramian matrix of $N^G$
 under the basis (\ref{basisIV}) is
\begin{equation} \label{gramIV}
\begin{pmatrix}
 -2 &   1 &  0 &  0 &  0 & -1 \\
  1 &  -2 &  0 &  0 &  0 &  1 \\
  0 &   0 & -2 &  0 &  0 &  0 \\
  0 &   0 &  0 & -4 &  0 &  0 \\
  0 &   0 &  0 &  0 &-16 & -8 \\
 -1 &   1 &  0 &  0 & -8 & -6
\end{pmatrix}.
\end{equation}
From (\ref{gramIV}),
 we can check that $q(N_G)$ is isomorphic to (\ref{discformII}).

\begin{figure}[bth]
\setlength\unitlength{0.8truecm}
\begin{center}
\begin{picture}( 11 , 3 )(0,0) 
\put( 1 , 1.83 ){$ 1 $} 
\put( 1 , 0.83 ){$ 2 $} 
\put( 3 , 1.83 ){$ 3 $} 
\put( 3 , 0.83 ){$ 4 $} 
\put( 7 , 1.83 ){$ 21 $} 
\put( 7 , 0.83 ){$ 22 $} 
\put( 9 , 1.83 ){$ 23 $} 
\put( 9 , 0.83 ){$ 24 $} 
\put( 2 , 2 ){\circle{ 0.3 }} 
\put( 2 , 1 ){\circle{ 0.3 }} 
\put( 4 , 2 ){\circle{ 0.3 }} 
\put( 4 , 1 ){\circle{ 0.3 }} 
\put(5.2,1.33){$\cdots$}
\put( 8 , 2 ){\circle{ 0.3 }} 
\put( 8 , 1 ){\circle{ 0.3 }} 
\put( 10 , 2 ){\circle{ 0.3 }} 
\put( 10 , 1 ){\circle{ 0.3 }} 
\put( 2 , 1.84 ){\line(0,-1){ 0.69 }} 
\put( 4 , 1.84 ){\line(0,-1){ 0.69 }} 
\put( 8 , 1.84 ){\line(0,-1){ 0.69 }} 
\put( 10 , 1.84 ){\line(0,-1){ 0.69 }} 
\end{picture}
\end{center}
\caption{$A_2^{\oplus 12}$}
\end{figure}
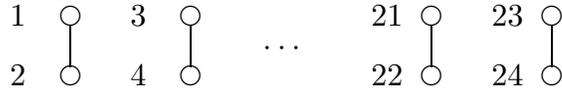

The case (VI).
There exists a numbering of
 $R(N,\Delta^+)$
 as in Figure 3 such that
$
g\cdot v_i=
v_{\sigma(i)}
$,
 where
\begin{equation}
 \sigma=(3,4)(5,6,7,8)(9,10,11,12,13,14,15,16)
                     (17,18,19,20,21,22,23,24).
\end{equation}
Moreover, $N^G\otimes\Q$ is generated by
\begin{equation}
\begin{array}{c}
 \displaystyle
 w_1=v_1,~
 w_2=v_2,~
 w_3=\sum_{i=3}^4 v_{i},~
 w_4=\sum_{i=5}^8 v_{i}, \\
 \displaystyle
 w_5=\sum_{i=9}^{16} v_{i},~
 w_6=\sum_{i=17}^{24} v_{i}
\end{array}
\end{equation}
over $\Q$, and $N^G$ is generated by
\begin{equation} \label{basisVI}
 w_1,w_2,w_3,
 \frac{1}{2} ( w_1+w_2+w_3+w_4 ),
 \frac{1}{2} ( w_4+w_5 ),
 \frac{1}{2} ( w_4+w_6 )
\end{equation}
 over $\Z$.
The Gramian matrix of $N^G$
 under the basis (\ref{basisVI}) is
\begin{equation} \label{gramVI}
\begin{pmatrix}
 -2 &   0 &  0 & -1 &  0 &  0 \\
  0 &  -2 &  0 & -1 &  0 &  0 \\
  0 &   0 & -4 & -2 &  0 &  0 \\
 -1 &  -1 & -2 & -4 & -2 & -2 \\
  0 &   0 &  0 & -2 & -6 & -2 \\
  0 &   0 &  0 & -2 & -2 & -6
\end{pmatrix}.
\end{equation}
From (\ref{gramVI}),
 we can check that $q(N_G)$ is isomorphic to (\ref{discformII}).

\begin{figure}[bth]
\setlength\unitlength{0.8truecm}
\begin{center}
\begin{picture}( 11 , 2 )(0,0) 
\put( 1 , 0.83 ){$ 1 $} 
\put( 3 , 0.83 ){$ 2 $} 
\put( 7 , 0.83 ){$ 23 $} 
\put( 9 , 0.83 ){$ 24 $} 
\put( 2 , 1 ){\circle{ 0.3 }} 
\put( 4 , 1 ){\circle{ 0.3 }} 
\put(5.2,0.83){$\cdots$}
\put( 8 , 1 ){\circle{ 0.3 }} 
\put( 10 , 1 ){\circle{ 0.3 }} 
\end{picture}
\end{center}
\caption{$A_1^{\oplus 24}$}
\end{figure}

The type of the root sublattice of $N^G$,
 i.e.\ the sublattice generated by vectors $v\in N^G$
 such that $\BF{v}{v}=-2$,
 in each case is as follows.
\begin{equation} \label{table_c8}
\begin{array}{c|ccc}
\text{case} & \text{(II)} & \text{(IV)} & \text{(VI)} \\
\hline
\text{root type} & A_3 & A_1 \oplus A_2 & A_1^{\oplus 2}
\end{array}
\end{equation}
Hence the condition (\ref{defn:mcn:invariant})
 in Definition \ref{defn:mcn} is satisfied.
The condition (\ref{defn:mcn:coinvariant}) is satisfied
 by Lemma \ref{lem:invariantroots}.
By the above argument, we have
\begin{equation}
 q(N_G)\cong \LF{-1/2} \oplus \LF{-1/4}
 \oplus \begin{pmatrix}0&1/8\\1/8&0\end{pmatrix}
\end{equation}
 in each case.
Let $L$ be a lattice defined by
\begin{equation}
 L=\LF{2} \oplus \LF{4} \oplus
 \begin{pmatrix}0&8\\8&0\end{pmatrix}.
\end{equation}
Then we have $\sign L=(3,1)$ and $q(L)\cong -q(N_G)$.
By Lemma \ref{lem:unimodembed},
 there exists a primitive embedding $N_G\hookrightarrow\Lambda$
 such that $(N_G)^\bot_\Lambda\cong L$.
Thus the condition (\ref{defn:mcn:embedding}) is satisfied.
Therefore, we have $(G,N)\in\MC{N}$
 in the cases (II), (IV) and (VI).

\subsection{Consequences from the list of $\MC{N}$}
\label{subsect:conseq}

Let $\MC{Q}$ denote a set defined by
\begin{equation} \label{mcq}
\MC{Q}=\{
 (\MF{G},q) \bigm| \exists G\in\MC{L}
 \text{ such that }
 \MF{G} = [G],
 q\cong q(\Lambda_{G})
\}.
\end{equation}
By Lemma \ref{lem:kondoembed}, we have
\begin{equation} \label{mcqequation}
\MC{Q}=\{
 (\MF{G},q) \bigm| \exists (G,N)\in\MC{N}
 \text{ such that }
 \MF{G} = [G],
 q\cong q(N_{G})
\}.
\end{equation}
We introduce an equivalence relation $\sim$ on $\MC{Q}$ by
\begin{equation}
 (\MF{G},q)\sim(\MF{G}',q')
 \Leftrightarrow
 \MF{G}=\MF{G}' \text{~and~} q\cong q'.
\end{equation}
By (\ref{mcqequation}) and the list of
 $q((N_i)^{G_{ij}})$ for $(G_{ij},N_i)\in\MC{N}$,
 we have the following:

\begin{prop} \label{prop:uniquemcq}
For $\MF{G}\in\Gsymp$, we have
\begin{equation}
 \sharp \left(
 \{ q \bigm| (\MF{G},q)\in\MC{Q} \}
 /\text{\rm isom}
 \right)=
 \begin{cases}
 1 & \text{if } \MF{G}\neq Q_8, T_{24}, \\
 2 & \text{if } \MF{G}  =  Q_8, T_{24}.
\end{cases}
\end{equation}
\end{prop}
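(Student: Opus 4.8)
The plan is to deduce the proposition directly from the complete list of $\MC{N}$ produced by the method of Subsection \ref{subsect:method}. By Lemma \ref{lem:kondoembed} and the reformulation (\ref{mcqequation}), for a fixed $\MF{G}\in\Gsymp$ the set $\{\,q\mid(\MF{G},q)\in\MC{Q}\,\}$ coincides, up to isomorphism, with $\{\,q(N_G)\mid(G,N)\in\MC{N},\ [G]=\MF{G}\,\}$; each such pair $(G,N)$ is one of the $(G_{ij},N_i)$ enumerated by GAP \cite{GAP}, and by Proposition \ref{prop:trace} all of them satisfy $\rank N^G=24-c(\MF{G})$. So the proof amounts to running through the list of $\MC{N}$, computing $q(N_G)$ for every $(G,N)$ with $[G]=\MF{G}$, and checking that the resulting discriminant forms constitute a single isomorphism class when $\MF{G}\neq Q_8,T_{24}$ and exactly two classes when $\MF{G}=Q_8$ or $T_{24}$.

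For each individual pair I would follow Subsection \ref{subsect:example}: from the explicit description of $G\subset\OP{O}(N,\Delta^+)$ one reads off the orbit decomposition of the simple roots $R(N,\Delta^+)$, writes down a $\Z$-basis and the Gramian matrix of $N^G$, determines $A(N^G)$ and $q(N^G)$ by the procedure recalled in Section \ref{sect:lattice} (diagonalization over $\Z_{(p)}$ together with Proposition \ref{prop:localodd} for odd $p$, and the $2$-adic normal form together with Propositions \ref{prop:localeven} and \ref{prop:2adicunimodular} for $p=2$), and concludes $q(N_G)\cong-q(N^G)$ by Lemma \ref{lem:unimodembed}. To compare two of these forms it suffices, since a finite quadratic form is the orthogonal sum of its $p$-components, to compare the $p$-components for every $p$: for odd $p$ this is immediate from the uniqueness in Proposition \ref{prop:localodd}, while for $p=2$ it is settled by a complete system of invariants of $2$-adic finite quadratic forms (the decomposition into the pieces (\ref{q2})--(\ref{v2}) itself being non-unique). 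Carrying this out in all cases and recording the answers (Tables \ref{subsect:qands} and \ref{subsect:invariant}) gives a single class for every $\MF{G}\neq Q_8,T_{24}$.

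For the two exceptional groups I would exhibit the two classes. Among the pairs $(G,N)\in\MC{N}$ with $[G]=Q_8$ (respectively $[G]=T_{24}$) there are two for which the root sublattices of the invariant lattices $N^G$ have different $A$-$D$-$E$ types --- this is exactly the phenomenon responsible for the exclusion of $Q_8,T_{24}$ in Xiao's Theorem \ref{thm:xiao} --- and one checks that the corresponding discriminant forms $q(N_G)$ are non-isomorphic, for instance because their discriminant groups, or their $2$-components, differ. Inspection of the list shows that no third class arises, so the count is exactly $2$ in each of these cases.

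The main obstacle is not a single deep step but the scope and reliability of the case analysis: one must be confident that the GAP enumeration of conjugacy classes of subgroups $G\subset\OP{O}(N_i,\Delta_i^+)$ with $[G]\in\Gsymp$ is exhaustive for all $23$ Niemeier lattices, and then carry out the lattice computation of $q(N^G)$ in every case without error. The one genuinely delicate point within these computations is the comparison of the $2$-adic parts of the discriminant forms, where the orthogonal decomposition of Proposition \ref{prop:localeven} is not unique; there one must argue through the complete invariants rather than the decomposition itself.
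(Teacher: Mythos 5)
Your proposal is correct and follows essentially the same route as the paper: the paper deduces this proposition purely by inspection of the computed list of discriminant forms $q(N_G)$ for all $(G,N)\in\MC{N}$, obtained exactly as you describe via the reformulation (\ref{mcqequation}), the GAP enumeration of subgroups of $\OP{O}(N_i,\Delta_i^+)$, and the computation $q(N_G)\cong -q(N^G)$ from the Gramian of $N^G$. Your additional remarks on comparing $2$-adic components through complete invariants rather than the (non-unique) decomposition, and on distinguishing the two classes for $Q_8$ and $T_{24}$ (e.g.\ by the order of the discriminant group), are consistent with the paper's tables and do not change the argument.
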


\begin{rem}
From the Xiao's list \cite{xiao96},
 we have $\sharp \Gsymp=79$.
By the above proposition,
 $\sharp \left( \MC{Q}/\sim \right)=81$.
In Table \ref{subsect:qands},
 we list a complete representative
 $\{ (\MF{G}_n,q_n) \}$ of $\MC{Q}/\sim$.
Our numbering coincides with
 that in \cite{xiao96}.
\end{rem}

By (\ref{mcqequation}), we have the natural map
\begin{equation}
 \pi:\MC{N}\rightarrow \MC{Q};\quad
 (G,N) \mapsto ([G],q(N)).
\end{equation}
In Table \ref{subsect:orbits},
 the type of the root sublattice of
 $N^G$ for each $(G,N)\in\MC{N}$ is given.
From the table, we have the following:

\begin{prop} \label{prop:listuniq}
Let $\MC{Q}^\circ$ denote the subset of $\MC{Q}$
 defined by
\begin{equation}
\MC{Q}^\circ=\{
 (\MF{G},q) \in \MC{Q} \bigm| 
 \MF{G}\neq \MF{G}_{58}
\}.
\end{equation}
There exists a section
 $\sigma:\MC{Q}^\circ\rightarrow\pi^{-1}(\MC{Q}^\circ)$
 of $\pi$ with the following conditions.
We set $\MC{N}'=\sigma(\MC{Q}^\circ)$.
\begin{enumerate}
\renewcommand{\labelenumi}{(\arabic{enumi})}
\item
Let $(G,N)\in\MC{N}$ and $(G',N')\in\MC{N}'$.
If $\pi(G,N)=\pi(G',N')$
 and $N^G\cong (N')^{G'}$, then $(G,N)\cong(G',N')$.
\item
Let $(G,N)\in\MC{N}'$.
If $[G]\neq\MF{G}_3$,
 then $N$ is of type $A_1^{\oplus 24}$.
\end{enumerate}
\end{prop}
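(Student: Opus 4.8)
The plan is to read Proposition \ref{prop:listuniq} off the complete list of $\MC{N}$ produced by the algorithm of Subsection \ref{subsect:method}, in which every isomorphism class $(G,N)\in\MC{N}$ comes recorded with its image $\pi(G,N)=([G],q(N))$ and with the root type of $N^G$ (Table \ref{subsect:orbits}); the assertion is organizational, so the task is to exhibit the section $\sigma$ and to verify the two itemized conditions against that list. Fix $(\MF{G},q)\in\MC{Q}^\circ$. By (\ref{mcqequation}) the fibre $\pi^{-1}(\MF{G},q)$ is a nonempty finite set of classes $(G,N)$ with $[G]=\MF{G}$ and $q(N_G)\cong q$. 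The first thing I would check, directly from Table \ref{subsect:orbits}, is that whenever $\MF{G}\neq\MF{G}_3$ this fibre contains a representative with $N$ of type $A_1^{\oplus 24}$; the conceptual reason is that for the Niemeier lattice $N$ with root system $A_1^{\oplus 24}$ one has $\OP{O}(N,\Delta^+)\cong M_{24}$ acting by permutations of the $24$ summands, and every group in $\Gsymp$ except the single exception occurs inside a point stabiliser $M_{23}$ whose orbit pattern on the $24$ points annihilates all roots of the coinvariant part. I then \emph{define} $\sigma(\MF{G},q)$ to be such an $A_1^{\oplus 24}$-representative (for $\MF{G}=\MF{G}_3$, simply an arbitrary representative of the fibre) and set $\MC{N}'=\sigma(\MC{Q}^\circ)$; condition (2) holds by construction.

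\textbf{The uniqueness condition.} For condition (1), let $(G,N)\in\MC{N}$ and $(G',N')\in\MC{N}'$ have $\pi(G,N)=\pi(G',N')=(\MF{G},q)$ and $N^G\cong(N')^{G'}$. An isometry $N^G\cong(N')^{G'}$ carries root sublattice to root sublattice, so it is enough to prove the a priori stronger statement that inside each fibre of $\pi$ over $\MC{Q}^\circ$ the root type of $N^G$ already separates the isomorphism classes of $\MC{N}$. This is a finite verification against Table \ref{subsect:orbits}, but it rests on the structural point that $(G,N)$ is reconstructed from the data consisting of $[G]$, the permutation action of $G$ on the Dynkin components of $R(N,\Delta^+)$, and the gluing of $N^G$ and $N_G$ inside $N$ — exactly what the orbit/root-type entries encode — so that once the abstract group and the orbit picture match, one only has to see that the gluing is rigid and that $\OP{O}(N,\Delta^+)$-conjugacy (not merely abstract isomorphism) of the subgroup is forced. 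Along the way $\rank N^G = 24-c(\MF{G})$ by Proposition \ref{prop:trace} and $q(N^G)\cong -q$ by Lemma \ref{lem:unimodembed}, while Theorem \ref{thm:nikulinuniq} — applied to the orthogonal complement $\Lambda^G$ of $\Lambda_G$ inside $\Lambda$, which is indefinite of rank $\geq 3$ by condition (\ref{defn:mcn:embedding}) — is what controls the possible invariant lattices.

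\textbf{The exceptional group and the main obstacle.} It remains to see why $\MF{G}_{58}$ must be removed, and this is precisely where the difficulty lies: I expect that over $\MF{G}_{58}$ the list contains two non-isomorphic classes in $\MC{N}$ lying above a common point of $\MC{Q}$ whose invariant lattices are isometric and whose root types coincide, so that no choice of section element could satisfy (1). Pinning down this genuine coincidence — and, in every other case, confirming that the tabulated combinatorial data really determines $(G,N)$ up to $G$-lattice isomorphism, i.e.\ that the gluing carries no hidden moduli and that conjugacy in $\OP{O}(N,\Delta^+)$ is rigidly determined — is the main work. The $A_1^{\oplus 24}$ instances are the easy ones, the ambient rigidity being that of $M_{24}$ and the binary Golay code; the Niemeier lattices with higher-rank $A$-, $D$-, $E$-components are where care is needed and where the $\MF{G}_{58}$ exception has to be located.
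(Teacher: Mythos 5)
Your overall strategy is the paper's: take the complete enumeration of $\MC{N}$, choose as $\sigma(\MF{G},q)$ a representative of each fibre (the $A_1^{\oplus 24}$ one whenever possible), and reduce condition (1) to checking that within each fibre the lattices $N^G$ are pairwise non-isomorphic, certified by explicit invariants. But the specific invariant you propose is not enough, and this is where your argument breaks. You assert that ``the root type of $N^G$ already separates the isomorphism classes'' inside each fibre over $\MC{Q}^\circ$. This is false: in the fibres over $(\MF{G}_n,q_n)$ for $n=32,41,56,63$ there are distinct classes in $\MC{N}$ whose invariant lattices have the same root type (e.g.\ for $n=56$ both classes have root type $A_1^{\oplus 2}$). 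The paper separates these by an additional invariant, the number of vectors $v\in N^G$ with $\BF{v}{v}=-4$ or $-6$ (e.g.\ $26$ versus $42$ for $n=56$), and it only needs to separate the \emph{boxed} representative from the others, not all classes from each other (for $n=41$ two of the unboxed classes both have $26$ such vectors and are not distinguished at all). As written, your finite verification would fail in these four cases.

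A second, related gap is your treatment of $\MF{G}_3$. You propose to take ``an arbitrary representative of the fibre,'' but in that fibre the root types repeat heavily (e.g.\ $A_1^{\oplus 8}$ occurs for Niemeier lattices of types $D_8^{\oplus 3}$, $A_3^{\oplus 8}$ and $A_1^{\oplus 24}$, and $A_1^{\oplus 4}$ occurs twice as well), so an arbitrary choice gives you no way to verify condition (1). The reason $\MF{G}_3$ is excepted in condition (2) is not that the fibre lacks an $A_1^{\oplus 24}$ representative --- it has two --- but that those representatives cannot be certified distinct from the others; the paper instead selects the class in the $A_2^{\oplus 12}$ Niemeier lattice, whose root type $A_2^{\oplus 2}$ is unique in the fibre. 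Finally, the ``structural point'' about reconstructing $(G,N)$ from the orbit picture and the rigidity of the gluing is doing no work and should be dropped: once one grants that the list of $\MC{N}$ is a complete list of isomorphism classes (which is the content of Section~\ref{subsect:method}), two classes in the same fibre with isomorphic $N^G$ coincide as soon as the chosen numerical invariants of $N^G$ separate the list entries; no further rigidity argument is needed or given in the paper. Your identification of $\MF{G}_{58}$ as the obstruction (two classes over one point of $\MC{Q}$, same root type, no separating invariant available) does match the table.
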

\begin{proof}
For each $(\MF{G},q)\in\MC{Q}^\circ$,
 we can chose $\sigma(\MF{G},q)\in\MC{N}$
 case by case.
For example,
 we consider the case of $C_8=\MF{G}_{14}$
 (see Subsection \ref{subsect:example}).
By the table (\ref{table_c8}),
 the root types of $N^G$ for
 $(G,N)\in\MC{N}$ with $[G]=C_8$
 are different from each other.
Therefore, $N^G$ are
 not isomorphic to each other.
Hence we can chose $(G,N)$ of the case (VI),
 in which $N$ is of type $A_1^{\oplus 24}$,
 as $\sigma(\MF{G}_{14},q_{14})$.
Similarly, for $(G,N)\in\MC{N}$
 with $\pi(G,N)=(\MF{G}_n,q_n)$,
 the isomorphism classes of $N^G$ can be distinguished
 by looking the root types
 except for the cases $n=32,41,56,63$.
For the cases $n=32,41,56,63$,
 we can distinguish them by looking
 the root types and the numbers of
 vectors $v\in N^G$ such that $\BF{v}{v}=-4$ or $-6$.
As a consequence,
 we can choose $(G,N)$
 enclosed by boxes in
 Table \ref{subsect:orbits}.
The choice of $\sigma$ is not unique.
\end{proof}

\section{Uniqueness of coinvariant lattices $\Lambda_G$}
\label{sect:coinvariantuniq}

Let $\MC{S}$ denote a set of $G$-lattices
 defined by
\begin{equation} \label{mcs}
\MC{S}=\{
(G,S) \bigm|
\exists G'\in\MC{L}
 \text{ such that }
 (G,S)\cong (G'
,\Lambda_{G'})
\}.
\end{equation}
For $(G,S)\in\MC{S}$, we have $G\subset\OP{O}_0(S)$
 by Lemma \ref{lem:unimodaction}(3).
In this section, we apply the results
 in the previous section to prove the following:

\begin{thm} \label{thm:coinvariantuniq}
The natural map
 $\varphi:\MC{S}/\text{\rm isom}\rightarrow\MC{Q}/\sim$
 is bijective.
%
\end{thm}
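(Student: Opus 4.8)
The plan is to dispose of surjectivity at once and concentrate all the work on injectivity, which I would reduce to Proposition \ref{prop:listuniq} by a ``re-gluing'' argument. The map $\varphi$ is well defined and surjective by construction: if $(G,S)\in\MC{S}$, say $(G,S)\cong(G',\Lambda_{G'})$ with $G'\in\MC{L}$, then $\varphi$ sends it to $([G'],q(\Lambda_{G'}))\in\MC{Q}$; and conversely every $(\MF{G},q)\in\MC{Q}$ has the form $([G'],q(\Lambda_{G'}))$ for some $G'\in\MC{L}$, so $(G',\Lambda_{G'})\in\MC{S}$ maps to the class of $(\MF{G},q)$. Thus the content is injectivity: given $(G_1,S_1),(G_2,S_2)\in\MC{S}$ with $[G_1]=[G_2]=\MF{G}$ and $q(S_1)\cong q(S_2)=:q$, one must show $(G_1,S_1)\cong(G_2,S_2)$.

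Assume first $\MF{G}\neq\MF{G}_{58}$, so $(\MF{G},q)\in\MC{Q}^\circ$, and set $(G_0,N_0)=\sigma(\MF{G},q)\in\MC{N}'$. By Lemma \ref{lem:kondoembed} there are $(H_i,N_i)\in\MC{N}$ with $(G_i,S_i)\cong(H_i,(N_i)_{H_i})$ for $i=1,2$. The idea is to keep the coinvariant $H_i$-lattice $(N_i)_{H_i}$ but swap its ambient Niemeier lattice for one whose invariant lattice is exactly $(N_0)^{G_0}$. By Lemma \ref{lem:unimodembed} applied to $N_0$ and to $N_i$,
\[
 q\bigl((N_0)^{G_0}\bigr)\cong -q\bigl((N_0)_{G_0}\bigr)\cong -q\cong -q\bigl((N_i)_{H_i}\bigr),
\]
so one may pick an isometry $\gamma\colon q((N_0)^{G_0})\to -q((N_i)_{H_i})$ and form the even unimodular overlattice $\widetilde N_i:=\Gamma_\gamma$ of $(N_0)^{G_0}\oplus(N_i)_{H_i}$. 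Since $H_i$ acts trivially on $(N_0)^{G_0}$ and, by Lemma \ref{lem:unimodaction}(3), trivially on $q((N_i)_{H_i})$, the last part of Lemma \ref{lem:unimodembed} extends the $H_i$-action to $\widetilde N_i$; a short computation with \eqref{def_dual} then gives $\widetilde N_i^{\,H_i}=(N_0)^{G_0}$ and $(\widetilde N_i)_{H_i}=(N_i)_{H_i}$ as $H_i$-lattices.

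It remains to see that $(H_i,\widetilde N_i)\in\MC{N}$. The lattice $\widetilde N_i$ is negative definite, even and unimodular of rank $24$, and it contains the vector of square $-2$ in $(N_0)^{G_0}$ supplied by condition (\ref{defn:mcn:invariant}) for $(G_0,N_0)$, so it is a Niemeier lattice; conditions (\ref{defn:mcn:invariant})--(\ref{defn:mcn:embedding}) of Definition \ref{defn:mcn} for $(H_i,\widetilde N_i)$ are inherited, the first from $(G_0,N_0)$ and the last two from $(H_i,N_i)$, because $\widetilde N_i^{\,H_i}=(N_0)^{G_0}$ and $(\widetilde N_i)_{H_i}=(N_i)_{H_i}$. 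Now $\pi(H_i,\widetilde N_i)=(\MF{G},q)=\pi(G_0,N_0)$ and $\widetilde N_i^{\,H_i}=(N_0)^{G_0}$, so Proposition \ref{prop:listuniq}(1) yields $(H_i,\widetilde N_i)\cong(G_0,N_0)$; restricting to coinvariant lattices, $(G_i,S_i)\cong(H_i,(N_i)_{H_i})\cong(G_0,(N_0)_{G_0})$ for $i=1,2$, hence $(G_1,S_1)\cong(G_2,S_2)$.

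Finally, for the one group $\MF{G}=\MF{G}_{58}$ excluded from Proposition \ref{prop:listuniq}, I would argue directly from the classification of $\MC{N}$ in Table \ref{subsect:orbits}: list the finitely many $(G,N)\in\MC{N}$ with $[G]=\MF{G}_{58}$ and $q(N_G)\cong q$, and check by inspection that their coinvariant $G$-lattices are mutually isomorphic. I expect the real obstacle to be conceptual rather than computational: ``$q(S_1)\cong q(S_2)$'' does not by itself force $S_1\cong S_2$ for negative definite lattices, still less an $\MF{G}$-equivariant isomorphism, so one cannot read $\Lambda_G$ off from $([G],q(\Lambda_G))$ directly. The re-gluing trick is designed to import the missing rigidity from the Niemeier classification through Proposition \ref{prop:listuniq}, and $\MF{G}_{58}$ is precisely the group where that input is not by itself decisive and a hands-on verification is required.
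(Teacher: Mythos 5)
Your treatment of the generic case $\MF{G}\neq\MF{G}_{58}$ is correct and is essentially the paper's own argument: the paper likewise takes the distinguished $(\Gamma,N)\in\MC{N}'$ from Proposition \ref{prop:listuniq}, uses Lemma \ref{lem:unimodembed} to re-embed $S$ primitively into a Niemeier lattice $N'$ with $(S)^\bot_{N'}\cong N^\Gamma$, extends the $G$-action by Lemma \ref{lem:unimodaction} (this is legitimate exactly as you say, since $G\subset\OP{O}_0(S)$ for $(G,S)\in\MC{S}$), observes $(G,N')\in\MC{N}$, and concludes $(G,N')\cong(\Gamma,N)$ from Proposition \ref{prop:listuniq}(1). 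Your explicit construction of $\Gamma_\gamma$ is just a more hands-on phrasing of the same step, and your verification that conditions (\ref{defn:mcn:invariant})--(\ref{defn:mcn:embedding}) of Definition \ref{defn:mcn} are inherited is sound.

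The genuine gap is the case $\MF{G}=\MF{G}_{58}$. Your fallback --- ``list the elements of $\MC{N}$ over $\MF{G}_{58}$ and check by inspection that their coinvariant $G$-lattices are isomorphic'' --- is not executable from the data at hand. Table \ref{subsect:orbits} shows two elements $(G,N)\in\MC{N}$ with $[G]=\MF{G}_{58}$, both in $N$ of type $A_1^{\oplus 24}$ and both with $N^G$ of root type $A_1^{\oplus 2}$; these are by construction \emph{non-conjugate} subgroups of $\OP{O}(N)$, so no isomorphism of the coinvariant $G$-lattices can be read off inside the Niemeier lattice, and exhibiting a $G$-equivariant isometry between two rank-$19$ negative definite lattices directly is not an ``inspection.'' The paper instead uses a structural trick: $\MF{G}_{43}\subsetneq\MF{G}_{58}$ with $c(\MF{G}_{43})=c(\MF{G}_{58})$, so restricting to a subgroup $G'_{43}\subset G$ gives $(G'_{43},S)\in\MC{S}$ with $[G'_{43}]=\MF{G}_{43}$, which is unique by the generic case; then $G$ is recovered as a subgroup of $\OP{O}_0(\Lambda_{G_{43}})$ isomorphic to $\MF{G}_{58}$, and Lemma \ref{lem:sect4} (proved via the Fermat quartic with its $F_{384}$-action and the GAP computation of Lemma \ref{lem:f384}) shows that such a subgroup is unique up to conjugacy in $\OP{O}(\Lambda_{G_{43}})$. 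That geometric/group-theoretic input is what your proposal is missing.
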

\begin{proof}
The surjectivity of $\varphi$ is trivial.
We shall show the injectivity.
Let $(\MF{G},q)\in\MC{Q}$.
Suppose that $(G,S)\in\MC{S}$,
 $[G]=\MF{G}$ and $q(S)\cong q$.
We show that
 $(G,S)$ is uniquely determined up to isomorphism.

(1)
The case $\MF{G}\neq \MF{G}_{58}$.
By Proposition \ref{prop:listuniq},
 there exists an element $(\Gamma,N)\in\MC{N}'$
 such that $[\Gamma]=\MF{G}$ and $q(N_\Gamma)\cong q$.
We show that $(G,S)\cong (\Gamma,N_\Gamma)$.
By Lemma \ref{lem:unimodembed},
 $q(S) \cong q \cong q(N_\Gamma) \cong -q(N^\Gamma)$.
Again by Lemma \ref{lem:unimodembed},
 there exists a primitive embedding $S\hookrightarrow N'$
 of $S$ into a Niemeier lattice $N'$
 such that $(S)^\bot_{N'}\cong N^\Gamma$.
By Lemma \ref{lem:unimodaction},
 the action of $G$ on $S$ is extended to
 that on $N'$ such that
 $(N')_{G}=S$ and $(N')^{G}\cong N^\Gamma$.
Thus $(G,N')\in\MC{N}$
 (see Definition \ref{defn:mcn}).
By Proposition \ref{prop:listuniq},
 we have $(G,N')\cong(\Gamma,N)$.
Hence $(G,S)=(G,(N')_{G})\cong(\Gamma,N_\Gamma)$.

(2)
The case $\MF{G}=\MF{G}_{58}$.
From Table \ref{subsect:trees}, we find that
 $\MF{G}_{43}\subsetneq \MF{G}_{58}$
 and
 $c(\MF{G}_{43}) = c(\MF{G}_{58})$.
Hence there exists a subgroup $G'_{43}$ of $G$
 such that $[G'_{43}]=\MF{G}_{43}$.
Since $c(\MF{G}_{43}) = c(\MF{G}_{58})$,
 we have $(G'_{43},S)\in\MC{S}$.
Let $G_{43}\in\MC{L}$ be as in Lemma \ref{lem:sect4}.
By (1) and
 Proposition \ref{prop:uniquemcq},
 $(G',S')\in\MC{S}$
 such that $[G']=\MF{G}_{43}$
 is unique up to isomorphism.
Therefore, we have
 $(G'_{43},S)\cong (G_{43},\Lambda_{G_{43}})$.
By the condition (2) in Lemma \ref{lem:sect4},
 there exists a unique subgroup
 $G_{58}$ of $\OP{O}_0(\Lambda_{G_{48}})$
 such that $[G_{58}]=\MF{G}_{58}$
 up to conjugacy in $\OP{O}(\Lambda_{G_{48}})$.
Hence $(G,S)\cong(G_{58},\Lambda_{G_{43}})$.
\end{proof}

\begin{defn} \label{defn:sgq}
Let $(\MF{G},q)\in\MC{Q}$.
By Theorem \ref{thm:coinvariantuniq},
 there exists a unique element $(G,S)\in\MC{S}$
 such that $([G],q(S))\sim (\MF{G},q)$,
 i.e.,
 $[G]=\MF{G}$ and $q(S)\cong q$
 up to isomorphism.
The lattice $S$ determined by this conditions is
 denoted by $S(\MF{G},q)$.
Since $G\subset\OP{O}_0(S)$,
 $\MF{G}$ is a subgroup of
 $[\OP{O}_0(S(\MF{G},q))]$.
\end{defn}

By the definition of $S(\MF{G},q)$, we have
\begin{equation} \label{lambda2sgq}
 \Lambda_G \cong S([G],q(\Lambda_G))
\end{equation}
 for $G\in\MC{L}$.

\begin{cor} \label{cor:sgq}
Let $(\MF{G},q),(\MF{G}',q')\in\MC{Q}$.
If $\MF{G}\subset\MF{G}'$, $q\cong q'$ and
 $c(\MF{G})=c(\MF{G}')$, then
 $S(\MF{G},q)\cong S(\MF{G}',q')$. 
\end{cor}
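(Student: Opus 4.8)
The plan is to reduce the statement to Corollary \ref{cor:sgq}'s hypotheses by producing a group $G'\in\MC{L}$ realizing $\MF{G}'$ together with a subgroup $G\subset G'$ realizing $\MF{G}$, and then to observe that the coinvariant lattices must coincide. First I would take the pair $(\MF{G}',q')\in\MC{Q}$ and, using Theorem \ref{thm:coinvariantuniq} (or directly Definition \ref{defn:sgq}), pick the unique $(G',S')\in\MC{S}$ with $[G']=\MF{G}'$ and $q(S')\cong q'$, so that $S'=S(\MF{G}',q')$. Since $\MF{G}\subset\MF{G}'$, there is a subgroup $G\subset G'$ with $[G]=\MF{G}$; I then consider the $G$-lattice $(G,\Lambda_{G'})$ restricted appropriately, or rather I pass to $S=(S')_G$ and $S^{\bot}=(S')^G$ inside $S'$. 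The key point is the numerical identity $c(\MF{G})=c(\MF{G}')$: by Proposition \ref{prop:trace} we have $\rank S_{G}=c(\MF{G})=c(\MF{G}')=\rank S'$, and since $S_G\subset S'$ with $S'=S'_{G'}$ and $G\subset G'$ forces $S'_G\supset S'_{G'}=S'$, we get $S_G=S'$, i.e. the $G$-action on $S'$ already has trivial invariant part. Hence $(G,S')$ lies in $\MC{S}$ — it is induced by a symplectic action of $\MF{G}$ on a $K3$ surface via Proposition \ref{prop:nikulinsymp} and Lemma \ref{lem:kondoembed} — and $q(S')\cong q'\cong q$, so $[G]=\MF{G}$ and $q(S')\cong q$.

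Now I apply uniqueness: by Theorem \ref{thm:coinvariantuniq} and Definition \ref{defn:sgq}, the lattice $S(\MF{G},q)$ is the unique (up to isomorphism) lattice underlying an element of $\MC{S}$ whose associated pair is $(\MF{G},q)$. Since $(G,S')\in\MC{S}$ with $([G],q(S'))\sim(\MF{G},q)$, we conclude $S'\cong S(\MF{G},q)$. Combined with $S'=S(\MF{G}',q')$, this gives $S(\MF{G},q)\cong S(\MF{G}',q')$, as desired.

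The main obstacle, and the step requiring care, is the claim that $c(\MF{G})=c(\MF{G}')$ together with $\MF{G}\subset\MF{G}'$ forces the invariant lattice of the subgroup action to be trivial — i.e. $(S')^G=0$. This uses that $(S')^{G'}=0$ (as $S'\in\MC{S}$ is a coinvariant lattice, hence equals its own coinvariant part) together with the monotonicity $(S')^{G'}\subset(S')^G$ and the rank equality $\rank(S')^G=24-c(\MF{G})-\text{(rank of coinvariant part)}$; here one must invoke Proposition \ref{prop:trace} to identify $\rank S'_G=c(\MF{G})$ and then compare with $\rank S'=c(\MF{G}')$. One should also check that $(G,S')$ genuinely satisfies the defining conditions of $\MC{L}$ (negative definiteness and absence of $(-2)$-vectors in the coinvariant lattice): both are inherited because $S_G=S'=S'_{G'}$ and $(G',S')$ already satisfies them, since $G'\in\MC{L}$. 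Once these verifications are in place, the rest is a direct appeal to the uniqueness packaged in Theorem \ref{thm:coinvariantuniq}.
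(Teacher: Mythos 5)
Your proposal is correct and follows essentially the same route as the paper's proof: realize $(\MF{G}',q')$ by some $G'\in\MC{L}$, pass to a subgroup $G\subset G'$ with $[G]=\MF{G}$, and use $c(\MF{G})=c(\MF{G}')$ to force $\Lambda_G=\Lambda_{G'}$, so that both $S(\MF{G},q)$ and $S(\MF{G}',q')$ are isomorphic to this common coinvariant lattice by the uniqueness in Theorem \ref{thm:coinvariantuniq}. The additional verifications you flag (that the restricted action gives an element of $\MC{S}$, inheritance of definiteness and absence of $(-2)$-vectors) are exactly the details the paper leaves implicit.
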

\begin{proof}
Let $G'\in\MC{L}$ such that
 $[G']=\MF{G}'$ and $q(\Lambda_{G'})\cong q'$.
Then $\Lambda_{G'}\cong S(\MF{G}',q')$.
Let $G$ be the subgroup of $G'$
 which corresponds to the subgroup $\MF{G}$ of $\MF{G}'$.
Since $c(G)=c(G')$, we have
 $S(\MF{G},q)\cong \Lambda_{G}=\Lambda_{G'}
 \cong S(\MF{G}',q')$.
\end{proof}

\begin{rem}
In Table \ref{subsect:trees},
 we give the trees of
\begin{equation}
 T_S=
 \{ \MF{G}_n \bigm| S(\MF{G}_n,q_n)\cong S \}
\end{equation}
 for $T_S$ with $\sharp T_S\geq 2$.
From Tables \ref{subsect:qands} and \ref{subsect:trees},
 we find that
 there exist exactly 40 isomorphism classes of lattices
 $S(\MF{G}_n,q_n)$
 (or $\Lambda_G$ for $G\in\MC{L}$).
Also,
 we can check that the natural map
\begin{equation}
 \{ S(\MF{G},q) \bigm| (\MF{G},q)\in\MC{Q} \}
 /\text{\rm isom}
 \rightarrow
 \{ q \bigm| (\MF{G},q)\in\MC{Q}, q\cong q(S(\MF{G},q)) \}
 /\text{\rm isom}
\end{equation}
 is bijective.
\end{rem}

\begin{defn} \label{defn:closgq}
Let $(\MF{G},q)\in\MC{Q}$.
We define $\OP{Clos}(\MF{G},q)$ by
\begin{equation}
 \OP{Clos}(\MF{G},q)=
 ([\OP{O}_0(S(\MF{G},q))],q).
\end{equation}
Note that $\MF{G}$ is a subgroup of
 $[\OP{O}_0(S(\MF{G},q))]$
 (see Definition \ref{defn:sgq}).
\end{defn}

For $(\MF{G},q)\in\MC{Q}$,
 there exists an element $G\in\MC{L}$
 such that $([G],q(\Lambda_G))\sim (\MF{G},q)$.
Since $S([G],q(\Lambda_G))\cong \Lambda_G$,
 we have
\begin{equation}
 \OP{Clos}(\MF{G},q)
 = ([\OP{O}_0(\Lambda_G)],q)
 = ([\OP{Clos}(G)],q)
\end{equation}
 (see Definition \ref{defn:closg}).
In particular,
 we have $\OP{Clos}(\MF{G},q)\in\MC{Q}$.
Let $\MC{Q}_\clos$ denote a subset of $\MC{Q}$
 defined by
\begin{equation} \label{qclos}
 \MC{Q}_\clos
 =\{ (\MF{G},q)\in\MC{Q} \bigm|
 \OP{Clos}(\MF{G},q)=(\MF{G},q) \}.
\end{equation}
For $G\in\MC{L}$, $G\in\MC{L}_\clos$
 if and only if $([G],q(\Lambda_G))\in\MC{Q}_\clos$.

\begin{cor} \label{cor:qcloslambda}
The map
\begin{equation} \label{qclos2sgq}
 \MC{Q}_\clos/\sim
 \rightarrow
 \{ \Lambda_G \bigm| G\in\MC{L} \}/\text{\rm isom}
\end{equation}
 which is induced by the correspondence
 $(\MF{G},q)\mapsto S(\MF{G},q)$
 is bijective.
\end{cor}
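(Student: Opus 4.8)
The map in \eqref{qclos2sgq} is well defined since, as noted just before the statement, $G\in\MC{L}_\clos$ is equivalent to $([G],q(\Lambda_G))\in\MC{Q}_\clos$, and for any $G\in\MC{L}$ one has $\Lambda_G\cong S([G],q(\Lambda_G))$ by \eqref{lambda2sgq}. The plan is to prove injectivity and surjectivity separately, leaning on Theorem \ref{thm:coinvariantuniq} (the bijection $\MC{S}/\text{isom}\to\MC{Q}/\sim$) and on the fact that $\OP{Clos}$ is a closure operation on $\MC{Q}$, i.e.\ $\OP{Clos}(\OP{Clos}(\MF{G},q))=\OP{Clos}(\MF{G},q)$.

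\emph{Surjectivity.} Given any $G\in\MC{L}$, set $\MF{G}=[G]$ and $q=q(\Lambda_G)$, so $(\MF{G},q)\in\MC{Q}$. By the remarks preceding the corollary, $\OP{Clos}(\MF{G},q)=([\OP{Clos}(G)],q)\in\MC{Q}$, and since $\OP{Clos}$ is idempotent this pair lies in $\MC{Q}_\clos$. Writing $G'=\OP{Clos}(G)$, Lemma \ref{lem:unimodaction}(1) gives $\Lambda_{G'}=\Lambda_G$ (this is already recorded after Definition \ref{defn:closg}), so the class of $\Lambda_G$ is the image of $\OP{Clos}(\MF{G},q)\in\MC{Q}_\clos/\!\sim$. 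Hence every isomorphism class $\Lambda_G$ with $G\in\MC{L}$ is hit.

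\emph{Injectivity.} Suppose $(\MF{G}_1,q_1),(\MF{G}_2,q_2)\in\MC{Q}_\clos$ map to the same lattice class, i.e.\ $S(\MF{G}_1,q_1)\cong S(\MF{G}_2,q_2)=:S$. By Definition \ref{defn:sgq}, $S(\MF{G}_i,q_i)$ carries the $\MF{G}_i$-action with $\MF{G}_i=[G_i]$ for the unique $(G_i,S)\in\MC{S}$ with $q(S)\cong q_i$; in particular $q_i\cong q(S)$ since $q_i\cong q(S(\MF{G}_i,q_i))$ — here I would invoke the final bijectivity remark after Corollary \ref{cor:sgq}, which says $(\MF{G},q)\in\MC{Q}$ appearing as an $S(\MF{G},q)$ always has $q\cong q(S(\MF{G},q))$. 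Thus $q_1\cong q_2$. Next, applying $\OP{O}_0$ to $S$ in the two descriptions: $\OP{Clos}(\MF{G}_i,q_i)=([\OP{O}_0(S)],q_i)$, and since $(\MF{G}_i,q_i)\in\MC{Q}_\clos$ we get $\MF{G}_i=[\OP{O}_0(S)]$ for $i=1,2$. Therefore $\MF{G}_1=\MF{G}_2$ and $q_1\cong q_2$, i.e.\ $(\MF{G}_1,q_1)\sim(\MF{G}_2,q_2)$, proving injectivity.

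\emph{Main obstacle.} The only delicate point is making rigorous the identification $S(\MF{G},q)\cong\Lambda_{\OP{Clos}(G)}$ with its full $\OP{O}_0$-action, i.e.\ checking that the lattice produced abstractly by Theorem \ref{thm:coinvariantuniq} is genuinely the one underlying a closed element of $\MC{L}$ and that passing to $\OP{O}_0$ commutes with this identification; once the bookkeeping around Definitions \ref{defn:sgq}--\ref{defn:closgq} is in place this is formal, but it is where care is needed. Everything else is a direct translation through the already-established bijection $\MC{S}/\text{isom}\cong\MC{Q}/\!\sim$.
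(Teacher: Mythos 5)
Your argument is correct and is essentially the paper's proof unpacked: the paper simply exhibits the inverse map $S\mapsto ([\OP{O}_0(S)],q(S))$, and your injectivity and surjectivity steps amount to verifying that this is a two-sided inverse (using $\Lambda_G=\Lambda_{\OP{Clos}(G)}$ and the idempotence of $\OP{Clos}$ on $\MC{Q}$, both of which follow from the identities recorded just before the corollary). The identification $q_i\cong q(S(\MF{G}_i,q_i))$ that you flag as needing the remark after Corollary \ref{cor:sgq} is in fact part of Definition \ref{defn:sgq} itself, so no extra input is required there.
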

\begin{proof}
The inverse map of (\ref{qclos2sgq}) is the map
 induced by the correspondence
 $S\mapsto ([\OP{O}_0(S)],q(S))$.
\end{proof}

\begin{cor} \label{cor:closure}
Let $(\MF{G},q)\in\MC{Q}$.
Then we have $\OP{Clos}(\MF{G},q)=(\MF{G}',q)$,
 where $\MF{G}'$ is the unique maximal element in
\begin{equation} \label{closset}
 \{ \MF{G}''\in\Gsymp \bigm|
 (\MF{G}'',q'')\in\MC{Q},
 \MF{G}\subset\MF{G}'', q\cong q'',
 c(\MF{G})=c(\MF{G}'') \}.
\end{equation}
Moreover, we have the following.
\begin{enumerate}
\renewcommand{\labelenumi}{(\arabic{enumi})}
\item \label{cor:closure:qt}
If $\MF{G}\in \{ Q_8,T_{24} \}$, i.e.,
 $(\MF{G},q)\sim (\MF{G}_n,q_n)$
 for $n\in \{ 12,13,37,38 \}$,
 then we have the follwoing table.
\begin{equation*}
\begin{array}{c|c|c|c}
n & \MF{G}=\MF{G}_n
 & m & \MF{G}'=\MF{G}_m \\
\hline
12 & Q_8 & 12 & Q_8 \\
13 & Q_8 & 40 & Q_8*Q_8 \\
37 & T_{24} & 77 & T_{192} \\
38 & T_{24} & 54 & T_{48}
\end{array}
\end{equation*}
Here $m$ is determined by
 $(\MF{G}_m,q_m)\sim \OP{Clos}(\MF{G},q)$.
\item \label{cor:closure:nonqt}
If 
 $\MF{G}\not\in \{ Q_8,T_{24} \}$,
 then $\MF{G}'$
 is the unique maximal element in
\begin{equation}
 \{ \MF{G}''\in\Gsymp \bigm|
 \MF{G}\subset\MF{G}'', c(\MF{G})=c(\MF{G}'') \}.
\end{equation}
\end{enumerate}
\end{cor}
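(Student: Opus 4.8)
The plan is to unwind the definition of $\OP{Clos}(\MF{G},q)$ and reduce the first assertion to a statement about which abstract overgroups $\MF{G}''$ of $\MF{G}$ can be realized as symplectic automorphism groups of the \emph{same} coinvariant lattice $S(\MF{G},q)$. Fix $(\MF{G},q)\in\MC{Q}$ and pick $G\in\MC{L}$ with $[G]=\MF{G}$ and $q(\Lambda_G)\cong q$, so that $\Lambda_G\cong S(\MF{G},q)$. By the discussion following Definition \ref{defn:closgq} we have $\OP{Clos}(\MF{G},q)=([\OP{O}_0(\Lambda_G)],q)=([\OP{Clos}(G)],q)$, and by Proposition \ref{prop:extensionofaction} the action of $G$ extends to that of $\OP{Clos}(G)=\OP{O}_0(\Lambda_G)$ with $\Lambda_{\OP{Clos}(G)}=\Lambda_G$, so $\OP{Clos}(\MF{G},q)\in\MC{Q}$ and $c([\OP{Clos}(G)])=c(\MF{G})$ by Proposition \ref{prop:trace}. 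Thus $\MF{G}':=[\OP{Clos}(G)]$ is an element of the set \eqref{closset}: it is an overgroup of $\MF{G}$ in $\Gsymp$, its associated discriminant form is $q$, and $c(\MF{G}')=c(\MF{G})$.

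The substance is to show $\MF{G}'$ is the \emph{maximal} such element, i.e.\ that any $\MF{G}''$ in \eqref{closset} satisfies $\MF{G}''\subset\MF{G}'$. Given such a $\MF{G}''$, choose $G''\in\MC{L}$ with $[G'']=\MF{G}''$ and $q(\Lambda_{G''})\cong q$; then $\Lambda_{G''}\cong S(\MF{G}'',q)$. Now $\MF{G}\subset\MF{G}''$ and $c(\MF{G})=c(\MF{G}'')$ and $q\cong q$, so Corollary \ref{cor:sgq} gives $S(\MF{G},q)\cong S(\MF{G}'',q)$, hence $\Lambda_G\cong\Lambda_{G''}$ as lattices. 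Under this isomorphism $G''$ becomes a subgroup of $\OP{O}(\Lambda_G)$ whose coinvariant sublattice is all of $\Lambda_G$; since $G''$ acts trivially on $\Lambda_{G''}^{G''}=(\Lambda_{G''})^\bot=0$ and hence (being a subgroup of $\OP{O}$ of a unimodular lattice whose orthogonal complement is trivial, cf.\ Lemma \ref{lem:unimodaction}) acts trivially on the discriminant form of its coinvariant lattice, we get $G''\subset\OP{O}_0(\Lambda_G)=\OP{Clos}(G)$. Therefore $\MF{G}''=[G'']\subset[\OP{Clos}(G)]=\MF{G}'$, proving maximality; in particular the maximal element is unique. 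Combining, $\OP{Clos}(\MF{G},q)=(\MF{G}',q)$ with $\MF{G}'$ as described.

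For parts \eqref{cor:closure:qt} and \eqref{cor:closure:nonqt}, the point is to determine the set \eqref{closset} concretely. When $\MF{G}\notin\{Q_8,T_{24}\}$, Proposition \ref{prop:uniquemcq} says $q$ is determined by $\MF{G}$ up to isomorphism, and likewise every overgroup $\MF{G}''\in\Gsymp$ with $c(\MF{G}'')=c(\MF{G})$ automatically has $q(\Lambda_{G''})\cong q$ (again by Proposition \ref{prop:uniquemcq}, applied to $\MF{G}''$, together with Corollary \ref{cor:sgq} relating the two coinvariant lattices which share the same rank and live inside comparable invariant data); hence the constraint $q\cong q''$ in \eqref{closset} is redundant, giving the formula in \eqref{cor:closure:nonqt}. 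When $\MF{G}\in\{Q_8,T_{24}\}$ the form $q$ is \emph{not} determined by $\MF{G}$ alone — there are two choices $q_n$ by Proposition \ref{prop:uniquemcq} — so the constraint $q\cong q''$ is genuinely active and singles out, for each of the four pairs $(\MF{G}_n,q_n)$ with $n\in\{12,13,37,38\}$, a specific maximal overgroup; these are read off from the trees in Table \ref{subsect:trees} (equivalently Table \ref{subsect:qands}), yielding the displayed table.

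The main obstacle I anticipate is the step asserting that $G''$, transplanted into $\OP{O}(\Lambda_G)$ via the abstract lattice isomorphism $S(\MF{G},q)\cong S(\MF{G}'',q)$, indeed lands in $\OP{O}_0(\Lambda_G)$: one must be careful that ``acting with coinvariant lattice equal to the whole lattice'' forces triviality on the discriminant form. This is exactly the content of Lemma \ref{lem:unimodaction}(3) applied inside a Niemeier lattice containing $\Lambda_G$ primitively (as furnished by Lemma \ref{lem:kondoembed}), so the argument goes through, but it is the place where the unimodularity of the ambient lattice is essential and where a naive ``subgroup of $\OP{O}$'' argument would fail. The rest is bookkeeping against the tables.
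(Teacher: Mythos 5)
Your proposal is correct and follows essentially the same route as the paper: the maximality of $\MF{G}'$ comes from Corollary \ref{cor:sgq} giving $S(\MF{G},q)\cong S(\MF{G}'',q'')$ together with the containment $G''\subset\OP{O}_0(S(\MF{G}'',q''))$ (already recorded after (\ref{mcs}) and in Definition \ref{defn:sgq}, via Lemma \ref{lem:unimodaction}(3) applied directly to the unimodular $\Lambda$ — no Niemeier detour needed), whence $\MF{G}''\subset[\OP{O}_0(S(\MF{G},q))]=\MF{G}'$. The remaining parts are, as in the paper, a direct check against Proposition \ref{prop:uniquemcq} and Table \ref{subsect:trees}.
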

\begin{proof}
For any element $\MF{G}''$ in (\ref{closset}),
 we have $S(\MF{G},q) \cong S(\MF{G}'',q'')$
 by Corollary \ref{cor:sgq}.
Hence
 $\MF{G}''\subset \MF{G}'=[\OP{O}_0(S(\MF{G},q))]$.
Therefore, the former part of the corollary follows.
We can directly check the latter part by
 Proposition \ref{prop:uniquemcq} and
 Table \ref{subsect:trees}.
\end{proof}

\section{Property %
 $\overline{\OP{O}(\Lambda_{G})}=\OP{O}(q(\Lambda_{G}))$}
\label{sect:coinvariantsurj}

This section is devoted
 to prove the following theorem,
 which gives a sufficient condition
 for $G\in\MC{L}$
 such that
 $\overline{\OP{O}(\Lambda_{G})}=\OP{O}(q(\Lambda_{G}))$
 (see (\ref{image_in_oq})).

\begin{thm} \label{thm:surjcoinv}
Let $G\in\MC{L}$
 with $c(G)=\OP{rank}\Lambda_G\geq 17$
 (see Proposition \ref{prop:trace}).
The group $\overline{\OP{O}(\Lambda_{G})}$
 is qual to $\OP{O}(q(\Lambda_{G}))$
 if and only if
 $[\OP{Clos}(G)]\in \{ \MF{G}_{48},\MF{G}_{51} \}$.
In particular, if $c(G)=\OP{rank}\Lambda_G=19$, then
 $\overline{\OP{O}(\Lambda_{G})}=\OP{O}(q(\Lambda_{G}))$.
\end{thm}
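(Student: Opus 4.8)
The plan is to leverage Theorem \ref{thm:nikulinsurj} (Nikulin's surjectivity criterion) together with the explicit computations of $q(\Lambda_G)$ obtained in Section \ref{sect:embedding}. Since the question concerns only $\overline{\OP{O}(\Lambda_G)}$ and $q(\Lambda_G)$, and both depend only on $([G],q(\Lambda_G))\in\MC{Q}$, I may first reduce to $G\in\MC{L}_\clos$, i.e.\ to $G=\OP{Clos}(G)$, because $\Lambda_G=\Lambda_{\OP{Clos}(G)}$ and $q(\Lambda_G)=q(\Lambda_{\OP{Clos}(G)})$. So it suffices to examine the finitely many pairs $(\MF{G},q)\in\MC{Q}_\clos$ with $c(\MF{G})\geq 17$; by Proposition \ref{prop:uniquemcq} and Table \ref{subsect:qands} there is essentially a short explicit list (the groups $\MF{G}$ with $c(\MF{G})\in\{17,18,19\}$, together with the $Q_8,T_{24}$ subtleties already recorded in Corollary \ref{cor:closure}).

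\emph{The ``if'' direction.} For $[\OP{Clos}(G)]\in\{\MF{G}_{48},\MF{G}_{51}\}$ (and for the case $c(G)=19$, where one checks which closed groups occur), I would read off $q(\Lambda_G)$ from the tables and verify the hypotheses of Theorem \ref{thm:nikulinsurj}. Concretely, since $\Lambda_G$ is negative definite, the lattice $S(\MF{G},q)$ itself is definite, so I cannot apply Nikulin's theorem to $\Lambda_G$ directly; instead I would apply it to the orthogonal complement $L$ of $\Lambda_G$ in $\Lambda$, which is \emph{indefinite} of signature $(3,19-c(G))$ and satisfies $q(L)\cong -q(\Lambda_G)$ by Lemma \ref{lem:unimodembed}. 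When $c(G)\leq 19$ we have $\OP{rank} L\geq 3$, and from the explicit shape of $q(\Lambda_G)$ (which in these cases contains a $u^{(2)}(2)$ or $v^{(2)}(2)$ summand, and has $2$-length and $p$-length small enough relative to $\OP{rank} L=22-c(G)$) the conditions (1) and (2) of Theorem \ref{thm:nikulinsurj} hold, so $\overline{\OP{O}(L)}=\OP{O}(q(L))=\OP{O}(q(\Lambda_G))$. A small extra step: I need $\overline{\OP{O}(\Lambda_G)}=\overline{\OP{O}(L)}$ as subgroups of $\OP{O}(q(\Lambda_G))=\OP{O}(q(L))$; this is immediate because $\Lambda_G\oplus L\hookrightarrow\Lambda$ is a glueing along the \emph{identity} $\gamma=\OP{id}$, and Lemma \ref{lem:unimodembed} shows $\varphi\in\OP{O}(\Lambda_G)$ extends to $\Lambda$ with trivial action on $L$ iff $\overline\varphi\in\overline{\OP{O}(L)}$; since every such glued automorphism restricts to $\OP{O}(\Lambda_G)$, we get $\overline{\OP{O}(\Lambda_G)}\supseteq\overline{\OP{O}(L)}$, and by symmetry equality. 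Hence the conclusion.

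\emph{The ``only if'' direction.} For every other closed $\MF{G}$ with $c(\MF{G})\geq 17$ I must exhibit an automorphism of $q(\Lambda_G)$ not in $\overline{\OP{O}(\Lambda_G)}$. The cleanest route: if $\overline{\OP{O}(\Lambda_G)}=\OP{O}(q(\Lambda_G))$ then by Lemma \ref{lem:unimodembed} \emph{every} glueing of $\Lambda_G$ with a fixed complement $L$ gives isomorphic unimodular overlattices, and more to the point $\OP{Clos}(G)=\OP{O}_0(\Lambda_G)$ would have to be normal in all of $\OP{O}(\Lambda_G)$ with $\OP{O}(\Lambda_G)/\OP{O}_0(\Lambda_G)\cong\OP{O}(q(\Lambda_G))$; I can instead argue group-theoretically inside $\OP{O}(N)$ for the Niemeier embedding furnished by Section \ref{sect:embedding}. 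Alternatively, and probably simpler for the write-up, I would invoke the contrapositive via the \emph{uniqueness-of-glueing} machinery of Sections \ref{sect:coinvariantsurj}--\ref{sect:invariantsurj}: if surjectivity held for a group outside $\{\MF{G}_{48},\MF{G}_{51}\}$, one would get more (or fewer) $G$-actions on $\Lambda$ than Main Theorem allows — but since Main Theorem has not yet been proved, I should instead just \textbf{compute}: for each such $\MF{G}$, use the Gramian of $N^G$ (available from Table \ref{subsect:invariant} / the data in Section \ref{sect:embedding}) to compute $\OP{O}(q(\Lambda_G))$ and the image of $\OP{O}(\Lambda_G)$ explicitly with GAP/Maxima, and observe the indices are $>1$. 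The ``in particular'' statement for $c(G)=19$ then follows because the only closed groups with $c(\MF{G})=19$ turn out to lie in $\{\MF{G}_{48},\MF{G}_{51}\}$ (this is where one checks the table of maximal $c$-values).

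\emph{Main obstacle.} The genuine difficulty is the ``only if'' direction: ruling out surjectivity for the remaining high-$c$ groups is not a single clean lemma but a finite case analysis, and the delicate cases are precisely those where $\OP{rank} L$ is small (so Nikulin's sufficient condition fails but surjectivity might still hold by accident) — there one must actually produce an explicit non-liftable automorphism of the discriminant form, most reliably by direct machine computation of $\overline{\OP{O}(\Lambda_G)}\subset\OP{O}(q(\Lambda_G))$ from the explicit $N^G$. Keeping track of the $Q_8$ and $T_{24}$ cases (where $\OP{Clos}$ behaves as in Corollary \ref{cor:closure}(1)) is an additional bookkeeping hazard, since there $c(G)$ alone does not pin down $q(\Lambda_G)$.
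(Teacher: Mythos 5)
There is a genuine gap, and it sits in the half of the theorem that carries all the content: proving surjectivity of $\OP{O}(\Lambda_G)\rightarrow\OP{O}(q(\Lambda_G))$. Your plan is to apply Theorem \ref{thm:nikulinsurj} to the indefinite complement $L=\Lambda^G$ and then transfer the conclusion across the glueing. Both steps fail. First, Nikulin's criterion is simply not available for most of the relevant cases: for $c(G)=19$ one has $\rank\Lambda^G=3$, so condition (\ref{thm:nikulinsurj:1}) forces $l(A(\Lambda^G)_p)\leq 1$ for every odd $p$ and condition (\ref{thm:nikulinsurj:2}) forces $l(A(\Lambda^G)_2)\leq 1$ or a $u^{(2)}(2)$ or $v^{(2)}(2)$ summand; Table \ref{subsect:qands} shows this fails for $n=54$ and $n=77,80$ (where $l(A_2)=3$ and the only $u/v$ blocks have modulus $8$ or $4$), for $n=70,74,79$ (where $l(A_p)=2$ for $p=5$ or $7$ or $3$), and so on --- indeed Section \ref{sect:invariantsurj} records that the criterion already fails for nine cases with $\rank\Lambda^G\geq 4$. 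Second, and decisively, the transfer step ``$\overline{\OP{O}(\Lambda^G)}=\OP{O}(q(\Lambda^G))$ implies $\overline{\OP{O}(\Lambda_G)}=\OP{O}(q(\Lambda_G))$ by symmetry'' is false: Lemma \ref{lem:unimodembed} only says that $\varphi_1\oplus\varphi_2$ extends iff the induced discriminant automorphisms match under $\gamma$; it gives no reason why an arbitrary $\varphi_2\in\OP{O}(L)$ should extend to $\Lambda$ at all, so fullness of one image does not propagate to the other. The paper itself supplies a counterexample to your claim: for $n=48$ one has $\overline{\OP{O}(\Lambda^{G})}=\OP{O}(q(\Lambda^{G}))$ (case (7) of Section \ref{sect:invariantsurj}, covered by Theorem \ref{thm:surj}) while $\overline{\OP{O}(\Lambda_{G})}\neq\OP{O}(q(\Lambda_{G}))$. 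This also shows you have the dichotomy inverted: the statement's ``$\in$'' is a typo for ``$\notin$'' (forced by the ``in particular'' clause, since $c(\MF{G}_{48})=c(\MF{G}_{51})=18$), and $\MF{G}_{48},\MF{G}_{51}$ are exactly the two groups for which surjectivity \emph{fails}, so your ``if'' direction sets out to prove the wrong assertion for them.

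The paper's actual route avoids $\Lambda^G$ entirely. It proves a two-sided criterion (Proposition \ref{prop:criterion}): $\overline{\OP{O}(L_1)}=\OP{O}(q(L_1))$ iff there exists a complement $L_2$ such that the even unimodular overlattice of $L_1\oplus L_2$ is essentially unique \emph{and} the restriction $\OP{O}(\Gamma,L_2)\rightarrow\OP{O}(L_2)$ is surjective. It then takes $L_1=N_G$, $L_2=N^G$, $\Gamma=N$ the Niemeier lattice of type $A_1^{\oplus 24}$: uniqueness follows from Proposition \ref{prop:listuniq}, the kernel of the restriction is exactly $G$ because $G$ is closed (Lemma \ref{lem:restriction}), and surjectivity reduces to the order count $[\OP{O}(N,N^G):G]=2^{24-c(G)}\left|N_{M_{24}}(G)\right|/|G|$ versus $\left|\OP{O}(N^G)\right|$, where $N^G$ has rank at most $7$ and so is computationally accessible. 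This single computation yields surjectivity for $n\neq 48,51$ and its failure for $n=48,51$. Your proposed ``only if'' computation (enumerating $\OP{O}(\Lambda_G)$ for a definite lattice of rank $17$--$19$) could in principle be carried out, but it is the surjectivity half that your proposal cannot deliver as written.
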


Since $c(\MF{G}_{48})=c(\MF{G}_{51})=18$
 by Table \ref{subsect:qands},
 the latter part of the theorem follows
 from the former part.

\subsection{Criterion of the property %
 $\overline{\OP{O}(L)}=\OP{O}(q(L))$}

We prepare for a criterion of
 the property $\overline{\OP{O}(L)}=\OP{O}(q(L))$.

\begin{lem} \label{lem:doublecoset}
Let $H$ be a group and $K_1,K_2$ subgroups of $H$.
If $K_1\subset K_2$ and $\sharp K_1\backslash H/K_2=1$,
 then $K_2=H$.
\end{lem}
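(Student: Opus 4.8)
The plan is to use nothing beyond the definition of the double coset space. First I would recall that $H$ is the disjoint union of its $(K_1,K_2)$-double cosets $K_1 h K_2$ as $h$ ranges over representatives. The hypothesis $\sharp K_1\backslash H/K_2=1$ says there is exactly one such double coset; since $1\in H$ lies in the double coset $K_1\cdot 1\cdot K_2=K_1 K_2$, that unique double coset is forced to be $K_1 K_2$, and therefore $H=K_1 K_2$.

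Next I would simplify $K_1 K_2$ using the containment $K_1\subset K_2$. On one hand $K_1 K_2\subset K_2 K_2=K_2$, since $K_2$ is a subgroup and $K_1\subset K_2$. On the other hand $K_2=\{1\}\cdot K_2\subset K_1 K_2$, since $1\in K_1$. Hence $K_1 K_2=K_2$, and combining this with $H=K_1 K_2$ from the previous step yields $H=K_2$.

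There is essentially no obstacle here: the statement is a purely formal consequence of the partition of $H$ into double cosets. The only point worth keeping in mind is that the product set $K_1 K_2$ need not be a subgroup of $H$ in general, but this is irrelevant for the argument, since we only use the set-theoretic equalities $H=K_1 K_2$ and $K_1 K_2=K_2$.
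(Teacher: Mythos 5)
Your argument is correct and is essentially the paper's own proof: both deduce $H=K_1K_2$ from the single double coset (necessarily the one containing $1$) and then use $K_1\subset K_2$ to collapse $K_1K_2$ to $K_2$. You have merely written out the two steps the paper states in one line each.
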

\begin{proof}
By the second assumption,
 any element in $H$ is
 of the form $k_1 k_2$ with $k_i\in K_i$.
Hence $K_2=H$ by the first assumption.
\end{proof}

\begin{prop} \label{prop:criterion}
Let $L_1$ be a non-degenerate even lattice.
The group $\overline{\OP{O}(L_1)}$
 is qual to $\OP{O}(q(L_1))$
 if and only if
 there exists a non-degenerate even lattice $L_2$
 satisfying the following conditions.
\begin{enumerate}
\renewcommand{\labelenumi}{(\arabic{enumi})}
\item \label{prop:criterion:uniq}
There exists an essentially unique
 even unimodular lattice
 $\Gamma\subset L_1^\vee\oplus L_2^\vee$
 which contains $L_i$ primitively.
Here the uniqueness of $\Gamma$ means that
 for another $\Gamma'$, there exist isomorphisms
 $\varphi_i\in\OP{O}(L_i)$ for $i=1,2$ such that
 $\varphi_1\oplus\varphi_2$ induces
 an isomorphism $\Gamma\rightarrow\Gamma'$.
\item \label{prop:criterion:surj}
The restriction map
 $\OP{O}(\Gamma,L_2)\rightarrow\OP{O}(L_2)$
 is surjective (see (\ref{osubgroup})).
\end{enumerate}
\end{prop}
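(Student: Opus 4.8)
The plan is to transfer the whole statement, via Lemma~\ref{lem:unimodembed}, into a statement about subgroups of the finite group $H:=\OP{O}(q(L_1))$. Assume first that a lattice $L_2$ as in the proposition is given. Then an even unimodular $\Gamma\subset L_1^\vee\oplus L_2^\vee$ containing $L_1,L_2$ primitively exists, so by Lemma~\ref{lem:unimodembed} the set $\OP{Isom}(q(L_1),-q(L_2))$ is non-empty, and I fix as base point the gluing isometry $\gamma_0$ corresponding to $\Gamma$. Since $\OP{O}(q(L_1))$ acts simply transitively on $\OP{Isom}(q(L_1),-q(L_2))$ by precomposition, the map $\gamma\mapsto\gamma_0^{-1}\circ\gamma$ identifies $\OP{Isom}(q(L_1),-q(L_2))$ with $H$.

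Under this identification I would translate the two conditions. By the ``Moreover'' part of Lemma~\ref{lem:unimodembed}, two gluings $\gamma,\gamma'$ produce isomorphic overlattices in the sense of condition~(\ref{prop:criterion:uniq}) exactly when $\gamma'\in\overline{\OP{O}(L_2)}\,\gamma\,\overline{\OP{O}(L_1)}$; hence the isomorphism classes of such $\Gamma$ correspond to the double cosets $K_2\backslash H/K_1$, where $K_1:=\overline{\OP{O}(L_1)}$ and $K_2:=\gamma_0^{-1}\,\overline{\OP{O}(L_2)}\,\gamma_0$, both regarded as subgroups of $H$. So condition~(\ref{prop:criterion:uniq}) is equivalent to $\sharp\bigl(K_2\backslash H/K_1\bigr)=1$. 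For condition~(\ref{prop:criterion:surj}): since $\Gamma$ is unimodular and $L_2$ is primitive in $\Gamma$, one checks $(L_2)^\bot_\Gamma=L_1$, so every element of $\OP{O}(\Gamma,L_2)$ preserves $L_1$ as well and has the block form $\varphi_1\oplus\varphi_2$ with $\varphi_i\in\OP{O}(L_i)$; again by the ``Moreover'' part (taking $\gamma'=\gamma_0$ and $\varphi_1$ free), a given $\varphi_2\in\OP{O}(L_2)$ extends to $\Gamma$ iff $\overline{\varphi}_2\in\gamma_0\,\overline{\OP{O}(L_1)}\,\gamma_0^{-1}$. Therefore condition~(\ref{prop:criterion:surj}) is equivalent to $K_2\subseteq K_1$.

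With this dictionary the proof is short. If such an $L_2$ exists, then $K_2\subseteq K_1$ and $\sharp(K_2\backslash H/K_1)=1$, so Lemma~\ref{lem:doublecoset} gives $K_1=H$, i.e.\ $\overline{\OP{O}(L_1)}=\OP{O}(q(L_1))$. Conversely, assume $\overline{\OP{O}(L_1)}=\OP{O}(q(L_1))$. I would take $L_2$ to be any non-degenerate even lattice with $q(L_2)\cong -q(L_1)$; such a lattice exists by Nikulin's existence theorem for even lattices with prescribed signature and discriminant form \cite{nikulin79int} (choose any signature of large enough rank with the correct sign $\bmod\ 8$). Then $\OP{Isom}(q(L_1),-q(L_2))\neq\emptyset$, a gluing $\gamma_0$ and the associated $\Gamma$ exist, and since $K_1=H$ we get $K_2\subseteq H=K_1$ automatically, so condition~(\ref{prop:criterion:surj}) holds, while $K_2\backslash H/K_1=K_2\backslash H/H$ is a single double coset, so condition~(\ref{prop:criterion:uniq}) holds. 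Hence $L_2$ is as required.

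The step I expect to be the main obstacle is the bookkeeping in the second paragraph: matching the equivalence relation ``$\varphi_1\oplus\varphi_2$ induces an isomorphism $\Gamma\to\Gamma'$'' of condition~(\ref{prop:criterion:uniq}) with left--right translation on the torsor $\OP{Isom}(q(L_1),-q(L_2))$ after the choice of base point, and verifying that $\OP{O}(\Gamma,L_2)$ really consists of the block-diagonal isometries (which rests on $(L_2)^\bot_\Gamma=L_1$, a consequence of unimodularity and primitivity). Once the two conditions are correctly expressed as ``$K_2\subseteq K_1$'' and ``one double coset'', the rest---the appeal to Lemma~\ref{lem:doublecoset} and the choice of $L_2$ in the converse---is routine.
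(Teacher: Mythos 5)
Your proposal is correct and follows essentially the same route as the paper: both translate condition (1) into the statement that the double coset space $\gamma^{-1}\overline{\OP{O}(L_2)}\gamma\backslash\OP{O}(q(L_1))/\overline{\OP{O}(L_1)}$ is a single point and condition (2) into the containment $\gamma^{-1}\overline{\OP{O}(L_2)}\gamma\subset\overline{\OP{O}(L_1)}$ via Lemma~\ref{lem:unimodembed}, and then invoke Lemma~\ref{lem:doublecoset}. The only cosmetic difference is in the converse, where the paper simply takes $L_2=L_1(-1)$ rather than appealing to Nikulin's existence theorem.
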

\begin{proof}
Assume that there exists $L_2$
 satisfying the conditions
 (\ref{prop:criterion:uniq}) and
 (\ref{prop:criterion:surj}).
Let $\gamma\in\OP{Isom}(q(L_1),-q(L_2))$ be
 the isomorphism corresponding to $\Gamma$
 (see Lamma \ref{lem:unimodembed}).
The condition (\ref{prop:criterion:uniq}) implies that
\begin{equation}
\overline{\OP{O}(L_2)}\backslash\OP{Isom}(q(L_1),-q(L_2))
 /\overline{\OP{O}(L_1)}
 \cong
 \gamma^{-1}\circ\overline{\OP{O}(L_2)}\circ\gamma
 \backslash\OP{O}(q(L_1))
 /\overline{\OP{O}(L_1)}
\end{equation}
 is a one point set
 by Lemma \ref{lem:unimodembed}.
On the other hand,
 the condition (\ref{prop:criterion:surj}) implies that
 for any $\varphi_2\in\OP{O}(L_2)$,
 there exists an automorphism
 $\varphi_1\in\OP{O}(L_1)$ such that
 $\gamma\circ\overline{\varphi}_1\circ\gamma^{-1}
 =\overline{\varphi}_2$
 by Lemma \ref{lem:unimodembed}.
Hence
 $\gamma^{-1}\circ\overline{\OP{O}(L_2)}\circ\gamma
 \subset\overline{\OP{O}(L_1)}$.
By Lemma \ref{lem:doublecoset},
 we have $\overline{\OP{O}(L_1)}=\OP{O}(q(L_1))$.

Conversely, assume that
 $\overline{\OP{O}(L_1)}=\OP{O}(q(L_1))$.
Then any non-degenerate even lattice $L_2$
 with $q(L_2)\cong -q(L_1)$ satisfies
 the conditions (\ref{prop:criterion:uniq}) and
 (\ref{prop:criterion:surj})
 by Lemma \ref{lem:unimodembed}.
For example, we can take $L_1(-1)$ as $L_2$.
\end{proof}

\subsection{Proof of Theorem \ref{thm:surjcoinv}}

Now we apply Proposition \ref{prop:criterion}
 to prove Theorem \ref{thm:surjcoinv}.
Let $G_0\in\MC{L}$ with $c(G_0)\geq 17$.
By Corollary \ref{cor:qcloslambda},
 $\Lambda_{G_0}\cong S(\MF{G}_n,q_n)$
 for some $(\MF{G}_n,q_n)\in\MC{Q}_\clos$.
Since $n\neq 58$ (see Table \ref{subsect:trees}), we have
\begin{equation} \label{condition_gn}
 \Lambda_{G_0}\cong S(\MF{G}_n,q_n)\cong N_G,~
 ([G],q(N_G))\sim (\MF{G}_n,q_n)\in\MC{Q}_\clos
\end{equation}
 for some $(G,N)\in\MC{N}'$
 by Proposition \ref{prop:listuniq}.
Since $c(\MF{G}_3)=12<17$, $N$ is of type $A_1^{\oplus 24}$
 by Proposition \ref{prop:listuniq}.
To prove Theorem \ref{thm:surjcoinv},
 it is sufficient to show that
 the conditions (\ref{prop:criterion:uniq})
 and (\ref{prop:criterion:surj}) in
 Proposition \ref{prop:criterion} are satisfied
 for $L_1=N_G$ and $L_2=N^G$
 if and only if $n\neq 48,51$.

We check that
 for $(G,N)\in\MC{N}'$ satisfying the conditions
 (\ref{condition_gn}),
 the condition (\ref{prop:criterion:uniq})
 is satisfied
 as follows:
Let $N'\subset (N_G)^\vee\oplus(N^G)^\vee$ be
 a Niemeier lattce which contains
 $N_G$ and $N^G$ primitively.
By Lemma \ref{lem:unimodaction},
 the action of $G$ on $N_G$ is extended
 to that on $N'$ such that $(N')^G=N^G$.
We have $(G,N')\in\MC{N}$ by Definition \ref{defn:mcn}.
By Proposition \ref{prop:listuniq},
 $(G,N)\cong(G,N')$.
The uniqueness of $N$ is shown.

Before showing the condition (\ref{prop:criterion:surj}),
 we prepare for a couple of lemmas.

\begin{lem} \label{lem:restriction}
For $(G,N)\in\MC{N}'$ satisfying the conditions
 (\ref{condition_gn}),
 let $\pi$ denote the restriction map
\begin{equation}
 \pi:\OP{O}(N,N^G)\rightarrow\OP{O}(N^G).
\end{equation}
Then we have $\OP{Ker}(\pi)=G$.
In particular, $G\triangleleft \OP{O}(N,N^G)$.
\end{lem}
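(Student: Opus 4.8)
The plan is to show that $\OP{Ker}(\pi)$, which equals $\OP{O}(N,N^G)\cap\OP{O}_0(N^G)$-extended-trivially... more precisely the kernel of restricting an automorphism of $N$ preserving $N^G$ down to $N^G$, coincides with $G$. First I would identify the kernel concretely: if $\psi\in\OP{O}(N)$ fixes $N^G$ pointwise and preserves $N^G$ (the latter being automatic), then $\psi$ acts trivially on $N^G$ and hence on $(N^G)^\vee$, so $\psi$ preserves the primitive sublattice $N_G=(N^G)^\bot_N$ and acts on it; thus $\OP{Ker}(\pi)$ is naturally a subgroup of $\OP{O}(N_G,\text{glueing})$, namely those automorphisms of $N_G$ whose induced action on $A(N_G)$ matches the trivial action on $A(N^G)\cong A(N_G)$ under the glueing isomorphism. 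By Lemma \ref{lem:unimodembed} applied to the unimodular overlattice $N\subset N_G^\vee\oplus (N^G)^\vee$, this says exactly $\OP{Ker}(\pi)=\{\psi\in\OP{O}(N_G)\mid \overline\psi=\OP{id}\text{ on }q(N_G)\}=\OP{O}_0(N_G)$. Since $(G,N)\in\MC{N}'\subset\MC{N}$ we have $N_G=\Lambda_G$ up to isomorphism with the given $G$-action, so by Definition \ref{defn:closg} and the hypothesis $(\MF{G}_n,q_n)\in\MC{Q}_\clos$ (so that $\OP{Clos}(G)=G$, i.e.\ $\OP{O}_0(\Lambda_G)=G$ as subgroups of $\OP{O}(\Lambda_G)$), we get $\OP{Ker}(\pi)=\OP{O}_0(N_G)=G$.

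More carefully, I would split the argument into two inclusions. For $G\subset\OP{Ker}(\pi)$: every $g\in G$ preserves $N$ and fixes $N^G$ pointwise by definition of the invariant lattice, so $\pi(g)=\OP{id}$; and $G$ preserves $N^G$, so $G\subset\OP{O}(N,N^G)$, giving $G\subset\OP{Ker}(\pi)$. For the reverse inclusion $\OP{Ker}(\pi)\subset G$: take $\psi\in\OP{O}(N,N^G)$ with $\psi|_{N^G}=\OP{id}$. As above, $\psi$ preserves $N_G=(N^G)^\bot_N$, and the pair $(\psi|_{N_G},\psi|_{N^G}=\OP{id})$ extends to $\psi$ on the unimodular lattice $N$; by the last assertion of Lemma \ref{lem:unimodembed} with $\varphi_1=\psi|_{N_G}$, $\varphi_2=\OP{id}$, $\gamma'=\gamma$, this forces $\gamma\circ\overline{\psi|_{N_G}}\circ\gamma^{-1}=\overline{\OP{id}}=\OP{id}$, i.e.\ $\psi|_{N_G}\in\OP{O}_0(N_G)$. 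Under the $G$-equivariant identification $(G,N_G)\cong(G,\Lambda_G)$ we have $\OP{O}_0(N_G)\cong\OP{O}_0(\Lambda_G)=\OP{Clos}(G)=G$ because $([G],q(N_G))\sim(\MF{G}_n,q_n)\in\MC{Q}_\clos$; moreover this identification is compatible with the $G$-action, so $\psi|_{N_G}\in G$ as a subgroup of $\OP{O}(N_G)$. Finally, an element of $G\subset\OP{O}(N_G)$ and its unique extension to $N$ acting trivially on $N^G$ is precisely the original element of $G\subset\OP{O}(N)$ (uniqueness of extension by Lemma \ref{lem:unimodaction}(1) applied to the unimodular $N$), so $\psi\in G$.

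The normality $G\triangleleft\OP{O}(N,N^G)$ is then immediate since kernels of homomorphisms are normal. The main obstacle I anticipate is bookkeeping the identification $(G,N_G)\cong(G,\Lambda_G)$ carefully enough that "$\OP{O}_0(N_G)=G$" is a statement about the \emph{same} copy of $G$ that sits inside $\OP{O}(N,N^G)$, rather than merely an abstract isomorphism; this is exactly where the hypothesis $(G,N)\in\MC{N}'$ together with $(\MF{G}_n,q_n)\in\MC{Q}_\clos$ is used, and where one must invoke the uniqueness built into Definition \ref{defn:closg}. The rest is a routine application of Lemmas \ref{lem:unimodembed} and \ref{lem:unimodaction}.
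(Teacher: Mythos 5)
Your proposal is correct and follows essentially the same route as the paper: the inclusion $G\subset\OP{Ker}(\pi)$ is immediate, and for the converse one shows that any $\psi\in\OP{Ker}(\pi)$ restricts to an element of $\OP{O}_0(N_G)$ (the paper cites Lemma \ref{lem:unimodaction}, you invoke the equivalent glueing statement of Lemma \ref{lem:unimodembed}) and then uses $(\MF{G}_n,q_n)\in\MC{Q}_\clos$, i.e.\ $\OP{O}_0(N_G)=G$, together with uniqueness of the extension to $N$, to conclude $\psi\in G$. Your extra care about identifying $\OP{O}_0(N_G)$ with the copy of $G$ sitting inside $\OP{O}(N,N^G)$ is exactly the point the paper leaves implicit.
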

\begin{proof}
Clearly, we have $G\subset \OP{Ker}(\pi)$.
Let $g\in\OP{Ker}(\pi)$.
Then $g|_{N_G}\in\OP{O}_0(N_G)$
 by Lemma \ref{lem:unimodaction}(3).
Since $(\MF{G}_n,q_n)\in\MC{Q}_\clos$, i.e.,
 $\OP{Clos}(\MF{G}_n,q_n)=(\MF{G}_n,q_n)$,
 we have $g\in G$
 (see Definition \ref{defn:closgq}).
Hence $\OP{Ker}(\pi)\subset G$.
\end{proof}

Let $\Delta^+$ be a set of positive roots of $N$
 which is stable under the action of $G$
 (see Subsection \ref{subsect:factniemeier}).
Since $N$ is of type $A_1^{\oplus 24}$,
 $\OP{O}(N,\Delta^+)$ is isomorphic to
 the Mathieu group $M_{24}$ of degree $24$ and
 the Weyl group $W(N)$ of $N$ is isomorphic to
 $C_2^{24}$.
We have $\OP{O}(N) = W(N)\rtimes M_{24}$.
 
\begin{lem} \label{lem:semidirect}
For $(G,N)\in\MC{N}'$ satisfying the conditions
 (\ref{condition_gn}),
 we have
\begin{equation}
 \OP{O}(N,N^G)
 =
 C_2^m\rtimes N_{M_{24}}(G),
\end{equation}
 where $m=\rank N^G=24-c(G)$
 and $N_{M_{24}}(G)$ is the normalizer subgroup
 of $G$ in $M_{24}$.
In particular, we have
 $\left|\OP{O}(N,N^G)\right|
 =2^m \left|N_{M_{24}}(G)\right|$
\end{lem}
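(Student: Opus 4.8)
The plan is to describe $\OP{O}(N,N^G)$ explicitly using the semi-direct product decomposition $\OP{O}(N)=W(N)\rtimes M_{24}$ and the fact that $N$ is of type $A_1^{\oplus 24}$. First I would observe that an element $\varphi\in\OP{O}(N)$ preserves $N^G$ if and only if it preserves $N_G=(N^G)^\bot$. Write $\varphi=w\cdot h$ with $w\in W(N)=C_2^{24}$ (acting by sign changes on the $24$ orthogonal roots $e_1,\dots,e_{24}$ spanning $\Delta^+$) and $h\in M_{24}$ (permuting the $e_i$). Since the orbit decomposition of $\{e_1,\dots,e_{24}\}$ under $G$ determines $N^G\otimes\Q$ (it is spanned by the orbit sums), the condition that $\varphi$ preserve $N^G$ should decompose into a condition on $h$ and a condition on $w$ separately, because $W(N)$ and $M_{24}$ interact by the standard wreath-product action.

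Next I would pin down each factor. For the permutation part: $h\in M_{24}$ preserves $N^G$ exactly when $h$ permutes the $G$-orbits on $\{e_1,\dots,e_{24}\}$ among themselves in a way compatible with the lattice $N^G$; I claim this is equivalent to $h\in N_{M_{24}}(G)$. The inclusion $N_{M_{24}}(G)\subset\OP{O}(N,N^G)$ is clear since conjugation by such $h$ sends $G$ to $G$ hence fixes $N^G$ setwise. For the reverse inclusion I would use Lemma \ref{lem:restriction}: since $G=\OP{Ker}(\pi)$ is normal in $\OP{O}(N,N^G)$, any $h\in M_{24}\cap\OP{O}(N,N^G)$ normalizes $G$, so $h\in N_{M_{24}}(G)$. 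For the Weyl part: a sign-change $w$ preserves $N^G$ iff $w$ acts on $N^G$; since $N^G$ is spanned over $\Q$ by the orbit sums $\sum_{e_i\in O}e_i$, and $w$ negates a subset of the $e_i$, the condition is that $w$ be constant on each $G$-orbit — but more care is needed because $N^G$ may be strictly larger than the span of the orbit sums over $\Z$ (as the $C_8$ example shows, with glue vectors like $\tfrac12(w_1+w_2+w_3+w_4)$). I would check that the relevant subgroup of $W(N)$ preserving $N^G$ is precisely $W(N)^G=C_2^m$ with $m$ equal to the number of $G$-orbits $=\rank N^G=24-c(G)$, using that $W(N)^G$ is exactly the group of sign-changes constant on orbits and that this automatically preserves any sublattice defined $G$-equivariantly.

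Finally I would assemble: $\OP{O}(N,N^G)$ contains $W(N)^G=C_2^m$ as a normal subgroup (it is normal in all of $W(N)\rtimes N_{M_{24}}(G)$ since $N_{M_{24}}(G)$ permutes orbits hence normalizes $W(N)^G$) and contains $N_{M_{24}}(G)$, with trivial intersection, giving the semi-direct product $C_2^m\rtimes N_{M_{24}}(G)$; conversely every element of $\OP{O}(N,N^G)$ lies in this subgroup by the factor-by-factor analysis above. The order formula $|\OP{O}(N,N^G)|=2^m|N_{M_{24}}(G)|$ is then immediate. The main obstacle I expect is the Weyl-part argument: showing that no sign-change outside $W(N)^G$ can preserve $N^G$ despite the extra $\tfrac12$-glue vectors, and dually that $W(N)^G$ really does preserve all of $N^G$ (not just the orbit-sum sublattice). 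This should follow because the glue vectors of $N^G$, being $G$-invariant, have support that is a union of $G$-orbits with uniform coefficients up to the $\tfrac12$, so a sign-change constant on orbits either fixes such a vector or sends it to another element of $N^G$; and a sign-change not constant on some orbit $O$ fails to preserve the orbit sum $\sum_{e_i\in O}e_i\in N^G$.
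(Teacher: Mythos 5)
Your proposal is correct and takes essentially the same route as the paper: the Weyl part is identified as the group of sign-changes constant on the $G$-orbits of $R(N,\Delta^+)$ (hence $C_2^m$, with your glue-vector worry resolved by primitivity of $N^G$ in $N$, since such a sign-change preserves both $N$ and $N^G\otimes\Q$), and the $M_{24}$-part is pinned to $N_{M_{24}}(G)$ via the normality of $G=\OP{Ker}(\pi)$ from Lemma \ref{lem:restriction} together with the easy inclusion $N_{M_{24}}(G)\subset\OP{O}(N,N^G)$. The one step to make explicit is that for a mixed element $\varphi=wh\in\OP{O}(N,N^G)$ the two factors really do separate: this is not automatic from the wreath-product structure, but follows by projecting the relation $\varphi G\varphi^{-1}=G$ to the $M_{24}$-factor (equivalently, the paper's injection $\OP{O}(N,N^G)/W'\hookrightarrow N_{M_{24}}(G)$), which gives $h\in N_{M_{24}}(G)\subset\OP{O}(N,N^G)$ and hence $w=\varphi h^{-1}\in W\cap\OP{O}(N,N^G)$.
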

\begin{proof}
Set $\{ v_1,\ldots,v_{24} \}=R(N,\Delta^+)$
 and $W'=\OP{O}(N,N^G)\cap W$.
The action of $G$ decomposes $R(N,\Delta^+)$ into
 $n$ orbits $O_1,\ldots,O_m$.
The invariant lattice $N^G$ is generated by
 $\sum_{v\in O_j} v$ ($j=1,\ldots,m$) over $\Q$.
Let $w \in W$.
Then $w$ is of the form
\begin{equation}
 w=\prod_{i=1}^{24} T(v_i)^{e_i},\quad e_i\in\{0,1\},
\end{equation}
 where $T(v)$ is the reflection of $v$.
Since
\begin{equation}
 w \cdot \sum_{i=1}^{24} a_i v_i =
 \sum_{i=1}^{24} (-1)^{e_i} a_i v_i, \quad
 a_i\in\Q,
\end{equation}
 $W'$ is generated by $\prod_{v\in O_j} T(v)$ ($j=1,\ldots,m$),
 thus $W'\cong C_2^m$.
By Lemma \ref{lem:restriction},
 we have an injection
 $\iota:\OP{O}(N,N^G)/W'\rightarrow N_{M_{24}}(G)$.
For $g\in N_{M_{24}}(G)$, we have
 $gG\cdot v_i=Gg\cdot v_i$.
Therefore, for any $j$, we have
 $g\cdot O_j=O_{j'}$ for some $j'$.
Hence we have $N_{M_{24}}(G)\subset\OP{O}(N,N^G)$,
 and $\iota$ is an isomorphism.
The assertion follows from this.
\end{proof}

Now we show that
 for $(G,N)\in\MC{N}'$ satisfying the conditions
 (\ref{condition_gn}),
 the condition (\ref{prop:criterion:surj})
 is satisfied.
By Lemma \ref{lem:semidirect},
 we can determine the order of $\OP{O}(N,N^G)$
 from the order of $N_{M_{24}}(G)$.
We can compute the order of $N_{M_{24}}(G)$
 by using GAP \cite{GAP}.
On the other hand,
 we can also determine
 the order of $\OP{O}(N^G)$ as follows.

Let $B=(b_{ij})\in M_m(\Z)$
 be the Gramian matrix of $N^G$.
Then $\OP{O}(N^G)$ is identified with
 the matrix group $H$
 consisting of $P\in M_m(\Z)$ such that ${}^tPBP=B$.
Let $S$ denote the set consisting of
 column vectors $v\in\Z^m$
 such that ${}^t vBv=b_{ii}$
 for some $i$.
Then any element $P\in H$ is of the form
 $(v_1 \cdots v_m)$ with $v_i\in S$.
Since $N^G$ is negative definite,
 we can enumerate all elements in $S$ and $H$
 in finite steps.
Practically, we take $B$ with
 smaller $|b_{ii}|$
 (cf.\ the reduction theory of quadratic forms).
Since the rank of $N^G$ is less than or equal to $24-17=7$
 by the assumption of the theorem,
 we can determine the order of
 $\OP{O}(N^G)$ in practical time by this method.
The author used Maxima \cite{maxima} for this computation.
The result is the following:

\begin{lem} \label{lem:order}
For $(G,N)\in\MC{N}'$ satisfying the conditions
 (\ref{condition_gn}),
 we have
 $[\OP{O}(N,N^G):G]=\left|\OP{O}(N^G)\right|$
 if and only if
 $[G]\neq \MF{G}_{48},\MF{G}_{51}$.
\end{lem}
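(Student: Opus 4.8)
The plan is to turn the statement into a finite comparison of integers that can be carried out on a computer. By Lemma~\ref{lem:restriction} the restriction map $\pi\colon\OP{O}(N,N^G)\to\OP{O}(N^G)$ has kernel exactly $G$, so $[\OP{O}(N,N^G):G]=|\OP{Im}(\pi)|\le|\OP{O}(N^G)|$, with equality if and only if $\pi$ is surjective. Hence it suffices, for each $(G,N)\in\MC{N}'$ satisfying (\ref{condition_gn}) with $c(G)\ge17$, to compute the two numbers $[\OP{O}(N,N^G):G]$ and $|\OP{O}(N^G)|$ and to check that they coincide exactly when $[G]\ne\MF{G}_{48},\MF{G}_{51}$.

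For the left-hand side, Lemma~\ref{lem:semidirect} gives $\OP{O}(N,N^G)=C_2^m\rtimes N_{M_{24}}(G)$ with $m=24-c(G)=\rank N^G$, so $[\OP{O}(N,N^G):G]=2^m\,|N_{M_{24}}(G)|/|G|$; the only input needed is the order of the normalizer $N_{M_{24}}(G)$, which is read off from the explicit embedding $G\subset\OP{O}(N,\Delta^+)\cong M_{24}$ recorded in the list of $\MC{N}$ by a direct GAP computation. For the right-hand side one works with the Gramian matrix $B$ of $N^G$, which is obtained from the orbit decomposition of $R(N,\Delta^+)$ exactly as in the computations of Subsection~\ref{subsect:example}: since $N^G$ is negative definite of rank $m\le7$ under the hypothesis $c(G)\ge17$, one enumerates the finite set $S$ of integer vectors $v$ with ${}^tvBv$ equal to one of the diagonal entries of $B$, and then lists exhaustively the matrices with columns in $S$ that preserve $B$; these are precisely the elements of $\OP{O}(N^G)$. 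Taking a Gram matrix with small diagonal entries (in the spirit of the reduction theory of quadratic forms) keeps $S$ small enough for Maxima to finish this search in practical time.

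Comparing the resulting tables finishes the proof: for every relevant $(G,N)$ with $[G]\ne\MF{G}_{48},\MF{G}_{51}$ one obtains $2^m|N_{M_{24}}(G)|/|G|=|\OP{O}(N^G)|$, whereas for the two exceptional groups $2^m|N_{M_{24}}(G)|/|G|$ is a proper divisor of $|\OP{O}(N^G)|$. The only genuine obstacle is the practical cost of the $\OP{O}(N^G)$ enumeration rather than anything conceptual; the bound $\rank N^G\le7$ forced by $c(G)\ge17$, together with a careful choice of basis to control $|S|$, is what makes the exhaustive computation feasible, and it is this range restriction that explains why the theorem is stated only for $c(G)\ge17$.
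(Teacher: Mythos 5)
Your proposal is correct and follows essentially the same route as the paper: reduce the left-hand side to $2^m\lvert N_{M_{24}}(G)\rvert/\lvert G\rvert$ via Lemma \ref{lem:semidirect}, enumerate $\OP{O}(N^G)$ by listing integer columns $v$ with ${}^tvBv=b_{ii}$ for the (negative definite, rank $\le 7$) Gram matrix $B$, and compare the resulting tables, with the injection $\OP{O}(N,N^G)/G\hookrightarrow\OP{O}(N^G)$ from Lemma \ref{lem:restriction} explaining the direction of the inequality in the exceptional cases $n=48,51$. This matches the paper's computational verification exactly.
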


For example, we consider the case $n= 80$
 ($[G]=\MF{G}_{80}=F_{384}$).
There exists exactly one element $(G,N)\in\MC{N}$
 such that $[G]=F_{384}$.
The Niemeier lattice $N$ is of type $A_1^{\oplus 24}$.
We have $[N_{M_{24}}(G):G]=2$
 and $\left|\OP{O}(N^G)\right|=64$.
Since $c(G)=19$, we have
 $\left|\OP{O}(N^G)\right|
 =[\OP{O}(N,N^G):G]=2^{24-19}\cdot 2=64$
 by Lemma \ref{lem:semidirect}.

Similarly,
 for other cases except $n\neq 48,51$,
 we have $[\OP{O}(N,N^G):G]=\left|\OP{O}(N^G)\right|$.
The following is the table of $k(G)=[N_{M_{24}}(G):G]$.
\begin{equation*}
\begin{array}{c|ccccccccccccc}
\hline
n    & 12 & 26 & 32 & 33 & 34 & 39
     & 40 & 46 & 49 & 54 & 55 & 56 & 61 \\
\hline
k(G) & 48 &  4 &  2 &  6 &  8 &  2
     & 24 &  4 &120 &  2 &  6 & 12 &  2 \\
\hline
\hline
n    & 62 & 63 & 65 & 70 & 74 & 75
     & 76 & 77 & 78 & 79 & 80 & 81 \\
\hline
k(G) &  2 &  6 & 24 &  1 &  2 & 24
     &  2 &  4 &  4 &  2 &  2 & 24 \\
\hline
\end{array}
\end{equation*}

On the other hand,
 we have $[\OP{O}(N,N^G):G]<\left|\OP{O}(N^G)\right|$
 for the cases $n=48,51$, as follows.
\begin{equation*}
\begin{array}{c|cc}
\hline
n               & 48 & 51 \\
\hline
k(G)            &  2 &  2 \\
\hline
\OP{O}(N^G)/2^m &  6 &  6 \\
\hline
\end{array}
\end{equation*}

We shall finish the proof
 of Theorem \ref{thm:surjcoinv}.
We already checked that the condition (1) is satisfied.
By Lemma \ref{lem:restriction},
 the restriction map
 $\pi:\OP{O}(N,N^G)\rightarrow\OP{O}(N^G)$
 induces an injection
 $\OP{O}(N,N^G)/G\hookrightarrow\OP{O}(N^G)$.
By Lemma \ref{lem:order}, this map
 is an isomorphism if and only if $n\neq 48,51$.
Therefore, the condition (2), i.e.\ the surjectivity of $\pi$
 is satisfied if and only if $n\neq 48,51$.
By Proposition \ref{prop:criterion},
 $\overline{\OP{O}(N_G)}=\OP{O}(q_{N_G})$
 if and only if $n\neq 48,51$.

\section{Uniqueness of invariant lattices $\Lambda^G$}
\label{sect:invariantuniq}

This section is devoted to prove the following:

\begin{prop} \label{prop:uniqueinvariant}
Set $E= \{ \MF{S}_5, L_2(7), \MF{A}_6 \}$.
For $(\MF{G},q)\in\MC{Q}$ (see (\ref{mcq})),
 we have
\begin{align}
\sharp \left( \{
 \Lambda^{G} \bigm|
 G\in\MC{L},[G]=\MF{G},q(\Lambda_G) \cong q \}
 /\text{\rm isom}
 \right) 
=
\begin{cases}
2 & \text{if } \MF{G}
 \in E, \\
1 & \text{otherwise.} 
\end{cases}
\end{align}
The Gramian matrices of $\Lambda^G$
 are given in Table \ref{subsect:invariant}.
\end{prop}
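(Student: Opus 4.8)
The plan is to reduce the statement to a genus-counting problem for $\Lambda^G$ and then to invoke Theorem~\ref{thm:nikulinuniq}, disposing of a short explicit list of cases by hand. First I would establish the following reduction. Fix $(\MF{G},q)\in\MC{Q}$, set $c=c(\MF{G})$ (Proposition~\ref{prop:trace}), and let $S=S(\MF{G},q)$ be the $G$-lattice attached to $(\MF{G},q)$ by Theorem~\ref{thm:coinvariantuniq}. If $G\in\MC{L}$ satisfies $[G]=\MF{G}$ and $q(\Lambda_G)\cong q$, then $(G,\Lambda_G)\cong(G,S)$, and the invariant lattice $\Lambda^G=(\Lambda_G)^{\bot}_{\Lambda}$ is an even lattice with $\sign\Lambda^G=(3,19-c)$ and $q(\Lambda^G)\cong -q(\Lambda_G)\cong -q$ by Lemma~\ref{lem:unimodembed}. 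Conversely, let $L$ be any even lattice with $\sign L=(3,19-c)$ and $q(L)\cong -q$. Since $G\subset\OP{O}_0(S)$ (Lemma~\ref{lem:unimodaction}(3)), choosing $\gamma\in\OP{Isom}(q(S),-q(L))$ and forming $\Gamma_\gamma\subset S^{\vee}\oplus L^{\vee}$ as in Lemma~\ref{lem:unimodembed} yields an even unimodular lattice of signature $(3,19)$, hence $\Gamma_\gamma\cong\Lambda$; the $G$-action on $S$ extends to one on $\Gamma_\gamma$ that is trivial on $L$ (Lemma~\ref{lem:unimodaction}(1)), and a rank count identifies the invariant and coinvariant lattices of this action with $L$ and $S$, so $L$ is realized as $\Lambda^G$ for some $G\in\MC{L}$ with $[G]=\MF{G}$, $q(\Lambda_G)\cong q$. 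Therefore
\begin{align*}
 &\{ \Lambda^G \bigm| G\in\MC{L},\ [G]=\MF{G},\ q(\Lambda_G)\cong q \}/\text{\rm isom} \\
 &\qquad = \{ L \text{ even} \bigm| \sign L=(3,19-c),\ q(L)\cong -q \}/\text{\rm isom},
\end{align*}
and the proposition becomes the computation of the number of classes in this genus.

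Second, I would run Theorem~\ref{thm:nikulinuniq}. Here $\rank L=22-c$, the group $A(L)$ underlies $q(\Lambda_G)$ (whose $p$-components are recorded in Table~\ref{subsect:qands}), and $L$ is indefinite of rank $\geq 3$ whenever $c\leq 18$. Running through the discriminant forms $q_n$, one checks that conditions~(\ref{thm:nikulinuniq:odd}) and~(\ref{thm:nikulinuniq:even}) of Theorem~\ref{thm:nikulinuniq} hold for $L$ --- for each prime $p$, either $22-c\geq l(A(\Lambda_G)_p)+2$, or the required repeated block of $q(\Lambda_G)_p$ is present --- for every $(\MF{G},q)\in\MC{Q}$ outside a short list. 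For each $(\MF{G},q)$ not on the list, Theorem~\ref{thm:nikulinuniq} forces $L$ to be unique up to isometry, and since $\Lambda^G$ is itself such a lattice, the count equals $1$.

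Third, I would treat the remaining cases directly, using the Gramian matrices of Table~\ref{subsect:invariant}: for each such pair one checks that a listed matrix defines an even lattice of the prescribed signature and discriminant form, which determines the genus. When $\MF{G}\in\{\MF{S}_5,L_2(7),\MF{A}_6\}$ the table supplies two such matrices, and one verifies they are not isometric --- e.g.\ by comparing the finite sets of vectors of small square length, or by computing the local spinor norms, which exhibit two spinor genera (and, $L$ being indefinite of rank $\geq 3$, each spinor genus is a single isometry class) --- so the count is $2$. For the remaining pairs outside the reach of Theorem~\ref{thm:nikulinuniq} --- a few with $\rank\Lambda^G$ small, such as $\MF{G}=\MF{G}_{48},\MF{G}_{51}$, together with those with $c=19$, where $L$ is positive definite of rank $3$ --- the analogous computation (reduction theory of the relevant forms, or the spinor-genus count) shows the genus has a single class, so the count is $1$.

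The conceptual heart is the reduction in the first step, which replaces the geometric question by a pure genus count; after that the work is bookkeeping over the tables of discriminant forms. The one point that is genuinely not formal is the ``at most two classes'' bound for $\MF{S}_5$, $L_2(7)$, $\MF{A}_6$ --- exactly the cases where Theorem~\ref{thm:nikulinuniq} does not apply --- where one must either enumerate the genus of these rank-$4$ (or rank-$3$) forms explicitly or carry out the spinor-genus/mass computation for them; this, together with the ternary class-number checks in the $c=19$ cases, is where I expect the main effort to go.
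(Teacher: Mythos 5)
Your proposal is correct and follows essentially the same route as the paper: reduce to counting the classes in the genus determined by $\sign L=(3,19-c)$ and $q(L)\cong -q$ (the realization of every lattice in that genus as some $\Lambda^G$ is exactly the paper's use of Lemma \ref{lem:unimodembed} together with Lemma \ref{lem:unimodaction}), apply Theorem \ref{thm:nikulinuniq} when $\rank\Lambda^G\geq 4$, and enumerate the definite ternary genus directly when $c=19$, where the paper cites the Schiemann table of even ternary forms and finds two classes exactly for $\MF{S}_5$, $L_2(7)$, $\MF{A}_6$. Two small slips, neither fatal: first, $\MF{G}_{48}$ and $\MF{G}_{51}$ do satisfy the hypotheses of Theorem \ref{thm:nikulinuniq} (e.g.\ $q_{48}$ contains $q^{(3)}_{+}(3)^{\oplus 2}$ and $v^{(2)}(2)$); they fail only the \emph{stronger} hypotheses of Theorem \ref{thm:nikulinsurj}, which is a different issue, so your fallback computation there is unnecessary though harmless. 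Second, in the exceptional cases $\Lambda^G$ is \emph{positive definite} of rank $3$, so the Eichler principle ``one class per spinor genus'' that you invoke parenthetically does not apply; distinguishing the two classes must be done by a definite-lattice method (short-vector counts or the explicit tables), which is your other stated alternative and is what the paper does.
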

\begin{proof}
Let $G\in\MC{L}$ such that
 $[G]=\MF{G}$ and $q(\Lambda_G)\cong q$.
By Lemma \ref{lem:unimodembed},
 $q(\Lambda^G)\cong -q(\Lambda_G)\cong -q$.

First we consider the case $\rank \Lambda^G > 3$.
Since $\OP{sign}\Lambda=(3,19)$ and $\Lambda_G$
 is negative definite,
 $\Lambda^G$ is indefinite in this case.
From Table \ref{subsect:invariant},
 we can check that the conditions
 (\ref{thm:nikulinuniq:odd}) and
 (\ref{thm:nikulinuniq:even})
 in Theorem \ref{thm:nikulinuniq}
 for $\Lambda^G$ are satisfied.
Hence the assertion follows
 from Theorem \ref{thm:nikulinuniq}.
We can directly find the Gramian matrices of $\Lambda^G$
 with the given signature and discriminant form
 for each case.

Next we consider the case $\rank \Lambda^G = 3$.
In this case, $\Lambda^G$ is positive definite.
From the table of definite ternary
 forms \cite{schiemann}, we can check that there exists
 a unique positive definite even lattice $K$
 of rank $3$
 such that $q(K)\cong -q$
 up to isomorphism, except for the cases
 $\MF{G} = \MF{S}_5,L_2(7),\MF{A}_6$.
If $\MF{G}=\MF{S}_5,L_2(7),\MF{A}_6$,
 there exist exactly two positive definite
 even lattices $K_1,K_2$ of rank $3$
 such that $q(K_i)\cong -q$ up to isomorphism.
For each $i=1,2$,
 there exists a primitive embedding
 $\Lambda_G\rightarrow\Lambda$
 such that $(\Lambda_G)^\bot_\Lambda\cong K_i$
 by Lemma \ref{lem:unimodembed}.
By Lemma \ref{lem:unimodaction},
 the action of $G$ on $\Lambda_G$ is extended to
 that on $\Lambda$ such that
 $\Lambda^{G}\cong K_i$.
This action is an element in $\MC{L}$
 by Definition \ref{defn:sympgrp}.
Therefore, the assertion follows.
\end{proof}

\section{Property %
 $\overline{\OP{O}(\Lambda^G)}=\OP{O}(q(\Lambda^G))$}
\label{sect:invariantsurj}

This section is devoted to prove the following:

\begin{thm} \label{thm:surj}
Let $G\in\MC{L}$.
If $\rank \Lambda^G\geq 4$,
 or equivalently, $c(G)\leq 18$
 (see Proposition \ref{prop:trace}), then
 $\overline{\OP{O}(\Lambda^G)}=\OP{O}(q(\Lambda^G))$.
\end{thm}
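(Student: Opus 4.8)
The plan is to read $\Lambda^G$ off from the uniqueness results of the previous section and apply Nikulin's surjectivity criterion, Theorem \ref{thm:nikulinsurj}, handling by hand the few lattices to which that criterion does not literally apply. Since $\Lambda_G$ is negative definite and sits primitively in $\Lambda$ of signature $(3,19)$, the invariant lattice $\Lambda^G=(\Lambda_G)^\bot_\Lambda$ has signature $(3,19-c(G))$, and by Proposition \ref{prop:trace} its rank is $22-c(G)$; thus $\rank\Lambda^G\geq 4$ is equivalent to $c(G)\leq 18$, and in that range $\Lambda^G$ is an indefinite even lattice of rank $\geq 3$ with $q(\Lambda^G)\cong -q(\Lambda_G)$ by Lemma \ref{lem:unimodembed}. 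Both the signature and the discriminant form of $\Lambda^G$ are determined by $([G],q(\Lambda_G))\in\MC{Q}$, and by Proposition \ref{prop:uniqueinvariant} so is the isomorphism class of $\Lambda^G$ itself (outside $\MF{S}_5,L_2(7),\MF{A}_6$, and even there only two classes occur), with explicit Gramian matrices collected in Table \ref{subsect:invariant}. Hence it suffices to run the argument on these finitely many explicit lattices.

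For each such $L=\Lambda^G$ I would first test the hypotheses of Theorem \ref{thm:nikulinsurj}: for odd $p$, that $\rank L\geq l(A(L)_p)+2$, read off from the tabulated discriminant forms; and for $p=2$, that either the same inequality holds or $q(L)_2$ splits off a $u^{(2)}(2)$ or $v^{(2)}(2)$ summand, normalizing $q(L)_2$ via Propositions \ref{prop:localeven} and \ref{prop:2adicunimodular}. Whenever both hypotheses are met, Theorem \ref{thm:nikulinsurj} gives $\overline{\OP{O}(\Lambda^G)}=\OP{O}(q(\Lambda^G))$ at once. The remaining cases are those with $\rank\Lambda^G$ small (rank $4$ or $5$), where $q(\Lambda^G)_2$ is too rigid to contain a $u^{(2)}(2)$ or $v^{(2)}(2)$ block; this already occurs for $\MF{G}=C_8$, where $\rank\Lambda^G=4$ and $q(\Lambda^G)_2\cong\LF{\pm1/2}\oplus\LF{\pm1/4}\oplus u^{(2)}(2^3)$. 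For these I would argue directly: by Theorem \ref{thm:localsurj} every $\bar g\in\OP{O}(q(\Lambda^G))$ is induced by isometries of each $\Lambda^G\otimes\Z_p$, and since $\Lambda^G$ is indefinite of rank $\geq 3$ one can patch these into a global isometry once the product of the local spinor norms is made a global square; at primes where $\Lambda^G\otimes\Z_p$ is sufficiently non-rigid the local spinor norm already fills the units, so only a finite $2$-adic (and, if it occurs, $3$-adic) computation on the explicit lattice remains. Equivalently, and this is what I expect to be done in practice, one produces enough explicit isometries of the lattice in Table \ref{subsect:invariant} (reflections in short vectors together with a few mixed isometries) to generate all of $\OP{O}(q(\Lambda^G))$, a check within reach of the computer algebra systems already in use.

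The main obstacle is precisely this residual analysis. The hypotheses of Theorem \ref{thm:nikulinsurj} genuinely fail for some $G$ with $c(G)\in\{17,18\}$ --- the bound $l(A(\Lambda^G))\leq 22-c(G)$ furnished by the Niemeier embedding of Section \ref{sect:embedding} falls two short of what the criterion demands --- so for those lattices one cannot invoke a black box and must establish surjectivity of $\OP{O}(\Lambda^G)\to\OP{O}(q(\Lambda^G))$ by hand, case by case, against Table \ref{subsect:invariant}; arranging that finite verification so that no spinor-norm obstruction is overlooked is the delicate point.
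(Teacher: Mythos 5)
Your proposal follows essentially the same route as the paper: the paper first verifies the hypotheses of Theorem \ref{thm:nikulinsurj} against Table \ref{subsect:invariant}, which succeeds in all but nine cases ($n=26,30,32,33,40,46,48,56,61$), and then handles those nine by exactly the residual analysis you describe --- local surjectivity (Theorem \ref{thm:localsurj}) plus the Strong Approximation Theorem, reducing surjectivity of $\OP{O}(\Lambda^G)\to\OP{O}(q(\Lambda^G))$ to showing that explicit reflections and isometries of the tabulated Gramians, together with elements of $\OP{O}_0(\Lambda^G\otimes\Z_p)$, exhaust the relevant spinor-norm groups. The only difference is that you leave the case-by-case spinor-norm verification as a plan rather than executing it, but the strategy and the identification of the delicate point are the paper's.
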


We may assume that $G\in\MC{L}_\clos$
 by replacing $G$ by $\OP{Clos}(G)$ if necessary.
Then $\Lambda_G\cong S(\MF{G}_n,q_n)$
 for some $(\MF{G}_n,q_n)\in\MC{Q}_\clos$
 (see Section \ref{sect:coinvariantuniq}).
We can check that $\Lambda^G$
 satisfies the conditions
 (\ref{thm:nikulinsurj:1}) and (\ref{thm:nikulinsurj:2})
 in Theorem \ref{thm:nikulinsurj}
 from Table \ref{subsect:invariant},
 except for the following nine cases:
\begin{equation} \label{nineexceptions}
 n=
  26,  30,  32,  33,
  40,  46,  48,  56,
  61.
\end{equation}
Hence we have
 $\overline{\OP{O}(\Lambda^G)}=\OP{O}(q(\Lambda^G))$
 except for these nine cases.

For example, in the case $n= 65$, we find that
\begin{gather}
 \Lambda^G \cong
 \begin{pmatrix} 4 & 2 \\ 2 & 4 \end{pmatrix} \oplus
 \LF{4} \oplus \LF{-8}, \\
 q(\Lambda^G) \cong -q_{65} \cong
 v^{(2)}(2) \oplus q^{(2)}_1(4) \oplus q^{(2)}_7(8)
 \oplus q^{(3)}_+(3) \label{decomp65}
\end{gather}
 from Table \ref{subsect:invariant}.
Since
\begin{equation}
 \rank \Lambda^G=4>l(A(\Lambda^G)_3)+2=3,
\end{equation}
 the condition (\ref{thm:nikulinsurj:1}) is satisfied.
On the other hand, since $v^{(2)}(2)$ appears in
 the orthogonal decomposition (\ref{decomp65})
 of $q(\Lambda^G)$,
 the condition (\ref{thm:nikulinsurj:2}) is satisfied.

\subsection{Preparation
 for the cases (\ref{nineexceptions})}

Before studying the cases (\ref{nineexceptions}),
 we recall some properties of
 the spinor norm (see e.g.\ \cite{cassels78}).
Let $L$ be a non-degenerate lattice.
For any $\varphi\in\OP{O}(L\otimes\Q)$,
 $\varphi$ is written as a composition of reflections:
\begin{equation} \label{varphit}
 \varphi=\prod_{i=1}^{r} T(v_i), \quad
 v_i\in L\otimes\Q,~ \BF{v_i}{v_i}\neq 0.
\end{equation}
Here $T(v)\in\OP{O}(L\otimes\Q)$ is the reflection of $v$,
 which is defined by
\begin{equation}
 T(v)\cdot w=w-\frac{2\BF{v}{w}}{\BF{v}{v}} v.
\end{equation}
The spinor norm $\theta(\varphi)$ of $\varphi$
 is defined by
\begin{equation}
 \theta(\varphi)=\prod_{i=1}^{r} \BF{v_i}{v_i}
 \bmod (\Q^\times)^2
 \in \Q^\times/(\Q^\times)^2,
\end{equation}
 which is independent of the choice of
 the expression (\ref{varphit}).
We define a map $f$ and
 a subgroup $\OP{O}'(L)\subset \OP{O}(L)$ by
\begin{equation}
 f=\det\times\theta: \OP{O}(L)\rightarrow
 \{ \pm 1 \} \times
 \Q^\times/(\Q^\times)^2
\end{equation}
 and $\OP{O}'(L)=\OP{Ker}(f)$.
Note that if $L=L_1\oplus L_2$, then
 $f(\OP{O}(L_i))\subset f(\OP{O}(L))$.
We can define the spinor norm
 $\theta_p(\varphi_p)\in\Q_p^\times/(\Q_p^\times)^2$
 of $\varphi_p\in\OP{O}(L\otimes\Q_p)$
 in a similar way.
Moreover, we define
\begin{equation}
 f_p=\det\times\theta_p: \OP{O}(L_p)\rightarrow
 \{ \pm 1 \} \times
 \Q_p^\times/(\Q_p^\times)^2
\end{equation}
 and $\OP{O}'(L_p)=\OP{Ker}(f_p)$,
 where $L_p=L\otimes\Z_p$.

To deal with the cases (\ref{nineexceptions}),
 we use the following proposition,
 which is a consequence of Strong Approximation Theorem
 of quadratic forms (cf.\ \cite{cassels78}).

\begin{prop}
Let $L$ be an indefinite even lattice of rank $\geq 3$.
We set $\OP{O}_0(L_p)=\OP{Ker} \bigl( \OP{O}(L_p)\rightarrow
 \OP{O}(q(L_p)) \bigr)$ and $d=\disc(L)$.
If the natural map
\begin{equation} \label{mapquot}
 \OP{O}(L)\rightarrow\prod_{p|d}
 \frac{f_p(\OP{O}(L_p))}{f_p(\OP{O}_0(L_p))}
\end{equation}
 is surjective, then
 $\overline{\OP{O}(L)}=\OP{O}(q(L))$.
\end{prop}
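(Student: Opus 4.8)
The plan is to derive the statement from strong approximation for spin groups (cf.\ \cite{cassels78}), together with the local surjectivity of Theorem \ref{thm:localsurj} and a short group-theoretic reduction. The first step is to organize the local spinor data. By Theorem \ref{thm:localsurj}, every $\bar\psi_p\in\OP{O}(q(L_p))$ lifts to some $\psi_p\in\OP{O}(L_p)$, and any two lifts differ by an element of $\OP{O}_0(L_p)$; hence $f_p(\psi_p)$ is well defined modulo $f_p(\OP{O}_0(L_p))$, and $\bar\psi_p\mapsto f_p(\psi_p)\bmod f_p(\OP{O}_0(L_p))$ defines a surjective homomorphism $c_p\colon\OP{O}(q(L_p))\to f_p(\OP{O}(L_p))/f_p(\OP{O}_0(L_p))$ (the target is abelian, being a quotient of $\{\pm1\}\times\Q_p^\times/(\Q_p^\times)^2$). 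Using $q(L)=\bigoplus_{p\mid d}q(L_p)$, these combine into a homomorphism $c=(c_p)_{p\mid d}\colon\OP{O}(q(L))\to A:=\prod_{p\mid d}f_p(\OP{O}(L_p))/f_p(\OP{O}_0(L_p))$. By construction the map in the statement is the composite $\OP{O}(L)\twoheadrightarrow\overline{\OP{O}(L)}\hookrightarrow\OP{O}(q(L))\xrightarrow{\,c\,}A$, so the hypothesis says precisely that $c(\overline{\OP{O}(L)})=A$.

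Since $A$ is abelian, $\OP{Ker}c$ is a normal subgroup of $\OP{O}(q(L))$; therefore, once we establish the \emph{Claim} that $\OP{Ker}c\subseteq\overline{\OP{O}(L)}$, we are done: given $\bar\psi\in\OP{O}(q(L))$, choose $\phi\in\OP{O}(L)$ with $c(\bar\phi)=c(\bar\psi)$ (possible by the hypothesis), note $c(\bar\psi\bar\phi^{-1})=1$, and conclude $\bar\psi\bar\phi^{-1}\in\OP{Ker}c\subseteq\overline{\OP{O}(L)}$, hence $\bar\psi\in\overline{\OP{O}(L)}$. Thus the whole proof reduces to the Claim: if $\bar\psi\in\OP{O}(q(L))$ satisfies $c_p(\bar\psi)=1$ for all $p\mid d$, then $\bar\psi$ is induced by an element of $\OP{O}(L)$.

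To prove the Claim I would argue adelically. The condition $c_p(\bar\psi)=1$ allows one to choose, for each $p\mid d$, a lift $\psi_p\in\OP{Ker}f_p=\OP{O}'(L_p)\subseteq\OP{SO}(L\otimes\Q_p)$; for $p\nmid d$ set $\psi_p=\mathrm{id}$. Since the spinor norm is the connecting homomorphism of $1\to\mu_2\to\OP{Spin}\to\OP{SO}\to1$, the subgroup $\OP{SO}(L\otimes\Q_p)\cap\OP{Ker}\theta_p$ is exactly the image of $\OP{Spin}(L\otimes\Q_p)$, so each $\psi_p$ lifts to $\tilde\psi_p\in\OP{Spin}(L\otimes\Q_p)$, with $\tilde\psi_p\in\OP{Spin}(\Z_p)$ for $p\nmid d$; this gives a point $(\tilde\psi_p)\in\OP{Spin}(\mathbb{A}_f)$. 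Because $L$ is indefinite of rank $\geq3$, the algebraic group $\OP{Spin}(L\otimes\Q)$ is semisimple, simply connected, and non-compact at the archimedean place, so strong approximation gives $\gamma\in\OP{Spin}(L\otimes\Q)$ approximating $(\tilde\psi_p)$ arbitrarily well at the primes dividing $2d$ and lying in $\OP{Spin}(\Z_p)$ elsewhere. Its image $\psi\in\OP{SO}(L\otimes\Q)$ then preserves $L_p$ for every $p$ (for $p\mid2d$ because $\OP{O}(L_p)$ is open in $\OP{SO}(L\otimes\Q_p)$ and contains $\psi_p$; for the remaining $p$ automatically), hence $\psi\in\OP{O}(L)$, and $\psi$ acts on each finite group $A(L_p)$ the same way $\psi_p$ does provided the approximation was close enough (the action on $A(L_p)$ factors through a finite quotient). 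Therefore $\overline\psi=\bar\psi$, proving the Claim.

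The main obstacle is the strong-approximation step of the third paragraph: making precise the choice of sufficiently small open neighbourhoods at the primes dividing $2d$ so as to control simultaneously the preservation of each $L_p$ and the induced action on the finite discriminant groups $A(L_p)$, and carefully passing between the three groups $\OP{O}'$, $\OP{SO}$ and $\OP{Spin}$ over $\Q$, over $\Q_p$ and over $\mathbb{A}_f$ — in particular verifying that $\OP{Spin}(L\otimes\Q)$ is semisimple and simply connected even in the small ranks $3$ and $4$ and is non-compact at $\infty$, which is exactly where the hypotheses $\rank L\geq3$ and $L$ indefinite enter. By comparison, the bookkeeping in the first two paragraphs is routine once $c$ is checked to be a well-defined homomorphism.
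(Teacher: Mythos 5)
Your proposal is correct and takes essentially the same route as the paper: the paper packages your reduction to the Claim as a diagram chase on the exact rows $1\to\OP{O}'(L)\to\OP{O}(L)\to f(\OP{O}(L))\to 1$ and $1\to\prod_{p\mid d}\OP{O}'(L_p)/\OP{O}'_0(L_p)\to\prod_{p\mid d}\OP{O}(L_p)/\OP{O}_0(L_p)\to\prod_{p\mid d}f_p(\OP{O}(L_p))/f_p(\OP{O}_0(L_p))\to 1$, and your $\OP{Ker}c$ is exactly its left-hand local group. The only divergence is at the Claim, where the paper simply cites the Strong Approximation Theorem for quadratic forms from \cite{cassels78} (density of the image of $\OP{O}'(L)$ in $\prod_{p\mid d}\OP{O}'(L_p)$, combined with the openness of each $\OP{O}'_0(L_p)$), whereas you re-derive that density via the Spin group; the technicalities you flag in your final paragraph are precisely what that citation absorbs.
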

\begin{proof}
We have a natural commutative diagram
\begin{equation} \label{snake}
\begin{array}{ccccccccc}
 1 & \rightarrow & \OP{O}'(L) & \rightarrow & \OP{O}(L)
 & \rightarrow
 & f(\OP{O}(L))
 & \rightarrow & 1 \\
&&~\downarrow \alpha&&~\downarrow \beta&
 &~\downarrow \gamma&& \\
 1 & \rightarrow
 & \displaystyle \prod_{p|d}
 \frac{\OP{O}'(L_p)}{\OP{O}'_0(L_p)}
 & \rightarrow
 & \displaystyle \prod_{p|d}
 \frac{\OP{O}(L_p)}{\OP{O}_0(L_p)}
 & \rightarrow
 & \displaystyle \prod_{p|d}
 \frac{f_p(\OP{O}(L_p))}{f_p(\OP{O}_0(L_p))}
 & \rightarrow & 1
\end{array}
\end{equation}
 where
 $\OP{O}'_0(L_p)=\OP{O}'(L_p) \cap \OP{O}_0(L_p)$.
The rows in (\ref{snake}) are exact.
Since
\begin{equation}
 \OP{O}(q(L))=\prod_{p|d} \OP{O}(q(L)_p)
 \cong \prod_{p|d}
 \frac{\OP{O}(L_p)}{\OP{O}_0(L_p)}
\end{equation}
 by Theorem \ref{thm:localsurj},
 it is sufficient to show that $\beta$ is surjective.
Since $[\OP{O}'(L_p):\OP{O}'_0(L_p)]<\infty$,
 each coset of $\OP{O}'(L_p)/\OP{O}'_0(L_p)$ is
 open dense subset of $\OP{O}'(L_p)$
 in $p$-adic topology.
By Strong Approximation Theorem of quadratic forms
 (cf.\ \cite{cassels78}),
 the image of $\OP{O}'(L)$ in $\prod_{p|d} \OP{O}'(L_p)$
 is dense.
Therefore, $\alpha$ is surjective.
On the other hand, $\gamma$ is surjective
 by the assumption.
By chasing the diagram, $\beta$ is surjective.
\end{proof}

For $f(\OP{O}(L))$ and $f_p(\OP{O}_0(L_p))$,
 we have the following:

\begin{lem} \label{lem:ozero}
Let $L^{(p)}$ be a non-degenerate even lattice
 over $\Z_p$.
\begin{enumerate}
\renewcommand{\labelenumi}{(\arabic{enumi})}
\item \label{lem:ozero:line}
 If $v\in L^{(p)}$ satisfies
 $a=\BF{v}{v}\in \Z_p^\times \cup 2 \Z_p^\times$,
 then $T(v)\in\OP{O}_0(L^{(p)})$ and
 $f_p(T(v))=(-1,\overline{a})\in f_p(\OP{O}_0(L_p))$.
\item \label{lem:ozero:hyp}
 If $L^{(p)}$ contains
 $U=\left(
 \begin{smallmatrix}0&1\\1&0\end{smallmatrix}
 \right)$
 as a sublattice, then
\begin{equation} \label{subgroupu}
 f_p(\OP{O}_0(L^{(p)})) \supset
 \begin{cases}
 J_2:=
 \langle (1,\Z_2^\times/(\Z_2^\times)^2),
 (-1,\overline{2}) \rangle
 & \text{if } p=2, \\
 J_p:=
 \{ \pm 1 \} \times \Z_p^\times/(\Z_p^\times)^2
 & \text{otherwise}.
 \end{cases}
\end{equation}
\item \label{lem:ozero:nonhyp}
 If $p=2$ and $L^{(2)}$ contains
 $V=\left(
 \begin{smallmatrix}2&1\\1&2\end{smallmatrix}
 \right)$
 as a sublattice, then
\begin{equation} \label{subgroupv}
 f_2(\OP{O}_0(L^{(2)})) \supset
 J_2.
\end{equation}
\end{enumerate}
\end{lem}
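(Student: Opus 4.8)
The plan is to prove each of the three statements by an explicit calculation of spinor norms, using the fact that a reflection $T(v)$ with $\BF{v}{v}=a$ has $f_p(T(v))=(-1,\overline{a})$, and that $T(v)$ acts trivially on the discriminant form precisely when $a$ is a $p$-adic unit or twice a $p$-adic unit.

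For part (\ref{lem:ozero:line}), I would first recall that the reflection $T(v)$ sends $w\mapsto w-\tfrac{2\BF{v}{w}}{\BF{v}{v}}v$. If $a=\BF{v}{v}\in\Z_p^\times\cup 2\Z_p^\times$, then $\tfrac{2}{a}\in\Z_p$, so $T(v)$ maps $L^{(p)}$ to itself and is an honest automorphism of $L^{(p)}$. To see that $T(v)\in\OP{O}_0(L^{(p)})$, I would check the induced action on $A(L^{(p)})=L^{(p)\vee}/L^{(p)}$: for $x\in L^{(p)\vee}$ one has $T(v)\cdot x - x = -\tfrac{2\BF{v}{x}}{a}v$, and since $\tfrac{2}{a}\in\Z_p$ while $\BF{v}{x}\in\Z_p$, this lies in $L^{(p)}$, so the action on $A(L^{(p)})$ is trivial. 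The spinor norm is $\theta_p(T(v))=\overline{a}$ and the determinant is $-1$ by definition, giving $f_p(T(v))=(-1,\overline{a})\in f_p(\OP{O}_0(L^{(p)}))$.

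For part (\ref{lem:ozero:hyp}), the idea is to exhibit, inside $\OP{O}_0(U)$ (hence inside $\OP{O}_0(L^{(p)})$ after extending by the identity on the orthogonal complement, which is legitimate since $U$ is unimodular and therefore a direct summand), enough reflections and rotations to generate the claimed subgroup $J_p$. Writing $U$ with basis $e,f$ satisfying $\BF{e}{e}=\BF{f}{f}=0$, $\BF{e}{f}=1$: for each unit $u\in\Z_p^\times$ the isometry $e\mapsto ue$, $f\mapsto u^{-1}f$ lies in $\OP{O}(U)$, has determinant $+1$, acts trivially on $A(U)$ (which is zero, as $U$ is unimodular), and one computes its spinor norm to be $\overline{u}$; thus $(1,\overline{u})\in f_p(\OP{O}_0(U))$ for all $u$. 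For the case $p\neq 2$, the reflection in $e+f$ has $\BF{e+f}{e+f}=2\in 2\Z_p^\times$ (a unit times $2$, here with $p$ odd so $2$ is a unit), giving $(-1,\overline{2})=(-1,\overline{1})$ since $2$ is a square times a unit; combining with the reflection in $e-f$ (norm $-2$) one gets all of $\{\pm1\}\times\Z_p^\times/(\Z_p^\times)^2$. For $p=2$, the reflection in $e+f$ has norm $2\in 2\Z_2^\times$, so by part (\ref{lem:ozero:line}) it contributes $(-1,\overline 2)$; together with the $(1,\Z_2^\times/(\Z_2^\times)^2)$ from the scalings, this generates exactly $J_2=\langle(1,\Z_2^\times/(\Z_2^\times)^2),(-1,\overline 2)\rangle$.

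For part (\ref{lem:ozero:nonhyp}), I would argue similarly with $V=\left(\begin{smallmatrix}2&1\\1&2\end{smallmatrix}\right)$: its discriminant is $3\in\Z_2^\times$, so $V$ is again unimodular over $\Z_2$ and hence a direct summand of $L^{(2)}$; it suffices to compute $f_2(\OP{O}_0(V))$. The vectors $e_1,e_2$ (the basis vectors) both have norm $2\in2\Z_2^\times$, so by part (\ref{lem:ozero:line}) the reflections $T(e_1),T(e_2)$ lie in $\OP{O}_0(V)$ with $f_2$-value $(-1,\overline 2)$; their product is a rotation with $f_2$-value $(1,\overline 1)$. The vector $e_1-e_2$ has norm $2$ as well, contributing another $(-1,\overline 2)$, while $e_1+e_2$ has norm $6\in 2\Z_2^\times$, contributing $(-1,\overline 6)=(-1,\overline 3\cdot\overline 2)$; since $V$ realizes the unit $3$ we pick up $(1,\overline 3)$ by composing, and one checks $3$ is not a square in $\Z_2^\times$. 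Assembling these generators one obtains exactly $J_2$, which is what is claimed. The main obstacle here is purely bookkeeping: keeping the $2$-adic square classes $\{1,3,5,7\}$ straight and verifying that the reflections listed actually generate $J_2$ and nothing smaller; this is a finite check with the square-class group $\Z_2^\times/(\Z_2^\times)^2\cong(\Z/2)^2$, and no conceptual difficulty arises once the unimodular summand reduction is in place.
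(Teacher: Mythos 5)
Your parts (1) and (2) are correct and essentially reproduce the paper's argument: the paper proves (1) exactly as you do, and for (2) it takes the reflections $T(e_1+xe_2)$, $x\in\Z_p^\times$, of norm $2x$, whose images $(-1,\overline{2x})$ together with their pairwise products $(1,\overline{xy})$ generate $J_p$ (resp.\ $J_2$); your hyperbolic rotations $e\mapsto ue$, $f\mapsto u^{-1}f$ are products of two such reflections, so this is the same computation in a slightly different packaging. (One aside in your odd-$p$ case is false as stated: $(-1,\overline{2})=(-1,\overline{1})$ only when $2$ is a square in $\Z_p^\times$, i.e.\ $p\equiv\pm1\bmod 8$. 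It is harmless, since you combine with $(1,\overline{u})$ for all units $u$ anyway.)

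Part (3), which the paper dismisses as ``similar to (2)'', is where you have a genuine gap. The vectors you reflect in, namely $e_1$, $e_2$, $e_1-e_2$, $e_1+e_2$, have norms $2,2,2,6$, so the only values of $f_2$ you produce are $(-1,\overline{2})$ and $(-1,\overline{6})$, and the subgroup they generate is $\{(1,\overline{1}),(1,\overline{3}),(-1,\overline{2}),(-1,\overline{6})\}$, which has index $2$ in $J_2$: you never obtain $(1,\overline{5})$ or $(1,\overline{7})$. Since $\Z_2^\times/(\Z_2^\times)^2\cong\{1,3,5,7\}$ has order $4$, those classes must be realized separately, and your ``finite check'' fails for the generators listed. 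The repair is to use more vectors of $V$: a vector $xe_1+ye_2$ has norm $2(x^2+xy+y^2)$, and the form $x^2+xy+y^2$ represents all four $2$-adic unit square classes (for $(x,y)=(1,0),(1,1),(1,3),(2,1)$ it takes the values $1,3,13,7$, and $13\equiv 5\bmod 8$). Thus $T(e_1+3e_2)$ contributes $(-1,\overline{2\cdot 5})$ and $T(2e_1+e_2)$ contributes $(-1,\overline{2\cdot 7})$, and these together with $(-1,\overline{2})$ and $(-1,\overline{6})$ do generate all of $J_2$; this is the sense in which (3) really is ``similar to (2)''.
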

\begin{proof}
Let $v,a$ be as in (\ref{lem:ozero:line}).
Since $T(v)\cdot w=w-(2\BF{v}{w}/a)v$ and
 $2/a\in\Z_p^\times$,
 we have $T(v)\cdot w\in L^{(p)}$ for $w\in L^{(p)}$.
Hence $T(v)\in\OP{O}(L^{(p)})$.
If $w\in (L^{(p)})^\vee$, then
 $\BF{v}{w}\in\Z_p$, thus
 $T(v)\cdot w\equiv w \bmod L^{(p)}$.
Hence $T(v)\in\OP{O}_0(L^{(p)})$.
Since the determinant of any reflection is euqal to $-1$,
 we have $f_p(T(v))=(-1,\overline{a})$.
This proves (\ref{lem:ozero:line}).

Let $(e_1,e_2)$ be a basis of $U$ such that
 $\BF{e_i}{e_i}=0$ and $\BF{e_1}{e_2}=1$.
For $x\in\Z_p^\times$, set $v_x=e_1+x e_2$.
We have $\BF{v_x}{v_x}=2x\in 2\Z_p^\times$.
By (\ref{lem:ozero:line}),
 $T(v_x)\in\OP{O}_0(L^{(p)})$ and
 $f_p(T(v_x))=(-1,\overline{2x})$.
We can check that the group generated by
 elements of the form $(-1,\overline{2x})$ is
 $J_2$ (resp.\ $J_p$) if $p=2$ (resp.\ $p\neq 2$).

The proof of (\ref{lem:ozero:nonhyp}) is
 similar to (\ref{lem:ozero:hyp}),
 and we omit it.
\end{proof}

\begin{lem} \label{lem:ogrobal}
Let $L$ be a non-degenerate even lattice.
\begin{enumerate}
\renewcommand{\labelenumi}{(\arabic{enumi})}
\item \label{lem:ogrobal:negate}
$f(-1_L)=((-1)^{\rank L},\overline{\disc(L)})$.
\item \label{lem:ogrobal:hyp}
If $L\cong U(t)\oplus L'$ for some $L'$, then
$f(\OP{O}(L))
 \supset \langle (-1,\pm \overline{2t}) \rangle$,
 where
 $U(t)=\left(
 \begin{smallmatrix}0&t\\t&0\end{smallmatrix}
 \right)$.
\end{enumerate}
\end{lem}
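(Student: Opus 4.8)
The plan is to handle the two parts independently, in each case by exhibiting the relevant isometry as an explicit product of reflections $T(v)$ and then reading off its determinant and spinor norm directly from the definition of $f=\det\times\theta$.

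For part (\ref{lem:ogrobal:negate}) I would first diagonalize: since the form on $L\otimes\Q$ is non-degenerate, Gram--Schmidt produces an orthogonal $\Q$-basis $e_1,\dots,e_n$ of $L\otimes\Q$ (with $n=\rank L$) such that $a_i:=\BF{e_i}{e_i}\neq 0$ for all $i$. Then $-1_L=T(e_1)\cdots T(e_n)$, so $\det(-1_L)=(-1)^n$ and $\theta(-1_L)=\overline{a_1\cdots a_n}$. Finally I would note that the Gram matrix of $L$ in an integral basis is congruent over $\Q$ to $\OP{diag}(a_1,\dots,a_n)$, so their determinants differ by a square in $\Q^\times$; hence $\overline{a_1\cdots a_n}=\overline{\disc(L)}$ and $f(-1_L)=((-1)^{\rank L},\overline{\disc(L)})$.

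For part (\ref{lem:ogrobal:hyp}), writing $L=U(t)\oplus L'$ and letting $(e_1,e_2)$ be the standard basis of $U(t)$ (so $\BF{e_1}{e_1}=\BF{e_2}{e_2}=0$ and $\BF{e_1}{e_2}=t$), I would consider the two vectors $v_\pm=e_1\pm e_2\in U(t)$, which have $\BF{v_+}{v_+}=2t$ and $\BF{v_-}{v_-}=-2t$. A short computation shows that $T(v_+)$ sends $e_1\mapsto -e_2$, $e_2\mapsto -e_1$ and that $T(v_-)$ sends $e_1\mapsto e_2$, $e_2\mapsto e_1$; in particular each preserves the sublattice $U(t)$, so, extended by the identity on $L'$, each lies in $\OP{O}(L)$. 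Being single reflections they satisfy $f(T(v_+))=(-1,\overline{2t})$ and $f(T(v_-))=(-1,\overline{-2t})$, and these two elements generate $\langle(-1,\pm\overline{2t})\rangle$, which is therefore contained in $f(\OP{O}(L))$.

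There is essentially no serious obstacle here; the one place to be attentive is in part (\ref{lem:ogrobal:hyp}), where one must check that $T(v_\pm)$ preserves the \emph{integral} lattice $U(t)$ and not merely $U(t)\otimes\Q$ --- this is exactly the computation of the images of $e_1$ and $e_2$ above, and it is what forces $2t$ (rather than some arbitrary value represented by the form) to be the spinor norm that appears. In part (\ref{lem:ogrobal:negate}) the only subtlety is that $\disc(L)$ is defined only modulo squares of units, but since $\theta$ already takes values in $\Q^\times/(\Q^\times)^2$ this is exactly what is needed.
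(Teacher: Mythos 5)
Your proposal is correct and follows essentially the same route as the paper: part (1) via an orthogonal $\Q$-basis giving $-1_L=\prod T(e_i)$ with $\prod\BF{e_i}{e_i}\equiv\disc(L)\bmod(\Q^\times)^2$, and part (2) via the reflections $T(e_1\pm e_2)$ in $U(t)$, whose spinor norms are $\pm\overline{2t}$. The only difference is that you spell out the integrality check and the action on $e_1,e_2$ explicitly, which the paper leaves implicit.
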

\begin{proof}
Let $(e_1,\ldots,e_r)$ be an orthogonal basis
 of $L\otimes\Q$, where $r=\rank L$.
Then, $-1_L=\prod_{i=1}^{r} T(e_i)$ and
 $\prod_{i=1}^{r} \BF{e_i}{e_i} \equiv \disc(L)
 \bmod (\Q^\times)^2$.
Therefore, $f(-1_L)=((-1)^r,\overline{\disc(L)})$.
This proves (\ref{lem:ogrobal:negate}).

Let $(e_1,e_2)$ be a basis of $U(t)$ such that
 $\BF{e_i}{e_i}=0$ and $\BF{e_1}{e_2}=t$.
Then, $\OP{O}(U(t))\cong (\Z/2\Z)^2$ is generated by
 $T(e_1 \pm e_2)$.
Therefore, $f(\OP{O}(U(t)))
 =\langle (-1,\pm \overline{2t}) \rangle$.
This proves (\ref{lem:ogrobal:hyp}).
\end{proof}

\subsection{Proof of Theorem \ref{thm:surj} %
 for the cases (\ref{nineexceptions})}

We set $L=\Lambda^G$, $r=\rank L$
 and $d=\disc(L)$.
We shall show that the map (\ref{mapquot}) is
 surjective in each case
 in (\ref{nineexceptions}).
In other words,
 we show that $\prod_{p|d} f_p(\OP{O}(L_p))$
 is generated by the images of $\OP{O}(L)$ and
 $\prod_{p|d} f_p(\OP{O}_0(L_p))$.
As is shown below,
 we have $f_p(\OP{O}(L_p))=N_p$ except fot
 the cases $n= 46,61$, where
\begin{equation}
 N_p =
 \{ \pm 1 \} \times \Q_p^\times/(\Q_p^\times)^2.
\end{equation}
Recall that the map
 $(a,b,c)\mapsto (-1)^a 3^b 2^c$ induces
 an isomorphism $(\Z/2\Z)^{3}\rightarrow
 \Q_2^\times/(\Q_2^\times)^2$.
Moreover, the map
 $(a,b)\mapsto \varepsilon_p^a p^b$
 induces an isomorphism
 $(\Z/2\Z)^{2}\rightarrow
 \Q_p^\times/(\Q_p^\times)^2$ if $p\neq 2$,
 where $\varepsilon_p$ is a non-square $p$-adic unit.
Let $(e_1,\ldots,e_r)$ be a basis of $L$
 whose Gramian matrix is given
 by Table \ref{subsect:invariant}.
We say $a$ is represented by $L$ if
 there exists a vector $v\in L$ such that
 $\BF{v}{v}=a$.
We denote $f(\OP{O}(L))$ and $f_p(\OP{O}_0(L_p))$ by
 $I$ and $I_p$, respectively.


(1)
The case $n= 26$.
We have
\begin{equation}
 L\cong
 \begin{pmatrix}0&8\\8&0\end{pmatrix}
 \oplus
 \LF{2}\oplus\LF{4},
 \quad d=-2^9.
\end{equation}
Since $2$ and $6$ are represented by $L$, we have
 $(-1,\overline{2}),(-1,\overline{6})\in I_2$
 by Lemma \ref{lem:ozero}(\ref{lem:ozero:line}).
By Lemma \ref{lem:ogrobal}(\ref{lem:ogrobal:hyp}),
 $(-1,\pm\overline{16}) = (-1,\pm\overline{1})
 \in I$.
We can check that the images of these four
 elements generate $N_2$.
(In what follows, we omit ``the image(s) of''
 for simplicity.)

(2)
The case $n= 30$.
We have
\begin{equation}
 L\cong
 \begin{pmatrix}0&3\\3&0\end{pmatrix}^{\oplus 2}
 \oplus
 \begin{pmatrix}2&3\\3&0\end{pmatrix},
 \quad d=-3^6.
\end{equation}
By Lemma \ref{lem:ogrobal}(\ref{lem:ogrobal:hyp}),
 $(-1,\pm \overline{6})\in I$.
Since $T(e_5)\in\OP{O}(L)$, we have
 $f(T(e_5))=(-1,\overline{2})\in I$.
We can check that
 these three elements generate $N_3$.

(3)
The case $n= 32$.
We have
\begin{equation}
 L\cong
 \begin{pmatrix}0&5\\5&0\end{pmatrix}
 \oplus
 \begin{pmatrix}4&2\\2&6\end{pmatrix},
 \quad d=-2^2 \cdot 5^3.
\end{equation}
Since $L_2$ contains $U$,
 we have $J_2\subset I_2$
 by Lemma \ref{lem:ozero}(\ref{lem:ozero:hyp}).
Since $4$ is represented by $L$, we have
 $(-1,\overline{4}) = (-1,\overline{1}) \in I_5$
  by Lemma \ref{lem:ozero}(\ref{lem:ozero:line}).
By Lemma \ref{lem:ogrobal}(\ref{lem:ogrobal:hyp}),
 $(-1,\pm \overline{10})\in I$.
Since $T(e_3)\in\OP{O}(L)$, we have
 $f(T(e_1))=(-1,\overline{4}) = (-1,\overline{1})
 \in I$.
Let $L'=\left(
 \begin{smallmatrix}4&2\\2&6\end{smallmatrix}
 \right)$.
By Lemma \ref{lem:ogrobal}(\ref{lem:ogrobal:negate}),
 $f(-1_{L'})=(1,\overline{20})=(1,\overline{5})
 \in I$.
Therefore, the images of $I,I_2,I_5$ contain
 the follwoing elements.
\begin{equation*}
\begin{array}{c|c}
 & \text{image in $N_2 \times N_5$} \\
\hline
I_2 & (1,\Z_2^\times/(\Z_2^\times)^2)
      \times(1,\overline{1}),
      (-1,\overline{2})\times(1,\overline{1}) \\
I_5 & (1,\overline{1})\times(-1,\overline{1}) \\
I   & (-1,\pm \overline{10})
              \times(-1,\pm \overline{10}),
      (-1,\overline{1}) \times (-1,\overline{1}),
      (1,\overline{5})\times(1,\overline{5})
\end{array}
\end{equation*}
From this, we can check that
 $I,I_2,I_5$ generate $N_2\times N_5$.

(4)
The case $n= 33$.
We have
\begin{equation}
 L\cong
 \begin{pmatrix}0&7\\7&0\end{pmatrix}
 \oplus
 \begin{pmatrix}2&1\\1&4\end{pmatrix},
 \quad d=-7^3.
\end{equation}
By Lemma \ref{lem:ogrobal}(\ref{lem:ogrobal:hyp}),
 $(-1,\pm \overline{14})\in I$.
Since $T(e_3)\in\OP{O}(L)$, we have
 $(-1,\overline{2})\in I$.
We can check that these three elements
 generate $N_7$.

(5)
The case $n= 40$.
We have
\begin{equation}
 L\cong \LF{4}^{\oplus 3} \oplus
 \LF{-4}^{\oplus 2},
 \quad d=2^{10}.
\end{equation}
Let $\varphi=T(e_1)T(e_1+2e_2)\in\OP{O}(L_2)$.
Then, modulo $L_2$, we have
\begin{align}
 \varphi\cdot \frac{e_1}{4}
 &=T(e_1)\cdot \left(
 \frac{e_1}{4} - \frac{2}{20}(e_1+2e_2)
 \right)
 \equiv T(e_1)\cdot \frac{3}{4} e_1
 \equiv \frac{e_1}{4}, \\
 \varphi\cdot \frac{e_2}{4}
 &=T(e_1)\cdot \left(
 \frac{e_2}{4} - \frac{4}{20}(e_1+2e_2)
 \right)
 \equiv T(e_1)\cdot \frac{e_2}{4}
 = \frac{e_2}{4}.
\end{align}
Hence $\varphi\in\OP{O}_0(L_2)$ and
 $f_2(\varphi)=(-1,\overline{4})\cdot
 (-1,\overline{20})=(1,\overline{5})\in I_2$.
Since $T(e_1),T(e_4),T(e_1+e_2) \in \OP{O}(L)$, we have
 $(-1,\pm \overline{4}),(-1,\overline{8}) \in I$.
We can check that these four elements
 generate $N_2$.

(6)
The case $n= 46$.
We have
\begin{equation}
 L\cong\begin{pmatrix}2&1\\1&2\end{pmatrix}
 \oplus\LF{6}\oplus\LF{-18},
 \quad d=-2^2 \cdot 3^4.
\end{equation}
Since $L_2$ contains $V$, we have
 $J_2 \subset I_2$
 by Lemma \ref{lem:ozero}(\ref{lem:ozero:nonhyp}).
By Theorem 3.14(i) of \cite{earnesthsia}, we have
 $f_2(\OP{O}(L_2))= J_2$,
 thus $I_2=f_2(\OP{O}(L_2))=J_2$.
Since $T(e_1),T(e_3),T(e_4)\in\OP{O}(L)$, we have
 $(-1,\overline{2}),
 (-1,\overline{6}),(-1,-\overline{18})\in I$.
From this, we can check that
 $I,I_2$ generate $f_2(\OP{O}(L_2))\times N_3$.

(7)
The case $n= 48$.
We have
\begin{equation}
 L\cong
 \begin{pmatrix}0&3\\3&0\end{pmatrix}
 \oplus
 \begin{pmatrix}12&6\\6&12\end{pmatrix},
 \quad d=-2^2 \cdot 3^5.
\end{equation}
Since $L_2$ contains $U$, we have
 $J_2 \subset I_2$
 by Lemma \ref{lem:ozero}(\ref{lem:ozero:hyp}).
By Lemma \ref{lem:ogrobal}(\ref{lem:ogrobal:hyp}),
 $(-1,\pm \overline{6})\in I$.
Since $T(e_3),T(e_3+e_4)\in\OP{O}(L)$, we have
 $(-1,\overline{12}),(-1,\overline{36})\in I$.
Therefore, the images of $I,I_2$ contains
 the follwoing elements.
\begin{equation*}
\begin{array}{c|c}
 & \text{image in $N_2 \times N_3$} \\
\hline
I_2 & (1,\Z_2^\times/(\Z_2^\times)^2)
      \times(1,\overline{1}),
      (-1,\overline{2})\times(1,\overline{1}) \\
I   & (-1,\pm \overline{6})
              \times(-1,\pm \overline{6}),
      (-1,\overline{3}) \times (-1,\overline{3}),
      (-1,\overline{1})\times(-1,\overline{1})
\end{array}
\end{equation*}
From this, we can check that
 $I,I_2$ generate $N_2\times N_3$.

(8)
The case $n= 56$.
We have
\begin{equation}
 L\cong\LF{4}^{\oplus 3}\oplus\LF{-8},
 \quad d=-2^9.
\end{equation}
By the argument in the case $n= 40$,
 $\varphi=T(e_1)T(e_1+2e_2)\in\OP{O}_0(L_2)$
 and $f_2(\varphi)=(1,\overline{5})\in I_2$.
Since $T(e_1),T(e_4),T(e_1+e_2)\in\OP{O}(L)$, we have
 $(-1,\overline{4}),(-1,-\overline{8}),
 (-1,\overline{8}) \in I$.
We can check that these four elements
 generate $N_2$.

(9)
The case $n= 61$.
We have
\begin{equation}
 L\cong
 \begin{pmatrix}0&3\\3&0\end{pmatrix}
 \oplus
 \begin{pmatrix}8&4\\4&8\end{pmatrix},
 \quad d=-2^4 \cdot 3^3.
\end{equation}
Since $L_2$ contains $U$, we have
 $J_2 \subset I_2$
 by Lemma \ref{lem:ozero}(\ref{lem:ozero:hyp}).
By Theorem 3.14(i) of \cite{earnesthsia},
 $f_2(\OP{O}(L_2))= J_2$,
 thus $I_2=f_2(\OP{O}(L_2))=J_2$.
Since $T(e_3)\in\OP{O}(L)$,
 $(-1,\overline{8})=(-1,\overline{2})\in I$.
By Lemma \ref{lem:ogrobal}(\ref{lem:ogrobal:hyp}),
 $(-1,\pm \overline{6})\in I$.
From this, we can check that
 $I,I_2$ generate $f_2(\OP{O}(L_2))\times N_3$.

Now we have proved Theorem \ref{thm:surj}.

\section{Uniqueness of symplectic actions %
 on the $K3$ lattice}
\label{sect:uniqueness}

In this section,
 we use the results in the previous sections
 to prove Main Theorem.

\subsection{Case $c(G)\leq 18$}

\begin{prop} \label{prop:case18}
The natural map
\begin{equation}
 \{ G\in\MC{L} \bigm|
 c(G)\leq 18 \} / \text{\rm conj} \rightarrow
 \{ (G,S)\in\MC{S} \bigm|
 c(G)\leq 18 \} / \text{\rm isom}
\end{equation}
 is bijective.
\end{prop}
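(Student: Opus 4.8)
The plan is to deduce the bijectivity from Nikulin's gluing lemma (Lemma~\ref{lem:unimodembed}), using two inputs from earlier sections: the uniqueness of the invariant lattice $\Lambda^G$ when $c(G)\le 18$ (Proposition~\ref{prop:uniqueinvariant}), and the fact that $\overline{\OP{O}(\Lambda^G)}=\OP{O}(q(\Lambda^G))$, which holds precisely when $\rank\Lambda^G\ge 4$, i.e.\ $c(G)\le 18$ (Theorem~\ref{thm:surj}). Surjectivity is immediate from the definition of $\MC{S}$: if $(G,S)\in\MC{S}$ with $c(G)\le 18$, choose $G'\in\MC{L}$ with $(G,S)\cong(G',\Lambda_{G'})$; then $[G']=[G]$, so $c(G')=c(G)\le 18$ by Proposition~\ref{prop:trace}, and $G'$ maps to $(G,S)$.

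For injectivity, let $G_1,G_2\in\MC{L}$ with $c(G_i)\le 18$, and suppose there is an isometry $\psi\colon\Lambda_{G_1}\to\Lambda_{G_2}$ with $\psi\,(G_1|_{\Lambda_{G_1}})\,\psi^{-1}=G_2|_{\Lambda_{G_2}}$; in particular $[G_1]=[G_2]$, hence $c(G_1)=c(G_2)$ by Proposition~\ref{prop:trace}. Write $S_i=\Lambda_{G_i}$ and $T_i=\Lambda^{G_i}$; note $S_i=T_i^\bot$ and $T_i=S_i^\bot$ in $\Lambda$. Since $\Lambda$ is even unimodular and $S_i,T_i$ are mutually orthogonal primitive sublattices, Lemma~\ref{lem:unimodembed} identifies $\Lambda$ with $\Gamma_{\gamma_i}$ for a gluing isometry $\gamma_i\colon q(S_i)\to -q(T_i)$, and the $G_i$-action on $\Lambda$ then restricts to $G_i|_{S_i}$ on $S_i$ and to the identity on $T_i$. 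Now $q(T_i)\cong -q(S_i)$, $\sign T_1=\sign T_2$, and $\rank T_i=\rank\Lambda^{G_i}\ge 4>3$, so the non-uniqueness of $\Lambda^G$ for $\MF{G}\in\{\MF{S}_5,L_2(7),\MF{A}_6\}$ (which only occurs when $\rank\Lambda^G=3$) does not intervene; by Proposition~\ref{prop:uniqueinvariant} there is an isometry $\phi\colon T_1\to T_2$.

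Put $\gamma_1'=\overline{\phi}\circ\gamma_1\circ\overline{\psi}^{-1}\colon q(S_2)\to -q(T_2)$. Straight from the definition of $\Gamma_\gamma$, the $\Q$-linear extension of $\psi\oplus\phi$ carries $\Lambda=\Gamma_{\gamma_1}$ isometrically onto $\Gamma_{\gamma_1'}$; call this isometry $\Psi$. Then $G_1':=\Psi\,G_1\,\Psi^{-1}\subset\OP{O}(\Gamma_{\gamma_1'})$ acts on $S_2$ with image $\psi(G_1|_{S_1})\psi^{-1}=G_2|_{S_2}$ and trivially on $T_2$, so $G_1'$ and $G_2$ are groups of isometries of two even unimodular overlattices of $S_2\oplus T_2$ having the same restriction $G_2|_{S_2}$ to $S_2$ and the trivial restriction to $T_2$. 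By the last assertion of Lemma~\ref{lem:unimodembed}, $\OP{id}_{S_2}\oplus\chi$ extends to an isometry $\Gamma_{\gamma_1'}\to\Gamma_{\gamma_2}$ (for some $\chi\in\OP{O}(T_2)$) if and only if $\overline{\chi}=\gamma_2\circ(\gamma_1')^{-1}$ in $\OP{O}(q(T_2))$, i.e.\ if and only if $\gamma_2\circ(\gamma_1')^{-1}\in\overline{\OP{O}(T_2)}$; and this holds because $\rank T_2\ge 4$ forces $\overline{\OP{O}(T_2)}=\OP{O}(q(T_2))$ by Theorem~\ref{thm:surj}. The resulting isometry $\Theta\colon\Gamma_{\gamma_1'}\to\Gamma_{\gamma_2}$ is the identity on $S_2$, hence commutes on $S_2\oplus T_2$ with the subgroup $\{g\oplus 1: g\in G_2|_{S_2}\}$; since an isometry between these overlattices is determined by its restriction to $S_2\oplus T_2$, $\Theta$ conjugates $G_1'$ onto $G_2$. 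Thus $\alpha:=\Theta\circ\Psi\in\OP{O}(\Lambda)$ conjugates $G_1$ onto $G_2$, as desired.

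The hard part, and essentially the whole content of the statement, is controlling the gluing datum: a priori $\gamma_1'$ and $\gamma_2$ need only lie in the same $\overline{\OP{O}(S_2)}\times\overline{\OP{O}(T_2)}$-orbit, and a correction on the $S_2$-side would not in general normalize $G_2$. Theorem~\ref{thm:surj} is exactly what circumvents this in the range $c(G)\le 18$: since $\overline{\OP{O}(\Lambda^G)}$ already exhausts $\OP{O}(q(\Lambda^G))$, the entire discrepancy can be absorbed by an isometry of $T_2$, leaving $S_2$ (and with it the $G_2$-action) fixed. The remaining points---well-definedness of the map on conjugacy classes, that $\gamma_1'$ is again a valid gluing, and the bookkeeping that the two extensions of the $S_2$-action $G_2|_{S_2}$ (with $T_2$ fixed) to $\Gamma_{\gamma_1'}$ and to $\Gamma_{\gamma_2}$ are intertwined by $\Theta$---involve no new ideas.
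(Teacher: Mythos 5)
Your proof is correct and follows essentially the same route as the paper: both rest on Proposition \ref{prop:uniqueinvariant} (uniqueness of $\Lambda^G$), Theorem \ref{thm:surj} ($\overline{\OP{O}(\Lambda^G)}=\OP{O}(q(\Lambda^G))$ for $c(G)\leq 18$), and Lemma \ref{lem:unimodembed}. The only difference is presentational — you unwind the gluing data $\gamma_i$ explicitly, whereas the paper quotes the two resulting facts (uniqueness of the primitive embedding $\Lambda_{G_1}\hookrightarrow\Lambda$ and surjectivity of $\OP{O}(\Lambda,\Lambda_{G_1})\rightarrow\OP{O}(\Lambda_{G_1})$) directly.
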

\begin{proof}
The surjectivity follows from the definition
 of $\MC{S}$ (see (\ref{mcs})).
Let $(G,S)\in\MC{S}$ such that
 $c(G)\leq 18$.
Suppose that $G_i\in\MC{L}$ and
 $(G_i,\Lambda_{G_i}) \cong (G,S)$ for $i=1,2$.
To prove the injectivity,
 it is sufficient to show that
 $G_1$ and $G_2$ are conjugate in $\OP{O}(\Lambda)$.
By Proposition \ref{prop:uniqueinvariant},
 $\Lambda^{G_1} \cong \Lambda^{G_2}$.
By Theorem \ref{thm:surj},
 $\overline{\OP{O}(\Lambda^{G_1})}
 =\OP{O}(q(\Lambda^{G_1}))$.
Therefore,
 a primitive embedding
 $\Lambda_{G_1}\rightarrow\Lambda$ such that
 $(\Lambda_{G_1})^\bot_\Lambda\cong\Lambda^{G_1}$
 is unique
 up to isomorphism and the restriction map
\begin{equation} \label{restrictionmap}
 \pi:\OP{O}(\Lambda,\Lambda_{G_1})
 \rightarrow\OP{O}(\Lambda_{G_1})
\end{equation}
 is surjective
 by Lemma \ref{lem:unimodembed}.
Hence we may assume that $\Lambda_{G_1}=\Lambda_{G_2}$
 by replacing $G_2$ by $\varphi G_2 \varphi^{-1}$
 for some $\varphi\in\OP{O}(\Lambda)$ if necessary.
Since $(G_1,\Lambda_{G_1})\cong (G_2,\Lambda_{G_2}) \cong (G,S)$,
 $G_1$ and $G_2$ are conjugate
 as subgroups of $\OP{O}(\Lambda_{G_1})$.
Since $\pi$
 is surjective, $G_1$ and $G_2$ are conjugate
 in $\OP{O}(\Lambda)$.
\end{proof}

\subsection{Case $c(G)=19$}


\begin{lem} \label{lem:maxconj}
Let $G_1,G_2\in\MC{L}$ such that $[G_1]=[G_2]$,
  $\OP{Clos}(G_1)=\OP{Clos}(G_2)$ and $c(G_i)=19$.
If $[\OP{Clos}(G_i)]\neq \MF{A}_{4,4},F_{384}$, then
 $G_1$ and $G_2$ are conjugate in $\OP{Clos}(G_i)$.
\end{lem}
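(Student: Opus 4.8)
The plan is to reduce the lemma to a finite group-theoretic check inside one fixed group. Put $C:=\OP{Clos}(G_1)=\OP{Clos}(G_2)$; since $\OP{Clos}(G)=\OP{O}_0(\Lambda_G)$ acts on $\Lambda$ with $\Lambda_{\OP{Clos}(G)}=\Lambda_G$, the equality of closures forces $S:=\Lambda_{G_1}=\Lambda_{G_2}$ to be one and the same primitive sublattice of $\Lambda$, with $C=\OP{O}_0(S)$ and $G_1,G_2\subseteq C$. Write $\MF{G}:=[G_1]=[G_2]$ and $\MF{G}':=[C]$; then $([C],q(S))\in\MC{Q}_\clos$ and $c(\MF{G}')=c(G_1)=19$. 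The key observation is that the hypotheses together with $c(G_i)=19$ describe exactly the following situation: for \emph{any} subgroup $G\subseteq C$ with $[G]=\MF{G}$ one automatically has $c(G)=c(\MF{G})=19$ — the trace formula from the proof of Proposition \ref{prop:trace} applies because every nontrivial cyclic subgroup of $C=\OP{O}_0(S)$ lies in $\MC{L}$ (its coinvariant lattice is a sublattice of the negative definite, $(-2)$-free lattice $S$) — so $\Lambda^G$ has rank $3$, contains the primitive rank-$3$ lattice $\Lambda^C=(S)^{\bot}_{\Lambda}$, hence equals it, whence $\Lambda_G=S$ and $G\in\MC{L}$ with $\OP{Clos}(G)=C$. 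Thus $(G_1,G_2)$ is an \emph{arbitrary} pair of subgroups of $C$ isomorphic to $\MF{G}$, and the lemma is equivalent to the assertion that all subgroups of $C$ isomorphic to $\MF{G}$ are conjugate in $C$.

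To make this computable I would realize $C$ explicitly. Since $c(\MF{G}')=19\neq c(\MF{G}_3)=12$ and $\MF{G}'\neq\MF{G}_{58}$, Proposition \ref{prop:listuniq} gives $(C,S)\cong(C,N_C)$ for a Niemeier lattice $N$ of type $A_1^{\oplus24}$ with $C\subseteq\OP{O}(N,\Delta^+)\cong M_{24}$; so $C$ becomes a concrete permutation group on $24$ points, together with explicit Gram matrices for $N_C=S$ and $N^C=(S)^{\bot}_{\Lambda}$. A shorter route to the same reduction uses the glueing theory: by Theorem \ref{thm:coinvariantuniq} the given $G_2$ is already conjugate to $G_1$ in $\OP{O}(S)$, say $G_2=\beta G_1\beta^{-1}$, and since $C=\OP{O}_0(S)\triangleleft\OP{O}(S)$ and $\OP{O}(S)/C\cong\OP{O}(q(S))$ by the $c(G)=19$ part of Theorem \ref{thm:surjcoinv}, the groups $G_1,G_2$ are $C$-conjugate if and only if the class of $\beta$ in $\OP{O}(q(S))$ lies in $\overline{N_{\OP{O}(S)}(G_1)}$; so the lemma is equivalent to $\overline{N_{\OP{O}(S)}(G_1)}=\OP{O}(q(S))$.

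The remaining step is a direct computation with GAP: for each abstract group $\MF{G}'$ occurring as $[C]$ with $c(\MF{G}')=19$, and each $\MF{G}\subseteq\MF{G}'$ with $c(\MF{G})=19$, enumerate the conjugacy classes of subgroups of $C$ isomorphic to $\MF{G}$ (equivalently, compute $\overline{N_{\OP{O}(S)}(G_1)}$) and check there is exactly one. This holds in every case except $\MF{G}'=\MF{A}_{4,4}$ and $\MF{G}'=F_{384}$, where two $C$-classes appear; those two closure groups are accordingly excluded here and handled by a separate argument. I expect the only genuine obstacle to be this computation, together with the bookkeeping of precisely which pairs $(\MF{G}',\MF{G})$ must be examined; the structural reductions above are formal once Propositions \ref{prop:trace} and \ref{prop:listuniq} and Theorems \ref{thm:coinvariantuniq} and \ref{thm:surjcoinv} are in hand.
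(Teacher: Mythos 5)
Your argument is correct and follows essentially the same route as the paper: both $G_1$ and $G_2$ sit inside the single group $C=\OP{Clos}(G_i)$ (which in the non-excluded cases is $T_{48}$, $H_{192}$, $T_{192}$ or $M_{20}$), and the lemma reduces to the GAP-verifiable fact that all subgroups of $C$ isomorphic to $[G_i]$ form a single conjugacy class. The paper performs exactly this check with \texttt{IsomorphicSubgroups}; your additional reformulation via $\overline{N_{\OP{O}(S)}(G_1)}=\OP{O}(q(S))$ is a valid equivalent criterion but is not needed.
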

\begin{proof}
It is sufficient to consider the case
 $G_i\subsetneq \OP{Clos}(G_i)$.
By Tables \ref{subsect:qands} and \ref{subsect:trees},
 we find that
 $\MF{H}:=[\OP{Clos}(G_i)]=T_{48},H_{192},T_{192},M_{20}$.
Using GAP \cite{GAP}, we can check that
 there exists a unique subgroup $\MF{G}$ of $\MF{H}$
 up to conjugacy in $\MF{H}$ such that $\MF{G}=[G_i]$
 (see Appendix).
The assertion follows from this.
\end{proof}

Now we consider subgroups $\MF{G}$ of
 $\MF{A}_{4,4}$ or $F_{384}$ such that $c(\MF{G})=19$.
In \cite{mukai88}, Mukai constructed $K3$ surfaces
 with maximal finite symplectic actions.
We use two $K3$ surfaces with symplectic actions of
 $\MF{A}_{4,4}$ or $F_{384}$ from \cite{mukai88}.

Let $X$ be a surface in $\P^5$ defined
 by the following equations:
\begin{align}
x^2+y^2+z^2&=\sqrt{3}u^2, \\
x^2+\zeta y^2+\zeta^2 z^2&=\sqrt{3}v^2, \\
x^2+\zeta^2 y^2+\zeta z^2&=\sqrt{3}w^2,
\end{align}
 where $\zeta=\exp(2\pi\sqrt{-1}/3)$ and
 $x,y,z,u,v,w$ are homogeneous coordinates of $\P^5$.
Since $X$ is a smooth complete intersection
 of type $(2,2,2)$ in $\P^5$,
 $X$ is a $K3$ surface.
Let $G$ denote a subgroup of $\OP{PGL}(6,\C)$
 generated by
\begin{align}
(x:y:z:u:v:w)&\mapsto(-x:-y:z:u:v:w), \label{eq:minusxy} \\
(x:y:z:u:v:w)&\mapsto(x:y:z:-u:-v:w), \\
(x:y:z:u:v:w)&\mapsto(y:z:x:u:\zeta v:\zeta^2 w), \\
(x:y:z:u:v:w)&\mapsto(x:\zeta^2 y:\zeta z:v:w:u), \\
(x:y:z:u:v:w)&\mapsto(-x:-z:-y:u:w:v).
\end{align}
Then $G$ acts on $X$ symplectically and
 $[G]=\MF{A}_{4,4}$.
Moreover, let $\widetilde{G}$ denote the group
 generated by $G$ and
\begin{equation}
g:(x:y:z:u:v:w)\mapsto(u:v:w:x:z:y).
\end{equation}
Then $\widetilde{G}$ acts on $X$ and
 $g^* \omega_X=\sqrt{-1} \omega_X$.
Using GAP, we can show the following (see Appendix):

\begin{lem} \label{lem:a44}
Suppose that $\MF{G}\in\Gsymp$ is a subgroup of
 $\MF{A}_{4,4}$ and $c(\MF{G})=19$.
Then there exists a unique subgroup $K$ of $G$
 such that $[K]=\MF{G}$
 up to conjugacy in $\widetilde{G}$.
\end{lem}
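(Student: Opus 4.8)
The plan is to reduce the statement to a finite computation in GAP, just as the phrase ``see Appendix'' indicates. First I would read off from Table~\ref{subsect:qands} (together with the subgroup relations recorded in Table~\ref{subsect:trees}) the finite list of abstract groups $\MF{G}\in\Gsymp$ that are subgroups of $\MF{A}_{4,4}$ and satisfy $c(\MF{G})=19$; since $[G]=\MF{A}_{4,4}$, each such $\MF{G}$ certainly occurs as the isomorphism type of some subgroup $K\le G$, so only the uniqueness assertion requires work.

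Next I would make the group $\widetilde{G}$ explicit. Each of the displayed projective transformations lifts to a matrix in $\OP{GL}(6,F)$ with $F=\Q(\zeta,\sqrt{3},\sqrt{-1})$, and $\widetilde{G}$ is the subgroup of $\OP{PGL}(6,F)$ they generate; it is finite, and from it one obtains an honest permutation representation (for instance on a $\widetilde{G}$-orbit of a generic point of $\P^5$, or via the regular representation once the order is known). Inside $\widetilde{G}$ sits the normal subgroup $G$ of symplectic elements, namely the kernel of the character sending an element to the scalar by which it acts on $\omega_X$; since $g^\ast\omega_X=\sqrt{-1}\,\omega_X$ this character has image exactly $\mu_4$, so $[\widetilde{G}:G]=4$ and $\widetilde{G}/G\cong C_4$. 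With $G\triangleleft\widetilde{G}$ thus identified, for each $\MF{G}$ from the first step I would enumerate in GAP~\cite{GAP} the subgroups $K\le G$ with $K\cong\MF{G}$, sort them into $G$-conjugacy classes, and then check that conjugation by a preimage in $\widetilde{G}$ of a generator of $C_4$ permutes these classes transitively; equivalently, that there is a single $\widetilde{G}$-conjugacy class of such subgroups.

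I do not expect a genuine obstacle here: the computation is small, and the content is simply that a subgroup of $G$ of prescribed isomorphism type, while possibly not unique up to conjugacy in $G$, does become unique once one is allowed to conjugate by the extra, non-symplectic automorphism $g$. The one point requiring care --- and the reason $g$ is introduced at all --- is that conjugation by $g$ still induces an isometry of $H^2(X,\Z)\cong\Lambda$ (it merely fails to fix $\omega_X$), so a $\widetilde{G}$-conjugacy between two subgroups $K_1,K_2\le G$ already yields an isomorphism of the induced actions of $\MF{G}$ on $\Lambda$, which is what Section~\ref{sect:uniqueness} needs. Accordingly I would be careful to model the extension $1\to G\to\widetilde{G}\to C_4\to 1$ faithfully, using the explicit matrices above rather than an abstract guess, so that the GAP verification is rigorous.
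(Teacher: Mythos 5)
Your proposal is correct and follows essentially the same route as the paper: the Appendix likewise reduces the lemma to a GAP check that, for each relevant $\MF{G}_n$ ($n=52,53,68,72$), the subgroups of $G$ isomorphic to $\MF{G}_n$ form a single conjugacy class under $\widetilde{G}$, using \texttt{IsomorphicSubgroups} filtered by containment in $G$. The only difference is the concrete realization of $\widetilde{G}$ (the paper uses a permutation representation on $36$ letters, i.e.\ coordinates times sixth roots of unity, modulo the scalar subgroup, rather than explicit matrices), which is immaterial to the argument.
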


Let $Y$ be a surface in $\P^3$ defined
 by the following equation:
\begin{equation}
x^4+y^4+z^4+t^4=0,
\end{equation}
 where
 $x,y,z,t$ are homogeneous coordinates of $\P^3$.
Since $Y$ is a smooth quartic surface in $\P^3$,
 $Y$ is a $K3$ surface.
Let $H$ denote a subgroup of $\OP{PGL}(4,\C)$
 generated by
\begin{align}
(x:y:z:t)&\mapsto(i x:-i y:z:t), \\
(x:y:z:t)&\mapsto(y:-x:z:t), \\
(x:y:z:t)&\mapsto(y:z:t:-x),
\end{align}
 where $i=\sqrt{-1}$.
Then $H$ acts on $Y$ symplectically and
 $[H]=F_{384}$.
Moreover, let $\widetilde{H}$ denote the group
 generated by $H$ and
\begin{equation}
h:(x:y:z:t)\mapsto(i x:y:z:t).
\end{equation}
Then $\widetilde{H}$ acts on $Y$ and
 $h^* \omega_Y=i \omega_Y$.
Again using GAP, we can show the following (see Appendix):

\begin{lem} \label{lem:f384}
Suppose that $\MF{G}\in\Gsymp$ is a subgroup of
 $F_{384}$ and $c(\MF{G})=19$.
Then there exists a unique subgroup $K$ of $H$
 such that $[K]=\MF{G}$
 up to conjugacy in $\widetilde{H}$.
\end{lem}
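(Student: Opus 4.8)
The plan is to argue exactly as in the proof of Lemma~\ref{lem:a44}, reducing the statement to a finite computation with GAP \cite{GAP} whose details go into the Appendix. Existence of a subgroup $K\subset H$ with $[K]=\MF{G}$ is automatic: since $[H]=F_{384}$ and $\MF{G}$ is (abstractly) a subgroup of $F_{384}$, any monomorphism $\MF{G}\hookrightarrow H$ yields such a $K$, and every subgroup of $H$ lies in $\Gsymp$ because $H$ acts faithfully and symplectically on $Y$. So the real content is the \emph{uniqueness} of $K$ up to conjugacy in $\widetilde{H}$.

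First I would pin down the list of abstract groups to treat: combining the values $c(\MF{G}_n)$ from Table~\ref{subsect:qands} with the inclusion data in Table~\ref{subsect:trees} gives all $\MF{G}\in\Gsymp$ with $\MF{G}\subset F_{384}$ and $c(\MF{G})=19$. Next I would realize $H$ and $\widetilde{H}$ as explicit finite matrix (equivalently, permutation) groups: $H$ is generated by the three displayed projective transformations of $Y$, and $\widetilde{H}=\langle H,h\rangle$ with $h$ normalizing $H$. Since $H$ is precisely the kernel of the character of $\widetilde{H}$ recording the action on $\omega_Y$ and $h^*\omega_Y=i\,\omega_Y$ has order $4$, one gets $H\triangleleft\widetilde{H}$ with $\widetilde{H}/H\cong C_4$, hence $|H|=384$ and $|\widetilde{H}|=1536$; GAP then verifies $[H]=F_{384}$ and constructs the conjugation action of $\widetilde{H}$ on the subgroup lattice of $H$.

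Then, for each $\MF{G}$ on the list, I would have GAP enumerate the $H$-conjugacy classes of subgroups isomorphic to $\MF{G}$ and fuse them under the $\widetilde{H}$-action; the assertion to be confirmed is that a single $\widetilde{H}$-orbit results in every case, which is exactly the lemma.

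The step I expect to be the main obstacle is not conceptual but combinatorial. Because $F_{384}$ is fairly large, for several isomorphism types $\MF{G}$ the subgroups $K$ with $[K]=\MF{G}$ do split into more than one conjugacy class \emph{inside} $H$, and it is precisely the extra ``non-symplectic'' twist by $h\in\widetilde{H}\setminus H$ that has to be shown to merge these classes --- the same mechanism by which the enlargement $\widetilde{G}\supsetneq G$ does the job in Lemma~\ref{lem:a44}. Verifying this fusion for all the relevant $\MF{G}$ is the heart of the computation.
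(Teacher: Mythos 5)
Your proposal matches the paper's proof: the paper likewise reduces the lemma to a finite GAP computation, realizing $H$ and $\widetilde{H}$ as explicit permutation/projective groups, reading off the relevant isomorphism types $\MF{G}$ from Tables \ref{subsect:qands} and \ref{subsect:trees}, and checking with \texttt{IsomorphicSubgroups} that the subgroups of $H$ of each type form a single $\widetilde{H}$-conjugacy class (the Appendix carries this out for Lemma \ref{lem:a44} and states that Lemma \ref{lem:f384} is checked in the same way). Your reformulation as ``enumerate $H$-classes, then fuse under $\widetilde{H}$'' is equivalent since $H\triangleleft\widetilde{H}$, so this is essentially the same argument.
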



\begin{rem}
We can show that the projective automorphism groups
 of $X$ and $Y$ are $\widetilde{G}$ and $\widetilde{H}$,
 respectively (cf.\ \cite{hashi}).
However, since $X$ and $Y$ have Picard number $20$,
 the automorphism groups of $X$ and $Y$ are
 infinite groups by \cite{shiodainose}.
\end{rem}

By considering induced actions on
 $H^2(X,\Z)$ and $H^2(Y,\Z)$,
 which are isomorphic to $\Lambda$,
 we have the following:

\begin{lem} \label{lem:gh}
Consider $G$ (resp.\ $H$)
 as a subgroup of $\OP{O}(\Lambda)$.
Suppose that $\MF{G}$ is a subgroup of
 $\MF{A}_{4,4}$ (resp.\ $F_{384}$)
 such that $c(\MF{G})=19$.
Then there exists a unique subgroup $K$
 of $G$ (resp. $H$) up to conjugacy
 in $\OP{O}(\Lambda)$
 such that $[K]=\MF{G}$.
\end{lem}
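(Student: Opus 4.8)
The plan is to obtain Lemma~\ref{lem:gh} from Lemmas~\ref{lem:a44} and~\ref{lem:f384} by transporting their conjugacy statements, which take place in the projective automorphism groups $\widetilde{G}\subset\OP{PGL}(6,\C)$ and $\widetilde{H}\subset\OP{PGL}(4,\C)$, into $\OP{O}(\Lambda)$. The key point is that $\widetilde{G}$ acts on $X$ and $\widetilde{H}$ acts on $Y$, hence on $H^2(X,\Z)$ and $H^2(Y,\Z)$, both of which are isometric to $\Lambda$.

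I would treat the case of $G$; the case of $H$ is identical, with $Y$, $\widetilde{H}$, and Lemma~\ref{lem:f384} replacing $X$, $\widetilde{G}$, and Lemma~\ref{lem:a44}. First I would fix an isometry $\alpha\colon H^2(X,\Z)\CONG\Lambda$ and let $\rho\colon\widetilde{G}\to\OP{O}(\Lambda)$ be the homomorphism induced by the action of $\widetilde{G}$ on $H^2(X,\Z)$ together with conjugation by $\alpha$; by construction $\rho(G)$ is the copy of $G$ inside $\OP{O}(\Lambda)$ appearing in the statement. Since $G$ acts on $X$ faithfully and symplectically, the global Torelli theorem shows that $\rho|_G$ is injective, so that subgroups of $\rho(G)$ correspond bijectively to subgroups of $G$, and this correspondence is compatible with conjugation by elements of $\widetilde{G}$.

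With this setup both claims are short. For existence, Lemma~\ref{lem:a44} supplies a subgroup $K\subset G$ with $[K]=\MF{G}$, and then $\rho(K)\subset\rho(G)$ is a subgroup of $\OP{O}(\Lambda)$ with $[\rho(K)]\cong\MF{G}$. For uniqueness, if $K_1,K_2\subset G$ satisfy $[K_i]=\MF{G}$, then Lemma~\ref{lem:a44} produces $\gamma\in\widetilde{G}$ with $\gamma K_1\gamma^{-1}=K_2$; applying the homomorphism $\rho$ gives $\rho(\gamma)\,\rho(K_1)\,\rho(\gamma)^{-1}=\rho(K_2)$ with $\rho(\gamma)\in\OP{O}(\Lambda)$, so $\rho(K_1)$ and $\rho(K_2)$ are conjugate in $\OP{O}(\Lambda)$.

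The only point requiring a little care --- and the closest thing to an obstacle here --- is the bookkeeping ensuring that the correspondence between subgroups of $G\subset\OP{O}(\Lambda)$ and abstract subgroups of $G$ is faithful enough to carry conjugacy along: this is exactly the injectivity of $\rho|_G$, which follows from the global Torelli theorem. Note that we do not need $\rho$ to be injective on all of $\widetilde{G}$ (indeed the action of $\widetilde{G}$ on $X$ is not symplectic), only that $\rho$ is a group homomorphism into $\OP{O}(\Lambda)$, so that conjugacy in $\widetilde{G}$ pushes forward to conjugacy in $\OP{O}(\Lambda)$; and the choice of $\alpha$ affects $\rho(G)$ only up to conjugation in $\OP{O}(\Lambda)$, which is harmless.
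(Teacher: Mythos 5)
Your proposal is correct and is essentially the argument the paper intends: the paper derives Lemma~\ref{lem:gh} from Lemmas~\ref{lem:a44} and~\ref{lem:f384} in one line (``by considering induced actions on $H^2(X,\Z)$ and $H^2(Y,\Z)$''), and you have simply made explicit the homomorphism $\rho\colon\widetilde{G}\to\OP{O}(\Lambda)$ that pushes conjugacy in $\widetilde{G}$ forward to conjugacy in $\OP{O}(\Lambda)$, together with the faithfulness of $\rho|_G$ from the global Torelli theorem.
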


We use the following lemma in the proof of
 Theorem \ref{thm:coinvariantuniq}.

\begin{lem} \label{lem:sect4}
There exists an element $G_{43}\in\MC{L}$
 which satisfies the following:
\begin{enumerate}
\renewcommand{\labelenumi}{(\arabic{enumi})}
\item
$[G_{43}]=\MF{G}_{43}$;
\item
There exists a unique subgroup $G_{58}$ of
 $\OP{O}_0(\Lambda_{G_{43}})$
 such that $[G_{58}]=\MF{G}_{58}$
 up to conjugacy
 in $\OP{O}(\Lambda_{G_{43}})$.
\end{enumerate}
\end{lem}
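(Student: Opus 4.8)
The plan is to exhibit $G_{43}$ explicitly by Kond\=o's embedding and then reduce condition (2) to a finite group-theoretic computation inside the closure of $G_{43}$. For the construction: since $\MF{G}_{43}\in\Gsymp$, the classification of $\MC{N}$ carried out in Section \ref{sect:embedding} contains a pair $(G',N)\in\MC{N}$ with $[G']=\MF{G}_{43}$, and by Lemma \ref{lem:kondoembed} there is a corresponding $G_{43}\in\MC{L}$ with $(G_{43},\Lambda_{G_{43}})\cong(G',N_{G'})$; this already gives condition (1). Fixing one such pair explicitly — a Niemeier lattice $N$ together with the explicit permutation group $G'\subset\OP{O}(N,\Delta^+)$ — makes the Gramian matrix of $\Lambda_{G_{43}}\cong N_{G'}$ and the $G'$-action on it completely explicit.

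Next I would identify the closure $\OP{O}_0(\Lambda_{G_{43}})=\OP{Clos}(G_{43})$. Since $c(G_{43})=c(\OP{Clos}(G_{43}))$ by Proposition \ref{prop:trace}, we have $\Lambda_{\OP{Clos}(G_{43})}=\Lambda_{G_{43}}$, and by Corollary \ref{cor:closure} the abstract group $\MF{G}':=[\OP{O}_0(\Lambda_{G_{43}})]$ is the unique maximal element of the set in (\ref{closset}) attached to $(\MF{G}_{43},q_{43})$, which is read off from Table \ref{subsect:trees}. Because $\MF{G}_{43}\subsetneq\MF{G}_{58}$ with $q_{43}\cong q_{58}$ and $c(\MF{G}_{43})=c(\MF{G}_{58})$ (Table \ref{subsect:trees}), the group $\MF{G}_{58}$ lies in that set, and in fact $\MF{G}_{58}\subsetneq\MF{G}'$ (note $(\MF{G}_{58},q_{58})\notin\MC{Q}_\clos$); in particular a subgroup $G_{58}\subset\OP{O}_0(\Lambda_{G_{43}})$ with $[G_{58}]=\MF{G}_{58}$ exists. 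Applying Theorem \ref{thm:coinvariantuniq} to $(\MF{G}',q_{43})$ — legitimate since $\MF{G}'\neq\MF{G}_{58}$ — the $G$-lattice $(\OP{O}_0(\Lambda_{G_{43}}),\Lambda_{G_{43}})$ is the unique element of $\MC{S}$ with these invariants, hence is isomorphic to $(\Gamma,N_\Gamma)$ for the element $(\Gamma,N)\in\MC{N}'$ lying over $(\MF{G}',q_{43})$, which realizes $\OP{O}_0(\Lambda_{G_{43}})$ concretely inside $\OP{O}(\Lambda_{G_{43}})$.

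It remains to prove the uniqueness in condition (2). Any subgroup of $\OP{O}(\Lambda_{G_{43}})$ isomorphic to $\MF{G}_{58}$ whose coinvariant lattice is all of $\Lambda_{G_{43}}$ — in particular any $G_{58}$ as in the statement — lies in $\OP{O}_0(\Lambda_{G_{43}})$, exactly as for any element of $\MC{S}$ (Lemma \ref{lem:unimodaction}(3)); and $\OP{O}_0(\Lambda_{G_{43}})$ is a finite normal subgroup of the finite group $\OP{O}(\Lambda_{G_{43}})$. So it suffices to show that the subgroups of $\OP{O}_0(\Lambda_{G_{43}})$ isomorphic to $\MF{G}_{58}$ form a single orbit under conjugation by $\OP{O}(\Lambda_{G_{43}})$. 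Using GAP I would first enumerate, in the abstract group $\MF{G}'$, the conjugacy classes of subgroups isomorphic to $\MF{G}_{58}$; if there is exactly one, the claim follows a fortiori. Otherwise I would compute the image of $\OP{O}(\Lambda_{G_{43}})$ in $\OP{Aut}(\OP{O}_0(\Lambda_{G_{43}}))$ under conjugation — a finite computation, with a generating set of $\OP{O}(\Lambda_{G_{43}})$ modulo $\OP{O}_0(\Lambda_{G_{43}})$ available from $\overline{\OP{O}(\Lambda_{G_{43}})}\subset\OP{O}(q(\Lambda_{G_{43}}))$ together with the Niemeier description — and check that these extra automorphisms fuse the $\MF{G}'$-classes into one.

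The hard part will be this last step: one must be careful that the conjugating group is really the full $\OP{O}(\Lambda_{G_{43}})$ and not merely $\OP{Clos}(G_{43})$ — the lemma is phrased with $\OP{O}(\Lambda_{G_{43}})$ precisely because there may be more than one $\OP{Clos}(G_{43})$-class — and so that one produces enough automorphisms of $\OP{O}_0(\Lambda_{G_{43}})$ coming from outside $\OP{Clos}(G_{43})$. Since $\OP{Aut}(\OP{O}_0(\Lambda_{G_{43}}))$ is a fixed finite group and $|\MF{G}'|\le 960$, this remains a finite verification. Everything else — constructing $G_{43}$ and pinning down its closure — is immediate from Section \ref{sect:embedding} and Corollary \ref{cor:closure}.
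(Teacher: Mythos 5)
Your construction of $G_{43}$ and your identification of $\OP{O}_0(\Lambda_{G_{43}})$ with the group $\MF{G}'=F_{384}$ realized on $N_G$ for the unique $(G,N)\in\MC{N}'$ over $(\MF{G}_{80},q_{80})$ are fine, and you are right to note that invoking Theorem \ref{thm:coinvariantuniq} for $\MF{G}'=\MF{G}_{80}\neq\MF{G}_{58}$ avoids circularity with the use of Lemma \ref{lem:sect4} in the $\MF{G}_{58}$ case of that theorem. The gap is in the decisive uniqueness step. Your first branch (``a single conjugacy class of $\MF{G}_{58}$-subgroups already in $\MF{G}'$'') does not occur: Table \ref{subsect:orbits} records two non-isomorphic elements of $\MC{N}$ over $(\MF{G}_{58},q_{58})$ with identical root type and identical $N^G$, which is exactly why $\MF{G}_{58}$ is excluded from $\MC{Q}^\circ$ in Proposition \ref{prop:listuniq}; there are two classes of such subgroups inside $F_{384}$, and they must be fused by an element of $\OP{O}(\Lambda_{G_{43}})$ lying outside $\OP{O}_0(\Lambda_{G_{43}})$. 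Your fallback for producing such an element is not workable as stated: knowing $\overline{\OP{O}(\Lambda_{G_{43}})}$ as a subgroup of $\OP{O}(q(\Lambda_{G_{43}}))$ gives the quotient $\OP{O}(\Lambda_{G_{43}})/\OP{O}_0(\Lambda_{G_{43}})$ only abstractly and does not hand you the conjugation action on $\OP{O}_0(\Lambda_{G_{43}})$ (and determining $\overline{\OP{O}(\Lambda_{G_{43}})}$ would itself require knowing $\OP{O}(\Lambda_{G_{43}})$). Computing $\OP{O}(\Lambda_{G_{43}})$ outright means computing the automorphism group of a negative definite lattice of rank $19$, far beyond the naive enumeration the paper uses for the rank $\leq 7$ lattices $N^G$; you would need to say how this is to be done.

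The paper sidesteps this by producing the required extra automorphism geometrically: it takes the Fermat quartic $Y$ with its symplectic $F_{384}$-action $H$ and the order-four non-symplectic automorphism $h$, observes that $\widetilde{H}=\langle H,h\rangle$ preserves $\Lambda^H$ and hence acts on $\Lambda_H=\Lambda_{G_{43}}$, and then verifies by a GAP computation in the order-$1536$ group $\widetilde{H}$ (Lemma \ref{lem:f384}) that all $\MF{G}_{58}$-subgroups of $H$ are $\widetilde{H}$-conjugate. If you want to stay within your lattice-theoretic framework, the natural concrete source of an outside element is Lemma \ref{lem:semidirect}: for $n=80$ one has $[N_{M_{24}}(G):G]=2$, so $\OP{O}(N,N^G)=C_2^5\rtimes N_{M_{24}}(G)$ restricts to $N_G\cong\Lambda_{G_{43}}$ and supplies an explicit automorphism normalizing $\OP{O}_0(\Lambda_{G_{43}})$ but not contained in it; one would then check with GAP that conjugation by it fuses the two classes. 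Without some such explicit input, the fusion claim is unproved.
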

\begin{proof}
We fix an identification $H^2(Y,\Z)=\Lambda$.
By Table \ref{subsect:trees},
 there exists a subgroup $G_{43}$ of $H$
 such that $[G_{43}]=\MF{G}_{43}$.
Since $c(\MF{G}_{43})=c(H)=19$,
 we have $\Lambda_{G_{43}}=\Lambda_H$.
Since $[H]=F_{384}$ is
 a maximal element in $\Gsymp$,
 we have $[\OP{O}_0(\Lambda_H)]=[H]$.
Since $H \triangleleft \widetilde{H}$,
 we have $\widetilde{H}\subset
 \OP{O}(\Lambda,\Lambda^H)$.
By Lemma \ref{lem:f384} and Table \ref{subsect:trees},
 the condition (2) is satisfied.
\end{proof}

We have the following by the above lemmas.

\begin{prop} \label{prop:case19}
Set $E=\{\MF{S}_5,L_2(7),\MF{A}_6\}\subset\Gsymp$.
The natural map
\begin{equation}
 \{ G\in\MC{L} \bigm| c(G)=19,[G]\not\in E \}
 /\text{\rm conj}
 \rightarrow
 \{ (G,S)\in\MC{S} \bigm|
 c(G)=19,[G]\not\in E \}/\text{\rm isom}
\end{equation}
 is bijective.
\end{prop}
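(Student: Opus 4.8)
The plan is to run the argument of Proposition \ref{prop:case18}, but since now $\rank\Lambda^G=3$ is too small to feed into Theorem \ref{thm:surj}, I switch to the coinvariant side, where Theorem \ref{thm:surjcoinv} gives $\overline{\OP{O}(\Lambda_G)}=\OP{O}(q(\Lambda_G))$ as soon as $c(G)=19$. Surjectivity of the map is immediate from the definition of $\MC{S}$, so the content is injectivity. Let $G_1,G_2\in\MC{L}$ with $c(G_i)=19$, $[G_i]\notin E$, and $(G_1,\Lambda_{G_1})\cong(G_2,\Lambda_{G_2})\cong(G,S)$; the goal is to produce an isometry of $\Lambda$ conjugating $G_1$ onto $G_2$.

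The first step is to align the invariant and coinvariant sublattices. By Proposition \ref{prop:uniqueinvariant} (here the hypothesis $[G_i]\notin E$ is used) $\Lambda^{G_1}\cong\Lambda^{G_2}$, and of course $\Lambda_{G_1}\cong\Lambda_{G_2}$. Fix lattice isometries $\alpha_0\colon\Lambda_{G_1}\to\Lambda_{G_2}$ and $\beta\colon\Lambda^{G_1}\to\Lambda^{G_2}$. By Lemma \ref{lem:unimodembed}, a pair $\alpha\oplus\beta$ glues to an isometry of $\Lambda$ exactly when the induced maps on discriminant forms match up with the gluing isomorphisms $\gamma_i$ of the primitive embeddings $\Lambda_{G_i}\hookrightarrow\Lambda$; since $\overline{\OP{O}(\Lambda_{G_1})}=\OP{O}(q(\Lambda_{G_1}))$ by Theorem \ref{thm:surjcoinv}, I can adjust $\alpha_0$ within its $\OP{O}(\Lambda_{G_1})$-coset to an $\alpha$ for which this holds, obtaining $\varphi_0\in\OP{O}(\Lambda)$ with $\varphi_0(\Lambda_{G_1})=\Lambda_{G_2}$ and $\varphi_0(\Lambda^{G_1})=\Lambda^{G_2}$. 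Replacing $G_1$ by $\varphi_0G_1\varphi_0^{-1}$, I may assume $\Lambda_{G_1}=\Lambda_{G_2}=:S$ and $\Lambda^{G_1}=\Lambda^{G_2}=:M$ inside $\Lambda$. An isometry of $\Lambda$ fixing both $S$ and $M$ pointwise is the identity (it is trivial on $(S\oplus M)\otimes\Q=\Lambda\otimes\Q$), so by Lemma \ref{lem:unimodaction}(1) the group $\OP{O}_0(S)$ extends uniquely to $\{g\in\OP{O}(\Lambda)\mid g|_M=\mathrm{id},\ g|_S\in\OP{O}_0(S)\}$; consequently $\OP{Clos}(G_1)=\OP{Clos}(G_2)=:\Gamma$ as subgroups of $\OP{O}(\Lambda)$, with $G_1,G_2\subseteq\Gamma$ and $[G_1]=[G_2]$.

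It remains to conjugate $G_1$ onto $G_2$ inside $\OP{O}(\Lambda)$. If $[\Gamma]\neq\MF{A}_{4,4},F_{384}$, Lemma \ref{lem:maxconj} applies verbatim (its hypotheses $[G_1]=[G_2]$, $\OP{Clos}(G_1)=\OP{Clos}(G_2)$, $c(G_i)=19$ are satisfied) and shows $G_1$ and $G_2$ are conjugate already in $\Gamma$. If $[\Gamma]=\MF{A}_{4,4}$ (resp.\ $F_{384}$), I first move $\Gamma$ onto the explicit Mukai model $G$ (resp.\ $H$) of Section \ref{sect:uniqueness}: that model lies in $\MC{L}_\clos$ with the same abstract type as $\Gamma$, and since this type is neither $Q_8$ nor $T_{24}$, Proposition \ref{prop:uniquemcq} and Corollary \ref{cor:qcloslambda} force its coinvariant lattice to be isomorphic to $\Lambda_\Gamma$; as both groups are $\OP{O}_0$ of their coinvariant lattices, the alignment step above applies to the pair and yields $\varphi_1\in\OP{O}(\Lambda)$ with $\varphi_1\Gamma\varphi_1^{-1}=G$ (resp.\ $H$). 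Then $\varphi_1G_i\varphi_1^{-1}$ are subgroups of $G$ (resp.\ $H$) of type $\MF{G}:=[G_i]$ with $c(\MF{G})=19$, so Lemma \ref{lem:gh} shows each of them is conjugate in $\OP{O}(\Lambda)$ to the same subgroup $K$; hence $G_1$ and $G_2$ are conjugate in $\OP{O}(\Lambda)$, which completes the proof.

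I expect the real difficulty to lie entirely in the cases $[\Gamma]=\MF{A}_{4,4},F_{384}$. For these two maximal groups $\OP{Clos}(G_i)$ collapses onto $[G_i]$ itself, and non-conjugate subgroups of it can become conjugate in $\OP{O}(\Lambda)$ only through the ``anti-symplectic'' isometries (acting by a root of unity on the period), so Lemma \ref{lem:maxconj} genuinely cannot be extended to them and one must fall back on the projective models $X\subset\P^5$ and $Y\subset\P^3$ together with the GAP verifications underlying Lemmas \ref{lem:a44}, \ref{lem:f384} and \ref{lem:gh}. Everything else is routine bookkeeping with Nikulin's gluing formalism and the surjectivity input of Theorem \ref{thm:surjcoinv}.
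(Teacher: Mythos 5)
Your proposal is correct and follows essentially the same route as the paper: surjectivity from the definition of $\MC{S}$, then alignment of the coinvariant lattices via Proposition \ref{prop:uniqueinvariant}, Theorem \ref{thm:surjcoinv} and Lemma \ref{lem:unimodembed} (switching to the coinvariant side precisely because $\rank\Lambda^G=3$ rules out Theorem \ref{thm:surj}), and finally the case split into Lemma \ref{lem:maxconj} versus the Mukai models with Lemma \ref{lem:gh} for $\MF{A}_{4,4}$ and $F_{384}$. Your write-up is somewhat more explicit about the gluing and about $\OP{Clos}(G_1)=\OP{Clos}(G_2)$ holding as subgroups of $\OP{O}(\Lambda)$, but the logical content matches the paper's proof.
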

\begin{proof}
The surjectivity follows from the definition
 of $\MC{S}$ (see (\ref{mcs})).
Let $(G,S)\in\MC{S}$ such that
 $c(G)=19$ and $[G]\not\in E$.
Suppose that $G_i\in\MC{L}$ and
 $(G_i,\Lambda_{G_i}) \cong (G,S)$ for $i=1,2$.
To prove the injectivity,
 it is sufficient to show that
 $G_1$ and $G_2$ are conjugate in $\OP{O}(\Lambda)$.
By Proposition \ref{prop:uniqueinvariant},
 $\Lambda^{G_1} \cong \Lambda^{G_2}$.
By Theorem \ref{thm:surjcoinv},
 $\overline{\OP{O}(\Lambda_{G_1})}
 =\OP{O}(q(\Lambda_{G_1}))$.
Therefore,
 a primitive embedding
 $\Lambda_{G_1}\rightarrow\Lambda$ such that
 $(\Lambda_{G_1})^\bot_\Lambda\cong\Lambda^{G_1}$
 is unique
 up to isomorphism
 by Lemma \ref{lem:unimodembed}.
Hence we may assume that $\Lambda_{G_1}=\Lambda_{G_2}$
 by replacing $G_2$ by $\varphi G_2 \varphi^{-1}$
 for some $\varphi\in\OP{O}(\Lambda)$ if necessary.
Thus $[\OP{Clos}(G_1)]=[\OP{Clos}(G_2)]$.

(1)
The case $[\OP{Clos}(G_i)]\neq \MF{A}_{4,4},F_{384}$.
By Lemma \ref{lem:maxconj},
 $G_1$ and $G_2$ are conjugate in
 $\OP{Clos}(G_i)$ ($\subset \OP{O}(\Lambda)$).

(2)
The case $[\OP{Clos}(G_i)]= \MF{A}_{4,4}$
 (resp.\ $F_{384}$).
By the above argument, we have
 $\Lambda_{G_i}=\Lambda_G$ (resp.\ $\Lambda_H$)
 for some identification
 $\Lambda=H^2(X,\Z)$ (resp.\ $H^2(Y,Z)$).
Hence $\OP{Clos}(G_i)=G$ (resp.\ $H$).
By Lemma \ref{lem:gh},
 $G_1$ and $G_2$ are conjugate in
 $\OP{O}(\Lambda)$.
\end{proof}

\begin{prop} \label{prop:threeexceptions}
For $\MF{G}=\MF{S}_5,L_2(7),\MF{A}_6$,
 there exist exactly two elements $G_1,G_2$ in $\MC{L}$
 up to conjugacy in $\OP{O}(\Lambda)$
 such that $[G_i]=\MF{G}$.
We have $\Lambda_{G_1}\cong\Lambda_{G_2}$,
 $q(\Lambda^{G_1})\cong q(\Lambda^{G_2})$
 and $\Lambda^{G_1}\not\cong\Lambda^{G_2}$.
\end{prop}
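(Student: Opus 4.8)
The plan is to run essentially the argument of Proposition~\ref{prop:case19}, with the one modification that for $\MF{G}\in E$ the invariant lattice $\Lambda^G$ is no longer unique: by Proposition~\ref{prop:uniqueinvariant} it takes exactly two isomorphism classes of values, say $K_1$ and $K_2$. The whole proof then amounts to showing that the conjugacy class of $G\in\MC{L}$ with $[G]=\MF{G}$ is determined by which of $K_1,K_2$ is isomorphic to $\Lambda^G$.

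First I would record the numerical input. For each $\MF{G}\in E$ one reads off from Table~\ref{subsect:qands} (or computes via Proposition~\ref{prop:trace}) that $c(\MF{G})=19$, so $\rank\Lambda_G=19$ and $\rank\Lambda^G=3$; in particular $\Lambda^G$ is positive definite of rank $3$. Since $\MF{G}\neq Q_8,T_{24}$, Proposition~\ref{prop:uniquemcq} gives a single $\sim$-class $(\MF{G},q)$ in $\MC{Q}$, so by Theorem~\ref{thm:coinvariantuniq} the $G$-lattice $(G,\Lambda_G)$ is determined up to isomorphism by $\MF{G}$; hence any two $G\in\MC{L}$ with $[G]=\MF{G}$ have isomorphic coinvariant lattices and the same discriminant form $q(\Lambda^G)\cong -q$. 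Finally, since $c(G)=19$, Theorem~\ref{thm:surjcoinv} yields $\overline{\OP{O}(\Lambda_G)}=\OP{O}(q(\Lambda_G))$, which via Lemma~\ref{lem:unimodembed} makes the primitive embedding $\Lambda_G\hookrightarrow\Lambda$ with a prescribed orthogonal complement unique up to isomorphism and makes the restriction map $\OP{O}(\Lambda,\Lambda_G)\to\OP{O}(\Lambda_G)$ surjective.

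For existence, I take the two elements $G_1,G_2\in\MC{L}$ constructed in the proof of Proposition~\ref{prop:uniqueinvariant}: for each $i$ one embeds $\Lambda_G$ primitively in $\Lambda$ with orthogonal complement $\cong K_i$ and extends the $G$-action by Lemma~\ref{lem:unimodaction}(1), obtaining $[G_i]=\MF{G}$, $\Lambda_{G_i}\cong\Lambda_G$ and $\Lambda^{G_i}\cong K_i$. A conjugation in $\OP{O}(\Lambda)$ induces an isometry of invariant lattices, so $K_1\not\cong K_2$ forces $G_1$ and $G_2$ to be non-conjugate, and the remaining assertions $\Lambda_{G_1}\cong\Lambda_{G_2}$, $q(\Lambda^{G_1})\cong q(\Lambda^{G_2})\cong -q$ and $\Lambda^{G_1}\not\cong\Lambda^{G_2}$ are then immediate.

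For the bound, let $G\in\MC{L}$ with $[G]=\MF{G}$. By Proposition~\ref{prop:uniqueinvariant}, $\Lambda^G\cong K_i\cong\Lambda^{G_i}$ for $i=1$ or $2$, and by Theorem~\ref{thm:coinvariantuniq} we have $(G,\Lambda_G)\cong(G_i,\Lambda_{G_i})$ as lattices with group action. Using the uniqueness of the primitive embedding $\Lambda_G\hookrightarrow\Lambda$ with complement $\cong\Lambda^G$, I conjugate $G$ in $\OP{O}(\Lambda)$ so that $\Lambda_G=\Lambda_{G_i}$, hence $\Lambda^G=\Lambda^{G_i}$, as sublattices; then, using the surjectivity of $\OP{O}(\Lambda,\Lambda_{G_i})\to\OP{O}(\Lambda_{G_i})$, I conjugate further so that $G$ and $G_i$ restrict to the same subgroup of $\OP{O}(\Lambda_{G_i})$. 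Since both $G$ and $G_i$ act trivially on $\Lambda^{G_i}$ and an isometry of $\Lambda$ is determined by its restrictions to the finite-index sublattice $\Lambda_{G_i}\oplus\Lambda^{G_i}$, this forces $G=G_i$. Hence $G_1,G_2$ are, up to conjugacy in $\OP{O}(\Lambda)$, the only elements of $\MC{L}$ with $[G_i]=\MF{G}$. The only point beyond the machinery already used for Proposition~\ref{prop:case19} is the input $\Lambda^{G_1}\not\cong\Lambda^{G_2}$, i.e.\ that the two positive definite ternary lattices with the given discriminant form are genuinely non-isomorphic; this is exactly the content of Proposition~\ref{prop:uniqueinvariant} (which rests on the classification of definite ternary forms \cite{schiemann}), so no new obstacle arises here.
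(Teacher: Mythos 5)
Your outline is right up to one step, but that step is a genuine gap, and it is exactly the point where the $c(G)=19$ cases differ from Proposition \ref{prop:case18}. From $\overline{\OP{O}(\Lambda_G)}=\OP{O}(q(\Lambda_G))$ (Theorem \ref{thm:surjcoinv}) you correctly deduce, via Lemma \ref{lem:unimodembed}, that the primitive embedding $\Lambda_G\hookrightarrow\Lambda$ with prescribed orthogonal complement is unique up to isomorphism; but you also claim the same hypothesis makes the restriction map $\OP{O}(\Lambda,\Lambda_G)\to\OP{O}(\Lambda_G)$ surjective, and then you use that surjectivity essentially in the last paragraph to lift the isometry of $\Lambda_{G_i}$ conjugating $G|_{\Lambda_{G_i}}$ to $G_i|_{\Lambda_{G_i}}$. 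By the last part of Lemma \ref{lem:unimodembed}, $\varphi_1\in\OP{O}(\Lambda_G)$ extends to $\Lambda$ if and only if $\gamma\circ\overline{\varphi}_1\circ\gamma^{-1}$ is realized by some $\varphi_2\in\OP{O}(\Lambda^G)$; since $\overline{\OP{O}(\Lambda_G)}$ is all of $\OP{O}(q(\Lambda_G))$, surjectivity of the restriction map is therefore equivalent to $\overline{\OP{O}(\Lambda^G)}=\OP{O}(q(\Lambda^G))$ --- a condition on the \emph{complement}. That is what Theorem \ref{thm:surj} supplies when $\rank\Lambda^G\geq 4$ and what is unavailable here: $\Lambda^G$ is a positive definite ternary lattice, Theorem \ref{thm:surj} explicitly excludes this case, and one cannot expect the orthogonal group of a definite ternary lattice (a small finite group) to surject onto $\OP{O}(q(\Lambda^G))$. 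This is precisely why the paper's treatment of $c(G)=19$ never asserts surjectivity of this restriction map, and instead resorts to other devices (Lemmas \ref{lem:maxconj}--\ref{lem:gh} for the non-exceptional groups). So your final conjugation step is not justified as written.

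The repair, which is the paper's actual argument, uses the maximality of $\MF{G}$ in $\Gsymp$ (Theorem \ref{thm:mukai}): since $G\subset\OP{O}_0(\Lambda_G)=\OP{Clos}(G)$ by Lemma \ref{lem:unimodaction}(3), $[\OP{Clos}(G)]\in\Gsymp$, and $\MF{G}$ is maximal, one gets $G=\OP{O}_0(\Lambda_G)$. Hence the group is completely determined by the sublattice $\Lambda_G\subset\Lambda$: once you have conjugated $G$ so that $\Lambda_G=S_i$ (which needs only the uniqueness of the embedding, i.e.\ Theorem \ref{thm:surjcoinv} plus Lemma \ref{lem:unimodembed}), you conclude $G=\OP{O}_0(S_i)=G_i$ with no lifting of isometries at all. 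The rest of your argument --- reducing to two classes of sublattices via Proposition \ref{prop:uniqueinvariant}, and the non-conjugacy of $G_1,G_2$ from $\Lambda^{G_1}\not\cong\Lambda^{G_2}$ --- is sound and matches the paper.
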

\begin{proof}
By Proposition \ref{prop:uniquemcq} and
 Theorem \ref{thm:coinvariantuniq},
 there exists a unique element $(G_0,S)\in\MC{S}$
 up to isomorphism such that $[G_0]=\MF{G}$.
Since $\MF{G}$ is a maximal element in $\Gsymp$,
 we have $\OP{O}_0(S)=G_0$.
By Theorem \ref{thm:surjcoinv},
 $\overline{\OP{O}(S)}=\OP{O}(q(S))$.
By Lemma \ref{lem:unimodembed} and
 Proposition \ref{prop:uniqueinvariant},
 there exist exactly two primitive
 sublattices $S_1,S_2$ of $\Lambda$
 such that $S_i\cong S$
 up to $\OP{O}(\Lambda)$.
The action of $G_i:=\OP{O}_0(S_i)$ on $S_i$ is extended to
 that on $\Lambda$ such that $\Lambda_{G_i}=S_i$ ($i=1,2$).
Let $G\in\MC{L}$ such that $[G]=\MF{G}$.
Then $\Lambda_G\cong S$.
Hence we may assume that $\Lambda_G=S_i$
 ($i=1,2$)
 by replacing $G$ by $\varphi G \varphi^{-1}$
 for some $\varphi\in\OP{O}(\Lambda)$ if necessary.
Then we have $G=G_i$.
This implies the assertion.
\end{proof}

\subsection{Proof of Main Theorem}

\begin{thm} \label{thm:main}
Let $\MF{G}\in\Gsymp$.
\begin{enumerate}
\renewcommand{\labelenumi}{(\arabic{enumi})}
\item
If $\MF{G}=Q_8,T_{24}$,
 there exist exactly two elements $G_1,G_2\in\MC{L}$
 such that $[G_i]=\MF{G}$ up to conjugacy in $\OP{O}(\Lambda)$.
We have
 the following table,
 by changing numbering of $G_1,G_2$
 if necessary
 (see Corollary \ref{cor:closure}).
\begin{equation*}
\begin{array}{c||c|c|c||c|c|c}
 \MF{G} & n & [\OP{Clos}(G_1)] & \disc(\Lambda_{G_1})
        & n & [\OP{Clos}(G_2)] & \disc(\Lambda_{G_2}) \\
\hline
Q_8    & 12 & Q_8 & -512 & 40 & Q_8*Q_8 & -1024 \\
\hline
T_{24} & 77 & T_{192} & -192 & 54 & T_{48} & -384
\end{array}
\end{equation*}
Here $n$ is determined by
 $([G_i],q(\Lambda_{G_i}))\sim (\MF{G}_n,q_n)$.
\item
If $\MF{G}=\MF{S}_5,L_2(7),\MF{A}_6$,
 there exist exactly two elements $G_1,G_2\in\MC{L}$
 such that $[G_i]=\MF{G}$ up to conjugacy in $\OP{O}(\Lambda)$.
We have $\Lambda_{G_1}\cong\Lambda_{G_2}$,
 $q(\Lambda^{G_1})\cong q(\Lambda^{G_2})$
 and $\Lambda^{G_1}\not\cong\Lambda^{G_2}$.
\item
Otherwise,
 there exists a unique $G\in\MC{L}$
 such that $[G]=\MF{G}$
 up to conjugacy in $\OP{O}(\Lambda)$.
\end{enumerate}
\end{thm}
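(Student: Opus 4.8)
The plan is to prove the theorem by chaining together the classification results of the preceding sections. The basic observation is that for $G\in\MC{L}$ the rank $\rank\Lambda_G=c([G])$ depends only on the abstract type $[G]$ (Proposition \ref{prop:trace}), and $c(\MF{G})\le 19$ for every $\MF{G}\in\Gsymp$ because $\Lambda_G$ is negative definite inside $\Lambda$, which has signature $(3,19)$. Hence each $\MF{G}$ lies in exactly one of the two regimes $c(\MF{G})\le 18$ and $c(\MF{G})=19$, treated respectively by Propositions \ref{prop:case18} and \ref{prop:case19}.

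First I would dispose of every $\MF{G}\notin E:=\{\MF{S}_5,L_2(7),\MF{A}_6\}$. If $c(\MF{G})\le 18$, Proposition \ref{prop:case18} gives a bijection between $\{G\in\MC{L}\mid[G]=\MF{G}\}/\text{\rm conj}$ and $\{(G,S)\in\MC{S}\mid[G]=\MF{G}\}/\text{\rm isom}$; if $c(\MF{G})=19$, the same bijection is provided by Proposition \ref{prop:case19}, whose hypothesis $[G]\notin E$ is met. Theorem \ref{thm:coinvariantuniq} next matches $\{(G,S)\in\MC{S}\mid[G]=\MF{G}\}/\text{\rm isom}$ with $\{q\mid(\MF{G},q)\in\MC{Q}\}/\text{\rm isom}$, and Proposition \ref{prop:uniquemcq} says this set has a single element unless $\MF{G}\in\{Q_8,T_{24}\}$, where it has two. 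Composing these bijections yields part (3) and the enumeration in part (1).

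To complete part (1) there remains the explicit table. For $\MF{G}\in\{Q_8,T_{24}\}$, each of the two classes $G_i$ satisfies $([G_i],q(\Lambda_{G_i}))\sim(\MF{G}_{n_i},q_{n_i})$, where $n_i$ ranges over the two indices representing $\MF{G}$ in $\MC{Q}/\sim$ (namely $12,13$ for $Q_8$ and $37,38$ for $T_{24}$). Corollary \ref{cor:closure}(1) then reads off $[\OP{Clos}(G_i)]=\MF{G}_{m_i}$, with $(n_i,m_i)\in\{(12,12),(13,40)\}$ for $Q_8$ and $(n_i,m_i)\in\{(37,77),(38,54)\}$ for $T_{24}$; after relabelling $G_1,G_2$ this is the list of closures in the statement, and since $\Lambda_{G_i}=\Lambda_{\OP{Clos}(G_i)}\cong S(\MF{G}_{m_i},q_{m_i})$ the discriminants $\disc(\Lambda_{G_i})$ ($-512,-1024$ for $Q_8$; $-192,-384$ for $T_{24}$) follow from the explicit lattice data of Table \ref{subsect:invariant}. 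Part (2) concerns $\MF{G}\in E$, for which $\rank\Lambda^G=3$ so that $c(\MF{G})=19$ and Proposition \ref{prop:case19} is inapplicable; this case is exactly Proposition \ref{prop:threeexceptions}.

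The hard work is already internal to the cited propositions, not in this final assembly; the one point where that work genuinely bites is the regime $c(\MF{G})=19$. There the short argument available when $c(\MF{G})\le 18$ --- surjectivity of the restriction $\OP{O}(\Lambda,\Lambda_G)\to\OP{O}(\Lambda_G)$, which follows from $\overline{\OP{O}(\Lambda^G)}=\OP{O}(q(\Lambda^G))$ (Theorem \ref{thm:surj}) and makes conjugacy in $\OP{O}(\Lambda_G)$ lift automatically --- is no longer at hand, so Proposition \ref{prop:case19} instead combines $\overline{\OP{O}(\Lambda_G)}=\OP{O}(q(\Lambda_G))$ (Theorem \ref{thm:surjcoinv}, valid whenever $c(G)=19$) with a direct study of the conjugacy of the finite groups, which for the maximal types $\MF{A}_{4,4}$ and $F_{384}$ rests on Mukai's explicit $K3$ models (Lemmas \ref{lem:gh} and \ref{lem:maxconj}). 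What is left for the present theorem is the routine check that the three cases $\MF{G}\in\{Q_8,T_{24}\}$, $\MF{G}\in E$, and $\MF{G}\notin\{Q_8,T_{24}\}\cup E$ are exhaustive and line up with this dichotomy, together with reading the closures and discriminants off Corollary \ref{cor:closure} and Tables \ref{subsect:qands}, \ref{subsect:trees} and \ref{subsect:invariant}. (The Main Theorem is then the content of part (3).)
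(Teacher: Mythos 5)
Your proposal is correct and follows essentially the same route as the paper: the paper's proof of Theorem \ref{thm:main} is precisely the assembly of Theorem \ref{thm:coinvariantuniq} with Propositions \ref{prop:case18} and \ref{prop:case19} (plus the counting of Proposition \ref{prop:uniquemcq} via Table \ref{subsect:qands}) for parts (1) and (3), and Proposition \ref{prop:threeexceptions} for part (2). Your write-up just makes explicit the case split by $c(\MF{G})$ and the table lookups (Corollary \ref{cor:closure} for the closures, $|q_n|$ and negative definiteness for the discriminants) that the paper leaves implicit.
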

\begin{proof}
By Theorem \ref{thm:coinvariantuniq},
 $(G,S)\in\MC{S}$ is determined uniquely
 by $[G]$ and $q(S)$ up to isomorphism.
The assertions (1) and (3) follow from
 Propositions \ref{prop:case18}, \ref{prop:case19}
 and Table \ref{subsect:qands}.
The asserion (2) is the same as
 Proposition \ref{prop:threeexceptions}.
\end{proof}

\section{Applications} \label{sect:applications}

Combining Xioa's result (Theorem \ref{thm:xiao}),
 the following theorem is a consequence of
 Theorem \ref{thm:main} and grobal Torelli theorem
 for $K3$ surfaces (cf.\ \cite{nikulin79fin}).

\begin{thm}
Let $G$ be a group such that $[G]\in\Gsymp$
 (see Notation \ref{nota:grp}).
Set $E_1=\{Q_8,T_{24}\}$,
 $E_2=\{ \MF{S}_5,L_2(7),\MF{A}_6 \}$.
\begin{enumerate}
\renewcommand{\labelenumi}{(\arabic{enumi})}
\item
If $[G]\not\in E_1\cup E_2$, then
 the moduli of $K3$ surfaces
 with faithful and symplectic $G$-actions
 is connected.
\item
If $[G]\in E_1\cup E_2$, then
 the moduli of $K3$ surfaces
 with faithful and symplectic $G$-actions
 has exactly two connected components.
\item
If $X_i$ is a $K3$ surface
 with a faithful and symplectic $G_i$-action
 for $i=1,2$ such that $[G_i]\not\in E_2$ and
 $G_1\backslash X_1,G_2\backslash X_2$
 have the same
 $A$-$D$-$E$-configuration of the singularities,
 then $[G_1]=[G_2]=:G$ and $X_1,X_2$ are
 $G$-deformable (see Section \ref{sect:intro}).
\item \label{assertion:ext}
If $X$ is a $K3$ surface
 with a faithful and symplectic action of $G$
 of type $(\MF{G},q)\in\MC{Q}$, i.e.,
 $([G],q(H^2(X,\Z)_G))\sim (\MF{G},q)$,
 then the action is extended to that of type
 $\OP{Clos}(\MF{G},q)$
 (see Section \ref{sect:coinvariantuniq}
 and Table \ref{subsect:trees}).
\end{enumerate}
\end{thm}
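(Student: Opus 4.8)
The plan is to treat the four assertions separately. Write $\MF{G}=[G]$. Assertions (1)--(3) all rest on the standard period-theoretic description, via the global Torelli theorem, the surjectivity of the period map and Proposition \ref{prop:nikulinsymp}, of the moduli of $K3$ surfaces carrying a faithful symplectic action of a group with abstract structure $\MF{G}$: after choosing a marking $H^2(X,\Z)\cong\Lambda$ it decomposes into pieces indexed by the conjugacy classes of subgroups $G'\subset\OP{O}(\Lambda)$ belonging to $\MC{L}$ with $[G']=\MF{G}$, and each such piece, built from the period domain of $\Lambda^{G'}$, is connected (this is Nikulin's argument in \cite{nikulin79fin}, reproduced in \cite{whitcher09}, which already underlies Corollary \ref{cor:deform}); hence these pieces are exactly the connected components of the moduli. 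Assertion (4), by contrast, is a direct translation of Proposition \ref{prop:extensionofaction}.

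For (1) and (2) I would then simply count: the number of connected components equals $\sharp\{G'\in\MC{L}\bigm|[G']=\MF{G}\}/\text{\rm conj}$, which by Theorem \ref{thm:main} is $1$ when $\MF{G}\notin E_1\cup E_2$ and $2$ when $\MF{G}\in E_1\cup E_2$.

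For (3) the first step is to note that the $A$-$D$-$E$-configuration of $G\backslash X$ is an invariant of the conjugacy class of $G\subset\OP{O}(\Lambda)$, being determined by the fixed-point data of the $G$-action, which is locally constant in a family. Combining Theorem \ref{thm:xiao} with Theorem \ref{thm:main}, this configuration depends only on the type $(\MF{G},q(\Lambda_G))\in\MC{Q}$ of the action (in particular the two conjugacy classes attached to a group in $E_2$ share a configuration), and from Xiao's tables \cite{xiao96} the induced map from $\MC{Q}/\sim$ to $A$-$D$-$E$-configurations is injective. Hence equal configurations force $([G_1],q(\Lambda_{G_1}))\sim([G_2],q(\Lambda_{G_2}))$, and in particular $[G_1]=[G_2]=:G$. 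Since $G\notin E_2$, Theorem \ref{thm:main} shows that this type determines the conjugacy class of $G$ in $\OP{O}(\Lambda)$ (for $G\in E_1$ the two classes are told apart by the discriminant of the coinvariant lattice), so $X_1$ and $X_2$ lie in one connected component of the moduli; the family construction behind Corollary \ref{cor:deform}, which is valid inside a single component including for $G\in E_1$, then exhibits $X_1$ and $X_2$ as $G$-deformable.

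For (4), by Proposition \ref{prop:extensionofaction} the $G$-action extends to a faithful symplectic action of $G':=\OP{O}_0(H^2(X,\Z)_G)$; as in the discussion following Definition \ref{defn:closg} one has $H^2(X,\Z)_{G'}=H^2(X,\Z)_G$, hence $q(H^2(X,\Z)_{G'})\cong q$, while $H^2(X,\Z)_G\cong\Lambda_G\cong S(\MF{G},q)$ by (\ref{lambda2sgq}), so $[G']=[\OP{O}_0(S(\MF{G},q))]$ and the extended action has type $([G'],q(H^2(X,\Z)_{G'}))=([\OP{O}_0(S(\MF{G},q))],q)=\OP{Clos}(\MF{G},q)$ by Definition \ref{defn:closgq}. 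I expect the one step requiring genuine case-checking to be the finite verification, inside (3), that the $81$ configurations attached to the elements of $\MC{Q}/\sim$ are pairwise distinct---in particular that the two $Q_8$-cases and the two $T_{24}$-cases are separated by the $A$-$D$-$E$-configuration; this is precisely what allows $E_1$, but not $E_2$, to occur in assertion (3).
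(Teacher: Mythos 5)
Your proposal is correct and follows exactly the route the paper intends: the paper gives no detailed proof of this theorem, stating only that it is a consequence of Theorem \ref{thm:main}, Xiao's Theorem \ref{thm:xiao} and the global Torelli theorem, and your argument is a faithful expansion of precisely that — components of the moduli indexed by $\OP{O}(\Lambda)$-conjugacy classes in $\MC{L}$ for (1)--(2), the injectivity of the configuration map on $\MC{Q}/\sim$ from Xiao's tables for (3), and Proposition \ref{prop:extensionofaction} together with Definitions \ref{defn:sgq} and \ref{defn:closgq} for (4). You also correctly isolate the one finite verification (pairwise distinctness of the $81$ configurations, separating the two $Q_8$ and two $T_{24}$ cases) that makes $E_1$ admissible but $E_2$ not in assertion (3).
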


The assertion (\ref{assertion:ext}) for some cases
 was pointed out and studied in detail by
 Garbagnati \cite{garbagnati08,garbagnati09}.

\section{Tables} \label{sect:tables}

\subsection{Niemeier lattices} \label{subsect:niemeier}

We give the list of Niemeier lattices $N$
 (see Subsection \ref{subsect:factniemeier}).
Let $\Delta^+$ be a set of positive roots of $N$.
We denote by $\OP{O}(N,\Delta^+)_1$ the group
 which consists of $g\in\OP{O}(N,\Delta^+)$
 preserving each connected component of
 the Dynkin diagram $R(N,\Delta^+)$.
We set
 $\OP{O}(N,\Delta^+)_2
 =\OP{O}(N,\Delta^+)/\OP{O}(N,\Delta^+)_1$.
The group $\OP{O}(N,\Delta^+)_2$ acts on
 the set of connected components of $R(N,\Delta^+)$.

\begin{equation*}
\begin{array}{c|c|c|c|c}
i & \text{root type}
 & |\OP{O}(N_i,\Delta^+_i)_1|
 & \OP{O}(N_i,\Delta^+_i)_2 & |\OP{O}(N_i,\Delta^+_i)| \\
\hline
1  & D_{24}            & 1 & 1 & 1 \\ 
2  & D_{16} \oplus E_8 & 1 & 1 & 1 \\ 
3  & E_8^{\oplus 3}    & 1 & \MF{S}_3 & 6 \\ 
4  & A_{24}            & 2 & 1 & 2 \\ 
5  & D_{12}^{\oplus 2} & 1 & \MF{S}_2 & 2 \\ 
6  & A_{17} \oplus E_7 & 2 & 1 & 2 \\ 
7  & D_{10} \oplus E_7^{\oplus 2}
                       & 1 & \MF{S}_2 & 2 \\ 
8  & A_{15} \oplus D_9 & 2 & 1 & 2 \\ 
9  & D_8^{\oplus 3}    & 1 & \MF{S}_3 & 6 \\ 
10 & A_{12}^{\oplus 2} & 2 & \MF{S}_2 & 4 \\ 
11 & A_{11} \oplus D_7 \oplus E_6
                       & 2 & 1 & 2 \\ 
12 & E_6^{\oplus 4}    & 2 & \MF{S}_4 & 48 \\ 
13 & A_9^{\oplus 2} \oplus D_6
                       & 2 & \MF{S}_2 & 4 \\ 
14 & D_6^{\oplus 4}    & 1 & \MF{S}_4 & 24 \\ 
15 & A_8^{\oplus 3}    & 2 & \MF{S}_3 & 12 \\ 
16 & A_7^{\oplus 2} \oplus D_5^{\oplus 2}
                       & 2
            & \MF{S}_2 \times \MF{S}_2 & 8 \\ 
17 & A_6^{\oplus 4}    & 2 & \MF{A}_4 & 24 \\ 
18 & A_5^{\oplus 4} \oplus D_4
                       & 2 & \MF{S}_4 & 48 \\ 
19 & D_4^{\oplus 6}    & 3 & \MF{S}_6 & 2160 \\ 
20 & A_4^{\oplus 6}    & 2 & \MF{S}_5 & 240 \\ 
21 & A_3^{\oplus 8}    & 2
 & \mathbb{F}_2^3 \rtimes \OP{GL}(3,\mathbb{F}_2)
                                      & 2688 \\ 
22 & A_2^{\oplus 12}   & 2 & M_{12} & 190080 \\ 
23 & A_1^{\oplus 24}   & 1 & M_{24} & 244823040
\end{array}
\end{equation*}

\subsection{Abstract groups and discriminant forms}
\label{subsect:qands}

\newcommand{\BQ}[3]
 {\left(\begin{smallmatrix}
   #1 & #3 \\
   #3 & #2
 \end{smallmatrix}\right)}
\newcommand{\TQ}[6]
 {\left(\begin{smallmatrix}
   #1 & #6 & #5 \\
   #6 & #2 & #4 \\
   #5 & #4 & #3
  \end{smallmatrix}\right)}
\newcommand{\BQP}[4]{\BQ{#1}{#2}{#3}^{\oplus #4}}
\newcommand{\II}{\text{II}}
\newcommand{\LFP}[2]{\LF{#1}^{\oplus #2}}

We give the list of a complete representative
 $\{ (\MF{G}_n,q_n) \}$ of $\MC{Q}/\sim$.
Recall that
\begin{align*}
\MC{Q}&=\{
 (\MF{G},q) \bigm| \exists G\in\MC{L}
 \text{ such that }
 \MF{G} = [G],
 q\cong q(\Lambda_{G})
\} \\
&=\{
 (\MF{G},q) \bigm| \exists (G,N)\in\MC{N}
 \text{ such that }
 \MF{G} = [G],
 q\cong q(N_{G})
\}
\end{align*}
 and $(\MF{G},q)\sim (\MF{G}',q')$ if and only if
 $\MF{G}=\MF{G}'$, $q\cong q'$
 (see Subsection \ref{subsect:conseq}).
For $q:A(q)\rightarrow \Q/2\Z$,
 we denote the order of $A(q)$ by $|q|$.
We use the following notation
 (cf.\ \cite{SP}):
\begin{gather*}
 a^{+n} =
 q^{(p)}_+(a)^{\oplus n}, ~
 a^{-n} =
 q^{(p)}_+(a)^{\oplus n-1}
 \oplus q^{(p)}_-(a), \\
 b^{+n}_\II = {u^{(2)}(b)}^{\oplus n}, ~
 b^{-n}_\II = {u^{(2)}(b)}^{\oplus n-1}
 \oplus v^{(2)}(b), ~
 b^{dr}_t=q(L^{(2)}_{r,d,t,\text{I}}(b)),
\end{gather*}
where $p$ is an odd prime, $a=p^k$,
 $b=2^k$ and $L^{(2)}_{r,d,t,e}$ is
 a (unique) unimodular lattice over $\Z_2$
 which has the invariants $r,d,t,e$ defined in
 Proposition \ref{prop:2adicunimodular}
 (see Section \ref{sect:lattice}).
For example,
\begin{align*}
 A(q_{63}) &\cong (\Z/2)^{\oplus 3}
 \oplus \Z/3\Z \oplus \Z/9\Z, \\
 q_{63} &\cong \LF{-1/2} \oplus
 \begin{pmatrix}1&1/2\\1/2&1\end{pmatrix} \oplus
 \LF{2/3} \oplus \LF{2/9}.
\end{align*}
In the list, e.g.\ $q_5$ is isomorphic to $q_{16}$.
The column $i$ indicates the catalogue number of $\MF{G}_n$ in GAP
 (see Appendix).

\begin{equation*}
\begin{array}{c|c|c|c|c|c|c}
n & |\MF{G}_n| & i & \MF{G}_n & |q_n| & q_n & c(\MF{G}_n) \\
\hline
1 & 2 & 1 & C_2 & 256 & 2^{+8}_\II & 8 \\ 
2 & 3 & 1 & C_3 & 729 & 3^{+6} & 12 \\ 
3 & 4 & 2 & C_2^2 & 1024 & 2^{-6}_\II , 4^{-2}_\II & 12 \\ 
4 & 4 & 1 & C_4 & 1024 & 2^{+2}_2 , 4^{+4}_\II & 14 \\ 
5 & 5 & 1 & C_5 & 625 & \sharp 16 & 16 \\ 
6 & 6 & 1 & D_6 & 972 & 2^{-2}_\II , 3^{+5} & 14 \\ 
7 & 6 & 2 & C_6 & 1296 & \sharp 18 & 16 \\ 
8 & 7 & 1 & C_7 & 343 & \sharp 33 & 18 \\ 
9 & 8 & 5 & C_2^3 & 1024 & 2^{+6}_\II , 4^{+2}_2 & 14 \\ 
10 & 8 & 3 & D_8 & 1024 & 4^{+5}_1 & 15 \\ 
11 & 8 & 2 & C_2 \times C_4 & 1024 & \sharp 22 & 16 \\ 
12 & 8 & 4 & Q_8 & 512 & 2^{-3}_7 , 8^{-2}_\II & 17 \\ 
13 & 8 & 4 & Q_8 & 1024 & \sharp 40 & 17 \\ 
14 & 8 & 1 & C_8 & 512 & \sharp 26 & 18 \\ 
15 & 9 & 2 & C_3^2 & 729 & \sharp 30 & 16 \\ 
16 & 10 & 1 & D_{10} & 625 & 5^{+4} & 16 \\ 
17 & 12 & 3 & \MF{A}_4 & 576 & 2^{-2}_\II , 4^{-2}_\II , 3^{+2} & 16 \\ 
18 & 12 & 4 & D_{12} & 1296 & 2^{+4}_\II , 3^{+4} & 16 \\ 
19 & 12 & 5 & C_2 \times C_6 & 432 & \sharp 61 & 18 \\ 
20 & 12 & 1 & Q_{12} & 432 & \sharp 61 & 18 \\ 
21 & 16 & 14 & C_2^4 & 512 & 2^{+6}_\II , 8^{+1}_1 & 15 \\ 
22 & 16 & 11 & C_2 \times D_8 & 1024 & 2^{+2}_\II , 4^{+4}_0 & 16 \\ 
23 & 16 & 3 & \Gamma_2 c_1 & 512 & \sharp 39 & 17 \\ 
24 & 16 & 13 & Q_8*C_4 & 1024 & \sharp 40 & 17 \\ 
25 & 16 & 2 & C_4^2 & 256 & \sharp 75 & 18 \\ 
26 & 16 & 8 & SD_{16} & 512 & 2^{+1}_7 , 4^{+1}_7 , 8^{+2}_\II & 18 \\ 
27 & 16 & 12 & C_2 \times Q_8 & 256 & \sharp 75 & 18 \\ 
28 & 16 & 6 & \Gamma_2 d & 256 & \sharp 80 & 19 \\ 
29 & 16 & 9 & Q_{16} & 256 & \sharp 80 & 19 \\ 
30 & 18 & 4 & \MF{A}_{3,3} & 729 & 3^{+4} , 9^{-1} & 16 \\ 
31 & 18 & 3 & C_3 \times D_6 & 972 & \sharp 48 & 18 \\ 
32 & 20 & 3 & \OP{Hol}(C_5) & 500 & 2^{-2}_6 , 5^{+3} & 18 \\ 
33 & 21 & 1 & C_7 \rtimes C_3 & 343 & 7^{+3} & 18 \\ 
34 & 24 & 12 & \MF{S}_4 & 576 & 4^{+3}_3 , 3^{+2} & 17 \\ 
35 & 24 & 13 & C_2 \times \MF{A}_4 & 576 & \sharp 51 & 18 \\ 
36 & 24 & 8 & C_3 \rtimes D_8 & 432 & \sharp 61 & 18 \\ 
37 & 24 & 3 & T_{24} & 192 & \sharp 77 & 19 \\ 
38 & 24 & 3 & T_{24} & 384 & \sharp 54 & 19 \\ 
39 & 32 & 27 & 2^4 C_2 & 512 & 2^{+2}_\II , 4^{+2}_0 , 8^{+1}_7 & 17 \\ 
40 & 32 & 49 & Q_8*Q_8 & 1024 & 4^{+5}_7 & 17 \\ 
\end{array}
\end{equation*} 
\begin{equation*}
\begin{array}{c|c|c|c|c|c|c}
n & |\MF{G}_n| & i & \MF{G}_n & |q_n| & q_n & c(\MF{G}_n) \\
\hline
41 & 32 & 6 & \Gamma_7 a_1 & 512 & \sharp 56 & 18 \\ 
42 & 32 & 31 & \Gamma_4 c_2 & 256 & \sharp 75 & 18 \\ 
43 & 32 & 7 & \Gamma_7 a_2 & 256 & \sharp 80 & 19 \\ 
44 & 32 & 11 & \Gamma_3 e & 256 & \sharp 80 & 19 \\ 
45 & 32 & 44 & \Gamma_6 a_2 & 256 & \sharp 80 & 19 \\ 
46 & 36 & 9 & 3^2 C_4 & 324 & 2^{-2}_6 , 3^{+2} , 9^{-1} & 18 \\ 
47 & 36 & 11 & C_3 \times \MF{A}_4 & 432 & \sharp 61 & 18 \\ 
48 & 36 & 10 & \MF{S}_{3,3} & 972 & 2^{-2}_\II , 3^{+3} , 9^{-1} & 18 \\ 
49 & 48 & 50 & 2^4 C_3 & 384 & 2^{-4}_\II , 8^{+1}_1 , 3^{-1} & 17 \\ 
50 & 48 & 3 & 4^2 C_3 & 256 & \sharp 75 & 18 \\ 
51 & 48 & 48 & C_2 \times \MF{S}_4 & 576 & 2^{+2}_\II , 4^{+2}_2 , 3^{+2} & 18 \\ 
52 & 48 & 49 & 2^2(C_2 \times C_6) & 288 & \sharp 78 & 19 \\ 
53 & 48 & 30 & 2^2 Q_{12} & 288 & \sharp 78 & 19 \\ 
54 & 48 & 29 & T_{48} & 384 & 2^{+1}_7 , 8^{-2}_\II , 3^{-1} & 19 \\ 
55 & 60 & 5 & \MF{A}_5 & 300 & 2^{-2}_\II , 3^{+1} , 5^{-2} & 18 \\ 
56 & 64 & 138 & \Gamma_{25} a_1 & 512 & 4^{+3}_5 , 8^{+1}_1 & 18 \\ 
57 & 64 & 242 & \Gamma_{13} a_1 & 256 & \sharp 75 & 18 \\ 
58 & 64 & 32 & \Gamma_{22} a_1 & 256 & \sharp 80 & 19 \\ 
59 & 64 & 35 & \Gamma_{23} a_2 & 256 & \sharp 80 & 19 \\ 
60 & 64 & 136 & \Gamma_{26} a_2 & 256 & \sharp 80 & 19 \\ 
61 & 72 & 43 & \MF{A}_{4,3} & 432 & 4^{-2}_\II , 3^{-3} & 18 \\ 
62 & 72 & 40 & N_{72} & 324 & 4^{+1}_1 , 3^{+2} , 9^{-1} & 19 \\ 
63 & 72 & 41 & M_9 & 216 & 2^{-3}_7 , 3^{-1} , 9^{-1} & 19 \\ 
64 & 80 & 49 & 2^4 C_5 & 160 & \sharp 81 & 19 \\ 
65 & 96 & 227 & 2^4 D_6 & 384 & 2^{-2}_\II , 4^{+1}_7 , 8^{+1}_1 , 3^{-1} & 18 \\ 
66 & 96 & 70 & 2^4 C_6 & 384 & \sharp 76 & 19 \\ 
67 & 96 & 64 & 4^2 D_6 & 256 & \sharp 80 & 19 \\ 
68 & 96 & 195 & 2^3 D_{12} & 288 & \sharp 78 & 19 \\ 
69 & 96 & 204 & (Q_8*Q_8) \rtimes C_3 & 192 & \sharp 77 & 19 \\ 
70 & 120 & 34 & \MF{S}_5 & 300 & 4^{-1}_3 , 3^{+1} , 5^{-2} & 19 \\ 
71 & 128 & 931 & F_{128} & 256 & \sharp 80 & 19 \\ 
72 & 144 & 184 & \MF{A}_4^2 & 288 & \sharp 78 & 19 \\ 
73 & 160 & 234 & 2^4 D_{10} & 160 & \sharp 81 & 19 \\ 
74 & 168 & 42 & L_2(7) & 196 & 4^{+1}_1 , 7^{+2} & 19 \\ 
75 & 192 & 1023 & 4^2 \MF{A}_4 & 256 & 2^{-2}_\II , 8^{-2}_6 & 18 \\ 
76 & 192 & 955 & H_{192} & 384 & 4^{-2}_4 , 8^{+1}_7 , 3^{-1} & 19 \\ 
77 & 192 & 1493 & T_{192} & 192 & 4^{-3}_7 , 3^{+1} & 19 \\ 
78 & 288 & 1026 & \MF{A}_{4,4} & 288 & 2^{+2}_\II , 8^{+1}_1 , 3^{+2} & 19 \\ 
79 & 360 & 118 & \MF{A}_6 & 180 & 4^{-1}_5 , 3^{+2} , 5^{+1} & 19 \\ 
80 & 384 & 18135 & F_{384} & 256 & 4^{+1}_7 , 8^{+2}_6 & 19 \\ 
81 & 960 & 11357 & M_{20} & 160 & 2^{-2}_\II , 8^{+1}_1 , 5^{-1} & 19 \\ 
\end{array}
\end{equation*}

\subsection{Invariant lattices $\Lambda^G$}
\label{subsect:invariant}

For $G\in\MC{L}$, there exsits a number $n$
 such that $([G],q(\Lambda_G))\sim (\MF{G}_n,q_n)$
 (see Table \ref{subsect:qands}).
Here we give the invariant lattices $\Lambda^G$
 for each $n$.
We set
\begin{equation*}
 r=\rank \Lambda^G=22-c(G),~ d=\disc \Lambda^G,~
 q=-q_n\cong q(\Lambda^G).
\end{equation*}
In the table, we set
\begin{equation*}
 U=\begin{pmatrix}0&1\\1&0\end{pmatrix},~
 A_2=\begin{pmatrix}2&-1\\-1&2\end{pmatrix},~
 D_4=\begin{pmatrix}
 2&0&0&-1\\
 0&2&0&-1\\
 0&0&2&-1\\
 -1&-1&-1&2
 \end{pmatrix}
\end{equation*}
 and $E_8$ denotes the root latice of type $E_8$,
 as usual.
For abelian $G\in\MC{L}$,
 the Gramian matrices of $\Lambda^G$ were determined in
 \cite{garbagnatisarti09}.

\begin{equation*}
\begin{array}{c|c|c|c|c}
n & r & d
 & q
 & \text{Gramian matrix} \\
\hline
1 & 14 & - 256 & 2^{+8}_\II & U^{\oplus 3} \oplus E_8(-2) \\ 
2 & 10 & - 729 & 3^{+6}
    & U \oplus U(3)^{\oplus2} \oplus A_2(-1)^{\oplus 2} \\ 
3 & 10 & - 1024 & 2^{-6}_\II,4^{-2}_\II
    & U \oplus U(2)^{\oplus 2} \oplus D_4(-2) \\ 
4 & 8 & - 1024 & 2^{+2}_6,4^{+4}_\II
    & U \oplus U(4)^{\oplus 2} \oplus \LFP{-2}{2} \\ 
6 & 8 & - 972 & 2^{-2}_\II,3^{-5}
    & U(3) \oplus A_2(2) \oplus A_2(-1)^{\oplus 2} \\ 
9 & 8 & - 1024 & 2^{+6}_\II,4^{+2}_6 & U(2)^{\oplus 3} \oplus \LFP{-4}{2} \\ 
10 & 7 & 1024 & 4^{+5}_7 & U \oplus \LFP{4}{2} \oplus \LFP{-4}{3} \\ 
12 & 5 & 512 & 2^{-3}_1,8^{-2}_\II
    & \TQ{6}{6}{6}{-2}{2}{2} \oplus \LFP{-2}{2} \\ 
16 & 6 & - 625 & 5^{+4} & U \oplus U(5)^{\oplus 2} \\ 
17 & 6 & - 576 & 2^{-2}_\II,4^{-2}_\II,3^{+2}
    & U \oplus A_2(2) \oplus A_2(-4) \\ 
18 & 6 & - 1296 & 2^{+4}_\II,3^{+4} & U \oplus U(6)^{\oplus 2} \\ 
21 & 7 & 512 & 2^{+6}_\II,8^{+1}_7 & U(2)^{\oplus 3} \oplus \LF{-8} \\ 
22 & 6 & - 1024 & 2^{+2}_\II,4^{+4}_0 & U(2) \oplus \LFP{4}{2} \oplus \LFP{-4}{2} \\ 
26 & 4 & - 512 & 2^{+1}_1,4^{+1}_1,8^{+2}_\II
    & U(8) \oplus \LF{2} \oplus \LF{4} \\ 
30 & 6 & - 729 & 3^{+4},9^{+1} & U(3)^{\oplus 2} \oplus \BQ{2}{0}{3} \\ 
32 & 4 & - 500 & 2^{-2}_2,5^{+3} & U(5) \oplus \BQ{4}{6}{2} \\ 
33 & 4 & - 343 & 7^{-3} & U(7) \oplus \BQ{2}{4}{1} \\ 
34 & 5 & 576 & 4^{+3}_5,3^{+2} & U \oplus A_2(2) \oplus \LF{-12} \\ 
39 & 5 & 512 & 2^{+2}_\II,4^{+2}_0,8^{+1}_1
    & U(2) \oplus \LF{4} \oplus \LF{-4} \oplus \LF{8} \\ 
40 & 5 & 1024 & 4^{+5}_1 & \LFP{4}{3} \oplus \LFP{-4}{2} \\ 
\end{array}
\end{equation*}
\begin{equation*}
\begin{array}{c|c|c|c|c}
46 & 4 & - 324 & 2^{-2}_2,3^{+2},9^{+1} & A_2 \oplus \LF{6} \oplus \LF{-18} \\ 
48 & 4 & - 972 & 2^{-2}_\II,3^{-3},9^{+1} & U(3) \oplus A_2(6) \\ 
49 & 5 & 384 & 2^{-4}_\II,8^{+1}_7,3^{+1} & U(2) \oplus A_2(2) \oplus \LF{-8} \\ 
51 & 4 & - 576 & 2^{+2}_\II,4^{+2}_6,3^{+2} & U(2) \oplus \LFP{12}{2} \\ 
54 & 3 & 384 & 2^{+1}_1,8^{-2}_\II,3^{+1} & \TQ{2}{16}{16}{8}{0}{0} \\ 
55 & 4 & - 300 & 2^{-2}_\II,3^{-1},5^{-2} & U \oplus A_2(10) \\ 
56 & 4 & - 512 & 4^{+3}_3,8^{+1}_7 & \LFP{4}{3} \oplus \LF{-8} \\ 
61 & 4 & - 432 & 4^{-2}_\II,3^{+3} & U(3) \oplus A_2(4) \\ 
62 & 3 & 324 & 4^{+1}_7,3^{+2},9^{+1} & \TQ{6}{6}{12}{3}{3}{0} \\ 
63 & 3 & 216 & 2^{-3}_1,3^{+1},9^{+1} & \TQ{2}{12}{12}{6}{0}{0} \\ 
65 & 4 & - 384 & 2^{-2}_\II,4^{+1}_1,8^{+1}_7,3^{+1}
     & A_2(2) \oplus \LF{4} \oplus \LF{-8} \\ 
70 & 3 & 300 & 4^{-1}_5,3^{-1},5^{-2}
    & \TQ{4}{4}{20}{0}{0}{1} , \TQ{4}{6}{16}{1}{2}{2} \\ 
74 & 3 & 196 & 4^{+1}_7,7^{+2}
    & \TQ{2}{4}{28}{0}{0}{1} , \TQ{4}{8}{8}{1}{2}{2} \\ 
75 & 4 & - 256 & 2^{-2}_\II,8^{-2}_2
 & \left(\begin{smallmatrix}4&0&2&0\\0&4&2&0\\2&2&4&4\\0&0&4&0\end{smallmatrix}\right) \\ 
76 & 3 & 384 & 4^{-2}_4,8^{+1}_1,3^{+1} & \TQ{4}{8}{12}{0}{0}{0} \\ 
77 & 3 & 192 & 4^{-3}_1,3^{-1} & \TQ{4}{8}{8}{4}{0}{0} \\ 
78 & 3 & 288 & 2^{+2}_\II,8^{+1}_7,3^{+2} & \TQ{8}{8}{8}{2}{4}{4} \\ 
79 & 3 & 180 & 4^{-1}_3,3^{+2},5^{+1}
    & \TQ{2}{8}{12}{0}{0}{1} , \TQ{6}{6}{8}{3}{3}{0} \\ 
80 & 3 & 256 & 4^{+1}_1,8^{+2}_2 & \TQ{4}{8}{8}{0}{0}{0} \\ 
81 & 3 & 160 & 2^{-2}_\II,8^{+1}_7,5^{-1} & \TQ{4}{4}{12}{2}{2}{0} \\
\end{array}
\end{equation*}

\subsection{Trees of groups with common invariant lattices}
\label{subsect:trees}

We give the trees of
\begin{equation*}
 T_S=
 \{ \MF{G}_n \bigm| S(\MF{G}_n,q_n)\cong S \}
 =\{ \MF{G}_n \bigm| q_n\cong q(S) \}
\end{equation*}
 for $T_S$ with $\sharp T_S\geq 2$.
In the table, $\sharp n$ denotes $\MF{G}_n$.
The maximal element in each $T_S$ corresponds to
 an element in $\MC{Q}_\clos$ defined
 by (\ref{qclos}).
The extensions $\sharp 5-\sharp 16$,
 $\sharp 7-\sharp 18$, $\sharp 11-\sharp 22$
 and $\sharp 15-\sharp 30$ are studied in detail by
 Garbagnati \cite{garbagnati08,garbagnati09}.

\newcommand{\tree}[2]
{\begin{array}{c}
 \sharp #1 \\
 | \\
 \sharp #2
\end{array}}

\begin{gather*}
\tree{16}{5}
\tree{18}{7}
\tree{33}{8}
\tree{22}{11}
\tree{26}{14}
\tree{30}{15}
\tree{39}{23}
\tree{48}{31}
\tree{51}{35}
\tree{54}{38}
\tree{56}{41}
\tree{76}{66} \\ \\
\begin{array}{c}
\sharp 40 \\
| \\
\sharp 24 \\
| \\
\sharp 13
\end{array} ~~
\begin{array}{c}
\sharp 77 \\
| \\
\sharp 69 \\
| \\
\sharp 37
\end{array} ~~
\begin{array}{c}
\sharp 81 \\
| \\
\sharp 73 \\
| \\
\sharp 64
\end{array} ~~
\begin{array}{ccc}
\sharp 61 &  & \\
|         & \diagdown & \\
\sharp 36 & & \sharp 47 \\
|         & \diagdown & | \\
\sharp 20 & & \sharp 19
\end{array} ~~
\begin{array}{ccc}
\sharp 75 & & \\
| & \diagdown & \\
\sharp 50 & & \sharp 57 \\
| & & | \\
| & & \sharp 42 \\
| & \diagup & | \\
\sharp 25 & & \sharp 27
\end{array} \\ \\
\begin{array}{ccc}
\sharp 78 &  & \\
|         & \diagdown & \\
\sharp 68 & & \sharp 72 \\
|         & \diagdown & | \\
\sharp 53 & & \sharp 52
\end{array} ~~
\begin{array}{ccccccc}
  &  &\sharp 80&  &  &  &  \\
  & \diagup&  &\diagdown &  &  &  \\
\sharp 67&  &  &  &\sharp 71&  &  \\
 |&  &  & \diagup&| &\diagdown &  \\
 |&  &\sharp 58&  &\sharp 59&  &\sharp 60\\
 |&  &  & \diagdown&  &\diagup &| \\
\sharp 44&  &  &  &\sharp 43&  &\sharp 45\\
  & \diagdown&  & \diagup&  &  &| \\
  &  &\sharp 28&  &  &  &\sharp 29
\end{array}
\setlength\unitlength{1truecm}
\begin{picture}(0,0)(0,0)
\put(-1,0){\line(-5,-1){4.5}}
\end{picture}
\end{gather*}

\subsection{Extensions of $G\in\MC{L}$}
\label{subsect:extension}

We give the list of possible extensions
 of $G\in\MC{L}_\clos$.
For example, let $G\in\MC{L}$
 of type $(\MF{G}_{55},q_{55})$, i.e.,
 $([G],q(\Lambda_G))\sim (\MF{G}_{55},q_{55})$.
Then, for $i=1,2$,
 there exsits an element $G'\in\MC{L}_\clos$
 of type $(\MF{G}_{79},q_{79})$
 such that $G\subset G'$ and
 $G'$ is conjugate to $G_i$ in Theorem \ref{thm:main}(2).
We omit the eleven maximal cases:
 $n=54,62,63,70,74,76,77,78,79,80,81$,
 for there is no proper extension.

\begin{equation*}
\begin{array}{c|l}
n & \text{extensions} \\
\hline
1 &
\hspace{-1ex}\begin{array}{l}
 3,4,6,9,10,12,16,17,18,21,22,26,30,32,34,39,40,46,48,49,51,\\
 54,55,56,61,62,63,65,70,74,75,76,77,78,79,80,81
\end{array} \\
2 &
\hspace{-1ex}\begin{array}{l}
 6,17,18,30,33,34,46,48,49,51,54,55,61,62,63,65,70,74,75,76,\\
 77,78,79,80,81
\end{array}\\
3 &
\hspace{-1ex}\begin{array}{l}
 9,10,17,18,21,22,26,34,39,40,48,49,51,54,55,56,61,62,65,70,\\
 74,75,76,77,78,79,80,81
\end{array}\\
4 &
\hspace{-1ex}\begin{array}{l}
 10,12,22,26,32,34,39,40,46,51,54,56,61,62,63,65,70,74,75,76,\\
 77,78,79,80,81
\end{array}\\
6 & 18,30,34,46,48,51,54,55,61,62,63,65,70,74,76,77,78,79,80,81\\
9 & 21,22,39,40,49,51,56,65,75,76,77,78,80,81\\
10 & 22,26,34,39,40,51,54,56,61,62,65,70,74,75,76,77,78,79,80,81\\
12 & 26,54,63,75,80,81\\
16 & 32,55,70,79,81\\
17 & 34,49,51,55,61,65,70,74,75,76,77,78,79,80,81\\
18 & 48,51,54,61,62,70,76,77,78\\
21 & 39,49,56,65,75,76,77,78,80,81\\
22 & 39,40,51,56,65,75,76,77,78,80,81\\
26 & 54,80\\
30 & 46,48,61,62,63,78,79\\
32 & 70\\
33 & 74\\
34 & 51,61,65,70,74,76,77,78,79,80,81\\
39 & 56,65,75,76,77,78,80,81\\
40 & 56,76,77,80\\
46 & 62,63,79\\
48 & 62\\
49 & 65,75,76,78,80,81\\
51 & 76,77,78\\
55 & 70,79,81\\
56 & 76,77,80\\
61 & 78\\
65 & 76,78,80,81\\
75 & 80,81
\end{array}
\end{equation*}

\subsection{Root types of $N^G$}
\label{subsect:orbits}

We give the type of the root sublattice
 of $N^G$,
 which is generated by vectors $v\in N^G$
 with $\BF{v}{v}=-2$,
 for each $(G,N)\in\MC{N}$
 such that $[G]=\MF{G}_n$ and $q(N_G)\cong q_n$
 (see Table \ref{subsect:qands}).
In the list,
 elements in $\MC{N}'$
 are enclosed by boxes
 (see Proposition \ref{prop:listuniq})
 and the number of vectors $v\in N^G$
 with $\BF{v}{v}=-4$ or $-6$ are given
 for the cases $n=32,41,56,63$.
As for Niemeier lattices $N=N_i$,
 see Table \ref{subsect:niemeier}.
\\

\noindent
$n=1$
\begin{equation*} 
\begin{array}{c|ccccc} 
\hline
i & 3 & 6 & 7 & 8 & 9 \\ 
\hline 
\text{type} & E_8 & A_1^{\oplus 9}\oplus E_7
 & D_9 & A_1^{\oplus 8}\oplus D_8 & D_8
 \\
\hline
\hline
i & 11 & 12 & 12 & 13 & 14 \\
\hline
\text{type}
 & A_1^{\oplus 6} \oplus D_4 \oplus D_6
 & D_4^{\oplus 4}
 & D_4 \oplus E_6 
 & A_1^{\oplus 10} \oplus D_6
 & D_5^{\oplus 2} \\
\hline
\hline
i & 15 & 16 & 16 & 16 & 18 \\ 
\hline 
\text{type} & A_8
 & A_1^{\oplus 8} \oplus D_4^{\oplus 2}
 & A_1^{\oplus 4} \oplus A_7
 & D_4\oplus D_5
 & A_1^{\oplus 12} \oplus D_4
 \\
\hline
\hline
i & 18 & 19 & 19 & 20 & 21 \\ 
\hline
\text{type}
 & A_1^{\oplus 3} \oplus A_3 \oplus A_5
 & A_3^{\oplus 4} & D_4^{\oplus 2}
 & A_4^{\oplus 2}
 & A_1^{\oplus 16} \\
\hline
\hline
i & 21 & 22 & \fbox{23} \\
\hline
\text{type}
 & A_1^{\oplus 4} \oplus A_3^{\oplus 2}
 & A_2^{\oplus 4} & A_1^{\oplus 8} \\
\hline
\end{array}
\end{equation*}
$n= 2$
\begin{equation*} 
\begin{array}{c|ccccccccc} 
\hline 
i & 12 & 14 & 17 & 18 & 19 & 19 & 21 & 22 & \fbox{23} \\ 
\hline 
\text{type} &  E_6
 &  D_6
 &  A_6
 &  A_2\oplus A_5
 & A_2^{\oplus 6}
 &  D_4\oplus A_2^{\oplus 2}
 &  A_3^{\oplus 2}
 &  A_2^{\oplus 3}
 &  A_1^{\oplus 6} \\
\hline 
\end{array} 
\end{equation*} 
$n= 3$
\begin{equation*} 
\begin{array}{c|ccccccc} 
\hline
i & 12 & 16 & 16 & 18 & 19 & 19 & 21 \\ 
\hline 
\text{type} &  D_4^{\oplus 2}
 &  A_1^{\oplus 8}
 &  D_4^{\oplus 2}
 &   A_1^{\oplus 6} \oplus A_3
 &  A_3^{\oplus 2}
 &  D_4^{\oplus 2}
 &  A_1^{\oplus 4}
 \\
\hline
\hline
i & 21 & 21 & 21 & \fbox{22} & 23 & 23 \\ 
\hline 
\text{type}
 & A_1^{\oplus 8}
 &  A_3\oplus A_1^{\oplus 6}
 &  A_3^{\oplus 2}
 &  A_2^{\oplus 2}
 &  A_1^{\oplus 4}
 &  A_1^{\oplus 8} \\
\hline
\end{array} 
\end{equation*} 
$n= 4$ 
\begin{equation*} 
\begin{array}{c|ccccccc} 
\hline
i & 13 & 18 & 19 & 20 & 21 & 22 & \fbox{23} \\ 
\hline 
\text{type} &  D_5
 &  D_4
 &  A_3^{\oplus 2}
 &  A_1^{\oplus 2} \oplus A_4 
 &  A_1^{\oplus 2}\oplus A_3
 &  A_1^{\oplus 2}\oplus A_2^{\oplus 2}
 & A_1^{\oplus 4} \\
\hline
\end{array} 
\end{equation*} 
$n= 5,16$ 
\begin{equation*} 
\begin{array}{c|cccc} 
\hline
i & 19 & 20 & 22 & \fbox{23} \\ 
\hline 
\text{type} &  D_4
 &  A_4
 &  A_2^{\oplus 2}
 & A_1^{\oplus 4} \\
\hline
\end{array} 
\end{equation*} 
$n= 6$ 
\begin{equation*} 
\begin{array}{c|cccccc}
\hline 
i & 12 & 12 & 14 & 18 & 18 & 19 \\ 
\hline 
\text{type}
 &  D_4
 &  E_6
 &  D_5
 &  A_1^{\oplus 3}\oplus A_2
 &  A_2\oplus A_5
 &  A_2^{\oplus 4}\\
\hline
\hline
i & 19 & 19 & 21 & 22 & 22 & \fbox{23} \\ 
\hline 
\text{type}
 &  A_2^{\oplus 2} \oplus A_3
 &  D_4 
 &  A_1^{\oplus 2}\oplus A_3
 &  A_2
 &  A_2^{\oplus 3}
 & A_1^{\oplus 4} \\
\hline
\end{array} 
\end{equation*} 
$n= 7,18$
\begin{equation*} 
\begin{array}{c|ccccccc} 
\hline
i & 12 & 18 & 19 & 19 & 21 & 22 & \fbox{23} \\ 
\hline 
\text{type} &  D_4
 &  A_1^{\oplus 3}\oplus A_2
 &  A_2^{\oplus 2}
 &  A_3
 & A_1^{\oplus 4}
 &  A_2
 & A_1^{\oplus 2} \\
\hline
\end{array} 
\end{equation*} 
$n= 8,33$
\begin{equation*} 
\begin{array}{c|cc} 
\hline
i & 21 & \fbox{23} \\ 
\hline 
\text{type} &  A_3
 & A_1^{\oplus 3} \\
\hline
\end{array} 
\end{equation*} 
$n= 9$
\begin{equation*} 
\begin{array}{c|ccccc} 
\hline
i & 21 & 21 & \fbox{23} & 23 & 23 \\ 
\hline 
\text{type}
 &  A_1^{\oplus 4}
 & A_1^{\oplus 8}
 & A_1^{\oplus 2}
 &  A_1^{\oplus 4}
 & A_1^{\oplus 8} \\
\hline
\end{array} 
\end{equation*} 
$n= 10 $
\begin{equation*} 
\begin{array}{c|ccccccc} 
\hline
i & 18 & 19 & 21 & 21 & 22 & \fbox{23} & 23 \\ 
\hline 
\text{type} &  A_3
 &  A_3^{\oplus 2}
 &  A_1^{\oplus 4}
 &  A_1^{\oplus 2}\oplus A_3
 &  A_2^{\oplus 2}
 &  A_1^{\oplus 2}
 &  A_1^{\oplus 4} \\
\hline
\end{array} 
\end{equation*} 
$n= 11,22$
\begin{equation*} 
\begin{array}{c|cccc} 
\hline
i & 21 & \fbox{23} & 23 \\ 
\hline 
\text{type} & A_1^{\oplus 4}
 & A_1^{\oplus 2}
 & A_1^{\oplus 4} \\
\hline
\end{array} 
\end{equation*} 
$n= 12$ 
\begin{equation*} 
\begin{array}{c|ccc} 
\hline
i & 18 & 22 & \fbox{23} \\ 
\hline 
\text{type} &  D_4
 &  A_1^{\oplus 3}\oplus A_2
 & A_1^{\oplus 4} \\
\hline
\end{array} 
\end{equation*} 
$n= 13,24,28,29,37,40,43,44,45,59,60,67,69,71,77,80$
\begin{equation*} 
\begin{array}{c|c} 
\hline
i & \fbox{23} \\ 
\hline 
\text{type} &  A_1^{\oplus 2} \\
\hline
\end{array} 
\end{equation*} 
$n= 14,26$
\begin{equation*} 
\begin{array}{c|ccc} 
\hline
i & 18 & 22 & \fbox{23} \\ 
\hline 
\text{type} &  A_3
 &  A_1\oplus A_2
 & A_1^{\oplus 2} \\
\hline
\end{array} 
\end{equation*} 
$n= 15,30$
\begin{equation*} 
\begin{array}{c|ccc} 
\hline
i & 19 & 22 & \fbox{23} \\ 
\hline 
\text{type} &  A_2^{\oplus 3}
 &  A_2^{\oplus 3}
 & A_1^{\oplus 3} \\
\hline
\end{array} 
\end{equation*} 
$n= 17$ 
\begin{equation*} 
\begin{array}{c|cccccccc} 
\hline
i & 19 & 19 & 21 & 21 & 22 & 23 & 23 & \fbox{23} \\ 
\hline 
\text{type}
 &  A_2^{\oplus 2}
 &  A_2\oplus D_4
 &  A_3
 &  A_3^{\oplus 2}
 &  A_2^{\oplus 2}
 & A_1^{\oplus 2}
 & A_1^{\oplus 4}
 &  A_1^{\oplus 5} \\
\hline
\end{array} 
\end{equation*} 
$n= 19,20,36,47,61$
\begin{equation*} 
\begin{array}{c|cc} 
\hline
i & 19 & \fbox{23} \\ 
\hline 
\text{type} &  A_2^{\oplus 2}
 &  A_1^{\oplus 2} \\
\hline
\end{array} 
\end{equation*} 
$n= 21$
\begin{equation*} 
\begin{array}{c|ccc} 
\hline
i & 23 & \fbox{23} \\ 
\hline 
\text{type} & A_1^{\oplus 4}
 &  A_1^{\oplus 8} \\
\hline
\end{array} 
\end{equation*} 
$n= 23,39$
\begin{equation*} 
\begin{array}{c|ccc} 
\hline
i & \fbox{23} & 23 & 23 \\ 
\hline 
\text{type}
 & A_1^{\oplus 2}
 & A_1^{\oplus 4}
 & A_1^{\oplus 4} \\
\hline
\end{array} 
\end{equation*} 
$n= 25,27,42,50,57,75$
\begin{equation*} 
\begin{array}{c|c} 
\hline
i & \fbox{23} \\ 
\hline 
\text{type} & A_1^{\oplus 4} \\
\hline
\end{array} 
\end{equation*} 
$n= 31$
\begin{equation*} 
\begin{array}{c|cccc} 
\hline
i & 19 & 19 & 22 & \fbox{23} \\ 
\hline 
\text{type} & { A_2} & { A_2}
 & { A_2}
 & { A_1} \\
\hline
\end{array} 
\end{equation*} 
$n= 32$
\begin{equation*} 
\begin{array}{c|ccccc} 
\hline
i & 19 & 20 & 20 & 22 & \fbox{23} \\ 
\hline 
\text{type} &  A_3
 & A_1^{\oplus 2}
 &  A_4
 &  A_1\oplus A_2
 & A_1^{\oplus 2} \\
\hline
\sharp\{v\in N^G\bigm| \BF{v}{v}=-4 \}
 & & 14 &   &  & 22 \\
\hline
\end{array} 
\end{equation*} 
$n= 34$
\begin{equation*} 
\begin{array}{c|ccccc} 
\hline
i & 19 & 19 & 21 & 21 & 21 \\ 
\hline 
\text{type}
 &  A_2^{\oplus 2}
 &  A_2\oplus A_3
 & A_1^{\oplus 2}
 &  A_1^{\oplus 2}\oplus A_3
 &  A_3 \\
\hline
\hline
i & 22 & 23 & 23 & 23 & \fbox{23} \\ 
\hline 
\text{type}
 &  A_2^{\oplus 2}
 & A_1^{\oplus 2}
 & A_1^{\oplus 2} 
 & A_1^{\oplus 3}
 & A_1^{\oplus 4} \\
\hline
\end{array} 
\end{equation*} 
$n= 35,51$
\begin{equation*} 
\begin{array}{c|cccccc} 
\hline
i & 21 & 21 & \fbox{23} & 23 & 23 \\ 
\hline 
\text{type}
 & A_1^{\oplus 2}
 & A_1^{\oplus 4}
 & A_1
 & A_1^{\oplus 2}
 & A_1^{\oplus 2} \\
\hline
\end{array} 
\end{equation*} 
$n= 38,54$
\begin{equation*} 
\begin{array}{c|ccc} 
\hline
i & 18 & 22 & \fbox{23} \\ 
\hline 
\text{type} & { A_2}
 & { A_2}
 & { A_1} \\
\hline
\end{array} 
\end{equation*} 
$n= 41$
\begin{equation*} 
\begin{array}{c|ccc} 
\hline
i & 23 & 23 & \fbox{23} \\ 
\hline 
\text{type} & A_1^{\oplus 2} & A_1^{\oplus 2}
 & A_1^{\oplus 2} \\
\hline
\sharp\{v\in N^G\bigm| \BF{v}{v}=-4 \}
 & 26 & 26 & 42 \\
\hline
\end{array} 
\end{equation*} 
$n= 46$
\begin{equation*} 
\begin{array}{c|ccc} 
\hline
i & 22 & 22 & \fbox{23} \\ 
\hline 
\text{type}
 & A_1^{\oplus 2} \oplus A_2
 & A_1 \oplus A_2^{\oplus 2}
 & A_1^{\oplus 3} \\
\hline
\end{array} 
\end{equation*} 
$n= 48$
\begin{equation*} 
\begin{array}{c|ccc} 
\hline
i & 19 & 22 & \fbox{23} \\ 
\hline 
\text{type} & { A_2}
 & { A_2}
 & { A_1} \\
\hline
\end{array} 
\end{equation*} 
$n= 49$
\begin{equation*} 
\begin{array}{c|ccc} 
\hline
i & 23 & 23 & \fbox{23} \\ 
\hline 
\text{type}
 & { A_1}
 & A_1^{\oplus 4}
 & A_1^{\oplus 5} \\
\hline
\end{array} 
\end{equation*} 
$n= 52,53,68,72,78$
\begin{equation*} 
\begin{array}{c|cc} 
\hline
i & 23 & \fbox{23} \\ 
\hline 
\text{type}
 & { A_1}
 &  A_1^{\oplus 2} \\
\hline
\end{array} 
\end{equation*} 
$n= 55$
\begin{equation*} 
\begin{array}{c|ccccc} 
\hline
i & 19 & 22 & 22 & 23 & \fbox{23} \\ 
\hline 
\text{type} & { D_4}
 & { A_2}
 &  A_2^{\oplus 2}
 &  A_1^{\oplus 3}
 &  A_1^{\oplus 4} \\
\hline
\end{array} 
\end{equation*} 
$n= 56$
\begin{equation*} 
\begin{array}{c|cc} 
\hline
i & 23 & \fbox{23} \\ 
\hline 
\text{type} & A_1^{\oplus 2}
 & A_1^{\oplus 2} \\
\hline
\sharp\{v\in N^G\bigm| \BF{v}{v}=-4 \}
 & 26 & 42 \\
\hline
\end{array} 
\end{equation*} 
$n=58$
\begin{equation*} 
\begin{array}{c|cc} 
\hline
i & 23 & 23 \\ 
\hline 
\text{type}
 &  A_1^{\oplus 2}
 &  A_1^{\oplus 2} \\
\hline
\end{array} 
\end{equation*} 
$n= 62$
\begin{equation*} 
\begin{array}{c|cc} 
\hline
i & 22 & \fbox{23} \\ 
\hline 
\text{type} & { A_2}
 & { A_1} \\
\hline
\end{array} 
\end{equation*} 
$n= 63$
\begin{equation*} 
\begin{array}{c|ccc} 
\hline
i & 22 & 22 & \fbox{23} \\ 
\hline 
\text{type} &  A_1^{\oplus 3}
 & A_1^{\oplus 2}\oplus  A_2
 & A_1^{\oplus 3} \\
\hline
\sharp\{v\in N^G\bigm| \BF{v}{v}=-6 \}
 & 14 & & 26 \\
\hline
\end{array} 
\end{equation*} 
$n= 64,73,81$
\begin{equation*} 
\begin{array}{c|cc} 
\hline
i & 23 & \fbox{23} \\ 
\hline 
\text{type} &  A_1^{\oplus 3}
 &  A_1^{\oplus 4} \\
\hline
\end{array} 
\end{equation*} 
$n= 65$
\begin{equation*} 
\begin{array}{c|cccc} 
\hline
i & 23 & 23 & 23 & \fbox{23} \\ 
\hline 
\text{type}
 & { A_1}
 &  A_1^{\oplus 2}
 &  A_1^{\oplus 3}
 &  A_1^{\oplus 4} \\
\hline
\end{array} 
\end{equation*} 
$n= 66,76$
\begin{equation*} 
\begin{array}{c|ccc} 
\hline
i & 23  & 23& \fbox{23} \\ 
\hline 
\text{type} & { A_1}
 & { A_1}
 &  A_1^{\oplus 2} \\
\hline
\end{array} 
\end{equation*} 
$n= 70$
\begin{equation*} 
\begin{array}{c|cccc} 
\hline
i & 19 & 22 & 23 & \fbox{23} \\ 
\hline 
\text{type} & { A_3}
 & { A_2}
 & { A_1}
 &  A_1^{\oplus 2} \\
\hline
\end{array} 
\end{equation*} 
$n= 74$
\begin{equation*} 
\begin{array}{c|ccc} 
\hline
i & 21 & 23 & \fbox{23} \\ 
\hline 
\text{type} & { A_3}
 &  A_1^{\oplus 2}
 &  A_1^{\oplus 3} \\
\hline
\end{array} 
\end{equation*} 
$n= 79$
\begin{equation*} 
\begin{array}{c|ccc} 
\hline
i & 22 & 23 & \fbox{23} \\ 
\hline 
\text{type} &  A_2^{\oplus 2}
 &  A_1^{\oplus 2}
 &  A_1^{\oplus 3} \\
\hline
\end{array} 
\end{equation*} 

\section*{Appendix: computations using GAP}

In this appendix, we briefly explain how to check
 Lemmas \ref{lem:maxconj}--\ref{lem:f384}
 using GAP \cite{GAP}.

For Lemma \ref{lem:maxconj},
 consider the case $\MF{H}=T_{192}$($=\MF{G}_{77}$).
By Table \ref{subsect:trees},
 it is sufficient to check that
 there exists only one conjugacy class of
 subgroups of $T_{192}$ which are isomorphic to
 $T_{24}=\MF{G}_{37}$
 (resp.\ $(Q_8*Q_8) \rtimes C_3=\MF{G}_{69}$).
GAP has the catalogue of all groups of small orders and
 the command \verb|SmallGroup|$(k,i)$ returns
 the $i$-th group of order $k$ in the catalogue (cf. \cite{beo}).
For example,
 \verb|SmallGroup|$(192,1493)$ returns $T_{192}$
 by Table \ref{subsect:qands}.
The command \verb|IsomorphicSubgroups|$(G,H)$ enumerates
 all conjugacy classes of subgroups of $G$
 which are isomorphic to $H$.
Hence we can check the assertion as follows.\footnote{
 A command terminated by two semicolons does not show the result.}

\begin{quote}
\begin{verbatim}
gap> h:=SmallGroup(192,1493);;
gap> g1:=SmallGroup(24,3);;
gap> g2:=SmallGroup(96,204);;
gap> Size( IsomorphicSubgroups( h , g1 ) );
1
gap> Size( IsomorphicSubgroups( h , g2 ) );
1
\end{verbatim}
\end{quote}

Here the command \verb|Size|$(a)$ returns
 the size of the object $a$.
The cases $\MF{H}=T_{48},H_{192},M_{20}$ are similar.

For Lemma \ref{lem:a44},
 we realize $G,\widetilde{G}$
 as quotient groups of
 subgroups of $\MF{S}_{36}$.
For example, the linear transformations
\begin{align*}
 (x,y,z,u,v,w)&\mapsto (e^{2\pi i/6}x,y,z,u,v,w), \\
 (x,y,z,u,v,w)&\mapsto (y,x,z,u,v,w)
\end{align*}
 correspond to
\begin{gather*}
 (1~ 2~ 3~ 4~ 5~ 6), \\
 (1~ 7)(2~ 8)(3~ 9)(4~ 10)(5~ 11)(6~ 12),
\end{gather*}
 respectively.
We can check Lemma \ref{lem:a44} as follows.

\begin{quote}
\begin{verbatim}
a1:=(1,2,3,4,5,6);
a2:=(7,8,9,10,11,12);
a3:=(13,14,15,16,17,18);
a4:=(19,20,21,22,23,24);
a5:=(25,26,27,28,29,30);
a6:=(31,32,33,34,35,36);
a123456:=a1*a2*a3*a4*a5*a6;

b123:=(1,7,13)(2,8,14)(3,9,15)(4,10,16)(5,11,17)*
(6,12,18);
b456:=(19,25,31)(20,26,32)(21,27,33)(22,28,34)*
(23,29,35)(24,30,36);
b23:=(7,13)(8,14)(9,15)(10,16)(11,17)(12,18);
b56:=(25,31)(26,32)(27,33)(28,34)(29,35)(30,36);
b14:=(1,19)(2,20)(3,21)(4,22)(5,23)(6,24);
b2536:=(7,25,13,31)(8,26,14,32)(9,27,15,33)*
(10,28,16,34)(11,29,17,35)(12,30,18,36);

g0:=Group(
 a1^3*a2^3,
 a4^3*a5^3,
 a5^2*a6^4*b123,
 a2^4*a3^2*b456,
 a1^3*a2^3*a3^3*b23*b56,
 a123456
);
gg0:=ClosureGroup(gg0, b14*b2536 );
n:=Group(a123456);
f:=NaturalHomomorphismByNormalSubgroup(gg0,n);
g:=Image(f,g0);
gg:=Image(f);

list:=[[48,49],[48,30],[96,195],[144,184]];
for nn in list do
 subgrps:=IsomorphicSubgroups(gg,SmallGroup(nn));
 subgrps:=Filtered(subgrps,x->IsSubgroup(g,Image(x)));
 Display(Size(subgrps));
od;
\end{verbatim}
\end{quote}

Here e.g.\ \verb|a1^3*a2^3| corresponds to
 the transformation (\ref{eq:minusxy}).
The quotient groups \verb|g|,\,\verb|gg|
 by the group \verb|n|,
 which corresponds to
 the subgroup of homothetic transformations,
 are $G,\widetilde{G}$,
 respectively.
By Table \ref{subsect:trees},
 it is sufficient to check that
 there exists only one conjugacy class of
 subgroups of $\widetilde{G}$ which are isomorphic to
 $\MF{G}_n$ and contained in $G$ for $n=52,53,68,72$.
This is done in the last paragraph of the above program.
The result is the following.

\begin{quote}
\begin{verbatim}
1
1
1
1
\end{verbatim}
\end{quote}

Thus Lemma \ref{lem:a44} has been checked.
Lemma \ref{lem:f384} is similarly checked.

\end{document}